\newcommand{\RR}{\mathbb{R}}
\newcommand{\CC}{\mathbb{C}}
\newcommand{\DD}{\mathbb{D}}
\newcommand{\ZZ}{\mathbb{Z}}
\newcommand{\TT}{\mathbb{T}}
\font\gothic=eufm10
\def\gR{\mbox{\gothic\char'122}}
\def\gb{\mbox{\gothic\char'142}}
\def\filbcirc#1{\hbox{\rlap{\hbox to 25pt {$\bigcirc$\hfill}}
\hbox to 2pt{\hfill \small #1\hfill}}}
\def\skthr{\noalign{\vskip 3pt}}
\def\sksix{\noalign{\vskip 6pt}}
\def\sD{{\mathfrak D}}      \def\sF{{\mathfrak F}}
      \def\sR{{\mathfrak R}}
\def\gh{{\mathfrak h}}
\def\gb{{\mathfrak b}}    \def\gs{{\mathfrak s}}
      \def\dC{{\mathbb C}}
\def\dD{{\mathbb D}}
   \def\dN{{\mathbb N}}
\def\NN{{\mathbb N}}
      \def\cF{{\mathcal F}}
   \def\cH{{\mathcal H}}   
   \def\cK{{\mathcal K}}   \def\cL{{\mathcal L}}
\def\cS{{\mathcal S}}      \def\cU{{\mathcal U}}
\newcommand{\IPi}{\hbox{$\,$I\vbox{\moveleft 2pt\hbox{$\Pi$}}}}
\newcommand{{\bx}}{\bf x}
\newcommand{{{\bff}}}{\bf f}
\newcommand{{\bw}}{\bf w}
\newcommand{{\bg}}{\bf g}
\newcommand{{\bb}}{\bf b}
\newcommand{{\ba}}{\bf a}
\newcommand{{\bq}}{\bf q}
\newcommand{\lam}{\lambda}
\newcommand{\om}{\omega}
\newcommand{\ptp}{p\times p}
\newcommand{\ptq}{p\times q}
\newcommand{\qtq}{q\times q}
\newcommand{\qtp}{q\times p}
\newcommand{\mtm}{m\times m}
\newcommand{\dsp}{\displaystyle}
\newcommand{\wtilde}{\widetilde}
\def\h#1{{{\hat #1} }}
\def\wt#1{{{\widetilde #1} }}
\def\wh#1{{{\widehat #1} }}
\def\bm\chi{\mbox{\boldmath$\chi$}}
\def\ran{{\rm rng\,}}
\def\dim{{\rm dim\,}}
\def\rank{{\rm rank\,}}
\let\xker=\ker \def\ker{{\xker\,}}
\newtheorem{thm}{Theorem}[section]
\newtheorem{proposition}[thm]{Proposition}
\newtheorem{corollary}[thm]{Corollary}
\newtheorem{lem}[thm]{Lemma}
\newtheorem{definition}[thm]{Definition}
\theoremstyle{definition}
\newtheorem{example}{Example}
\newtheorem{remark}[thm]{Remark}
\numberwithin{equation}{section}
\begin{document}

\title[Linear fractional transformations]
{On  linear fractional transformations associated with
generalized $J$-inner matrix functions}
\author{Vladimir Derkach}
\address{Department of Mathematics \\
Donetsk State University \\
Donetsk \\
Ukraine} \email{derkach.v@gmail.com}
\author{Harry Dym}
\address{Department of Mathematics,  \\
Weizmann Institute, Israel} \email{harry.dym@weizmann.ac.il}
\dedicatory{}
\thanks{V. Derkach, Weston Visiting Scholar, wishes to thank the Weizmann Institute of Science
for hospitality and support}
\subjclass{Primary 47A56; Secondary  46C20, 46E22, 47A57, 47B20.}
\keywords{ Linear fractional transformation, generalized Schur
class, Kre\u{\i}n-Langer factorization, resolvent matrix,
Potapov-Ginzburg transform, coprime factorization, reproducing
kernel space, associated pair}

\begin{abstract}
A class $\cU_{\kappa_1}(J)$ of generalized $J$-inner mvf's (matrix valued
functions) $W(\lam)$ which appear as resolvent matrices for bitangential
interpolation problems in the generalized Schur class of $\ptq$ mvf's
$\cS_\kappa^{\ptq}$ and some associated reproducing kernel Pontryagin spaces
are studied. These spaces are used to describe the range of the linear
fractional transformation $T_W$ based on $W$  and applied to
${\mathcal S}_{\kappa_2}^{\ptq}$. Factorization formulas for mvf's $W$ in a subclass
$\cU_{\kappa_1}^\circ(J)$ of $\cU_{\kappa_1}(J)$ found and then used to
parametrize the set
$\cS_{\kappa_1+\kappa_2}^{\ptq}\cap T_W[\cS_{\kappa_2}^{\ptq}]$.
Applications to bitangential interpolation problems in the class
$\cS_{\kappa_1+\kappa_2}^{\ptq}$ will be presented elsewhere.
\end{abstract}

\maketitle

\section{Introduction }
Let $J$ be an  $m\times m$ signature matrix (i.e., $J=J^*$ and
$JJ^*=I_m$) and let $\Omega_+$ be equal to either
$\dD=\{\lam\in\dC:\,|\lam|<1\}$ or
$\Pi_+=\{\lam\in\dC:\,\lam+\bar\lam>0\}$. An $m\times m$ mvf (matrix
valued function) $W(\lambda)$ that is meromorphic in $\Omega_+$
belongs to the class $\cU_\kappa(J)$ of {\it generalized $J$-inner}
mvf's if:
\begin{enumerate}
\item[(i)]
the kernel
\begin{equation}\label{kerK}
{\mathsf K}_\omega^W(\lambda)=
\frac{J-W(\lambda)JW(\omega)^*}{\rho_\omega(\lambda)}
\end{equation}
has $\kappa$ negative squares in ${\mathfrak h}_W^+\times{\mathfrak
h}_W^+$ (see definition in Subsection~2.1) and
\item[(ii)]
$J-W(\mu)JW(\mu)^*=0$ a.e. on the boundary $\Omega_0$ of $\Omega_+$;
\end{enumerate}
where ${\mathfrak h}_W^+$ denotes the
domain of holomorphy of $W$ in $\Omega_+$   and
\[
\rho_{\om}(\lam)=\left\{\begin{array}{ll}
                          1-\lam\overline{\om},
& \mbox{ if }\Omega_+=\dD; \\ \sksix
                          2\pi (\lam+\overline{\om}), & \mbox{ if }\Omega_+=\Pi_+. \\
                        \end{array}\right.
\]
Thus, in both cases
\[
\Omega_+=\{\omega\in\dC:\rho_\omega(\omega)>0\}
\] and
$\Omega_0=\{\omega\in\dC:\rho_\omega(\omega)=0\}$ is the boundary of
$\Omega_+$. Correspondingly we set
\begin{equation}
\label{eq:omegaminus} \Omega_- =\dC \setminus (\Omega_+
\cup\Omega_0) = \{\om\in\dC: \rho_{\om}(\om) < 0\}.
\end{equation}

Most of the other notation that we use will be fairly standard:
\medskip

\noindent
mvf for matrix valued function, vvf for vector valued function,
$\ker A$ for the kernel of a matrix $A$, $\ran A$ for
its range, $\sigma (A)$
for its spectrum if $A$ is square, and, if $A=A^*$,
$\nu_-(A)$  for the number of negative eigenvalues (counting multiplicities).
If $f(\lam)$ is a mvf, then
$$
f^{\#}(\lam)=f(\lam^o)^*, \mbox{ where}\quad \lam^o=
\left\{\begin{array}{ll} 1/\overline{\lam}
& \,\mbox{if}\quad\Omega_+=\dD \quad\text{and}\quad
\lam\ne 0; \\ \sksix
-\overline{\lam}  & \,\mbox{ if}\quad \Omega_+=\Pi_+ \\
\end{array}\right.
$$
and
$$
\gh_f=\{\lambda\in\CC\ \textrm{at which $f(\lambda)$ is
holomorphic}\}\quad\textrm{and}\quad \gh_f^\pm=\gh_f\cap\Omega_\pm.
$$
We also set
$$
\langle f, g\rangle_{st}=\left\{\begin{array}{ll}
{\displaystyle\frac{1}{2\pi}
\int_0^{2\pi}g(\mu)^*f(\mu)d\mu}&\quad\text{if}\quad
\Omega_+=\DD;\\ \\
\int_{-\infty}^{\infty}g(i\mu)^*f(i\mu)d\mu&\quad\text{if}\quad
\Omega_+=\Pi_+
\end{array}\right.
$$
and
$$
\widetilde{L}_1^{\ptq}=\left\{\begin{array}{ll} L_1^{\ptq}(\Omega_0)
&\quad\text{if}\quad
\Omega_+=\DD;\\ \\
\{f:\,(1+\vert\mu\vert^2)^{-1}f\in L_1^{\ptq}(\Omega_0)\}&\quad\text{if}\quad
\Omega_+=\Pi_+.
\end{array}\right.
$$
The symbol $H_2^{\ptq}$ (resp., $H_\infty^{\ptq}$)  stands for the class of
$\ptq$ mvf's with entries in the Hardy space $H_2$ (resp., $H_\infty$);
$H_2^p$  is short for $H_2^{p\times 1}$ and $(H_2^p)^\perp$ is the
orthogonal complement of $H_2^p$ in $L_2^p$ with respect to the standard
inner product on $\Omega_0$.

 The   class $\cU_\kappa(J)$ and reproducing kernel Pontryagin spaces
$\cK(W)$ with the reproducing kernel ${\mathsf K}_\omega^W(\lambda)$ based on
$W\in\cU_\kappa(J)$ were studied in ~\cite{AD86} and \cite{ADRS}.
In~\cite{Nud81}, \cite{BH83}, \cite{BGR} and \cite{Der03} mvf's
$W\in\cU_\kappa(J)$ appear as resolvent matrices of  interpolation
problems; in~\cite{Kuzh}, \cite{AI}, \cite{DrR}, \cite{DLS} and
\cite{Der01a} mvf's $W\in\cU_\kappa(J)$ were considered as characteristic
functions of linear operators in indefinite inner product spaces.

Let ${\mathcal S}_{\kappa}^{p\times q}(\Omega_+)$ denote the {\it generalized
Schur class} of mvf's $s$
that are meromorphic in $\Omega_+$ and for which the kernel
\begin{equation}\label{kerLambda}
{\mathsf \Lambda}_\omega^s(\lambda)=
\frac{I_{p}-s(\lambda)s(\omega)^*}{\rho_\omega(\lambda)}
\end{equation}
has $\kappa$ negative squares on ${\mathfrak h}_s^+\times{\mathfrak
H}_s^+$ (see~\cite{KL}).

A fundamental result of  Kre\u{\i}n and Langer~\cite{KL} guarantees that  every
generalized Schur function $s\in
{\mathcal S}_{\kappa}^{\ptq}:={\mathcal S}_{\kappa}^{\ptq}(\Omega_+)$ admits a pair of
coprime factorizations
\begin{equation}\label{KL}
s(\lam)=b_{\ell}(\lam)^{-1}s_{\ell}(\lam)=s_r(\lam)b_r(\lam)^{-1}
\quad\text{for}\, \lam\in{\mathfrak h}_s^+,
\end{equation}
where $b_{\ell}$ and $b_r$ are Blaschke--Potapov products of degree
$\kappa$ and sizes $\ptp$ and $\qtq$, respectively,  and the mvf's
$s_{\ell}$ and $s_r$ both belong to the {\it Schur class} ${\mathcal
S}^{\ptq}:={\mathcal S}_0^{\ptq}(\Omega_+)$. The classes of inner
and outer mvf's in ${\mathcal S}^{\ptp}$ will be denoted  $
{\mathcal S}_{in}^{\ptp}$ and ${\mathcal S}_{out}^{\ptp}$,
respectively.

In this paper we fix
$$
J=j_{pq}=\begin{bmatrix}I_p&0\\0&-I_q\end{bmatrix},\quad\text{where}\ p+q=m,
$$
and if $W\in\cU_\kappa(j_{pq})$ is written in block form $W=[w_{ij}]_{i,j=1}^2$
conformally with
$j_{pq}$, then the linear fractional transformation
\begin{equation}\label{eq:0.9}
    T_W[\varepsilon]=(w_{11}\varepsilon+w_{12})(w_{21}\varepsilon+w_{22})^{-1}
\end{equation}
is well defined on
${\mathcal S}_{\kappa_2}^{\ptq}$ for all $\lambda\in\Omega_+$ except for at
most a finite set of
points and $s=T_W[\varepsilon]$ belongs to ${\mathcal S}_{\kappa'}^{\ptq}$ with
$\kappa'\le \kappa_1+\kappa_2$. The main results of this paper are:
\begin{enumerate}
\item[\rm(1)]   A
characterization of the set
\[
  T_W[{\mathcal S}_{\kappa_2}^{\ptq}]=\left\{T_W[\varepsilon]:\,\varepsilon\in
{\mathcal S}_{\kappa_2}^{\ptq}\right\}
\]
that is formulated in terms of the mvf
\[
\Delta_s(\mu):=\left[\begin{array}{cc}
I_p & -s(\mu)\\
-s(\mu)^* & I_q   \end{array} \right]  \quad \text{a.e. on}\ \Omega_0
\]
and a $\kappa$-dimensional operator $\Gamma_r$ that is defined below.
\vskip 6pt
\item[\rm(2)] A parametrization of the intersection of this set with
${\mathcal S}_{\kappa_1+\kappa_2}^{\ptq}$ when $W$ belongs to a
subclass $\cU_{\kappa_1}^\circ(j_{pq})$ of $\cU_{\kappa_1}(j_{pq})$,
introduced below in~\eqref{eq:11.10}.
\end{enumerate}

Let the mvf $s\in {\mathcal S}_{\kappa}^{p\times q}$ admit the
Kre\u{\i}n-Langer factorizations
\eqref{KL} and let
$$
\cH_*(b_\ell):=(H_2^q)^\perp\ominus b_\ell^*(H_2^p)^\perp\quad\textrm{and}
\quad
\cH(b_r)=H_2^q\ominus b_rH_2^q.
$$
Since  $\textup{dim } \cH(b_r)=\kappa$, the operator
\[
    X_r:h\mapsto P_-sh\quad (h\in \cH(b_r)),
\]
is a finite-dimensional operator of rank at most $\kappa$. In fact, as will
be shown below, $X_r$ is a
1-1-isomorphism from $\cH(b_r)$ onto $\cH_*(b_{\ell})$. Let
    \[
     \Gamma_r:L_2^p\ni g\mapsto X_r^{-1}P_{\cH_*(b_{\ell})}g\in\cH(b_r).
    \]

\begin{thm}
\label{thm:0.1} Let $\kappa_1,\kappa_2\in\dN\cup\{0\}$, let $W\in
\cU_{\kappa_1}(j_{pq})$ and let
$s\in {\mathcal S}^{\ptq}_{\kappa_1+\kappa_2}$ admit the
Kre\u{\i}n-Langer factorizations \eqref{KL}. Then
$s\in T_W[{\mathcal S}^{\ptq}_{\kappa_2}]$ if and
only if the following conditions hold:
\begin{enumerate}
    \item [\rm(1)]
    $\begin{bmatrix}
      b_{\ell} & -s_{\ell}
    \end{bmatrix}f\in H_2^p$ for every $f\in
    {\cK}(W)$; \vskip 6pt
    \item [\rm(2)]
    $\begin{bmatrix}
      -s_r^* & b_r^*
    \end{bmatrix} f\in (H_2^q)^\perp$ for every $f\in
    {\cK}\rm(W)$; \vskip 6pt
    \item[\rm(3)]
    $
   {\displaystyle \left\langle \{\Delta_s
     +\Delta_s
     \left[\begin{array}{cc}
       0 & \Gamma_r^* \\
       \Gamma_r & 0 \\
    \end{array}\right]\Delta_s\}f,f\right
    \rangle_{st}\le\langle f,f\rangle_{{\cK}(W)}}
    $ for every $f\in
    {\cK}(W)$.
    \end{enumerate}
\end{thm}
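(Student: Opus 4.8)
The plan is to reduce the membership $s\in T_W[\cS_{\kappa_2}^{\ptq}]$ to a statement about the reproducing kernel space $\cK(W)$ and then to the three conditions, exploiting the identity $s = T_W[\varepsilon]$ which, after clearing denominators, is equivalent to
\[
\begin{bmatrix} I_p & -s \end{bmatrix} W \begin{bmatrix} \varepsilon \\ I_q \end{bmatrix} = 0
\]
on a suitable domain. So the first step is to analyze when a given $s$ can be so represented: writing $W = [w_{ij}]$ and $\varepsilon\in\cS_{\kappa_2}^{\ptq}$, I would show that $s = T_W[\varepsilon]$ iff the mvf
$u := w_{11}\varepsilon + w_{12}$, $v := w_{21}\varepsilon + w_{22}$ satisfy $s = u v^{-1}$, and then use the Kre\u\i n--Langer factorizations \eqref{KL} of both $s$ and $\varepsilon$ to convert the meromorphic identity into one involving the Blaschke--Potapov products $b_\ell, b_r$ (for $s$) and the corresponding ones for $\varepsilon$. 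The key algebraic manipulation is to factor $\begin{bmatrix} I_p & -s\end{bmatrix}W$ through $\begin{bmatrix} b_\ell & -s_\ell\end{bmatrix}$ on the left and to recognize that the rows of this product, evaluated against elements of $\cK(W)$, are exactly what conditions (1) and (2) control.

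Next I would bring in the space $\cK(W)$ explicitly. Since $W\in\cU_{\kappa_1}(j_{pq})$, the kernel ${\mathsf K}_\omega^W$ has $\kappa_1$ negative squares, so $\cK(W)$ is a Pontryagin space of negative index $\kappa_1$; the de Branges--Rovnyak type machinery (available from \cite{ADRS}) gives that $f\in\cK(W)$ iff a certain pair of "components" lies in $H_2^p$ and $(H_2^q)^\perp$ respectively, which is precisely the content of conditions (1) and (2) once one multiplies $f$ by the row vectors $\begin{bmatrix} b_\ell & -s_\ell\end{bmatrix}$ and $\begin{bmatrix} -s_r^* & b_r^*\end{bmatrix}$. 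I would establish that these two conditions are necessary for $s\in T_W[\cS_{\kappa_2}^{\ptq}]$ by a direct computation with the factorization identity above, and that together with (3) they are sufficient by constructing $\varepsilon$: one defines a candidate $\varepsilon$ from $s$ and $W$ via the inverse linear fractional transformation $T_{W^{-1}}[s]$ (where $W^{-1}\in\cU_{\kappa_1}(j_{pq})$ as well, up to the usual care with $j_{pq}$), and then shows $\varepsilon\in\cS_{\kappa_2}^{\ptq}$, i.e. that $\Lambda_\varepsilon^\varepsilon$ has exactly $\kappa_2$ negative squares.

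The role of condition (3) is the quantitative heart of the matter: the count of negative squares of $\Lambda_\varepsilon^\varepsilon$ splits, via the transformation law relating $\Lambda_\varepsilon^\varepsilon$, $\Lambda_s^s$ and ${\mathsf K}_\omega^W$, into $\kappa_1 + \kappa_2 = (\kappa_1+\kappa_2)$ minus a correction term that is governed by the operator $\Gamma_r$ and the "defect form" appearing on the left side of (3). Concretely, I expect the identity
\[
\Lambda_\varepsilon^\varepsilon(\lambda,\omega) + \left(\text{kernel built from } W\right) = \left(\text{kernel built from } s\right),
\]
which, when paired with the quadratic form $\langle\,\cdot\,,\,\cdot\,\rangle_{st}$ on a dense set and rewritten using $\Delta_s$ and $\Gamma_r$, yields that the deficiency between $\langle f,f\rangle_{\cK(W)}$ and the $\Delta_s$-weighted $L_2$ norm is nonnegative exactly when no extra negative squares are created — which is condition (3). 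The main obstacle, and the step I would budget the most care for, is the bookkeeping of negative squares: showing that the passage from $s$ to $\varepsilon$ neither loses nor gains negative squares beyond what (3) permits requires a careful Pontryagin-space version of the "subtraction" of reproducing kernels (so that $\kappa(\Lambda_\varepsilon^\varepsilon) = \kappa(\Lambda_s^s) - \dim\cK(W) + (\text{correction}) = \kappa_2$), and this is exactly where one must verify that $X_r$ is the claimed $1$--$1$ isomorphism $\cH(b_r)\to\cH_*(b_\ell)$ so that $\Gamma_r$ is well defined and has the right rank. I would isolate that isomorphism claim as a preliminary lemma, prove it by a dimension count plus an injectivity argument using the coprimeness in \eqref{KL}, and only then assemble the equivalence.
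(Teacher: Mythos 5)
Your outline captures the right global strategy for the necessity direction --- the kernel identity relating ${\mathsf \Lambda}^\varepsilon_\omega$, ${\mathsf K}^W_\omega$ and the kernel of $s$, plus the isomorphism $X_r:\cH(b_r)\to\cH_*(b_\ell)$ that makes $\Gamma_r$ well defined (Lemma \ref{Ker*}) --- but there are two genuine gaps. First, you misread what conditions (1)--(3) are saying. They are not a characterization of membership in $\cK(W)$; that characterization (Theorem \ref{cHW}) involves the Kre\u{\i}n--Langer factors of $S=PG(W)$, not of $s$, and for a generic $s$ conditions (1)--(2) simply fail for some $f\in\cK(W)$. Rather, since $[\,b_\ell\ \ -s_\ell\,]=b_\ell[\,I_p\ \ -s\,]$ and $[\,-s_r^*\ \ b_r^*\,]=b_r^*[\,-s^*\ \ I_q\,]$, conditions (1) and (2) say exactly that $\Delta_s f$ satisfies the defining conditions $({\sD}1)$--$({\sD}2)$ of the indefinite de~Branges--Rovnyak space $\sD(s)$, and the left-hand side of (3) is precisely $\langle \Delta_s f,\Delta_s f\rangle_{\sD(s)}$; the three conditions together assert that $f\mapsto\Delta_s f$ is a contraction from $\cK(W)$ into $\sD(s)$. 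Without introducing $\sD(s)$ (or an equivalent device) you cannot even interpret the inequality in (3), let alone prove it; the paper builds $\sD(s)$, computes $\mathrm{ind}_-\sD(s)=\kappa$ (Lemma \ref{lem:1.15}), establishes the unitary identification with the kernel space $\wh\sD(s)$ (Theorem \ref{DswhDs}), and relates $\sD(s)$ to $\cK(s)$ and $\cK_*(s)$ (Lemma \ref{HandD}) precisely to make this work.

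Second, your sufficiency argument is not yet an argument. Writing down the candidate $\varepsilon$ by inverting the linear fractional transformation is the easy part; the difficulty is to show that this $\varepsilon$ lies in $\cS^{\ptq}_{\kappa_2}$, i.e.\ has \emph{exactly} $\kappa_2$ negative squares, and ``careful bookkeeping'' does not supply the mechanism. The paper's route is: conditions (1)--(3) plus Lemma \ref{HandD} show that $T:f\mapsto[\,I_p\ \ -s\,]f$ is a contraction from $\cK(W)$ into $\cK(s)$; the Pontryagin-space index formula $\nu_-(I-TT^*)=\mathrm{ind}_-\cK(s)-\mathrm{ind}_-\cK(W)=\kappa_2$ then fixes the number of negative squares of the form built from $I-TT^*$ on reproducing kernels, which a direct computation identifies with the kernel $[\,I_p\ \ -s(\lambda)\,]W(\lambda)j_{pq}W(\omega)^*[\,I_p\ \ -s(\omega)\,]^*/\rho_\omega(\lambda)$, i.e.\ with the $(\Omega_+,j_{pq})_{\kappa_2}$-admissibility of $[\,I_p\ \ -s\,]W$; finally Theorem \ref{thm:3.3} together with Lemma \ref{Inv} (a corona/perturbation argument) converts admissibility into the existence of $\varepsilon\in\cS^{\ptq}_{\kappa_2}$ with $s=T_W[\varepsilon]$. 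All three ingredients --- the contraction $T$, the index formula for $I-TT^*$, and the admissibility criterion --- are missing from your plan, and they are where the actual content lies.
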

The description of the set $T_W[{\mathcal S}_{\kappa_2}^{\ptq}]$ given in
Theorem~\ref{thm:0.1} is
a generalization to the indefinite setting of a result from~\cite{D8}. The
proof is
based on the theory of the reproducing kernel Pontryagin  spaces $\cK(s)$ and
$\cK(W)$
associated with the kernels ${\mathsf \Lambda}_\omega^s(\lambda)$ and ${\mathsf
K}_\omega^W(\lambda)$ developed in~\cite{ADRS} and~\cite{AD86}, respectively.
For the
convenience of the reader we review and partially extend  the parts of this
theory that
are needed in this paper in Section 2. In particular, we furnish an
indefinite analog of
the de Branges-Rovnayk description (\cite{dBR}) of the space $\cK(s)$ and a
boundary
characterization of an indefinite analog $\sD(s)$ of the de Branges-Rovnyak
reproducing
kernel Hilbert space. In the definite case the left hand side of (3) in Theorem
\ref{thm:0.1} coincides with $\|f\|_{\sD(s)}^2$, which clarifies the
connection of this
result with the setting of the abstract interpolation problem in~\cite{KKhYu}.

For every mvf $W\in\cU_\kappa(j_{pq})$ 
$W=[w_{ij}]_{i,j=1}^2$
the lower right hand $q\times q$ corner
$w_{22}(\lambda)$ of $W(\lambda)$
is invertible for all $\lambda\in\gh_W^+$ except
for at most $\kappa$ points. Thus, the Potapov-Ginzburg transform
$S=PG(W)$ of $W$ is  defined on $\gh_W^+$ by the formula
\begin{equation}\label{PGtrans}
    S(\lambda)=\left[\begin{array}{cc}
      s_{11}(\lambda) & s_{12}(\lambda) \\
      s_{21}(\lambda) & s_{22}(\lambda)
    \end{array}      \right] 
    :=\left[\begin{array}{cc}
      w_{11}(\lambda) & w_{12}(\lambda) \\
      0 & I_q
    \end{array}      \right]
    \left[\begin{array}{cc}
            I_p &       0\\
      w_{21}(\lambda) & w_{22}(\lambda)
    \end{array}      \right]^{-1}
\end{equation}
and it belongs to the class ${\mathcal S}_\kappa^{\mtm}$ (as may be
verified by writing ${\mathsf \Lambda}^{S}_\omega(\lam)$ in terms of
${\mathsf K}^W_\omega(\lam)$). Moreover, since
\begin{equation}
\label{eq:jul4a7}
S=PG(W)\Longrightarrow W=PG(S)\,,
\end{equation}
the mvf $W$ is of bounded type. Thus, the nontangential limits $W(\mu)$ exist
a.e. on $\Omega_0$ and  assumption (ii) in the definition of $\cU_\kappa(J)$
makes sense.

Let
\begin{equation}
\label{eq:11.10}
\cU_\kappa^\circ(j_{pq})=\{W\in\cU_\kappa(j_{pq}):\, s_{21}:=-w_{22}^{-1}w_{21}\quad
\text{belongs to}\  {\mathcal S}_\kappa^{\qtp}\}.
\end{equation}
The Kre\u{\i}n-Langer factorizations of  $s_{21}$ will be written as
\begin{equation}\label{eq:0.4}
    s_{21}(\lambda):={\gb}_{\ell}(\lambda)^{-1}{\gs}_{\ell}(\lambda)
={\gs}_r(\lambda){\gb}_r(\lambda)^{-1}\quad\text{for}\ \lambda\in\gh_s^+,
\end{equation}
where ${\gb}_{\ell}$, ${\gb}_{r}$ are Blashke-Potapov products of degree
$\kappa$ and ${\gs}_{\ell},{\gs}_r\in {\mathcal S}^{\qtp}$;
german fonts are used to emphasize
that the factorization is now for a mvf of size $\qtp$.

In Theorem~\ref{thm:11.2} we shall show that the mvf's ${\gb}_{\ell}s_{22}$
and $s_{11}{\gb}_r$ belong to the classes ${\mathcal S}^{\ptp}$ and
${\mathcal S}^{\qtq}$,
respectively. Therefore, they admit inner-outer and outer-inner factorizations
\begin{equation}\label{eq:0.5}
    s_{11}{\gb}_r=b_1\varphi_1,\quad{\gb}_{\ell}s_{22}=\varphi_2b_2,
\end{equation}
where $b_1\in {\mathcal S}_{in}^{\ptp}$, $b_2\in {\mathcal S}_{in}^{\qtq}$, $\varphi_1\in
{\mathcal S}_{out}^{\ptp}$, $\varphi_2\in {\mathcal S}_{out}^{\qtq}$.
In keeping with the usage in \cite{ArovD97} and \cite{ArovD08},
the pair $\{b_1,b_2\}$ is called an
{\it associated pair} of the mvf
$W\in\cU_\kappa^\circ(j_{pq})$ and denoted $\{b_1,b_2\}\in\mbox{ap}(W)$.
If $\kappa=0$, the
formulas in~\eqref{eq:0.5} reduce to the inner-outer
and outer-inner factorizations of $s_{11}$ and $s_{22}$
(see~\cite{Arov93},~\cite{ArovD97}).

In Theorem~\ref{thm:11.8} it will be shown that if
$W\in\cU_\kappa^\circ(j_{pq})$ and   $\{b_1,b_2\}\in\mbox{ap}(W)$, then
there are $H_\infty$ mvf's $K$, $c_\ell$, $d_\ell$, $c_r$, $d_r$,
such that the factorizations
\begin{equation}
\label{eq:0.6}
W=\Theta\,\Phi\quad\text{and}\quad W=\wt{\Theta}\,\wt{\Phi},
\end{equation}
hold over $\Omega_+$ and $\Omega_-$, respectively, with
\begin{equation}
\label{eq:0.7}
\Theta=\begin{bmatrix}b_1&Kb_2^{-1}\\0&b_2^{-1}\end{bmatrix},\quad
\wt{\Theta}=\begin{bmatrix}b_1&0\\K^\# b_1&b_2^{-1}\end{bmatrix}
=j_{pq}\Theta^{-\#}j_{pq},
\end{equation}
\begin{equation}
\label{eq:0.8} \Phi=
\begin{bmatrix}\varphi_1&0\\0&\varphi_2^{-1}\end{bmatrix}
\begin{bmatrix}c_r&d_r\\-\gs_\ell&\gb_\ell\end{bmatrix}\quad\textrm{and}\quad
\quad \wt{\Phi}=
\begin{bmatrix}\varphi_1^{-\#}&0\\0&\varphi_2^\#\end{bmatrix}
\begin{bmatrix}\gb_r^\#&-\gs_r^\#\\ d_\ell^\#&c_\ell^\#\end{bmatrix}.
\end{equation}

If $W\in \cU^\circ_{\kappa_1}(j_{pq})$, then  Theorem~\ref{thm:0.1}
is supplemented
by the following parametrization of the set
$T_W[{\mathcal S}^{\ptq}_{\kappa_2}]
\cap {\mathcal S}^{\ptq}_{\kappa_1+\kappa_2}$ in terms of the parameter $\varepsilon$.
\begin{thm}\label{thm:0.2}
Let the mvf $W\in \cU_{\kappa_1}^\circ(j_{pq})$, $S=PG(W)$, let
$s_{21}$ have the Kre\u{\i}n-Langer factorizations~\eqref{eq:0.4},
$\{b_1, b_2\}\in ap(W)$, 
and let  $\varepsilon\in
{\mathcal S}^{\ptq}_{\kappa_2}$
satisfy the assumption
\begin{equation}\label{eq:0.10}
(I_q-s_{21}\varepsilon)^{-1}|_{\Omega_0} \in
\wtilde{L}_1^{q\times q}
\end{equation}
and admit the Kre\u{\i}n-Langer factorizations
\[
\varepsilon=\theta_\ell^{-1}\varepsilon_\ell=\varepsilon_r\theta_r^{-1},
\]
where $\theta_\ell$, $\theta_{r}$ are Blashke-Potapov products of degree $\kappa_2$ and
$\varepsilon_{\ell},\varepsilon_r\in {\mathcal S}^{\ptq}$. Then the mvf $s=T_W[\varepsilon]$
belongs to ${\mathcal S}_{\kappa_1+\kappa_2}^{\ptq}$ if and only if  the factorizations
\begin{equation}\label{Reg1}
    \theta_\ell w^\#_{11}+\varepsilon_\ell w^\#_{12}
    =(\theta_\ell \gb_r-\varepsilon_\ell\gs_r)(b_{1}\varphi_1)^{-1},
\end{equation}
\begin{equation}\label{Reg2}
    w_{21}\varepsilon_r+ w_{22}\theta_r=(\varphi_2 b_{2})^{-1}(-\gs_\ell
\varepsilon_r+\gb_\ell\theta_r)
\end{equation}
are coprime over $\Omega_+$.
\end{thm}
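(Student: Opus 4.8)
The plan is to deduce Theorem~\ref{thm:0.2} from Theorem~\ref{thm:0.1} together with the factorization formulas \eqref{eq:0.6}--\eqref{eq:0.8}. The starting point is to rewrite the three conditions of Theorem~\ref{thm:0.1} in terms of the explicit factors of $W$ supplied by Theorem~\ref{thm:11.8}. First I would compute $s=T_W[\varepsilon]$ using the block structure of $\Theta$ and $\Phi$: since $W=\Theta\Phi$ with $\Theta$ upper triangular, the transformation composes as $T_W=T_\Theta\circ T_\Phi$, and $T_\Theta[\cdot]=b_1(\cdot)b_2+Kb_2^{-1}b_2 = b_1(\cdot)+K$ after the obvious simplification, while $T_\Phi[\varepsilon]$ is governed by the second matrix factor in \eqref{eq:0.8} acting through the diagonal $\mathrm{diag}(\varphi_1,\varphi_2^{-1})$. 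Carrying this out, the numerator and denominator of $s=T_W[\varepsilon]$ should factor as
\[
s = b_1\varphi_1(c_r\varepsilon+d_r)(-\gs_\ell\varepsilon+\gb_\ell)^{-1}\varphi_2 b_2 + (\text{correction from }K),
\]
and after inserting the Krein--Langer factorization $\varepsilon=\theta_\ell^{-1}\varepsilon_\ell=\varepsilon_r\theta_r^{-1}$ and clearing the $\theta$'s, one identifies candidate left and right factorizations of $s$ whose numerators are, up to $H_\infty$ units, the left sides of \eqref{Reg1} and \eqref{Reg2}.

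The key technical identity to establish is that $\begin{bmatrix}b_\ell & -s_\ell\end{bmatrix}$ and $\begin{bmatrix}-s_r^* & b_r^*\end{bmatrix}$ (the left and right Krein--Langer data of $s=T_W[\varepsilon]$) can be read off from the products in \eqref{Reg1}--\eqref{Reg2}; here is where the hypothesis \eqref{eq:0.10} enters, guaranteeing that $(I_q-s_{21}\varepsilon)^{-1}$ is integrable on $\Omega_0$ so that the relevant mvf's have honest nontangential boundary values and the Smirnov-class bookkeeping is legitimate. Once the factorizations of $s$ are in hand, condition (1) of Theorem~\ref{thm:0.1} — that $\begin{bmatrix}b_\ell & -s_\ell\end{bmatrix}f\in H_2^p$ for all $f\in\cK(W)$ — translates, via the description of $\cK(W)$ in terms of $\cK(\Theta)$, $\cK(\Phi)$ and the de Branges complementation machinery from Section~2, precisely into the statement that the factorization \eqref{Reg1} has no common inner divisor on the left, i.e. that it is coprime over $\Omega_+$; symmetrically, condition (2) becomes coprimeness of \eqref{Reg2}. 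Condition (3), the quadratic-form inequality involving $\Delta_s$ and $\Gamma_r$, should turn out to be automatically satisfied once (1) and (2) hold and $s\in\cS^{\ptq}_{\kappa_1+\kappa_2}$, since in that regime the relevant $\sD(s)$-norm is controlled by the $\cK(W)$-norm through the factorization; alternatively (3) is equivalent to the membership $s\in\cS^{\ptq}_{\kappa_1+\kappa_2}$ itself given (1)--(2), which is exactly the equivalence we want.

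The main obstacle I anticipate is the careful tracking of degrees and of which Blaschke--Potapov factors ($b_\ell$, $b_r$ of $s$ versus $\theta_\ell$, $\theta_r$ of $\varepsilon$, $b_1$, $b_2$ of $W$, and $\gb_\ell$, $\gb_r$ of $s_{21}$) cancel or accumulate when one clears denominators in $T_W[\varepsilon]$. One must verify that the left side of \eqref{Reg1} is genuinely the ratio $b_1\varphi_1$ times a Smirnov-class mvf with the correct pole structure, and that the "coprimeness over $\Omega_+$" of \eqref{Reg1}--\eqref{Reg2} matches the coprimeness built into the Krein--Langer factorization \eqref{KL} of $s$; a subtlety is that the total number of negative squares can drop below $\kappa_1+\kappa_2$ exactly when these factorizations fail to be coprime, so the equivalence "$s\in\cS^{\ptq}_{\kappa_1+\kappa_2}$ $\iff$ coprime" encodes a degree count that must be done on both sides. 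I would handle this by comparing $\nu_-$ of the kernel $\mathsf\Lambda^s_\omega$ computed two ways: from the Krein--Langer factorization of $s$ and from the factorization of $W$ composed with the kernel $\mathsf\Lambda^\varepsilon_\omega$ of the parameter, using the additivity of negative squares under the linear fractional transformation (which is where $\kappa'\le\kappa_1+\kappa_2$ came from in the Introduction), with equality precisely under the coprimeness condition. The remaining steps — verifying (1) and (2) of Theorem~\ref{thm:0.1} via the explicit factors, and checking (3) is implied — are then essentially bookkeeping with the reproducing kernel identities of Section~2.
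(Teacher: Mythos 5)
There is a genuine gap. Your plan is to verify conditions (1)--(3) of Theorem~\ref{thm:0.1} for $s=T_W[\varepsilon]$, but Theorem~\ref{thm:0.1} answers a different question: it \emph{presupposes} that $s\in\cS^{\ptq}_{\kappa_1+\kappa_2}$ (so that $s$ already has Kre\u{\i}n--Langer factorizations with Blaschke--Potapov factors of degree $\kappa_1+\kappa_2$) and then characterizes membership in $T_W[\cS^{\ptq}_{\kappa_2}]$. In Theorem~\ref{thm:0.2} the membership $s\in T_W[\cS^{\ptq}_{\kappa_2}]$ is automatic, and the whole issue is whether the negative index of $s$ is exactly $\kappa_1+\kappa_2$ or drops. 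Your treatment of condition (3) makes this circularity explicit: you assert it is ``automatically satisfied once (1) and (2) hold and $s\in\cS^{\ptq}_{\kappa_1+\kappa_2}$,'' or ``alternatively \ldots equivalent to the membership \ldots which is exactly the equivalence we want'' --- i.e., you assume the conclusion. The degree count that you defer to the end as ``bookkeeping'' is in fact the entire substance of the proof, and your sketch of it (comparing $\nu_-$ of $\mathsf{\Lambda}^s_\omega$ computed two ways) does not explain why \emph{coprimeness of \eqref{Reg1}--\eqref{Reg2}} is the precise condition for equality.

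The paper's argument does start, as you do, from $W=\Theta\Phi$ and $s=T_\Theta[G]=b_1Gb_2+K$ with $G=\varphi_1H_rG_r^{-1}\varphi_2=\varphi_1G_\ell^{-1}H_\ell\varphi_2$ (note your simplification $T_\Theta[G]=b_1G+K$ drops a factor of $b_2$), but then it works entirely with \emph{pole multiplicities}: since $K\in H_\infty^{\ptq}$, $s\in\cS^{\ptq}_{\kappa_1+\kappa_2}$ iff $M_\pi(b_1Gb_2,\Omega_+)=\kappa_1+\kappa_2$, and Proposition~\ref{Prop:5.1} converts coprimeness of \eqref{Reg1} and \eqref{Reg2} into the statements $M_\pi(b_1\varphi_1G_\ell^{-1},\Omega_+)=M_\pi(G_\ell^{-1},\Omega_+)$ and $M_\pi(G_r^{-1}\varphi_2b_2,\Omega_+)=M_\pi(G_r^{-1},\Omega_+)$, which are then chained together via Lemmas~\ref{lem:7.7} and~\ref{lem:5.4}. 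The indispensable input you are missing is Lemma~\ref{lem:5.3}, i.e.\ the Kre\u{\i}n--Langer generalization of Rouch\'e's theorem, which shows $M_\zeta(\gb_\ell\theta_r-\gs_\ell\varepsilon_r,\Omega_+)=M_\zeta(\theta_\ell\gb_r-\varepsilon_\ell\gs_r,\Omega_+)=\kappa_1+\kappa_2$, i.e.\ $M_\pi(G_\ell^{-1},\Omega_+)=M_\pi(G_r^{-1},\Omega_+)=\kappa_1+\kappa_2$. This is where hypothesis \eqref{eq:0.10} actually enters --- it is the integrability condition \eqref{eq:8.4} needed to get \emph{equality} in the Rouch\'e estimate $M_\zeta(\varphi+\psi,\Omega_+)\le M_\zeta(\varphi,\Omega_+)$ --- not, as you suggest, a device for ensuring ``honest nontangential boundary values'' and Smirnov-class bookkeeping. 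Without this step neither direction of the equivalence goes through.
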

The proof of this result is based on the
factorizations~\eqref{eq:0.6}-\eqref{eq:0.8} and an application of
the Kre\u{\i}n-Langer generalization of Rouche's theorem
from~\cite{KL81}. The cases when the assumption~\eqref{eq:0.10} can
be dropped are discussed.

 The paper is organized as follows. In Section
\ref{preli} the basic notions of left and right coprime
factorizations are introduced. Their connection with the
Kre\u{\i}n-Langer factorizations of generalized Schur functions is
discussed. The theory of reproducing kernel Pontryagin spaces,
associated with a generalized Schur function $s$ from~\cite{ADRS} is
reviewed and extended.  In Section~\ref{Sec3} we prove the first main
result of the paper: Theorem~\ref{thm:0.1}, which characterizes the
range of the linear fractional transformation $T_W$ associated with
the mvf $W\in \cU_{\kappa_1}(j_{pq})$.  In Section~\ref{Sec4} we
 obtain factorization
formulas for mvf's $W\in \cU_\kappa^\circ(j_{pq})$ and use them to
characterize the corresponding
reproducing kernel Pontryagin spaces $\cK(W)$.   A parametrization of the set
$T_W[{\mathcal
S}^{\ptq}_{\kappa_2}]\cap {\mathcal S}^{\ptq}_{\kappa_1+\kappa_2}$
is given in Theorem~\ref{thm:0.2} for $W\in\cU_{\kappa_1}^\circ(j_{pq})$.

\section{Preliminaries \label{preli}}

\subsection{The generalized Schur class}
Recall that a Hermitian kernel  ${\mathsf
K}_\omega(\lambda):\Omega\times\Omega\to\dC^{m\times m}$ is said to
have $\kappa $ negative squares
if for every positive integer $n$ and every choice of $\omega_j\in\Omega$
and $u_j\in\dC^m$
$(j=1,\dots,n)$ the matrix
\[
\left(\left<{\mathsf
K}_{\omega_j}(\omega_k)u_j,u_k\right>\right)_{j,k=1}^n
\]
has at most $\kappa$ negative eigenvalues and for some choice of
$\omega_1,\ldots,\omega_n\in\Omega$ and $u_1,\ldots,u_n\in\dC^m$ exactly
$\kappa$ negative
eigenvalues. In this case we write
\[
\mbox{sq}_-{\mathsf K}=\kappa.
\]

The class ${\mathcal S}^{\ptq}:={\mathcal S}_{0}^{\ptq}(\Omega_+)$ is the
usual
Schur class.
Recall that a mvf $s\in {\mathcal S}^{\ptq}$ is called inner
(resp., $*$-inner), if
$s(\mu)$ is an isometry (resp., a co-isometry) for a.e.
$\mu\in\Omega_0$, that is
\[
I_q-s(\mu)^*s(\mu)=0 \quad (\mbox{resp., } I_p-s(\mu)s(\mu)^*=0), \quad \mu\in\Omega_0\,
(a.e.).
\]
Let ${\mathcal S}_{in}^{p\times q}$ (resp., ${\mathcal S}_{*in}^{p\times q}$)
denote the set of all
inner (resp., $*$-inner) mvf's $s\in {\mathcal S}^{\ptq}$. An example of
an inner square mvf is provided by the finite {\it Blaschke--Potapov product},
that in the case of the unit
disc ($\Omega_+=\dD$) is given by
\begin{equation}\label{BPprod}
b(\lam)=\prod_{j=1}^{\kappa}b_j(\lam),\quad
b_j(\lam)=I-P_j+\frac{\lam-\alpha_j}{1-\bar\alpha_j \lam}P_j,
\end{equation}
where $\alpha_j\in\dD$ and  $P_j$ is an orthogonal projection in $\dC^p$ for
$j=1,\dots ,n$. The factor $b_j$ is called {\it simple} if $P_j$
has rank one. The representation of $b(\lam)$ as a
product of simple Blaschke-Potapov factors is not unique. However, the number
$\kappa$ of such simple factors is
the same for every representation~\eqref{BPprod}. It is called the
{\it degree} of the Blaschke--Potapov product
$b(\lam)$~\cite{Pot}.

A theorem of  Kre\u{\i}n and Langer~\cite{KL} guarantees that
every generalized Schur
function $s\in {\mathcal S}_{\kappa}^{\ptq}(\Omega_+)$ admits a factorization
of the form
\begin{equation}\label{KLleft}
s(\lam)=b_{\ell}(\lam)^{-1}s_{\ell}(\lam) \quad \text{for}\
\lam\in\mathfrak{h}_s^+,
\end{equation}
where $b_{\ell}$ is a Blaschke--Potapov product of degree $\kappa$,
$s_{\ell}$ is in the Schur class ${\mathcal S}^{\ptq}(\Omega_+)$ and
\begin{equation}\label{KLcanon}
\ker s_{\ell}(\lam)^*\cap \ker
b_{\ell}(\lam)^*=\{0\}\quad\text{for}\ \lam\in\Omega_+.
\end{equation}
The representation~\eqref{KLleft} is called a {\it left
Kre\u{\i}n--Langer factorization}. The constraint~\eqref{KLcanon}
can be expressed in the equivalent form
\begin{equation}\label{KLcanon2}
\rank \left[\begin{array}{cc}
 b_{\ell}(\lam) & s_{\ell}(\lam)
\end{array}\right]
=p\quad (\lam\in\Omega_+).
\end{equation}
If $\alpha_j\in\dD$ $(j=1,\dots ,n)$ are all the zeros of $b_{\ell}$
in $\Omega_+$, then the noncancellation condition~\eqref{KLcanon}
ensures that ${\mathfrak
h}_s^+=\Omega_+\setminus\{\alpha_1,\dots,\alpha_n\}$. The left
Kre\u{\i}n--Langer factorization~\eqref{KLleft} is essentially
unique in a sense that $b_{\ell}$ is defined uniquely up to a left
unitary factor $V\in\dC^{\ptp}$.

Similarly, every generalized Schur
function $s\in {\mathcal S}_{\kappa}^{\ptq}(\Omega_+)$ admits a {\it right
Kre\u{\i}n-Langer factorization}
\begin{equation}\label{KLright}
s(\lam)=s_r(\lam)b_r(\lam)^{-1}\quad \text{for}\ \lam\in{\mathfrak h}_s^+,
\end{equation}
where $b_r$ is a Blaschke--Potapov product of degree $\kappa$,
$s_r\in {\mathcal S}^{\ptq}(\Omega_+)$ and
\begin{equation}\label{KRcanon}
\ker s_r(\lam)\cap \ker b_r(\lam)=\{0\}\quad\text{for}\  \lam\in\Omega_+.
\end{equation}
This condition can be rewritten in the equivalent form
\begin{equation}\label{KRcanon2}
\rank \left[\begin{array}{cc}
 b_r(\lam)^* & s_r(\lam)^*
\end{array}\right]  =q\quad (\lam\in\Omega_+).
\end{equation}
Under assumption~\eqref{KRcanon} the mvf $b_r$ is uniquely
defined up to a right unitary factor $V^\prime\in\CC^{\qtq}$.

\begin{lem}\label{Corona}
A mvf $s_\ell\in\cS^{p\times q}$ and a finite Blaschke-Potapov
product $b_\ell\in {\mathcal S}_{in}^{p\times p}$ meet the rank condition (\ref{KLcanon2}),
if and only if there exists a pair of mvf's $c_\ell\in
H^{p\times p}_\infty$ and $d_\ell\in H^{q\times p}_\infty$ such that
\begin{equation}\label{CorFormula}
b_\ell(\lam)c_\ell(\lam)+s_\ell(\lam)d_\ell(\lam)=I_p\quad \text{for}\ \lam\in
\Omega_+.
\end{equation}
\end{lem}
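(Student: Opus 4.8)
The statement to prove is Lemma~\ref{Corona}: the rank condition \eqref{KLcanon2} on the pair $\{b_\ell, s_\ell\}$ (with $b_\ell$ a finite Blaschke--Potapov product and $s_\ell\in\cS^{p\times q}$) is equivalent to the existence of $H_\infty$ mvf's $c_\ell\in H^{p\times p}_\infty$, $d_\ell\in H^{q\times p}_\infty$ solving the Bezout-type identity \eqref{CorFormula}.

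The plan is as follows. The implication ``\eqref{CorFormula} $\Rightarrow$ \eqref{KLcanon2}'' is the easy direction: if $b_\ell(\lambda)c_\ell(\lambda)+s_\ell(\lambda)d_\ell(\lambda)=I_p$ for $\lambda\in\Omega_+$, then for each fixed $\lambda$ the $p\times(p+q)$ matrix $\begin{bmatrix} b_\ell(\lambda) & s_\ell(\lambda)\end{bmatrix}$ has a right inverse $\begin{bmatrix} c_\ell(\lambda)\\ d_\ell(\lambda)\end{bmatrix}$, hence is surjective, so its rank is $p$; this is exactly \eqref{KLcanon2}. No analytic input is needed here beyond evaluation at points of $\Omega_+$.

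The substantive direction is ``\eqref{KLcanon2} $\Rightarrow$ \eqref{CorFormula}.'' This is a corona-type theorem, but for a \emph{finite} Blaschke--Potapov product $b_\ell$ rather than a general inner function, so the deep analytic corona theorem of Carleson is not required; instead one can argue more directly. First I would reduce to the scalar/finite setting: since $b_\ell$ has degree $\kappa$, the space $\cH(b_\ell)=H_2^p\ominus b_\ell H_2^p$ is finite-dimensional, and the zeros $\alpha_1,\dots,\alpha_n$ of $\det b_\ell$ in $\Omega_+$ form a finite set. The rank condition \eqref{KLcanon2} says precisely that at each such $\alpha_j$ (where $b_\ell$ drops rank), the columns of $s_\ell(\alpha_j)$ make up for the rank deficiency, i.e. $\ker b_\ell(\alpha_j)^*\cap\ker s_\ell(\alpha_j)^*=\{0\}$. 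One then builds $c_\ell$ and $d_\ell$ by a two-step construction: (a) use a Hermite--Fej\'er / tangential interpolation argument to produce a polynomial (or rational $H_\infty$) mvf $d_\ell$ of small degree matching the required residual behavior of $I_p - b_\ell c_\ell^{(0)}$ modulo $b_\ell$ for a convenient first guess $c_\ell^{(0)}$; equivalently, choose $d_\ell\in H_\infty^{q\times p}$ so that the mvf $I_p - s_\ell d_\ell$ vanishes to the appropriate order along the (left) null directions of $b_\ell$ at each $\alpha_j$ — this is possible exactly because of \eqref{KLcanon2}. (b) Then $I_p - s_\ell d_\ell$ is divisible on the left by $b_\ell$ in $H_\infty^{p\times p}$, i.e. $b_\ell^{-1}(I_p - s_\ell d_\ell)\in H_\infty^{p\times p}$, and one sets $c_\ell := b_\ell^{-1}(I_p - s_\ell d_\ell)$, which gives \eqref{CorFormula} by construction. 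The divisibility step uses that $b_\ell$ is inner and that a bounded holomorphic mvf which, together with all the data forced by the local interpolation conditions, lies in the left ideal generated by $b_\ell$; concretely this is the statement that $F\in H_\infty^{p\times p}$ with $F(\lambda)u=0$ for every $\lambda\in\Omega_+$ and every $u\in\ker b_\ell(\lambda)^*$ of the correct multiplicity implies $b_\ell^{-1}F\in H_\infty$, which follows from the finite-rank structure of $b_\ell$ (write $b_\ell$ as a product of simple factors and peel them off one at a time, each peeling being an elementary scalar Blaschke division after a unitary change of frame).

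The main obstacle is bookkeeping the higher-order (confluent) interpolation conditions correctly when $b_\ell$ has repeated zeros or when the projections $P_j$ in the simple factors are not in ``general position'': one must match not just values but also the appropriate jets of $I_p - s_\ell d_\ell$ along the null chains of $b_\ell$, and verify that \eqref{KLcanon2} — or rather its pointwise consequence at the zeros — is exactly the solvability condition for that confluent tangential interpolation problem. I expect this to be a known fact (it is essentially the Leech-type / Bezout corona statement for finite Blaschke--Potapov products, and is surely implicit in the references \cite{BGR}, \cite{ArovD08}), so in the write-up I would either cite it or give the short inductive peeling argument sketched above; the rest is routine.
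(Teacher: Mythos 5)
Your easy direction is fine (and is the part the paper leaves implicit). For the substantive direction you take a genuinely different route from the paper. The paper follows Fuhrmann's reduction to the \emph{scalar} corona theorem: it forms the $p\times(p+q)$ matrix $\begin{bmatrix}b_\ell & s_\ell\end{bmatrix}$ and considers all of its $p\times p$ minors $\alpha_{i_1\ldots i_p}$; one of these is $\det b_\ell$, a finite scalar Blaschke product, hence bounded below by $\frac{1}{2}$ on an annulus $r\le|\lam|<1$, while on the compact core the rank condition \eqref{KLcanon2} and continuity bound $\sum|\alpha_{i_1\ldots i_p}|$ away from zero; the scalar corona theorem then produces $\beta_{i_1\ldots i_p}\in H_\infty$ with $\sum\alpha_{i_1\ldots i_p}\beta_{i_1\ldots i_p}=1$, and cofactor expansion of each minor along its rows assembles $c_\ell$ and $d_\ell$. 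Your route --- solve a confluent tangential interpolation problem for $d_\ell$ at the finitely many zeros of $\det b_\ell$ so that $I_p-s_\ell d_\ell$ is left-divisible by $b_\ell$, then set $c_\ell:=b_\ell^{-1}(I_p-s_\ell d_\ell)$ --- avoids the corona theorem entirely (a fair point: Carleson's theorem is overkill when one of the data is a finite Blaschke product), at the price of doing the local analysis by hand.

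The gap is that the heart of your argument, the solvability of the interpolation problem in step (a) under the hypothesis \eqref{KLcanon2}, is asserted rather than proved; the ``peeling'' argument you sketch addresses only step (b) (division of a suitably vanishing $H_\infty^{\ptp}$ mvf by $b_\ell$), which is indeed routine, but it does not produce $d_\ell$. What must be shown is that the \emph{pointwise} condition \eqref{KLcanon2} at a zero $\alpha_j$ controls the full jet problem there: one needs that the columns of $b_\ell$ and $s_\ell$ generate all of $\cO^p$ as a module over the local ring $\cO=\cO_{\alpha_j}$ of germs of holomorphic functions, so that $I_p=b_\ell C+s_\ell D$ with germ coefficients, whose truncated Taylor data then serve as the interpolation conditions for $d_\ell$. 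This upgrade from a rank condition at a point to a statement about null chains is exactly the nontrivial content of the lemma, and ``I expect this to be a known fact'' does not close it. It is true, and can be closed in a few lines: $\cO$ is a discrete valuation ring, so by Nakayama's lemma (or the local Smith form) the submodule generated by the columns of $b_\ell$ and $s_\ell$ equals $\cO^p$ as soon as it surjects modulo the maximal ideal, i.e.\ as soon as $\rank\begin{bmatrix}b_\ell(\alpha_j) & s_\ell(\alpha_j)\end{bmatrix}=p$; truncating $D$ at the order of $\det b_\ell$ at $\alpha_j$ and patching the finitely many local solutions by Hermite interpolation (rational and bounded on $\Omega_+$ in the half-plane case) then yields $d_\ell$, after which your step (b) gives $c_\ell\in H_\infty^{\ptp}$ since $b_\ell^{-1}$ is bounded off a compact neighbourhood of its poles. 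With that paragraph supplied your proof is complete and, unlike the paper's, self-contained modulo only elementary local algebra.
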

\begin{proof} This is a matrix version of the Carleson Corona theorem. The
proof is adapted from Fuhrmann \cite{Fuhr68} who treated the
square block case. We sketch the details for the convenience of
the reader.

Let
$$
a(\lam)=[a_1(\lam)\ \cdots \ a_m(\lam)]=[b_\ell(\lam)\ s_\ell(\lam)]
$$
be the $p\times m$ matrix with columns $a_i(\lam),\ i=1,\ldots,m$, $m=p+q$,
and let
$$
\alpha_{i_1\ldots i_p}(\lam)=\det[a_{i_1}(\lam)\ \cdots \
a_{i_p}(\lam)]
$$
for every choice of positive integers $i_1,\ldots,i_p$ that meet
the constraint $1\leq i_1<i_2<\cdots <i_p\leq m$. Then, since
$b_\ell(\lam)$ is a finite Blaschke product, there exists a
positive number $r<1$ such that
$$
|\det b_\ell(\lam)| = |\det[a_1(\lam)\ \cdots \ a_p(\lam)]|\geq
\frac{1}{2}
$$
for $r\leq |\lam|\leq 1$. Thus, it is readily checked that there
exists a $\delta>0$ such that
$$
\sum_{i_1\ldots i_p}|\alpha_{i_1\ldots i_p}(\lam)|\geqslant
\delta>0
$$
for every point $\lam\in \Omega_+$. Therefore, by the scalar Corona
theorem \cite{Duren}, there exists a  set of functions
$\beta_{i_1\ldots i_p}(\lam)$ in $H_\infty$ such that
$$
\sum_{i_1\ldots i_p}\alpha_{i_1\ldots i_p}(\lam) \beta_{i_1\ldots
i_p}(\lam)=1.
$$
Now expand each of the determinants $ \alpha_{i_1\ldots i_p}$
along its $i^{th}$ row and express the resulting equality as
$$
a_{i1}(\lam)b_{1i}(\lam)+\dots  +a_{im}(\lam)b_{mi}(\lam)=1
$$
and observe that
$$
a_{k1}(\lam)b_{1i}(\lam)+\dots + a_{km}(\lam)b_{mi}(\lam)=0
$$
if $k\not= i$, because this expression may be obtained by
replacing the $i^{th}$ row in each of the determinants
$\alpha_{i_1\ldots i_p}$ by its $k^{th}$ row. The proof is
completed by setting
$$
\begin{bmatrix}
c_\ell(\lam)\\ \skthr d_\ell(\lam)\end{bmatrix}= \left[
\begin{array}{lcl}
  b_{11}(\lam) & \cdots & b_{1p}(\lam) \\
 \vdots &  & \vdots \\
 b_{m1}(\lam) &\cdots  &b_{mp}(\lam)
\end{array}
\right].
$$
\end{proof}

A dual statement for Lemma~\ref{Corona} is obtained by applying
Lemma~\ref{Corona} to transposed vvf's.
\begin{lem}\label{Corona1}
A mvf $s_r\in\cS^{p\times q}$ and a finite Blaschke-Potapov
product $b_r\in {\mathcal S}_{in}^{q\times q}$ meet the rank condition (\ref{KRcanon2}),
if and only if there exists a pair of mvf's $c_r\in
H^{q\times q}_\infty$ and $d_r\in H^{q\times p}_\infty$ such that
\begin{equation}\label{CorFormula1}
c_r(\lam)b_r(\lam)+d_r(\lam)s_r(\lam)=I_q\quad \text{for}\ \lam\in
\Omega_+.
\end{equation}
\end{lem}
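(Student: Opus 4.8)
The plan is to deduce Lemma~\ref{Corona1} directly from Lemma~\ref{Corona} by a transposition argument, exactly as the remark preceding the statement suggests. First I would observe that the rank condition \eqref{KRcanon2} for the pair $\{b_r,s_r\}$ is precisely the condition that the $q\times m$ matrix $\begin{bmatrix} b_r(\lam)^* & s_r(\lam)^*\end{bmatrix}$ has full row rank $q$ on $\Omega_+$; equivalently, taking transposes, the $m\times q$ matrix $\begin{bmatrix} b_r(\lam)^\top \\ s_r(\lam)^\top\end{bmatrix}$ has full column rank $q$, and hence the $q\times m$ matrix $\begin{bmatrix} b_r^\top & s_r^\top\end{bmatrix}$ (reading the pair the other way) satisfies the left-type rank condition \eqref{KLcanon2}. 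Here one uses that $b_r^\top$ is again a finite Blaschke--Potapov product of the same degree in $\cS_{in}^{q\times q}$, since transposition maps inner mvf's to inner mvf's and $(b_j^\top)$ is again of the simple Blaschke--Potapov form with $P_j$ replaced by $P_j^\top$.

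Next I would apply Lemma~\ref{Corona}, with $p$ replaced by $q$ and $q$ by $p$, to the pair $s_\ell:=s_r^\top\in\cS^{q\times p}$ and $b_\ell:=b_r^\top\in\cS_{in}^{q\times q}$. Lemma~\ref{Corona} then furnishes mvf's $\h{c}\in H_\infty^{q\times q}$ and $\h{d}\in H_\infty^{p\times q}$ with
\[
b_r(\lam)^\top\,\h{c}(\lam)+s_r(\lam)^\top\,\h{d}(\lam)=I_q\qquad(\lam\in\Omega_+).
\]
Taking transposes of this identity gives
\[
\h{c}(\lam)^\top b_r(\lam)+\h{d}(\lam)^\top s_r(\lam)=I_q\qquad(\lam\in\Omega_+),
\]
so that setting $c_r:=\h{c}^\top\in H_\infty^{q\times q}$ and $d_r:=\h{d}^\top\in H_\infty^{q\times p}$ yields exactly \eqref{CorFormula1}. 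For the converse direction, if a Bezout identity \eqref{CorFormula1} holds, then for each $\lam\in\Omega_+$ the matrix $\begin{bmatrix} b_r(\lam)^* & s_r(\lam)^*\end{bmatrix}$ has a right inverse $\begin{bmatrix} c_r(\lam)^*\\ d_r(\lam)^*\end{bmatrix}$, hence has full row rank $q$, which is \eqref{KRcanon2}; alternatively one simply transposes \eqref{CorFormula1} and invokes the converse half of Lemma~\ref{Corona}.

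There is essentially no hard step here: the content is entirely in Lemma~\ref{Corona}, and the only things to be careful about are bookkeeping — keeping track of which block sizes go where under the $p\leftrightarrow q$ swap, and checking that transposition indeed preserves the Blaschke--Potapov structure and the class $H_\infty$ (both immediate, since $f\mapsto f^\top$ preserves holomorphy, boundedness, and maps $I-P_j+\tfrac{\lam-\alpha_j}{1-\bar\alpha_j\lam}P_j$ to the same expression with $P_j^\top$). If one preferred to avoid transposes altogether, the mild obstacle would be re-running the Corona argument of Lemma~\ref{Corona} with rows and columns interchanged — expanding the $q\times q$ minors of $\begin{bmatrix} b_r & s_r\end{bmatrix}^*$ along columns rather than rows — but the transposition shortcut makes even that unnecessary.
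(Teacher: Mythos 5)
Your proposal is correct and follows exactly the route the paper indicates: the paper's entire proof of Lemma~\ref{Corona1} is the one-line remark that it is obtained by applying Lemma~\ref{Corona} to transposed vvf's, and you have simply carried out that transposition argument in detail (correctly checking that $f\mapsto f^\top$ preserves the Blaschke--Potapov structure, the class $H_\infty$, and converts the rank condition \eqref{KRcanon2} into \eqref{KLcanon2} with $p$ and $q$ interchanged). No issues.
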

The factorization~\eqref{KLleft} is called a {\it left coprime} factorization of $s$ if $s_\ell$ and
$b_\ell$ satisfy~\eqref{CorFormula}.
Similarly,  the factorization~\eqref{KLright} is called a {\it right coprime} factorization of $s$
if $s_r$ and $b_r$ satisfy~\eqref{CorFormula1}.

Every vvf $h(\lambda)$ from $H_2^p$ $({H_2^p}^\perp)$  has
nontangential limits a.e. on the boundary $\Omega_0$.  These nontangential
limits identify the vvf $h$
uniquely. In what follows we often identify a vvf $h\in H_2^p$
$({H_2^p}^\perp)$ with its boundary values $h(\mu)$.

Let
$P_+$ and $P_-$ denote the orthogonal projections from $L_2^k$ onto $H_2^k$
and ${H_2^k}^\perp$, respectively, where $k$ is a positive integer that will
be understood from the context. The Hilbert spaces
\begin{equation}
\label{eq:feb10a8}
\cH(b_r)=H_2^q\ominus b_rH_2^q, \quad \cH_*(b_\ell):=(H_2^p)^\perp\ominus
b_\ell^*(H_2^p)^\perp
\end{equation}
and the operators
\begin{equation}
\label{eq:feb10b8}
X_r: h\in\cH(b_r)\mapsto P_-sh\quad\text{and}\quad
X_\ell: h\in\cH_*(b_\ell)
\mapsto P_+s^*h
\end{equation}
based on $s\in {\mathcal S}_{\kappa}^{\ptq}$ will play an important role.

\begin{lem}\label{Ker*}{\rm(cf.~\cite{Der01})}
If $s\in {\mathcal S}_{\kappa}^{\ptq}$ and its two factorizations are given by
(\ref{KLleft}) and (\ref{KLright}), then:
\begin{enumerate}
\item[(1)]  The operator $X_\ell$ maps
$\cH_*(b_\ell)$ injectively onto $\cH(b_r)$. \vskip 6pt
\item[(2)]
 The operator $X_r$ maps $\cH(b_r)$ injectively onto $\cH_*(b_{\ell})$.
\vskip 6pt
\item[(3)] $X_\ell=X_r^*$.
\end{enumerate}
\end{lem}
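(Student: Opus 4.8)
The plan is to work throughout on the boundary $\Omega_0$ and to exploit the Krein--Langer factorizations $s = b_\ell^{-1}s_\ell = s_r b_r^{-1}$ together with the coprimeness relations from Lemmas~\ref{Corona} and \ref{Corona1}. First I would record the basic algebraic identity that makes everything run: multiplying $b_\ell^{-1}s_\ell = s_r b_r^{-1}$ through by $b_\ell$ on the left and $b_r$ on the right gives $s_\ell b_r = b_\ell s_r$ on $\Omega_0$ (both sides are bounded there since $b_\ell, b_r$ are inner and $s_\ell, s_r \in \cS$). From this one gets the companion operator identity on boundary values: for $h \in \cH(b_r)$,
\[
s h = b_\ell^{-1}s_\ell h, \qquad\text{hence}\qquad b_\ell (sh) = s_\ell h .
\]

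Next I would prove item (2), that $X_r \colon \cH(b_r) \to \cH_*(b_\ell)$ is injective with range $\cH_*(b_\ell)$. For the mapping property: given $h \in \cH(b_r)$, write $sh = P_+ sh + P_- sh$; I need $P_- sh \in \cH_*(b_\ell) = (H_2^p)^\perp \ominus b_\ell^*(H_2^p)^\perp$, i.e.\ $P_- sh \perp b_\ell^* (H_2^p)^\perp$, equivalently $b_\ell P_- sh \in H_2^p$. Since $b_\ell sh = s_\ell h$ with $s_\ell \in \cS^{p\times q}$ and $h \in H_2^q$, we have $b_\ell sh \in H_2^p$; and $b_\ell P_+ sh \in H_2^p$ automatically because $b_\ell$ is inner, so $b_\ell P_- sh = b_\ell sh - b_\ell P_+ sh \in H_2^p$, as required. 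For injectivity: if $P_- sh = 0$ then $sh = P_+ sh \in H_2^q$, so $b_\ell s h = s_\ell h$ and also $s_r b_r^{-1} h \cdot b_r = s_r \cdot(b_r^{-1}h\, b_r)$... more cleanly, $sh \in H_2^q$ means $s_r b_r^{-1} h \in H_2^q$; writing $h = P_+ h$ and using $h \in \cH(b_r) = H_2^q \ominus b_r H_2^q$, the coprimeness condition \eqref{KRcanon2} forces $h = 0$. Concretely: from $c_r b_r + d_r s_r = I_q$ one gets $h = c_r b_r h + d_r s_r h$; here $b_r h \in b_r H_2^q$ hmm — actually the slick route is that $b_r^{-1}h \in H_2^q$ would follow and then $h \in b_r H_2^q \cap \cH(b_r) = \{0\}$. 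I expect the delicate point to be organizing these containments so that no spurious poles at the zeros of $b_r$, $b_\ell$ are introduced; this is exactly where the noncancellation/coprimeness conditions \eqref{KLcanon2}, \eqref{KRcanon2} must be invoked, and it is the main obstacle of the argument. Surjectivity onto $\cH_*(b_\ell)$ follows by reversing the roles: given $g \in \cH_*(b_\ell)$, set $h := b_r P_+ (s^* b_\ell g)$ or argue by a dimension count — since $\dim \cH(b_r) = \dim \cH_*(b_\ell) = \kappa$, an injective linear map between them of equal finite dimension is automatically onto, which disposes of surjectivity with no extra work.

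For item (1), that $X_\ell \colon \cH_*(b_\ell) \to \cH(b_r)$ is injective onto $\cH(b_r)$, I would run the mirror-image argument using the right factorization: for $g \in \cH_*(b_\ell)$, $s^* g = s_r^{\#}{}^{-1}\cdots$ — more precisely use $s^* = (b_r^*)^{-1}s_r^*$ on $\Omega_0$ (from $s = s_r b_r^{-1}$, so $s^* = b_r^{-*}s_r^*$), whence $b_r^* (s^* g) = s_r^* g$; since $g \in (H_2^p)^\perp$ and $s_r^* \in \cS^{q\times p}$ acting on the conjugate side, $s_r^* g \in (H_2^q)^\perp$, and then $P_+ s^* g \in \cH(b_r)$ by the same bookkeeping as before with $(H_2)^\perp$ replaced appropriately. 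Injectivity and surjectivity again come from coprimeness \eqref{KLcanon2} plus the equal-dimension count. Finally, item (3), $X_\ell = X_r^*$: for $h \in \cH(b_r)$ and $g \in \cH_*(b_\ell)$ I would compute both sides of
\[
\langle X_r h, g\rangle_{st} = \langle P_- sh,\, g\rangle_{st}
= \langle sh,\, g\rangle_{st}
= \langle h,\, s^* g\rangle_{st}
= \langle h,\, P_+ s^* g\rangle_{st}
= \langle h,\, X_\ell g\rangle_{st},
\]
where the second and fourth equalities use that $g \in (H_2^p)^\perp$ while $P_+ sh \in H_2^q$ and $h \in H_2^q$ while $P_- s^* g \in (H_2^q)^\perp$, so the discarded pieces are orthogonal. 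This shows $X_r^* = X_\ell$ on the relevant spaces and completes the proof.
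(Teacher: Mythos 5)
Your proposal is correct and follows essentially the same route as the paper: the mapping properties come from the Kre\u{\i}n--Langer factorizations, injectivity from the coprimeness (Corona) identities of Lemmas~\ref{Corona} and \ref{Corona1}, surjectivity from the equal finite dimensions of $\cH(b_r)$ and $\cH_*(b_\ell)$, and $X_\ell=X_r^*$ by exactly the orthogonality bookkeeping you indicate. The only substantive variation is in the injectivity step, where you use the Bezout identity $c_rb_r+d_rs_r=I_q$ to get $b_r^{-1}h=c_rh+d_r(sh)\in H_2^q$ and hence $h\in b_rH_2^q\cap\cH(b_r)=\{0\}$, whereas the paper evaluates $b_\ell h$ pointwise against the kernels $\xi/\rho_\omega$; both hinge on the same coprimeness lemma (note also the small typo: for $h\in\cH(b_r)$ one has $sh\in L_2^p$, so $P_-sh=0$ gives $sh\in H_2^p$, not $H_2^q$).
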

\begin{proof} If $h\in \cH_*(b_{\ell})$, $h_+\in H_2^q$  and
$f=b_rh_+$,
it  is readily checked that
\[
\langle P_+s^*h,f\rangle_{st} =\langle
s^*h,b_rh_+\rangle_{st}
=\langle h,s_rh_+\rangle_{st}=0\quad \forall\  h_+ \in
H_2^q,
\]
i.e., $X_\ell$ maps $\cH_*(b_\ell)$ into $\cH(b_r)$.
Therefore, since $\cH_*(b_\ell)$ and $\cH(b_r)$ are finite
dimensional spaces of the same dimension, and
\[
\dim\cH_*(b_\ell)=\dim\ker X_\ell+\dim\mbox{range } X_\ell,
\]
it suffices to show that $\textup{ker }X_\ell=\{0\}$. But, if $h\in
\cH_*(b_\ell)$ and  $P_+s^* h=0$, then $b_\ell h\in H_2^p$ and, in
view of Lemma~\ref{Corona},
\begin{eqnarray*}
  \xi^* (b_\ell h)(\omega) &=&\langle b_\ell h,\frac{\xi}{\rho_\omega}\rangle_{st}
   = \langle b_\ell h,(b_\ell c+s_\ell d)\frac{\xi}{\rho_\omega}\rangle _{st}\\
  & =& \langle b_\ell h,s_\ell d\frac{\xi}{\rho_\omega}\rangle _{st}
  =\langle P_+s^* h, d\frac{\xi}{\rho_\omega}\rangle_{st}=0
\end{eqnarray*}
for every choice of $\omega\in \Omega_+$, and $\xi\in \dC^p$. Since
$b_\ell(\omega)\not\equiv 0$, this implies that $h(\omega)\equiv 0$.

 Statement (ii) can be obtained by similar calculations, and (iii) is easy.
 \end{proof}


\begin{definition}\label{GammaS}  Let
    \begin{equation}\label{GammaS2}
    \Gamma_{\ell} : f\in L_2^q \to  X_{\ell}^{-1}P_{\cH(b_r)}f\in\cH_*(b_{\ell})\quad
\text{and}\quad
    \Gamma_r: g\in L_2^p\to X_r^{-1}P_{\cH_*(b_{\ell})}g\in \cH(b_r),
\end{equation}
where $X_{\ell}$ and $X_r$ are defined in formula
(\ref{eq:feb10b8}).
\end{definition}
It is readily checked that
\begin{equation}\label{GfGg}
 P_+s^*\Gamma_{\ell} f=P_{\cH(b_r)}f\quad\textrm{and}\quad
P_-s\Gamma_rg=P_{\cH_*(b_{\ell})}g,
 \quad f\in L_2^q,\,g\in L_2^p.
\end{equation}
\begin{remark}\label{rem:Ker*2}
If $f_1\in\cH_*(b_{\ell})$ and $f_2\in\cH(b_r)$, then, in view of~ Lemma~\ref{Ker*},
\[
P_+s^*f_1=P_{\cH(b_r)}f\Longleftrightarrow f_1=\Gamma_{\ell} f,
\]
and
\[
P_-sf_2=P_{\cH_*(b_{\ell})}f \Longleftrightarrow  f_2=\Gamma_rf.
\]
\end{remark}

    \begin{lem}\label{Ker*2}
The operators  $\Gamma_{\ell}$ and $\Gamma_r$ satisfy the
equalities:
\begin{enumerate}
\item[\rm(1)]
$\Gamma_{\ell}^*=\Gamma_r$; \vskip 6pt
\item[\rm(2)]
  $\langle
(\overline{\psi}\Gamma_{\ell}-\Gamma_{\ell}\overline{\psi})f,
g\rangle_{st}=0$ for $f\in\cH(b_r)$, $g\in\cH_*(b_{\ell})$ and
$\psi$ a scalar inner function.
\end{enumerate}
\end{lem}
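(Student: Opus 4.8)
The plan is to prove the two assertions in turn, in each case reducing to the defining relations of~\eqref{GfGg}, namely $P_+s^*\Gamma_\ell f=P_{\cH(b_r)}f$ and $P_-s\Gamma_rg=P_{\cH_*(b_\ell)}g$, together with the orthogonal decompositions $H_2^q=\cH(b_r)\oplus b_rH_2^q$ and $(H_2^p)^\perp=\cH_*(b_\ell)\oplus b_\ell^*(H_2^p)^\perp$ and the pointwise identities $b_\ell s=s_\ell$, $sb_r=s_r$ a.e.\ on $\Omega_0$ (valid since $b_\ell,b_r$ are square and inner). I would first record the mapping properties that the cross terms will use: since $\psi\in H_\infty$, multiplication by the scalar $\overline{\psi}$ carries $(H_2^p)^\perp$ into $(H_2^p)^\perp$ and $(H_2^q)^\perp$ into $(H_2^q)^\perp$; and $P_+s^*h\in\cH(b_r)$ for every $h\in(H_2^p)^\perp$, which is a one-line check of orthogonality to $b_rH_2^q$ using $sb_r=s_r$.

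To prove (1), fix $f\in L_2^q$ and $g\in L_2^p$. Since $\Gamma_\ell f\in\cH_*(b_\ell)\subseteq(H_2^p)^\perp$ and $\Gamma_rg\in\cH(b_r)\subseteq H_2^q$, I would write
\begin{align*}
\langle\Gamma_\ell f,g\rangle_{st}
&=\langle\Gamma_\ell f,P_{\cH_*(b_\ell)}g\rangle_{st}
=\langle\Gamma_\ell f,P_-s\,\Gamma_rg\rangle_{st}
=\langle\Gamma_\ell f,s\,\Gamma_rg\rangle_{st}\\
&=\langle P_+s^*\Gamma_\ell f,\Gamma_rg\rangle_{st}
=\langle P_{\cH(b_r)}f,\Gamma_rg\rangle_{st}
=\langle f,\Gamma_rg\rangle_{st},
\end{align*}
using~\eqref{GfGg} for both the second and the fifth equality. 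As $f$ and $g$ are arbitrary this says $\Gamma_\ell^*=\Gamma_r$.

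The substantive step is (2). Fix $f\in\cH(b_r)$ and a scalar inner function $\psi$ and set $a:=\Gamma_\ell f\in\cH_*(b_\ell)$. It suffices to prove $P_{\cH_*(b_\ell)}(\overline{\psi}a)=\Gamma_\ell(\overline{\psi}f)$, for then pairing $(\overline{\psi}\Gamma_\ell-\Gamma_\ell\overline{\psi})f=\overline{\psi}a-\Gamma_\ell(\overline{\psi}f)$ against any $g\in\cH_*(b_\ell)$ gives the asserted vanishing. By the recorded mapping property $\overline{\psi}a\in(H_2^p)^\perp$, so decompose $\overline{\psi}a=b+b_\ell^*n$ with $b:=P_{\cH_*(b_\ell)}(\overline{\psi}a)$ and $n\in(H_2^p)^\perp$. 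Now I would evaluate $s^*(\overline{\psi}a)$ two ways. Since $P_+s^*a=P_{\cH(b_r)}f=f$ by~\eqref{GfGg}, we have $s^*a=f+P_-s^*a$, hence $s^*(\overline{\psi}a)=\overline{\psi}f+\overline{\psi}P_-s^*a$ with $\overline{\psi}P_-s^*a\in(H_2^q)^\perp$, so $P_+s^*(\overline{\psi}a)=P_+(\overline{\psi}f)$. On the other hand $s^*(\overline{\psi}a)=s^*b+s^*b_\ell^*n=s^*b+s_\ell^*n$ with $s_\ell^*n\in(H_2^q)^\perp$, so $P_+s^*(\overline{\psi}a)=P_+s^*b$. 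Equating the two expressions, $P_+s^*b=P_+(\overline{\psi}f)$; since $b\in(H_2^p)^\perp$ the left-hand side lies in $\cH(b_r)$, whence $P_+s^*b=P_{\cH(b_r)}(\overline{\psi}f)$. Because $b\in\cH_*(b_\ell)$, the uniqueness built into Remark~\ref{rem:Ker*2} (i.e.\ injectivity of $X_\ell$ on $\cH_*(b_\ell)$) forces $b=\Gamma_\ell(\overline{\psi}f)$, which is the required identity.

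I expect the main obstacle to be the bookkeeping in (2): one has to keep careful track of the Hardy-type subspace in which each summand lives, so that the cross terms $\overline{\psi}P_-s^*a$ and $s_\ell^*n$ are killed by $P_+$. The two structural inputs that make this happen are that $\overline{\psi}$ preserves $(H_2)^\perp$ (because $\psi$ is a bounded analytic function) and the factorizations $b_\ell s=s_\ell$, $sb_r=s_r$; granting those, part (1) is purely formal and part (2) reduces to the short two-way computation above followed by the uniqueness statement in Remark~\ref{rem:Ker*2}.
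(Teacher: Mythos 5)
Your proof is correct. Part (1) is essentially the paper's argument: the paper invokes $X_\ell^*=X_r$ directly, while you unwind the same adjoint relation at the level of boundary inner products via \eqref{GfGg}; the content is identical. For part (2) you take a genuinely different route. The paper's proof is a two-line substitution: it writes $g=P_-s\Gamma_rg$ and (using part (1)) $\langle \Gamma_\ell\overline{\psi}f,g\rangle=\langle P_+s^*\Gamma_\ell f,\psi\Gamma_rg\rangle$, drops the projections by support considerations, and concludes from the fact that the scalar $\overline{\psi}$ commutes with multiplication by $s$. You instead prove the equivalent operator-level identity $P_{\cH_*(b_\ell)}\bigl(\overline{\psi}\,\Gamma_\ell f\bigr)=\Gamma_\ell(\overline{\psi}f)$ by computing $P_+s^*(\overline{\psi}\Gamma_\ell f)$ in two ways -- once through $\overline{\psi}s^*\Gamma_\ell f$ and once through the decomposition $(H_2^p)^\perp=\cH_*(b_\ell)\oplus b_\ell^*(H_2^p)^\perp$ together with $s^*b_\ell^*=s_\ell^*$ -- and then appealing to the injectivity of $X_\ell$ (Remark~\ref{rem:Ker*2}). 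Your bookkeeping of which Hardy-type subspace each summand lives in is accurate, and the auxiliary facts you record ($\overline{\psi}$ preserves $(H_2)^\perp$, $P_+s^*$ maps $(H_2^p)^\perp$ into $\cH(b_r)$) are all valid. What your approach buys is an explicit formula for $\Gamma_\ell\overline{\psi}f$ rather than just the vanishing of a pairing; what it costs is length -- the paper's direct computation is substantially shorter.
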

\begin{proof} (1)
Let $f\in L_2^q$ and $g\in L_2^p$. Then, since $X_{\ell}$ maps
$\cH_*(b_\ell)$ onto $\cH(b_r)$ and  $X_{\ell}^*=X_r$,
$$
\left<\Gamma_{\ell}f,g\right>_{st}=
\left<X_{\ell}^{-1}P_{\cH(b_r)}f,P_{\cH_*(b_\ell)}g\right>_{st}
    =\left<P_{\cH(b_r)}f, X_r^{-1}P_{\cH_*(b_{\ell})}g\right>_{st}
    =\left<f, \Gamma_r g\right>_{st}.
$$
(2) If $f\in\cH(b_r)$ and $g\in\cH_*(b_{\ell})$, then
\[
\begin{split}
\langle
(\overline{\psi}\Gamma_{\ell}-\Gamma_{\ell}\overline{\psi})f,
g\rangle_{st}&= \langle
\overline{\psi}\Gamma_{\ell}f,P_-s\Gamma_rg\rangle_{st} -\langle
P_+s^*\Gamma_{\ell}f,
\psi\Gamma_rg\rangle_{st}\\ \sksix
&= \langle \overline{\psi}\Gamma_{\ell}f,s\Gamma_rg \rangle_{st}
-\langle s^*\Gamma_{\ell}f, \psi\Gamma_rg\rangle_{st}=0.
\end{split}
\]
\end{proof}
\subsection{Reproducing kernel Pontryagin spaces}
In this subsection we review some facts and notation from \cite{AI,Bo} on
the theory of
indefinite inner product spaces for the convenience of the reader. A linear
space $\cK$
equipped with a sesquilinear form $\left<\cdot,\cdot\right>_\cK$ on
$\cK\times\cK$ is
called an indefinite inner product space. A subspace $\sF$ of $\cK$ is
called positive
(negative) if $\left<f,f\right>_\cK>0\,(<0)$ for all $f\in\cF$, $f\ne 0$. If
the full
space $\cK$ is positive and complete with respect to the norm
$\|f\|=\left<f,f\right>_\cK^{1/2}$ then it is a Hilbert space.

An indefinite inner product space $(\cK,\left<\cdot,\cdot\right>_\cK)$ is
called a Pontryagin space, if it can be decomposed as the
orthogonal sum
\begin{equation}\label{Pspace}
    \cK=\cK_+\oplus\cK_-
\end{equation}
of a positive subspace $\cK_+$ which is a Hilbert space and a
negative subspace $\cK_-$ of finite dimension. The number
$\mbox{ind}_-\cK :=\dim\cK_-$ is referred to as the negative index
of $\cK$. The convergence in a Pontryagin space
$(\cK,\left<\cdot,\cdot\right>_\cK)$ is meant with respect to the
Hilbert space norm
\begin{equation}\label{Pnorm}
    \|h\|^2=\left<h_+,h_+\right>_\cK-\left<h_-,h_-\right>_\cK,\quad
    h=h_++h_-,\quad h_\pm\in\cK_\pm.
\end{equation}
It is easily seen that the convergence does not depend on a choice of
the decomposition~\eqref{Pspace}.

A Pontryagin space $(\cK,\left<\cdot,\cdot\right>_\cK)$ of
$\dC^m$-valued functions defined on a subset $\Omega$ of $\dC$ is called
a reproducing kernel Pontryagin space if there exists a
Hermitian kernel ${\mathsf
K}_\omega(\lambda):\Omega\times\Omega\to\dC^{m\times m}$ such that
\begin{enumerate}
\item[\rm(1)] for every $\omega\in\Omega$ and every $u\in\dC^m$ the vvf
${\mathsf K}_\omega
(\lambda)u$ belongs to $\cK$; \vskip 6pt
\item[\rm(2)] for every $h\in\cK$, $\omega\in\Omega$ and $u\in\dC^m$ the
following identity holds
\begin{equation}\label{RKprop}
    \left<h,{\mathsf K}_\omega u\right>_\cK=u^*f(\omega).
\end{equation}
\end{enumerate}

It is known (see~\cite{Sch}) that for every Hermitian kernel
${\mathsf K}_\omega
(\lambda):\Omega\times\Omega\to\dC^{m\times m}$ with a finite number of
negative squares
on $\Omega\times\Omega$ there is a unique Pontryagin space $\cK$ with
reproducing kernel
${\mathsf K}_\omega (\lambda)$, and that $\mbox{ind}_-\cK =\mbox{sq}_-{\mathsf
K}=\kappa$. In the case $\kappa=0$ this fact is due to Aronszajn~\cite{Ar50}.

Let $\cK$, $\cK_1$ be Pontryagin spaces and let $A:\cK_1\to\cK$ be
a continuous linear operator. The adjoint $A^*:\cK\to\cK_1$ is
defined by the equality
\[
\left<A^*h,g\right>_{\cK_1}=\left<h,Ag\right>_{\cK},\quad \mbox{for
all }h\in\cK,g\in\cK_1.
\]
The operator $A$ is called a contraction if
\begin{equation}\label{Contr}
\left<Ag,Ag\right>_\cK\le\left<g,g\right>_{\cK_1} \quad \mbox{for
all }g\in\cK_1;
\end{equation}
$A$ is called an isometry if equality prevails
 in~\eqref{Contr}; it is called a coisometry  if
 $A^*:\cK\to\cK_1$ is an isometry.

A  subspace $\cK_1$ of a Pontryagin space $\cK$ is said to be contained
contractively in $\cK$, if the inclusion mapping $\imath:\cK_1\to\cK$ is a
contraction, i.e.
\[
\langle g,g\rangle_\cK\le \langle g,g\rangle_{\cK_1} \mbox{ for all }g\in\cK_1.
\]
$\cK_1$
is said to be contained isometrically in $\cK$ if the inclusion
$\imath:\cK_1\to\cK$  is an isometry.

A subspace $\cK_2$ of a Pontryagin space $\cK$ is said to be {\it
complementary} to the subspace $\cK_1$ in the sense of de~Branges
if:
\begin{enumerate}
\item[\rm(1)]
Every $h\in \cK$ can be decomposed as
\begin{equation}\label{dBcompl}
    h=h_1+h_2,\quad h_1\in\cK_1,\,\,h_2\in\cK_2.
\end{equation}
\item[\rm(2)] The inequality
\begin{equation}\label{dBcompl2}
    \left<h,h\right>_\cK\le\left<h_1,h_1\right>_{\cK_1}+\left<h_2,h_2
\right>_{\cK_2}
\end{equation}
holds for every decomposition $h=h_1+h_2$, with $ h_1\in\cK_1$ and
$h_2\in\cK_2$. \vskip 6pt
\item[\rm(3)]
There is a unique decomposition~\eqref{dBcompl} such that
equality prevails in~\eqref{dBcompl2}.
\end{enumerate}
If the subspaces $\cK_1$ and $\cK_2$ are complementary in $\cK$ in
the sense of de~Branges, then
$\mbox{ind}_-\cK=\mbox{ind}_-{\cK_1}+\mbox{ind}_-{\cK_2}$ and {\it
the overlapping space} $\sR=\cK_1\cap\cK_2$ of $\cK_1$ and $\cK_2$
is a Hilbert space with respect to the inner product
\begin{equation}\label{Overlap}
  \left<h,g\right>_\sR=\left<h,g\right>_{\cK_1}+\left<h,g\right>_{\cK_2},\quad
     h,g\in\sR.
\end{equation}

The following theorem is due to L.~de~Branges~\cite{dB88}.
\begin{thm}\label{deBrCompl}
Let $\cK$, $\cK_1$ be Pontryagin spaces such that $\cK_1$ is
contained contractively in $\cK$. Then there is exactly one subspace
$\cK_2$ of $\cK$ that is complementary to $\cK_1$ in the sense of
de~Branges. Moreover, the following statements are equivalent:
\begin{enumerate}
\item[\rm(1)]
$\cK_2$ is the orthogonal complement of $\cK_1$ in $\cK$; \vskip 6pt
\item[\rm(2)] $\cK_1\cap\cK_2=\{0\}$; \vskip 6pt
\item[\rm(3)]
$\cK_1$ is contained isometrically in $\cK$.
\end{enumerate}
\end{thm}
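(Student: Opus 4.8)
The statement to prove is Theorem~\ref{deBrCompl}, the de~Branges complementation theorem for Pontryagin spaces. The plan is to follow de~Branges' original argument, adapted to the indefinite setting. First I would produce the candidate complementary space. Since $\cK_1$ is contained contractively in $\cK$, the inclusion $\imath:\cK_1\to\cK$ is a contraction between Pontryagin spaces, so it has an adjoint $\imath^*:\cK\to\cK_1$, and the operator $B:=\imath\imath^*:\cK\to\cK$ is a bounded selfadjoint operator with $\langle Bh,h\rangle_\cK \le \langle h,h\rangle_\cK$ for all $h$ (this is where contractivity enters). The natural candidate for $\cK_2$ is the range of $(I-B)^{1/2}$ — but in the indefinite setting $I-B$ need not be nonnegative, so instead one takes $\cK_2=\ran\,(I-B)$ (or its appropriate completion) equipped with the inner product that makes $(I-B)$ a coisometry onto $\cK_2$; more precisely, $\langle (I-B)h,(I-B)g\rangle_{\cK_2}:=\langle (I-B)h,g\rangle_\cK$, which is well defined because $\ker(I-B)$ is contained in the "null part" appropriately. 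One must check $\cK_2$ so defined is a Pontryagin space; here the finiteness of $\mathrm{ind}_-\cK$ is essential and makes the analytic subtleties (closedness of ranges, etc.) manageable — finite-dimensional negative parts cannot cause the pathologies that plague the Krein-space case.

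Next I would verify that this $\cK_2$ is complementary to $\cK_1$ in the sense of the three-part definition. The decomposition property~\eqref{dBcompl}: given $h\in\cK$, write $h = \imath^*h + (I-B)h$ — wait, more carefully, $h = B h + (I-B)h = \imath(\imath^* h) + (I-B)h$, so $h_1=\imath^*h\in\cK_1$ and $h_2=(I-B)h\in\cK_2$. For the inequality~\eqref{dBcompl2} and the uniqueness of the minimizing decomposition~\eqref{dBcompl}, the standard computation shows that for any decomposition $h=h_1+h_2$,
\[
\langle h_1,h_1\rangle_{\cK_1}+\langle h_2,h_2\rangle_{\cK_2}-\langle h,h\rangle_\cK
\]
equals the squared $\cK_1$-norm (times a nonnegative factor) of the "error" $h_1-\imath^*h$, so it is nonnegative and vanishes precisely for the canonical choice. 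Uniqueness of the complementary space itself follows because any complementary $\cK_2'$ must, by the minimization property, have its inner product determined by $B$ in exactly the same way.

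Finally, the equivalence of (1)–(3). The implication (1)$\Rightarrow$(2) is trivial since an orthogonal complement meets $\cK_1$ only in the isotropic vectors, and in a Pontryagin space $\cK$ (no isotropic part) that intersection is $\{0\}$; alternatively one uses that the overlapping space $\sR=\cK_1\cap\cK_2$ carries the inner product~\eqref{Overlap} and, when $\cK_2$ is the orthogonal complement, that inner product is $2\langle\cdot,\cdot\rangle$ restricted to an isotropic-free space, forcing $\sR=\{0\}$. For (2)$\Rightarrow$(3): if $\cK_1\cap\cK_2=\{0\}$ then in the canonical decomposition $\|h\|^2_\cK = \|h_1\|^2_{\cK_1}+\|h_2\|^2_{\cK_2}$ with the two pieces independent, and feeding $h\in\cK_1$ into this (so $h_2=0$) gives $\|h\|_\cK=\|h\|_{\cK_1}$, i.e. $\imath$ is isometric. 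For (3)$\Rightarrow$(1): if $\imath$ is isometric then $B=\imath\imath^*$ is an orthogonal projection (an idempotent selfadjoint operator), $\ran B=\cK_1$ isometrically, $\ran(I-B)=\cK_2$, and these are genuinely orthogonal complements.

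\textbf{Main obstacle.} The delicate point is the construction of $\cK_2$ and the proof that it is a \emph{Pontryagin} space with the asserted negative index, together with the well-definedness of its inner product — one must control $\ker(I-B)$ and argue that the form $\langle(I-B)h,g\rangle_\cK$ descends to a nondegenerate inner product on $\ran(I-B)$ that is complete in the appropriate topology. In the Hilbert-space case this is de~Branges' complementary space; in the Krein-space case it can fail, so the argument must genuinely use $\mathrm{ind}_-\cK<\infty$. I expect this to be where most of the work lies, and I would either cite~\cite{dB88,AI} for the detailed verification or reproduce the finite-negative-index argument: decompose $\cK=\cK_+\oplus\cK_-$ with $\dim\cK_-<\infty$, observe $B$ differs from a Hilbert-space contraction by a finite-rank perturbation, and deduce the Pontryagin property of $\cK_2$ from the classical result plus a finite-dimensional correction.
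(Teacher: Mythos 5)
The paper offers no proof of Theorem~\ref{deBrCompl}: it is quoted as a known result of de~Branges with a pointer to \cite{dB88} (see also \cite[Section~1.5]{ADRS}), so there is no in-text argument to compare yours against. Your outline is the standard construction and is the right approach: take $B=\imath\imath^*$, set $\cK_2=\ran(I-B)$ with $\langle(I-B)h,(I-B)g\rangle_{\cK_2}=\langle(I-B)h,g\rangle_{\cK}$, use $h=Bh+(I-B)h$ for existence of decompositions, and obtain the equivalences from uniqueness of decompositions when $\cK_1\cap\cK_2=\{0\}$ and from $B$ being a selfadjoint idempotent when $\imath$ is isometric.

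Two steps, as written, would fail in the indefinite setting and need repair. First, the claim $\langle Bh,h\rangle_\cK\le\langle h,h\rangle_\cK$ is equivalent to $\imath^*$ being a contraction, which does \emph{not} follow from contractivity of $\imath$ when $\mathrm{ind}_-\cK_1<\mathrm{ind}_-\cK$ (already $\cK_1=\{0\}$ gives a counterexample); you contradict it yourself two lines later when you correctly observe that $I-B$ need not be nonnegative. The correct quantitative statement is $\nu_-(I-\imath\imath^*)=\mathrm{ind}_-\cK-\mathrm{ind}_-\cK_1$ --- the same identity from \cite[Theorem~1.3.4]{ADRS} that the paper invokes in the sufficiency part of the proof of Theorem~\ref{thm:0.1} --- and it is this count that identifies $\mathrm{ind}_-\cK_2$. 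Second, the excess $\langle h_1,h_1\rangle_{\cK_1}+\langle h_2,h_2\rangle_{\cK_2}-\langle h,h\rangle_\cK$ is not a $\cK_1$-norm of the error (that quantity can be negative here); for a general decomposition it equals $\langle v,v\rangle_\sR$, where $v$ is the deviation from the canonical decomposition, $v\in\sR=\cK_1\cap\cK_2$, and $\langle\cdot,\cdot\rangle_\sR$ is the sum inner product \eqref{Overlap}. Its nonnegativity is precisely the assertion that the overlapping space is a Hilbert space, which is part of what must be proved rather than something to quote. With these repairs, and with the completeness and finite-negative-index verification for $\cK_2$ that you defer to \cite{dB88}, the argument goes through; deferring that analytic core is consistent with what the paper itself does.
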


These notions come into play when the reproducing kernel ${\mathsf
K}_\omega (\lambda)$ of a Pontryagin space is decomposed as a sum
of two such kernels
\begin{equation}\label{SumK12}
    {\mathsf K}_\omega(\lambda)={\mathsf K}_\omega^{(1)}(\lambda)+{\mathsf
    K}_\omega^{(2)}(\lambda).
\end{equation}
The following theorem is a paraphrase of Theorem~1.5.5 from~\cite{ADRS}.
\begin{thm}\label{SUMK12}{\rm(\cite[Theorem~1.5.5]{ADRS}.)}
Let ${\mathsf K}_\omega(\lambda)$, ${\mathsf
K}_\omega^{(1)}(\lambda)$, ${\mathsf K}_\omega^{(2)}(\lambda)$  be
$\dC^{m\times m}$- valued Hermitian kernels on $\Omega\times \Omega$
with finite negative squares such that \eqref{SumK12} holds. If
$\cK$, $\cK_1$, $\cK_2$ are the corresponding reproducing kernel
Pontryagin spaces, then
\begin{equation}\label{sqK12}
\textup{sq}_-{\mathsf K}\le\textup{sq}_-{\mathsf
K^{(1)}}+\textup{sq}_-{\mathsf K}^{(2)}.
\end{equation}
Moreover the following assertions are equivalent:
\begin{enumerate}
\item[\rm(1)]
Equality prevails in~\eqref{sqK12};\vskip 6pt
\item[\rm(2)]
 $\cK_1$ and $\cK_2$ are contained contractively in
$\cK$ as complementary spaces in the sense of de~Branges; \vskip 6pt
\item[\rm(3)]
the overlapping space $\sR=\cK_1\cap\cK_2$ is a Hilbert space with
respect to the inner product \eqref{Overlap}.
\end{enumerate}
\end{thm}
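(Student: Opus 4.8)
The plan is to represent the sum kernel by an ``addition operator'' on the external direct sum of $\cK_1$ and $\cK_2$, and then to extract all the assertions from the geometry of orthogonal complements in Pontryagin spaces. The inequality \eqref{sqK12} itself is routine: for any finite system $\omega_1,\dots,\omega_n\in\Omega$ and $u_1,\dots,u_n\in\dC^m$ the Gram matrix $\left(\langle{\mathsf K}_{\omega_j}(\omega_k)u_j,u_k\rangle\right)_{j,k=1}^n$ is, by \eqref{SumK12}, the sum of the corresponding Gram matrices for ${\mathsf K}^{(1)}$ and ${\mathsf K}^{(2)}$, and $\nu_-$ is subadditive on Hermitian matrices; taking the supremum over all such systems gives \eqref{sqK12}. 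In particular ${\mathsf K}$ has finitely many negative squares, so $\cK$ is well defined and $\textup{ind}_-\cK=\textup{sq}_-{\mathsf K}$ (and likewise $\textup{ind}_-\cK_i=\textup{sq}_-{\mathsf K}^{(i)}$).

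Next, let $\cP:=\cK_1\oplus\cK_2$ be the external orthogonal direct sum, a Pontryagin space with $\textup{ind}_-\cP=\textup{sq}_-{\mathsf K}^{(1)}+\textup{sq}_-{\mathsf K}^{(2)}$, and let $A\colon\cP\to\{\,\dC^m\text{-valued functions on }\Omega\,\}$ be the addition map $A(f_1,f_2)=f_1+f_2$. By \eqref{SumK12} one has $A({\mathsf K}^{(1)}_\omega u,{\mathsf K}^{(2)}_\omega u)={\mathsf K}_\omega u$. Also $\ker A=\{(g,-g):g\in\cK_1\cap\cK_2\}$, and under the bijection $g\mapsto(g,-g)$ the form inherited from $\cP$ becomes $\langle g,g\rangle_{\cK_1}+\langle g,g\rangle_{\cK_2}$, so $\ker A$ is isometrically the overlapping space $(\sR,\langle\cdot,\cdot\rangle_\sR)$ of \eqref{Overlap}. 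Let $\cD$ be the closed linear span in $\cP$ of the vectors $({\mathsf K}^{(1)}_\omega u,{\mathsf K}^{(2)}_\omega u)$. Using the reproducing property of $\cK_1,\cK_2$ one computes $\langle({\mathsf K}^{(1)}_\omega u,{\mathsf K}^{(2)}_\omega u),({\mathsf K}^{(1)}_{\eta}v,{\mathsf K}^{(2)}_{\eta}v)\rangle_\cP=v^*{\mathsf K}_\omega(\eta)u$, so the correspondence $({\mathsf K}^{(1)}_\omega u,{\mathsf K}^{(2)}_\omega u)\leftrightarrow{\mathsf K}_\omega u$ is isometric between dense subsets of $\cD$ and of $\cK$; by the uniqueness of the reproducing kernel Pontryagin space it extends to a unitary $\Psi\colon\cD\to\cK$, which coincides with $A|_\cD$. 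Hence $\cD$ is nondegenerate, so $\cP=\cD\oplus\cD^{\perp}$ with $\cD^{\perp}$ the orthogonal complement of $\cD$ in $\cP$; and since $(g_1,g_2)\perp\cD$ means $u^*(g_1(\omega)+g_2(\omega))=0$ for all $\omega,u$, we get $\cD^{\perp}=\ker A=\sR$.

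The equivalences now drop out. From $\textup{ind}_-\cD=\textup{ind}_-\cK=\textup{sq}_-{\mathsf K}$ and $\textup{ind}_-\cP=\textup{ind}_-\cD+\textup{ind}_-\cD^{\perp}=\textup{sq}_-{\mathsf K}+\textup{ind}_-\sR$, comparison with $\textup{ind}_-\cP=\textup{sq}_-{\mathsf K}^{(1)}+\textup{sq}_-{\mathsf K}^{(2)}$ shows that equality holds in \eqref{sqK12} iff $\textup{ind}_-\sR=0$, i.e. iff $\sR$ is a Hilbert space; this is $(1)\Leftrightarrow(3)$. Assume $(3)$; then $A=\Psi\circ P_\cD$, where $P_\cD$ is the orthogonal projection of $\cP$ onto $\cD$, is a coisometry of $\cP$ onto $\cK$ whose kernel $\cD^{\perp}=\sR$ is positive. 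The inclusions $\imath_i\colon\cK_i\to\cK$ given by $\imath_1f_1=A(f_1,0)=f_1$ and $\imath_2f_2=A(0,f_2)=f_2$ then satisfy $\langle\imath_1f_1,\imath_1f_1\rangle_\cK=\langle\Psi P_\cD(f_1,0),\Psi P_\cD(f_1,0)\rangle_\cK=\langle P_\cD(f_1,0),P_\cD(f_1,0)\rangle_\cP\le\langle(f_1,0),(f_1,0)\rangle_\cP=\langle f_1,f_1\rangle_{\cK_1}$ (the discarded component lying in the positive space $\sR$), and similarly for $\imath_2$, so each $\cK_i$ is contained contractively in $\cK$. Surjectivity of $A$ gives every $h\in\cK$ a decomposition $h=A(f_1,f_2)=\imath_1f_1+\imath_2f_2$, and the same estimate yields $\langle h,h\rangle_\cK\le\langle f_1,f_1\rangle_{\cK_1}+\langle f_2,f_2\rangle_{\cK_2}$ for every such decomposition, with equality exactly when $(f_1,f_2)\in\cD$; since $A|_\cD$ is bijective onto $\cK$ there is a unique such minimal decomposition. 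Thus $\cK_1,\cK_2$ are contained contractively in $\cK$ as complementary spaces in the sense of de~Branges, i.e. $(2)$; and $(2)\Rightarrow(1)$ is the already recorded fact that de~Branges-complementary subspaces satisfy $\textup{ind}_-\cK=\textup{ind}_-\cK_1+\textup{ind}_-\cK_2$.

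The step I expect to require the most care is the identification of the diagonal copy $\cD$ of $\cK$ inside $\cP$: showing it is nondegenerate (hence orthocomplemented) and that its orthogonal complement is precisely the overlapping space. This rests on the uniqueness of the reproducing kernel Pontryagin space (to upgrade the isometry on dense subsets to the unitary $\Psi$) and on the Pontryagin-space facts that a closed subspace is orthocomplemented iff nondegenerate and that an isometry with dense range between Pontryagin spaces of equal finite negative index is surjective. Once $\cD^{\perp}=\sR$ is in hand, the remaining assertions are pure bookkeeping with negative indices.
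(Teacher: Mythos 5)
The paper does not actually prove this theorem---it is quoted from \cite[Theorem~1.5.5]{ADRS}---so your proposal can only be judged on its own terms. Your architecture is the standard one (external sum $\cP=\cK_1\oplus\cK_2$, addition map $A$, identification of $\ker A$ with the overlapping space), and it is sound up to and including the computation $\langle({\mathsf K}^{(1)}_\omega u,{\mathsf K}^{(2)}_\omega u),({\mathsf K}^{(1)}_\eta v,{\mathsf K}^{(2)}_\eta v)\rangle_\cP=v^*{\mathsf K}_\omega(\eta)u$ and the identity $\cD^{\perp}=\ker A\cong\sR$. The genuine gap is the unconditional claim that $\cD$ is nondegenerate, equivalently that the pairing ${\mathsf K}_\omega u\leftrightarrow({\mathsf K}^{(1)}_\omega u,{\mathsf K}^{(2)}_\omega u)$ is a well-defined isometric \emph{operator} extending to a unitary $\Psi$. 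It is not: if $\sum c_j{\mathsf K}_{\omega_j}u_j=0$ in $\cK$, the corresponding combination in $\cP$ need only be isotropic in $\cD$, not zero. Concretely, take $\Omega$ a single point, $m=1$, ${\mathsf K}^{(1)}\equiv 1$, ${\mathsf K}^{(2)}\equiv -1$, so ${\mathsf K}\equiv 0$ and $\cK=\{0\}$, while $\cP$ is two-dimensional of signature $(1,1)$ and $\cD=\mathrm{span}\{(1,-1)\}$ is a neutral line: $\cD$ is degenerate, $\cD\subset\cD^{\perp}=\ker A$, $\cP\neq\cD\oplus\cD^{\perp}$, and your bookkeeping identity $\textup{ind}_-\cP=\textup{ind}_-\cD+\textup{ind}_-\cD^{\perp}$ reads $1=0+0$. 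In the same example $\textup{ind}_-\sR=0$ although equality fails in \eqref{sqK12} and $\sR$ (being neutral) is not a Hilbert space, so the asserted chain ``equality $\Leftrightarrow\textup{ind}_-\sR=0\Leftrightarrow\sR$ Hilbert'' is false as stated. The degeneracy you suppress is exactly where the content of $(1)\Leftrightarrow(3)$ lives.

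The repair is to carry the isotropic part $\cD^0:=\cD\cap\ker A$ through the count. For a closed subspace of a Pontryagin space one has $\textup{ind}_-\cP=\textup{ind}_-\cD+\textup{ind}_-\cD^{\perp}+\dim\cD^0$, where $\textup{ind}_-$ denotes the negative index of the (possibly degenerate) restricted form; here $\textup{ind}_-\cD=\textup{sq}_-{\mathsf K}$ (the Gram matrices on the spanning vectors of $\cD$ are exactly those of ${\mathsf K}$), $\cD^{\perp}=\ker A\cong\sR$, and $\cD^0$ is also the isotropic part of $\sR$. Hence equality in \eqref{sqK12} holds iff $\textup{ind}_-\sR=0$ \emph{and} $\cD^0=\{0\}$, i.e.\ iff $\sR$ is positive definite, and a closed positive definite subspace of a Pontryagin space is uniformly positive, hence a Hilbert space; this gives $(1)\Leftrightarrow(3)$ correctly. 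Alternatively, establish nondegeneracy of $\cD$ separately under each hypothesis: under $(1)$, $\cD$ contains a maximal negative subspace of $\cP$, so $\cD^{\perp}$ is positive definite and $\cD^0=\{0\}$; under $(3)$, $\cD^0$ is an isotropic subspace of the Hilbert space $\sR$, hence trivial. Once $\cD$ is known to be nondegenerate, your construction of $\Psi$, the coisometry $A=\Psi P_\cD$, and the derivation of $(2)$ (and the count $(2)\Rightarrow(1)$) all go through as written.
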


A multiplicative version of this statement is formulated below for
kernels of the form
\begin{equation}\label{FactK}
    {\mathsf K}_\omega(\lambda)=R(\lambda){\mathsf
    K}_\omega^{(1)}(\lambda)R(\omega)^*.
\end{equation}
\begin{thm}\label{FACTK} {\rm(see~\cite[Theorem~1.5.7]{ADRS}.)}
Let $R(\lambda)$ be a $\dC^{m\times n}$-valued function on $\Omega$,
let ${\mathsf K}_\omega(\lambda)$, ${\mathsf
K}_\omega^{(1)}(\lambda)$ be $\dC^{m\times m}$ and $\dC^{n\times n}$
valued Hermitian kernels on $\Omega\times \Omega$, respectively,
with finite negative squares and let $\cK$, $\cK_1$ be the
corresponding reproducing kernel Pontryagin spaces. Then
\begin{equation}\label{sqKK1}
\textup{sq}_-{\mathsf K}\le\textup{sq}_-{\mathsf K^{(1)}}.
\end{equation}
Equality prevails in~\eqref{sqKK1} if and only if the
multiplication by $R(\lambda)$ is a coisometry from $\cK_1$ to
$\cK$, whose kernel is a Hilbert space.
\end{thm}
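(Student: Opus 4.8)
The plan is to establish \eqref{sqKK1} first, by a finite–dimensional computation, and then to analyse the equality case through the operator of multiplication by $R$. For \eqref{sqKK1} I would work with Gram matrices. Given $\omega_1,\dots,\omega_n\in\Omega$, set $\cR=\diag(R(\omega_1),\dots,R(\omega_n))$; then the Hermitian block matrices $G=\big({\mathsf K}_{\omega_j}(\omega_k)\big)_{j,k=1}^n$ and $G_1=\big({\mathsf K}^{(1)}_{\omega_j}(\omega_k)\big)_{j,k=1}^n$ satisfy $G=\cR\,G_1\,\cR^*$. A congruence $B\mapsto\cR B\cR^*$ cannot increase the number of negative eigenvalues: if $G$ is negative definite on a subspace $V$, the relation $x^*Gx=(\cR^*x)^*G_1(\cR^*x)$ forces $\cR^*$ to be injective on $V$ and $G_1$ to be negative definite on $\cR^*V$, so $\nu_-(G_1)\ge\nu_-(G)$. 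Since $\nu_-(G_1)\le\textup{sq}_-{\mathsf K}^{(1)}$ for every choice of points, taking the supremum over all finite point sets gives \eqref{sqKK1}.

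For the equality statement, let $M_R$ denote multiplication by $R$ and let $T$ be the map defined on $\span\{{\mathsf K}_\omega u:\ \omega\in\Omega,\ u\in\dC^m\}$ by $T({\mathsf K}_\omega u)={\mathsf K}^{(1)}_\omega R(\omega)^*u$. From ${\mathsf K}=R{\mathsf K}^{(1)}R^*$ one gets at once the identity
\[
\langle T\textstyle\sum_j{\mathsf K}_{\omega_j}u_j,\,T\textstyle\sum_k{\mathsf K}_{\omega_k}u_k\rangle_{\cK_1}
=\textstyle\sum_{j,k}u_k^*R(\omega_k){\mathsf K}^{(1)}_{\omega_j}(\omega_k)R(\omega_j)^*u_j
=\langle\textstyle\sum_j{\mathsf K}_{\omega_j}u_j,\,\textstyle\sum_k{\mathsf K}_{\omega_k}u_k\rangle_{\cK},
\]
as well as $\langle g,T{\mathsf K}_\omega u\rangle_{\cK_1}=u^*R(\omega)g(\omega)$ for $g\in\cK_1$, which says that $T^*=M_R$ whenever $T$ extends to a bounded operator $\cK\to\cK_1$. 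The ``if'' direction is then immediate: if $M_R:\cK_1\to\cK$ is a coisometry with $\cN:=\ker M_R$ a Hilbert space, then $\cK_1=\cN\oplus\cN^\perp$, $M_R^*M_R$ is the orthogonal projection onto $\cN^\perp$, and $M_R$ restricts to a unitary map $\cN^\perp\to\cK$; hence $\textup{ind}_-\cK=\textup{ind}_-\cN^\perp=\textup{ind}_-\cK_1-\textup{ind}_-\cN=\textup{ind}_-\cK_1$, which is equality in \eqref{sqKK1}.

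For the converse, assume equality in \eqref{sqKK1} and put $\cN=\{h\in\cK_1:\ R(\lambda)h(\lambda)\equiv0\}$, a closed subspace of $\cK_1$. The crux is to show that $\cN$ is a Hilbert space. Granting (the delicate point) that $\cN$ is nondegenerate, so that $\cK_1=\cN\oplus\cN^\perp$ and correspondingly ${\mathsf K}^{(1)}={\mathsf K}^{\cN}+{\mathsf K}^{\cN^\perp}$, Theorem~\ref{SUMK12} gives $\textup{ind}_-\cK_1=\textup{ind}_-\cN+\textup{ind}_-\cN^\perp$; since ${\mathsf K}^{\cN}_\omega(\lambda)u\in\cN$ is annihilated by $R$, we have ${\mathsf K}=R{\mathsf K}^{(1)}R^*=R{\mathsf K}^{\cN^\perp}R^*$, so the congruence bound of the first paragraph yields $\textup{ind}_-\cK\le\textup{ind}_-\cN^\perp$; together with the standing equality $\textup{ind}_-\cK=\textup{ind}_-\cK_1$ this forces $\textup{ind}_-\cN=0$, and a closed nondegenerate nonnegative subspace is a Hilbert space. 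Once $\cN$ is known to be a Hilbert space, $T$ is well defined: if $\sum_j{\mathsf K}_{\omega_j}u_j=0$, then $w:=\sum_j{\mathsf K}^{(1)}_{\omega_j}R(\omega_j)^*u_j$ is annihilated by $R$, hence $w\in\cN$, while the displayed identity gives $\langle w,w\rangle_{\cK_1}=0$, so $w=0$. The same identity shows $T$ is isometric on its (dense) domain, hence extends to an isometry $\cK\to\cK_1$ whose range is dense in $\cN^\perp$; therefore $M_R=T^*$ is a coisometry of $\cK_1$ onto $\cK$ with $\ker M_R=\cN$, a Hilbert space, and the proof is complete.

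I expect the real work to be concentrated in the bracketed step: proving, under the equality hypothesis, that $\cN$ is nondegenerate — a nonzero neutral vector in $\cN\cap\cN^\perp$ is exactly what would make $T$ multivalued — which carries the bulk of the Pontryagin–space bookkeeping and must be dealt with either directly or by a suitable regularization of $\cN$. In the Hilbert–space case $\kappa=0$ this difficulty is absent, and the assertion is the classical de~Branges description of the range space $R\cK_1$ together with the fact that $\ker M_R$ is then automatically a Hilbert space.
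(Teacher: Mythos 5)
The paper offers no proof of this statement---it is quoted verbatim from \cite[Theorem~1.5.7]{ADRS}---so there is no internal argument to measure you against. Your congruence argument for \eqref{sqKK1} is correct, and so is the ``if'' half of the equality statement. The problem is the ``only if'' half, where you explicitly defer the one step that carries essentially all of the content: proving that $\cN=\{h\in\cK_1:\ Rh\equiv 0\}$ is nondegenerate, equivalently that the linear relation $V=\span\{({\mathsf K}_\omega u,\,{\mathsf K}^{(1)}_\omega R(\omega)^*u)\}$ has trivial multivalued part. As written, your converse is circular at exactly this point: the index count $\textup{ind}_-\cK\le\textup{ind}_-\cN^\perp$ presupposes the orthogonal decomposition $\cK_1=\cN\oplus\cN^\perp$, which is precisely what nondegeneracy of $\cN$ is needed for, and nondegeneracy is the thing you have not proved. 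So the proposal is not a proof of the converse direction.

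The gap can be closed, and the equality hypothesis enters exactly here. Your Gram identity shows that for any choice of data the Gram matrix of $\{{\mathsf K}^{(1)}_{\omega_j}R(\omega_j)^*u_j\}$ in $\cK_1$ equals that of $\{{\mathsf K}_{\omega_j}u_j\}$ in $\cK$; since $\textup{sq}_-{\mathsf K}=\kappa=\textup{ind}_-\cK_1$, the span $\ran V$ therefore contains a $\kappa$-dimensional, hence \emph{maximal}, negative subspace $\cM_-$ of $\cK_1$. Now take $w=\sum_j{\mathsf K}^{(1)}_{\omega_j}R(\omega_j)^*u_j$ with $\sum_j{\mathsf K}_{\omega_j}u_j=0$ (an element of $\mul V$). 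The same identity gives $\langle w,w\rangle_{\cK_1}=0$, and since $R(\lambda)w(\lambda)=\sum_j{\mathsf K}_{\omega_j}(\lambda)u_j=0$, the reproducing property yields $\langle w,{\mathsf K}^{(1)}_\nu R(\nu)^*v\rangle_{\cK_1}=v^*R(\nu)w(\nu)=0$, i.e.\ $w\perp\ran V\supseteq\cM_-$. Thus $w$ is a neutral vector of $\cM_-^{\perp}$, which is a Hilbert space by the fundamental decomposition of $\cK_1$ along a maximal negative subspace; hence $w=0$. The identical argument kills any vector of $(\ran V)^\perp\cap\overline{\ran V}=\cN\cap\cN^\perp$, which is the nondegeneracy you need. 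With that in place the rest of your outline works, provided you also quote the two Pontryagin-space facts you use tacitly and which are false in general Krein spaces: a closed nondegenerate subspace of a Pontryagin space is orthocomplemented and is itself a Pontryagin space (so that Theorem~\ref{SUMK12} applies and a closed nonnegative nondegenerate subspace is automatically uniformly positive, hence a Hilbert space), and a densely defined isometry between Pontryagin spaces of the same finite negative index whose domain contains a maximal negative subspace extends to an everywhere defined continuous isometry; both are in Chapter~1 of \cite{ADRS}.
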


Next we consider some examples of reproducing kernel Hilbert spaces
associated with a mvf  $s\in
{\mathcal S}^{\ptq}(\Omega_+)$.

\begin{example}\label{dBspace}
Let $\cH(s)$ be the reproducing kernel Hilbert space associated
with the kernel ${\mathsf \Lambda}_\omega^s(\lambda)$ on
$\mathfrak{H}_s^+\times\mathfrak{H}_s^+$.
The following description of $\cH(s)$ is due to de Branges and Rovnyak,~\cite{dBR}: A vvf $f\in H_2^p$ belongs to $\cH(s)$ if and only if
\begin{equation}\label{dBR2}
    \alpha(f):=\sup\{ \|f+s\varphi\|_{st}^2-\|\varphi\|_{st}^2:\,\varphi\in{
    H}_2^q\}<\infty.
\end{equation}
Moreover, if $f\in \cH(s)$,  then
\[
\|f\|_{\cH(s)}^2=\alpha(f).
\]
If  $s\in
{\mathcal S}_{in}^{\ptq}(\Omega_+)$, then
\[
\cH(s)=H_2^p\ominus sH_2^p\quad\textrm{and}\quad \Vert f\Vert_{\cH(s)}=
\Vert f\Vert_{st}.
\]
\end{example}
\begin{example}\label{dBspace1}
Let $\cH_*(s)$ be the reproducing kernel Hilbert space associated with the
kernel
\begin{equation}\label{KernelL}
    {\mathsf  L}_\om^s(\lambda)=\frac{I_q-s^{\#} (\lambda) s^{\#} (\om)^*}
{-\rho_\om
    (\lambda)} \quad\text{on}\ \Omega_{s^{\#}}\times \Omega_{s^{\#}}.
\end{equation}
The space $\cH_*(s)$ admits the following description: A vvf $f\in
H_2^q(\Omega_+)^\perp$ belongs to $\cH_*(s)$ if and only if
\begin{equation}
    \beta(f):=\sup\{ \|f+s^\#\varphi\|_{st}^2-\|\varphi\|_{st}^2:\,
\varphi\in H_2^p(\Omega_+)^\perp\}<\infty.
\end{equation}
If $f\in \cH_*(s)$, then
\[
\|f\|_{\cH_*(s)}^2=\beta(f).
\]
If $s\in
{\mathcal S}_{*in}^{\ptq}(\Omega_+)$, then
\[
\cH_*(s)=H_2^q(\Omega_+)^\perp\ominus s^\#H_2^p(\Omega_+)^\perp.
\]
\end{example}
In the following example the reproducing kernel space $\cK(s)$ is negative.
\begin{example}
If  $b$ is a Blaschke-Potapov product of degree $\kappa$, then
$s=b^{-1}\in {\mathcal S}_{\kappa}^{p\times p}$ and it follows from
Theorem~\ref{FACTK} and the identity
\[
{\mathsf
    \Lambda}_\omega^{b^{-1}}(\lambda)=b(\lam)^{-1}({-\mathsf
    \Lambda}_\omega^{b}(\lambda))b(\om)^{-*}
\]
that the space $\cK(b^{-1})$ corresponding to the kernel ${\mathsf
    \Lambda}_\omega^{b^{-1}}(\lambda)$
coincides with $b^{-1}\cH(b)$ as
a set. 
Since the inner product $\langle f,f\rangle_{\cK(b)^{-1}}$ is
negative in $\cK(b^{-1})$,
\[
\langle f,f\rangle_{\cK(b^{-1})}=-\langle
bf,bf\rangle_{\cH(b)}=-\langle f,f\rangle_{st},\quad
f\in{\cK(b^{-1})},
\]
$\cK(b^{-1})$ is called the antispace of the Hilbert space
$b^{-1}\cH(b)$.
\end{example}

\subsection{The spaces $\cK(s)$ and $\cK_*(s)$}
If $s\in{\mathcal S}_{\kappa}^{\ptq}(\Omega_+)$, then the reproducing
kernel $\Lambda_\omega^s(\lambda)$ of the reproducing kernel Pontryagin space $\cK(s)$ can   be expressed in terms of the
right Kre\u{i}n-Langer factorization (\ref{KLright})  of $s\in {\mathcal
S}^{\ptq}_\kappa$ as
\begin{equation}\label{eq:1.1}
{\mathsf \Lambda}^s_\omega(\lambda)={\mathsf
\Lambda}^{s_r}_\omega(\lambda)+ s_r(\lambda){\mathsf
\Lambda}_\omega^{b_r^{-1}}(\lambda)s_r(\omega)^* ={\mathsf
\Lambda}^{s_r}_\omega(\lambda)- s(\lambda){\mathsf
\Lambda}_\omega^{b_r}(\lambda)s(\omega)^*,
\end{equation}
which leads to fundamental decomposition of the Pontryagin space
$\cK(s)$. The following theorem extends Theorem 4.2.3 of \cite{ADRS}.

\begin{thm}\label{thm:1.10}
Let $s\in {\mathcal S}^{\ptq}_\kappa$ have Kre\u{i}n-Langer factorization
(\ref{KLright}) $s=s_rb_r^{-1}$. Then:
\begin{enumerate}
    \item [\rm(1)] The space $\cK(s)$ admits the orthogonal
    decomposition
    \begin{equation}\label{eq:1.2}
    \cK(s)=\cH(s_r)\oplus s_r  \cK(b^{-1}_r).
    \end{equation}
    \item [\rm(2)] If $f\in \cK(s)$, then
\begin{equation}\label{eq:1.3a}
    f-s\Gamma_rf\in\cH(s_r),\quad s\Gamma_r f\in s_r\cK(b^{-1}_r),
\end{equation}
    \begin{equation}\label{eq:1.3}
   f=(f-s\Gamma_r f)+s\Gamma_rf,
\end{equation}
and
\begin{equation}\label{eq:1.4}
   \langle
   f,f\rangle_{\cK(s)}=\|f-s\Gamma_rf\|^2_{\cH(s_r)}-\|\Gamma_rf\|^2_{st}.
   \end{equation}
\end{enumerate}
\end{thm}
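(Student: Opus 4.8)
The plan is to exploit the identity \eqref{eq:1.1} together with the
two structural theorems on reproducing kernels from \cite{ADRS}, namely
Theorem~\ref{SUMK12} and Theorem~\ref{FACTK}. The first equality in
\eqref{eq:1.1} expresses ${\mathsf \Lambda}^s_\omega(\lambda)$ as a sum
${\mathsf K}_\omega^{(1)}(\lambda)+{\mathsf K}_\omega^{(2)}(\lambda)$
with ${\mathsf K}^{(1)}={\mathsf \Lambda}^{s_r}_\omega$ (whose space is
the Hilbert space $\cH(s_r)$, since $s_r\in{\mathcal S}^{\ptq}$) and
${\mathsf K}^{(2)}_\omega(\lambda)=s_r(\lambda){\mathsf
\Lambda}_\omega^{b_r^{-1}}(\lambda)s_r(\omega)^*$. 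By
Theorem~\ref{FACTK} applied with $R=s_r$, the space $\cK_2$ attached to
${\mathsf K}^{(2)}$ is the image $s_r\cK(b_r^{-1})$, with
$\mbox{sq}_-{\mathsf K}^{(2)}\le \mbox{sq}_-{\mathsf
\Lambda}^{b_r^{-1}}=\kappa$; and since $\mbox{sq}_-{\mathsf
\Lambda}^s_\omega=\kappa$ while $\mbox{sq}_-{\mathsf
\Lambda}^{s_r}_\omega=0$, the subadditivity \eqref{sqK12} forces
equality $\mbox{sq}_-{\mathsf K}=\mbox{sq}_-{\mathsf
K}^{(1)}+\mbox{sq}_-{\mathsf K}^{(2)}$, hence also
$\mbox{sq}_-{\mathsf K}^{(2)}=\kappa$ and (again via
Theorem~\ref{FACTK}) multiplication by $s_r$ is a \emph{coisometry}
from $\cK(b_r^{-1})$ onto $\cK_2$ with Hilbert-space kernel. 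The
equality case of Theorem~\ref{SUMK12} then tells us $\cH(s_r)$ and
$s_r\cK(b_r^{-1})$ sit contractively and de~Branges-complementarily in
$\cK(s)$, with Hilbert overlapping space $\sR=\cH(s_r)\cap
s_r\cK(b_r^{-1})$. To upgrade ``de~Branges complementary'' to
``orthogonal direct sum'' \eqref{eq:1.2}, I would invoke
Theorem~\ref{deBrCompl}: it suffices to check $\sR=\{0\}$. This is the
step I expect to be the main obstacle; I would argue that any
$h\in\sR$ lies simultaneously in $H_2^p$ (from $h\in\cH(s_r)$) and in
$s_r\cK(b_r^{-1})\subseteq s_rb_r^{-1}(H_2^q)^\perp\cdot\{\dots\}$,
more precisely that $b_rs_r^{-1}$-type considerations, or the
noncancellation \eqref{KRcanon2} / coprimeness, force the
$H_2^p$-component and its complement to vanish — equivalently, that the
coisometry $s_r:\cK(b_r^{-1})\to\cK_2$ has trivial kernel because
$\cK(b_r^{-1})$ is the (negative) antispace of $b_r^{-1}\cH(b_r)$ and
$s_r$ acts injectively there by \eqref{KRcanon}. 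Once $\sR=\{0\}$,
Theorem~\ref{deBrCompl} yields the orthogonal decomposition
\eqref{eq:1.2}, and moreover that $\cH(s_r)$ is contained
\emph{isometrically}, so $\|f-s\Gamma_rf\|^2_{\cK(s)}=\|f-s\Gamma_r
f\|^2_{\cH(s_r)}$ whenever $f-s\Gamma_rf\in\cH(s_r)$.

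For part~(2), the key point is to identify, for a given $f\in\cK(s)$,
the two components of $f$ under the decomposition \eqref{eq:1.2}. I
claim the $\cH(s_r)$-component is $f-s\Gamma_rf$ and the
$s_r\cK(b_r^{-1})$-component is $s\Gamma_rf$; note $s\Gamma_r
f=s_rb_r^{-1}\Gamma_rf$ with $\Gamma_rf\in\cH(b_r)$, and by the
example preceding this subsection $b_r^{-1}\Gamma_rf\in\cK(b_r^{-1})$,
so indeed $s\Gamma_rf\in s_r\cK(b_r^{-1})$. It remains to verify
$f-s\Gamma_rf\in\cH(s_r)$. Here I would use the de~Branges--Rovnyak
characterization in Example~\ref{dBspace}: since $f\in\cK(s)\subseteq
H_2^p$ only after an appropriate shift, I would instead argue at the
level of reproducing kernels — writing a generic element of $\cK(s)$ as
a finite linear combination $\sum_j{\mathsf\Lambda}^s_{\omega_j}u_j$,
applying $\Gamma_r$ (which is meaningful on $L_2^p$ by
Definition~\ref{GammaS}) and using \eqref{GfGg}, namely
$P_-s\Gamma_rg=P_{\cH_*(b_r^{-1})}g$ in the appropriate guise, to see
that $P_-(f-s\Gamma_rf)$ kills the ``bad'' part of $f$, i.e.
$f-s\Gamma_rf$ has the boundedness property \eqref{dBR2}. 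Equivalently
and more cleanly: the orthogonal projection of $f$ onto $\cH(s_r)$
along $s_r\cK(b_r^{-1})$ exists by part~(1), call it $f_1$; then
$f-f_1\in s_r\cK(b_r^{-1})$, say $f-f_1=s_rb_r^{-1}h$ with
$h\in\cH(b_r)$, and applying $\Gamma_r$ and using
Remark~\ref{rem:Ker*2} (characterizing preimages under $X_r$) together
with the identity $P_-s(f-f_1)=P_-s\cdot s_rb_r^{-1}h$ one recovers
$h=\Gamma_rf$, whence $f-f_1=s\Gamma_rf$ as asserted. This is the
second delicate point: matching the abstract de~Branges projection with
the concrete operator $\Gamma_r$ defined via the Krein--Langer data.

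Finally, \eqref{eq:1.4} follows by combining the orthogonality in
\eqref{eq:1.2} with the norm formulas on the two summands:
$$
\langle f,f\rangle_{\cK(s)}
=\|f-s\Gamma_rf\|^2_{\cH(s_r)}
+\langle s\Gamma_rf,\,s\Gamma_rf\rangle_{s_r\cK(b_r^{-1})}.
$$
On the second summand, the coisometry $s_r:\cK(b_r^{-1})\to
s_r\cK(b_r^{-1})$ from Theorem~\ref{FACTK} (with trivial kernel, as
established above, hence an isometric bijection) gives $\langle
s\Gamma_rf,s\Gamma_rf\rangle_{s_r\cK(b_r^{-1})}=\langle
b_r^{-1}\Gamma_rf,\,b_r^{-1}\Gamma_rf\rangle_{\cK(b_r^{-1})}$, and by
the computation in the last example of Subsection~2.2 the right side
equals $-\langle b_r^{-1}\Gamma_rf,b_r^{-1}\Gamma_rf\rangle_{st}$;
multiplying by $b_r$ inside the standard inner product (a unitary on
$L_2^q$) collapses this to $-\|\Gamma_rf\|^2_{st}$. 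Substituting gives
\eqref{eq:1.4}. Thus the whole argument reduces to two verifications —
$\sR=\{0\}$ (to get orthogonality and the isometric embedding of
$\cH(s_r)$) and the identification of the de~Branges components with
$f-s\Gamma_rf$ and $s\Gamma_rf$ — with everything else being
bookkeeping against the machinery of Section~2. I expect the
$\sR=\{0\}$ step to be where the coprimeness hypothesis \eqref{KRcanon}
genuinely enters and hence the crux of the proof.
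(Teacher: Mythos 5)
Your route is the paper's route: decompose the kernel via \eqref{eq:1.1}, feed it to Theorems~\ref{SUMK12} and \ref{FACTK}, reduce orthogonality of \eqref{eq:1.2} to the triviality of the overlapping space, and identify the two components of $f$ by computing $P_-f$ and inverting $X_r$ (your recovery of $h=\Gamma_rf$ from $P_-(f-f_1)=X_rh\in\cH_*(b_\ell)$ is exactly the paper's step, and the norm formula \eqref{eq:1.4} then follows as you say).

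The one point you leave genuinely open is the crux you yourself flag: $\cH(s_r)\cap s_r\cK(b_r^{-1})=\{0\}$. The justification you sketch --- that multiplication by $s_r$ has trivial kernel on $\cK(b_r^{-1})$ because of \eqref{KRcanon} --- is not the statement you need: injectivity of $s_r$ on $\cK(b_r^{-1})$ says nothing about whether its \emph{image} meets $\cH(s_r)$. What closes the gap is the Bezout identity of Lemma~\ref{Corona1}: choose $c\in H_\infty^{\qtq}$ and $d\in H_\infty^{\qtp}$ with $cb_r+ds_r=I_q$. If $f=s_rh\in\cH(s_r)$ with $h\in\cK(b_r^{-1})=b_r^{-1}\cH(b_r)$, then
\[
h=c(b_rh)+d(s_rh)=c(b_rh)+df\in H_2^q,
\]
since $b_rh\in\cH(b_r)\subset H_2^q$ and $f\in\cH(s_r)\subset H_2^p$; but $h\in\cK(b_r^{-1})\subset(H_2^q)^\perp$, so $h=0$ and $f=0$. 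With that line inserted in place of your gesture, the argument is complete and coincides with the paper's proof.
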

\begin{proof}
If $f\in\cH(s_r)\cap s_r\cK(b^{-1}_r)$ and $f=s_r h$ for some
$h\in \cK(b^{-1}_r)$, then,  since the factorization (\ref{KLright}) is
coprime, it follows from
Lemma~\ref{Corona1} that there exists a pair of mvf's
$c\in H^{q\times q}_\infty,\ d\in H^{q\times p}_\infty$ such that
\begin{equation}\label{eq:1.5A}
   cb_r+ds_r=I_q.
\end{equation}
Hence
$$
h=(cb_r) h+d(s_rh)=c(b_rh)+df\in H^q_2.
$$
Since $h\in\cK(b_r^{-1})\subset (H^q_2)^\perp$ this implies that
$h\equiv 0$, and hence $f\equiv 0$, i.e.,
    \begin{equation}\label{eq:1.5}
  \cH(s_r)\cap s_r\cK(b^{-1}_r)=\{0\}.
\end{equation}
The space $\cK(s)$ can be identified with the set $\cK$ of vvf's
\begin{equation}\label{eq:Ks}
f(\lam)=
\left[%
\begin{array}{cc}
  I_p & -s(\lam) \\
\end{array}%
\right]
 \begin{bmatrix}f_1(\lam)\\f_2(\lam)\end{bmatrix}
\quad\text{with}\quad f_1\in\cH(s_r)\quad\text{and}\quad
f_2\in\cH(b_r)
\end{equation}
endowed with the indefinite inner product
\begin{equation}
\label{eq:ipk} \langle f, f\rangle_{\cK}=\langle f_1,
f_1\rangle_{\cH(s_r)}-\langle f_2, f_2\rangle_{st},
\end{equation}
which is correctly defined in view of~\eqref{eq:1.5}. In particular,
\begin{equation}
\label{eq:Lu} {\mathsf \Lambda}^s_\omega u=\left[%
\begin{array}{cc}
  I_p & -s \\
\end{array}%
\right]
\begin{bmatrix}{\mathsf \Lambda}^{s_r}_\omega u \\
{\mathsf \Lambda}^{b_r}_\omega s(\omega)^*u\end{bmatrix} \quad
\text{for every $\omega\in\gh_s^+$ and $u\in\CC^p$}
\end{equation}
and
\begin{eqnarray*}
\langle f,{\mathsf \Lambda}^s_\omega u\rangle_{\cK}&=& \langle f_1, {\mathsf
\Lambda}^{s_r}_\omega u
\rangle_{\cH(s_r)}-\langle f_2, {\mathsf \Lambda}^{b_r}_\omega s(\omega)^*u\rangle_{st}\\
&=&u^*f_1(\omega)-u^*s(\omega)f_2(\omega)=u^*f(\omega)
\end{eqnarray*}
for every $f\in\cK$. Since $\cK$ is a Pontryagin space with respect
to the indefinite inner product (\ref{eq:ipk}), it coincides with
$\cK(s)$.

Next, the decomposition (\ref{eq:Ks}) implies that $P_-f=P_{\cH_*(b_\ell)}f$
for every $f\in\cK(s)$. Therefore, by another application of  (\ref{eq:Ks}),
\[
P_{\cH_*(b_\ell)}f=P_-f=-P_-sf_2=-X_rf_2,
\]
i.e.,
$$
f_2=-X_r^{-1}P_{\cH_*(b_\ell)}f=-\Gamma_r f\quad
\textrm{belongs to $\cH(b_r)$  for every}\ f\in\cK(s)
$$
and
\[
    f-s\Gamma_r f=f+sf_2=f_1\quad \textrm{belongs to}\  \cH(s_r).
\]
This proves the second statement.
\end{proof}

\begin{corollary}
The space $\cK(s)$ consists of vvf's $f\in H^p_2\oplus\cH_*(b_\ell)$
such that
\begin{equation}\label{eq:1.11}
\sup\{\|f-s\Gamma_r
f-s_r\varphi\|^2_{st}-\|\varphi\|^2_{st}:\,\varphi\in
H_2^q\}<\infty.
\end{equation}
Moreover,  if $f\in\cK(s)$, then
\begin{equation}
\label{eq:jul3a7} \langle f,f\rangle_{\cK(s)}=\sup\{\|f-s\Gamma_r
f-s_r\varphi\|^2_{st}-\|\varphi\|^2_{st}:\,\varphi\in
H_2^q\}-\|\Gamma_rf\|^2_{st}\,.
\end{equation}
\end{corollary}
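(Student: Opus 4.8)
The plan is to read this corollary off directly from Theorem~\ref{thm:1.10} together with the de~Branges--Rovnyak description of $\cH(s_r)$ recorded in Example~\ref{dBspace}. I will use repeatedly that for $g\in H_2^p$ the quantity $\alpha(g)=\sup\{\|g+s_r\varphi\|^2_{st}-\|\varphi\|^2_{st}:\varphi\in H_2^q\}$ is unchanged if $s_r\varphi$ is replaced by $-s_r\varphi$, since $\varphi\mapsto-\varphi$ is a bijection of $H_2^q$; hence $g\in\cH(s_r)$ if and only if $\sup\{\|g-s_r\varphi\|^2_{st}-\|\varphi\|^2_{st}:\varphi\in H_2^q\}<\infty$, and in that case this supremum equals $\|g\|^2_{\cH(s_r)}$. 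I will also use the identification $s_r\cK(b_r^{-1})=s\,\cH(b_r)$ and the defining property $X_r\Gamma_rg=P_{\cH_*(b_\ell)}g$ of $\Gamma_r$ on $L_2^p$.

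First I would treat the inclusion from left to right, proving \eqref{eq:jul3a7} along the way. Let $f\in\cK(s)$. Theorem~\ref{thm:1.10} gives $\Gamma_rf\in\cH(b_r)$, $g:=f-s\Gamma_rf\in\cH(s_r)\subseteq H_2^p$ and $f=g+s\Gamma_rf$ with $s\Gamma_rf\in s_r\cK(b_r^{-1})\subseteq L_2^p$; in particular $f\in L_2^p$. Applying $P_-$ and using $g\in H_2^p$ together with $\Gamma_rf\in\cH(b_r)$ we get $P_-f=P_-s\Gamma_rf=X_r\Gamma_rf=P_{\cH_*(b_\ell)}f\in\cH_*(b_\ell)$, so $f=P_+f+P_-f\in H_2^p\oplus\cH_*(b_\ell)$. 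Since $g\in\cH(s_r)$, the supremum in \eqref{eq:1.11} equals $\alpha(g)=\|g\|^2_{\cH(s_r)}<\infty$, and substituting this into \eqref{eq:1.4} produces \eqref{eq:jul3a7}.

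For the reverse inclusion, let $f\in H_2^p\oplus\cH_*(b_\ell)\subseteq L_2^p$ satisfy \eqref{eq:1.11}. Set $f_2:=-\Gamma_rf=-X_r^{-1}P_{\cH_*(b_\ell)}f\in\cH(b_r)$ and $f_1:=f+sf_2=f-s\Gamma_rf$, so that $f=f_1-sf_2$. Because $f\in H_2^p\oplus\cH_*(b_\ell)$ we have $P_-f=P_{\cH_*(b_\ell)}f$, whence $P_-f_1=P_-f+P_-sf_2=P_{\cH_*(b_\ell)}f+X_rf_2=P_{\cH_*(b_\ell)}f-P_{\cH_*(b_\ell)}f=0$, i.e.\ $f_1\in H_2^p$. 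Now the supremum defining $\alpha(f_1)=\alpha(f-s\Gamma_rf)$ is exactly the finite supremum in \eqref{eq:1.11}, so Example~\ref{dBspace} yields $f_1\in\cH(s_r)$ with $\|f_1\|^2_{\cH(s_r)}=\alpha(f_1)$. Therefore $f=f_1-sf_2$ with $f_1\in\cH(s_r)$ and $f_2\in\cH(b_r)$, which by the identification \eqref{eq:Ks}--\eqref{eq:ipk} of $\cK(s)$ established inside the proof of Theorem~\ref{thm:1.10} means $f\in\cK(s)$ with $\langle f,f\rangle_{\cK(s)}=\|f_1\|^2_{\cH(s_r)}-\|f_2\|^2_{st}=\alpha(f_1)-\|\Gamma_rf\|^2_{st}$, which is \eqref{eq:jul3a7} once more.

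The computation is mostly bookkeeping once Theorem~\ref{thm:1.10} and Example~\ref{dBspace} are in hand; the one place that needs genuine attention is the reverse direction, where one must verify $f_1\in H_2^p$ before the de~Branges--Rovnyak criterion can be applied. This is precisely where the hypothesis $f\in H_2^p\oplus\cH_*(b_\ell)$, rather than merely $f\in L_2^p$, enters: it forces $P_-f=P_{\cH_*(b_\ell)}f$, which combines with $X_r\Gamma_rf=P_{\cH_*(b_\ell)}f$ to cancel the $(H_2^p)^\perp$-component of $f_1$.
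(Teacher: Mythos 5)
Your argument is correct and is exactly the intended derivation: the paper states this corollary without proof as an immediate consequence of Theorem~\ref{thm:1.10}, and your two directions (reading off \eqref{eq:jul3a7} from \eqref{eq:1.4} plus the de~Branges--Rovnyak description of $\cH(s_r)$ in Example~\ref{dBspace}, and conversely reconstructing the pair $(f_1,f_2)$ of \eqref{eq:Ks} from $f$ via $\Gamma_r$) fill in precisely the bookkeeping the authors leave to the reader. The one step that genuinely needed care --- verifying $P_-f_1=0$ so that the criterion of Example~\ref{dBspace} applies --- is handled correctly by combining $P_-f=P_{\cH_*(b_\ell)}f$ with $X_r\Gamma_rf=P_{\cH_*(b_\ell)}f$.
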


Let $\cK_*(s)$ designate  the reproducing kernel Pontryagin space with kernel
$$
{\mathsf  L}^s_\omega(\lambda)
=\frac{I_q-s^\#(\lambda)s^\#(\omega)^*}{-\rho_\omega(\lambda)}\,.
$$
The left Kre\u{\i}n-Langer
factorization (\ref{KLleft}) yields the representation
\begin{equation}\label{eq:1.15}
{\mathsf L}^s_\omega(\lambda)={\mathsf
L}^{s_\ell}_\omega(\lambda)+s^\#_\ell(\lambda){\mathsf
L}^{b_\ell}_\omega(\lambda)s^\#_\ell(\omega)^*
   \quad \text{on} \ \gh^-_{s^\#}\times \gh^-_{s^\#},
\end{equation}
which leads to a dual version of Theorem \ref{thm:1.10}:

\begin{thm}\label{thm:1.12}
If $s\in {\mathcal S}^{\ptq}_\kappa$ and its left Kre\u{i}n-Langer
factorization (\ref{KLleft}) is $s=b_\ell^{-1}s_\ell$, then:
\begin{enumerate}
    \item [\rm(1)] the space $\cK_*(s)$ admits the fundamental
    decomposition
    \begin{equation}\label{eq:1.16}
    \cK_*(s)=\cH_*(s_\ell)\oplus s_\ell^*  \cK(b^{-1}_\ell);
    \end{equation}
    \item [\rm(2)]for every $f\in \cK_*(s)$ its orthogonal
    decomposition corresponding to (\ref{eq:1.16}) takes the form
    \begin{equation}\label{eq:1.17}
   f=(f-s^*\Gamma_\ell f)+s^*\Gamma_\ell f,
\end{equation}
where $f-s^*\Gamma_\ell f\in\cH_*(s_\ell),\ s^*\Gamma_\ell f\in
s_\ell^*\cK(b^{-1}_\ell)$ and
\begin{equation}\label{eq:1.18}
   \langle
   f,f\rangle_{\cK_*(s)}=\|f-s^*\Gamma_\ell f\|^2_{\cH_*(s_\ell)}-\|\Gamma_\ell f\|^2_{st}.
   \end{equation}
\end{enumerate}
\end{thm}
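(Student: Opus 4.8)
The statement for $\cK_*(s)$ is exactly the dual of Theorem~\ref{thm:1.10}, obtained by replacing $s$ with $s^\#$ and interchanging the roles of $H_2$ and $(H_2)^\perp$, so the cleanest route is to reduce to the already‑established result rather than to repeat the argument. First I would observe that $s\in\cS^{\ptq}_\kappa(\Omega_+)$ has the left Kre\u{\i}n–Langer factorization $s=b_\ell^{-1}s_\ell$ precisely when $s^\#$ has the \emph{right} Kre\u{\i}n–Langer factorization $s^\#=s_\ell^\# (b_\ell^\#)^{-1}$, with $b_\ell^\#$ a Blaschke–Potapov product of degree $\kappa$ (in $\Omega_-$, or, after the standard change of variable $\lambda\mapsto\lambda^o$, in $\Omega_+$) and $s_\ell^\#\in\cS^{\qtp}$. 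The kernel ${\mathsf L}^s_\omega(\lambda)$ is by definition ${\mathsf \Lambda}^{s^\#}$‑type up to the sign convention built into~\eqref{KernelL}, so the reproducing kernel space $\cK_*(s)$ is the ``$\#$‑image'' of a space of the form $\cK(\widetilde s)$ with $\widetilde s=s^\#$; more precisely, $f\mapsto f^\#$ is a unitary map between $\cK_*(s)$ and $\cK(s^\#)$ (this is routine and I would state it as such).

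Granting that dictionary, the three assertions transcribe term by term. Applying Theorem~\ref{thm:1.10}(1) to $s^\#=s_\ell^\#(b_\ell^\#)^{-1}$ gives $\cK(s^\#)=\cH(s_\ell^\#)\oplus s_\ell^\#\,\cK((b_\ell^\#)^{-1})$; pulling back by the $\#$‑map and using $\cH(s_\ell^\#)=\cH_*(s_\ell)$ and $s_\ell^\#\,\cK((b_\ell^\#)^{-1})=s_\ell^*\,\cK(b_\ell^{-1})$ (again just the $\#$‑translation of Examples~\ref{dBspace}–\ref{dBspace1} and of the antispace example) yields~\eqref{eq:1.16}. For part (2), the operator that appears in Theorem~\ref{thm:1.10}(2) is $\Gamma_r$ built from the right factorization of $s^\#$, i.e. from $b_\ell^\#$ and $s_\ell^\#$; by Definition~\ref{GammaS} and Lemma~\ref{Ker*}, this is exactly (the $\#$‑conjugate of) the operator $\Gamma_\ell$ attached to the left factorization of $s$, because $X_\ell$ for $s$ is $P_+s^*h$ on $\cH_*(b_\ell)$, which matches $X_r$ for $s^\#$ under the $\#$‑identification $\cH_*(b_\ell)\leftrightarrow\cH(b_\ell^\#)$. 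Hence~\eqref{eq:1.3} becomes~\eqref{eq:1.17} with $s\Gamma_r$ replaced by $s^*\Gamma_\ell$, the membership statements~\eqref{eq:1.3a} become the two inclusions $f-s^*\Gamma_\ell f\in\cH_*(s_\ell)$, $s^*\Gamma_\ell f\in s_\ell^*\cK(b_\ell^{-1})$, and the norm formula~\eqref{eq:1.4} becomes~\eqref{eq:1.18} (note $\|\Gamma_r f\|_{st}$ transcribes to $\|\Gamma_\ell f\|_{st}$ since the $\#$‑map is isometric on $L_2$).

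The main thing to be careful about—and what I expect to be the only real obstacle—is bookkeeping of the two sign/inversion conventions: the minus sign in the denominator of~\eqref{KernelL} versus $\rho_\omega$, and the placement of $\Omega_+$ versus $\Omega_-$ (equivalently, whether one conjugates by $\lambda\mapsto\lambda^o$ before invoking Theorem~\ref{thm:1.10}). One must check that~\eqref{eq:1.15} is genuinely the image of the decomposition~\eqref{eq:1.1} under these conventions, so that the degrees, the coprimeness (needed, as in the proof of Theorem~\ref{thm:1.10}, to get $\cH_*(s_\ell)\cap s_\ell^*\cK(b_\ell^{-1})=\{0\}$ from the dual Corona Lemma~\ref{Corona}), and the direction of the projections $P_\pm$ all line up. Once that verification is in place, the proof is: state the $\#$‑duality as a lemma (or inline remark), check it respects Kre\u{\i}n–Langer factorizations, the spaces $\cH,\cH_*$, the antispaces $\cK(b^{-1})$, and the operators $\Gamma_r,\Gamma_\ell$, and then quote Theorem~\ref{thm:1.10} verbatim. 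Alternatively—and I would mention this as a fallback in case the reader prefers a self‑contained argument—one can simply repeat the proof of Theorem~\ref{thm:1.10} line for line: identify $\cK_*(s)$ with pairs $(f_1,f_2)\in\cH_*(s_\ell)\times\cH(b_\ell^\#)$ via $f=\begin{bmatrix} I_q & -s^\#\end{bmatrix}\begin{bmatrix} f_1\\ f_2\end{bmatrix}$ with inner product $\langle f_1,f_1\rangle_{\cH_*(s_\ell)}-\langle f_2,f_2\rangle_{st}$, verify the reproducing property against ${\mathsf L}^s_\omega u$ using~\eqref{eq:1.15}, invoke the dual Corona lemma for well‑definedness, and read off $f_2=-\Gamma_\ell f$ from $P_+f=P_{\cH(b_\ell^\#)}f=-X_\ell f_2$.
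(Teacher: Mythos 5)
Your proposal matches the paper, which gives no separate proof at all: it simply records the kernel decomposition \eqref{eq:1.15} and declares Theorem~\ref{thm:1.12} to be the resulting dual of Theorem~\ref{thm:1.10}, exactly the reduction you describe. The only point needing care is that $f\mapsto f^{\#}$ is antilinear (one really conjugates by a transpose-type map, or equivalently interchanges $P_+$ and $P_-$ throughout), and your fallback of transcribing the proof of Theorem~\ref{thm:1.10} verbatim with $\cH_*(s_\ell)$, $\cH_*(b_\ell)$, $X_\ell$, $\Gamma_\ell$ in place of their right-handed counterparts disposes of that cleanly.
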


\begin{corollary}
The space $\cK_*(s)$ consists of vvf's $f\in
(H^q_2)^\perp\oplus\cH(b_r)$ such that
\begin{equation}\label{eq:1.11a}
\sup\{\|f-s^*\Gamma_\ell
f-s^*_\ell\varphi\|^2_{st}-\|\varphi\|^2_{st}:\,\varphi\in
   (H^p_2)^\perp\}-\|\Gamma_\ell f\|^2_{st}<\infty.
\end{equation}
For every $f\in\cK_*(s)$ the inner product $\langle
f,f\rangle_{\cK_*(s)}$ coincides with the left hand side
of~\eqref{eq:1.11a}.
\end{corollary}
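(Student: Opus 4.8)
**Proof plan for the corollary describing $\cK_*(s)$.**

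The plan is to mimic exactly the proof of the previous Corollary (the one for $\cK(s)$), since the present statement is its dual obtained by passing from $s$ to $s^\#$ and interchanging the roles of left/right Kre\u{\i}n-Langer factorizations. First I would invoke Theorem~\ref{thm:1.12}: by \eqref{eq:1.16} every $f\in\cK_*(s)$ decomposes as $f=(f-s^*\Gamma_\ell f)+s^*\Gamma_\ell f$ with $f-s^*\Gamma_\ell f\in\cH_*(s_\ell)$ and $s^*\Gamma_\ell f\in s_\ell^*\cK(b_\ell^{-1})$. Since $\cH_*(s_\ell)\subset(H_2^q)^\perp$ and, by Lemma~\ref{Ker*}(2), $\Gamma_\ell f\in\cH_*(b_\ell)\subset(H_2^p)^\perp$ so that $s_\ell^*\Gamma_\ell f$ lies in the appropriate space whose $P_-$-component sits in $\cH(b_r)$ (via $X_\ell$), one reads off that $f\in(H_2^q)^\perp\oplus\cH(b_r)$ exactly as in \eqref{eq:Ks}. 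Thus the ambient-space membership in \eqref{eq:1.11a} is forced.

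Next I would identify the $\cH_*(s_\ell)$-norm of the component $g:=f-s^*\Gamma_\ell f$ using the de~Branges--Rovnyak-type description of $\cH_*(s_\ell)$ recorded in Example~\ref{dBspace1}: since $g\in\cH_*(s_\ell)$,
\[
\|g\|_{\cH_*(s_\ell)}^2=\sup\{\|g+s_\ell^\#\varphi\|_{st}^2-\|\varphi\|_{st}^2:\ \varphi\in(H_2^p)^\perp\}.
\]
Here one must be slightly careful with the $\#$ versus $*$ bookkeeping: on $\Omega_0$ the boundary values satisfy $s_\ell^\#=s_\ell^*$, so $g+s_\ell^\#\varphi$ may be written $f-s^*\Gamma_\ell f-s_\ell^*\varphi$, which matches the quantity inside the supremum in \eqref{eq:1.11a}. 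Combining this with the norm formula \eqref{eq:1.18} of Theorem~\ref{thm:1.12}, namely $\langle f,f\rangle_{\cK_*(s)}=\|g\|_{\cH_*(s_\ell)}^2-\|\Gamma_\ell f\|_{st}^2$, gives precisely \eqref{eq:1.11a} together with the asserted equality of $\langle f,f\rangle_{\cK_*(s)}$ with the left-hand side. Finiteness is automatic because $f\in\cK_*(s)$ forces $g\in\cH_*(s_\ell)$, so the supremum is finite.

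For the converse direction — that any $f$ in $(H_2^q)^\perp\oplus\cH(b_r)$ for which the left-hand side of \eqref{eq:1.11a} is finite actually belongs to $\cK_*(s)$ — I would argue as follows. Finiteness of the supremum, with $\Gamma_\ell f$ well defined since $f$ has the stated form, means $f-s^*\Gamma_\ell f$ satisfies the membership criterion of Example~\ref{dBspace1}, hence lies in $\cH_*(s_\ell)$; and $s^*\Gamma_\ell f=s_\ell^*\Gamma_\ell f$ (boundary values) lies in $s_\ell^*\cK(b_\ell^{-1})$ because $\Gamma_\ell f\in\cH_*(b_\ell)$ and $\cH_*(b_\ell)$ is identified with $\cK(b_\ell^{-1})$ up to the antispace sign (compare the Example on $\cK(b^{-1})$ and the definition in \eqref{eq:feb10a8}). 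Therefore $f$ lies in the direct sum \eqref{eq:1.16}, i.e.\ $f\in\cK_*(s)$. The main obstacle I anticipate is purely notational rather than conceptual: keeping the $s^\#$ / $s^*$ distinction straight, verifying that the projection identities $P_+s_\ell^\#(\cdot)$ on $(H_2^p)^\perp$ land where claimed, and checking that the uniqueness-of-decomposition clause in Theorem~\ref{thm:1.12} is what licenses reading the norm off the two pieces; none of these requires new ideas beyond the already-proved dual Theorem~\ref{thm:1.10} and its Corollary.
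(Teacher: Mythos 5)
Your proposal is correct and follows exactly the route the paper intends: the corollary is stated without proof as an immediate consequence of Theorem~\ref{thm:1.12} (the decomposition \eqref{eq:1.17} and the norm formula \eqref{eq:1.18}) combined with the de~Branges--Rovnyak description of $\cH_*(s_\ell)$ from Example~\ref{dBspace1}, which is precisely what you assemble. Your additional checks (the projection identity $P_+(f-s^*\Gamma_\ell f)=0$ via \eqref{GfGg}, and the harmless replacement of $\varphi$ by $-\varphi$ to reconcile the signs) are the right ones and close the argument in both directions.
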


Another decomposition of the reproducing kernel Pontryagin space
$\cK(s)$ can
be based on the
representation
\begin{equation}\label{Lambdecom}
    {\mathsf { \Lambda}}_\omega^s(\lambda)=b_{\ell}^{-1}(\lam){\mathsf
    \Lambda}_\omega^{s_{\ell}}(\lambda)b_{\ell}(\om)^{-*}+{\mathsf
    \Lambda}_\omega^{b_{\ell}^{-1}}(\lambda)
    \quad\text{on}\ \mathfrak{h}_s^+\times\mathfrak{h}_s^+.
\end{equation}
In view of Theorems~\ref{SUMK12}, \ref{FACTK} this leads to the
following representation of $\cK(s)$ 
\begin{equation}\label{Hsdecom}
    \cK(s)=b_{\ell}^{-1}\cH(s_{\ell})\dotplus\cK(b_{\ell}^{-1}),
\end{equation}
where $\cH(s_{\ell})$ is a Hilbert space and $\cK(b_{\ell}^{-1})$ is
the antispace of  the Hilbert space $b_{\ell}^{-1}\cH(b_{\ell})$.
The decomposition (\ref{Hsdecom}) is not necessarily an orthogonal
decomposition in $\cK(s)$ (see e.g., \cite[p.148]{ADRS} for an
example). However, if $s_{\ell}\in\cS_{in}^{\ptq}$, then
$$
\Vert f\Vert_{\cH(s_\ell)}=\Vert f\Vert_{st}
$$
and the decomposition (\ref{Hsdecom}) is orthogonal.

\begin{thm}\label{Hsinner}
If $s \in {\mathcal S}_{\kappa}^{\ptq}$ and its left
Kre\u{\i}n-Langer factorization (\ref{KLleft}) is
$b_\ell^{-1}s_\ell$ and $s_{\ell} \in {\mathcal S}_{in}^{\ptq}$,
then:
\begin{enumerate}
\item[\rm(1)]
the decomposition~\eqref{Hsdecom} of $\cK(s)$ is orthogonal; \vskip 6pt
\item[\rm(2)]
$f\in\cK(s)$ if and only if
\begin{equation}\label{eq:1.35a}
    b_{\ell}f\in H_2^p\quad \textrm{and}\quad b_r^*s^*f\in(H_2^q)^\perp.
\end{equation}
\end{enumerate}
\end{thm}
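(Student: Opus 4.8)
The plan is to build both statements on the decomposition \eqref{Hsdecom} together with Theorems \ref{SUMK12} and \ref{FACTK}, and on the boundary characterizations of $\cH(s_\ell)$, $\cH(b_\ell^{-1})$ and $\cK_*(s)$ already recorded above. For (1): when $s_\ell\in\cS_{in}^{\ptq}$ we have $\|f\|_{\cH(s_\ell)}=\|f\|_{st}$, so the Hilbert space $\cH(s_\ell)$ equals $H_2^p\ominus s_\ell H_2^p$ isometrically, and multiplication by $b_\ell^{-1}$ carries it onto $b_\ell^{-1}\cH(s_\ell)$. I would invoke Theorem \ref{FACTK} (with $R=b_\ell^{-1}$) on the summands of \eqref{Lambdecom}, observing that the kernel of the multiplication operator reduces to a Hilbert space precisely because the Krein--Langer factorization is canonical \eqref{KLcanon}; then Theorem \ref{SUMK12} shows that equality in \eqref{sqK12} forces $b_\ell^{-1}\cH(s_\ell)$ and $\cK(b_\ell^{-1})$ to be complementary in the sense of de Branges, and their overlapping space $b_\ell^{-1}\cH(s_\ell)\cap\cK(b_\ell^{-1})$ is a Hilbert space. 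The point is that this intersection is actually $\{0\}$: an element $g$ in it would satisfy $b_\ell g\in H_2^p$ (from membership in $b_\ell^{-1}\cH(s_\ell)$, using $s_\ell$ inner) and simultaneously $b_\ell g\in b_\ell\cH(b_\ell)$, and a Lemma \ref{Corona}-type coprimeness argument (as in the proof of \eqref{eq:1.5}) kills it. By Theorem \ref{deBrCompl} the two subspaces are then orthogonal complements, which is (1).

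For (2), the "only if" direction: take $f\in\cK(s)$ and use the orthogonal decomposition from (1), $f=b_\ell^{-1}g_1+g_2$ with $g_1\in\cH(s_\ell)\subset H_2^p$ and $g_2\in\cK(b_\ell^{-1})=b_\ell^{-1}\cH(b_\ell)$. Then $b_\ell f=g_1+b_\ell g_2\in H_2^p$ since $b_\ell g_2\in\cH(b_\ell)\subset H_2^p$, giving the first inclusion in \eqref{eq:1.35a}. For the second, I would pass to the space $\cK_*(s)$: by Theorem \ref{FACTK} applied to the factorization \eqref{eq:1.1} (equivalently, by the identity ${\mathsf L}^s_\omega(\lambda)= s^\#(\lambda)^{-}\cdots$ relating $\cK(s)$ and $\cK_*(s)$ through multiplication by $s^\#$ or its reflection), $f\in\cK(s)$ corresponds to an element $\t f$ of $\cK_*(s)$; then the boundary description of $\cK_*(s)$ via Theorem \ref{thm:1.12} and Corollary after it, combined with the right Krein--Langer factorization $s=s_rb_r^{-1}$, yields $b_r^*s^*f\in(H_2^q)^\perp$. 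Concretely: from Theorem \ref{thm:1.10}, $f\in H_2^p\oplus\cH_*(b_\ell)$ and $\Gamma_rf\in\cH(b_r)$, so $P_-f=-X_r\Gamma_rf=-P_-s\Gamma_rf$; multiplying $s^*f$ by $b_r^*$ and using $b_r^*\cH(b_r)\subset(H_2^q)^\perp$ together with $s^*H_2^p\subset(H_2^q)^\perp$ for $s\in\cS^{\ptq}$ gives the claim.

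The "if" direction is the converse: suppose $f\in H_2^p\oplus\cH_*(b_\ell)$ satisfies $b_\ell f\in H_2^p$ and $b_r^*s^*f\in(H_2^q)^\perp$. Writing $f_1:=b_\ell f\in H_2^p$, the condition $b_r^*s^*f\in(H_2^q)^\perp$ rewrites (using $s=b_\ell^{-1}s_\ell$) as $b_r^*s_\ell^*f_1\in(H_2^q)^\perp$; since $s_\ell$ is inner this means $P_+s_\ell^*f_1\in\cH(b_r)$, i.e. $f_1$ lies in the orthogonal complement of $s_\ell b_rH_2^q$ inside $H_2^p$, which one checks is exactly $\cH(s_\ell)\dotplus s_\ell\cH(b_r)$; after peeling off the $\cH(s_\ell)$-component the remainder sits in $b_\ell^{-1}$ times $\cK(b_\ell^{-1})$, so $f\in b_\ell^{-1}\cH(s_\ell)\dotplus\cK(b_\ell^{-1})=\cK(s)$ by \eqref{Hsdecom}. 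I would organize this as: the two boundary conditions together say precisely that $f$ decomposes as in \eqref{Hsdecom}, and then finiteness of the relevant supremum (hence membership in $\cK(s)$) is automatic because each summand already lies in the corresponding space.

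The main obstacle I anticipate is the bookkeeping in identifying the overlapping space as $\{0\}$ and, relatedly, showing that the two boundary conditions in \eqref{eq:1.35a} are jointly \emph{sufficient} — i.e. that they don't merely place $f$ in some larger space but pin it exactly into the two summands of the orthogonal decomposition. Both hinge on the coprimeness (canonicity) of the Krein--Langer factorizations \eqref{KLcanon}, \eqref{KRcanon}, exploited through the Corona-type identities of Lemmas \ref{Corona}--\ref{Corona1} exactly as in the proofs of \eqref{eq:1.5} and Lemma \ref{Ker*}; the rest is an assembly of Theorems \ref{SUMK12}, \ref{FACTK}, \ref{deBrCompl} and \ref{thm:1.10}--\ref{thm:1.12}.
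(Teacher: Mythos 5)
Your part (1) and the first inclusion in \eqref{eq:1.35a} follow the paper's route: triviality of the overlapping space $b_\ell^{-1}\cH(s_\ell)\cap\cK(b_\ell^{-1})$ via the Corona identity $b_\ell c_\ell+s_\ell d_\ell=I_p$ of Lemma \ref{Corona}, then Theorem \ref{deBrCompl}. Your ``if'' direction is also correct and is a genuine (and arguably cleaner) alternative: the paper manufactures the two components via $x=b_\ell\Gamma_\ell(s^*f)$ and the identity $P_+\bigl(s^*f-s^*\Gamma_\ell(s^*f)\bigr)=0$, whereas you observe directly that the two boundary conditions place $b_\ell f$ in $H_2^p\ominus s_\ell b_rH_2^q=\cH(s_\ell)\oplus s_\ell\cH(b_r)$ and that $b_\ell^{-1}s_\ell\cH(b_r)\subset\cK(b_\ell^{-1})$.

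There is, however, a genuine gap in the ``only if'' direction for the second condition. The inclusion ``$s^*H_2^p\subset(H_2^q)^\perp$ for $s\in\cS^{\ptq}$,'' on which your ``concrete'' argument rests, is false (already for a nonzero constant $s$ one has $s^*H_2^p\subset H_2^q$); what is true is $s^*(H_2^p)^\perp\subset(H_2^q)^\perp$, and that alone cannot control the $H_2^p$-component of $f$. The alternative route you sketch through $\cK_*(s)$ is also unavailable here: the identification $\cK_*(s)=s^*\cK(s)$ of Corollary \ref{Hsin*in} is stated, and valid, only for square $s$ with square inner $s_\ell$. The repair is precisely the decomposition you already used for the first condition: write $f=b_\ell^{-1}h+b_\ell^{-1}x$ with $h\in\cH(s_\ell)$ and $x\in\cH(b_\ell)$; then a.e.\ on $\Omega_0$
\[
b_r^*s^*f=b_r^*s_\ell^*h+s_r^*(b_\ell^*x),
\]
where $s_\ell^*h\in(H_2^q)^\perp$ precisely because $h\perp s_\ell H_2^q$ and $s_\ell$ is inner (not merely because $h\in H_2^p$), while $b_\ell^*x\in\cH_*(b_\ell)\subset(H_2^p)^\perp$ and $s_r^*(H_2^p)^\perp\subset(H_2^q)^\perp$. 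This is the paper's argument; as written, your version would ``prove'' that $b_r^*s^*f\in(H_2^q)^\perp$ for every $f\in H_2^p\oplus\cH_*(b_\ell)$ with $\Gamma_rf\in\cH(b_r)$, which is false and would destroy the ``only if'' half of the characterization.
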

\begin{proof}
By Lemma \ref{Corona},
there exists a pair of mvf's $c_\ell$ and $d_\ell$ with entries in
$H_\infty$ such that
$b_{\ell}c_\ell+s_{\ell}d_\ell=I_p$. Thus, if
$f\in\cH(s_{\ell})\cap\cH(b_{\ell})$,
then
\[
\langle f,f\rangle_{st}=\langle f,b_{\ell}c_{\ell}f\rangle_{st}+\langle
f,s_{\ell}d_{\ell}f\rangle_{st}=0,
\]
and, hence, $f=0$, i.e., $\cH(s_{\ell})\cap\cH(b_{\ell})=\{0\}$. Therefore,
the decomposition~\eqref{Hsdecom} is orthogonal, by
Theorem~\ref{deBrCompl}.

If $f\in\cK(s)$ then it follows from~\eqref{Hsdecom} that $f$
admits the decomposition
\begin{equation}\label{eq:1.36}
    f=b_{\ell}^{-1}h+b_{\ell}^{-1}x,
\end{equation}
where $h\in\cH(s_{\ell})$ and $x\in\cH(b_{\ell})$. Therefore
\[
b_r^*s^*f=b_r^*(s_{\ell}^*h)+s_r^*(b_{\ell}^*x).
\]
Since $s_{\ell}\in {\mathcal S}_{in}^{p\times q}$ and $b_{\ell}\in
{\mathcal S}_{in}^{p\times p}$ one obtains
$s_{\ell}^*h\in(H_2^q)^\perp$, 
$b_{\ell}^*x\in\cH_*(b_{\ell})\subset(H_2^q)^\perp$. Thus
$b_r^*s^*f\in(H_2^q)^\perp$.

Conversely, assume that~\eqref{eq:1.35a} holds. Then
\[
s^*f\in b_r(H_2^q)^\perp=(H_2^q)^\perp\oplus\cH(b_r).
\]
In view of~\eqref{GfGg} $\Gamma_\ell(s^*f)$ satisfies the equality
\[
P_+(s^*f-s^*\Gamma_\ell(s^*f))=0
\]
and hence
\[
h=b_\ell(f-\Gamma_\ell(s^*f))\in\cH(s_{\ell}),\quad
x=b_\ell\Gamma_\ell(s^*f)\in\cH(b_{\ell}).
\]
Therefore, $f=b_{\ell}^{-1}h+b_{\ell}^{-1}x\in\cK(s)$.
\end{proof}
Similarly to the Example~\ref{dBspace} the right Kre\u{\i}n--Langer
factorization~\eqref{KLright} leads to the following  decomposition
of the reproducing kernel Pontryagin space $\cK_*(s)$
(see~\cite[Theorem~4.2.3]{ADRS})
\begin{equation}\label{Hsdecom*}
    \cK_*(s)=b_r^{-\#}\cH_*(s_r)\dotplus \cK_*(b_r^{-1}),
\end{equation}
where $\cH_*(s_r)$ is a Hilbert space  and $\cK_*(b_r^{-1})$ is
the antispace of a Hilbert space $b_r^{-\#}\cH_*(b_r)$.

\begin{corollary}\label{Hs*inner}
Let a mvf $s \in {\mathcal S}_{\kappa}^{\ptq}$ admit the Kre\u{\i}n-Langer
factorization (\ref{KLright}) with $s_r \in {\mathcal S}_{*in}^{\ptq}$. Then:
\begin{enumerate}
\item[(1)]
the decomposition~\eqref{Hsdecom*} of $\cK_*(s)$ is orthogonal; \vskip 6pt
\item[(2)]
$f\in\cK_*(s)$ if and only if
\begin{equation}\label{eq:1.350}
    b_r^*f\in (H_2^q)^\perp,\quad b_{\ell}sf\in H_2^p.
\end{equation}
\end{enumerate}
\end{corollary}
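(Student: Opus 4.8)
The plan is to obtain Corollary \ref{Hs*inner} as the dual of Theorem \ref{Hsinner} in the same way that Theorem \ref{thm:1.12} is the dual of Theorem \ref{thm:1.10}. The underlying mechanism is the map $s\mapsto s^{\#}$, which interchanges $\Omega_+$ and $\Omega_-$, interchanges the roles of $H_2$ and $(H_2)^{\perp}$, sends the kernel ${\mathsf \Lambda}^s_\omega(\lambda)$ to ${\mathsf L}^s_\omega(\lambda)$, and, at the level of Kre\u\i n--Langer factorizations, turns the right factorization $s=s_rb_r^{-1}$ into a left factorization $s^{\#}=b_r^{-\#}s_r^{\#}$ of $s^{\#}$ with inner factor $b_r^{-\#}$. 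Thus I would first note that if $s\in {\mathcal S}^{\ptq}_\kappa$ has right Kre\u\i n--Langer factorization \eqref{KLright} with $s_r\in {\mathcal S}_{*in}^{\ptq}$, then $s^{\#}\in {\mathcal S}_\kappa^{\qtp}$ has left Kre\u\i n--Langer factorization $s^{\#}=b_r^{-\#}\cdot s_r^{\#}$ with $s_r^{\#}\in {\mathcal S}_{in}^{\qtp}$ (the inner-ness of $s_r$ translates into $*$-inner-ness of... — more precisely, $s_r$ $*$-inner means $s_r(\mu)$ is a coisometry a.e., so $s_r^{\#}(\mu)=s_r(\mu^o)^*$ is an isometry a.e., i.e. $s_r^{\#}\in {\mathcal S}_{in}^{\qtp}$). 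Then $\cK_*(s)$ is, by definition, $\cK(s^{\#})$ for the $\qtp$ function $s^{\#}$, with the roles of $p$ and $q$ swapped.

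Next I would invoke Theorem \ref{Hsinner} applied to $s^{\#}\in {\mathcal S}_\kappa^{\qtp}$. Its left Kre\u\i n--Langer factorization is $s^{\#}=b_r^{-\#}s_r^{\#}$, so in the notation of that theorem the ``$b_\ell$'' is $b_r^{\#}$ and the ``$s_\ell$'' is $s_r^{\#}\in {\mathcal S}_{in}^{\qtp}$; the ``$b_r$'' of $s^{\#}$ (from its right Kre\u\i n--Langer factorization $s^{\#}=s_\ell^{\#}b_\ell^{-\#}$) is $b_\ell^{\#}$. Part (1) of Theorem \ref{Hsinner} then says the analogue of decomposition \eqref{Hsdecom} for $\cK(s^{\#})$ is orthogonal; translating back through the $\#$-operation this is exactly the statement that \eqref{Hsdecom*} is an orthogonal decomposition of $\cK_*(s)$, which proves (1). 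Part (2) of Theorem \ref{Hsinner} says $f\in\cK(s^{\#})$ iff $b_r^{\#}f\in H_2^p$ and $(b_\ell^{\#})^*(s^{\#})^*f\in (H_2^q)^\perp$, with sizes read off for $\qtp$ functions; applying the $\#$-operation (replace $f$ by $f^{\#}$, use $(g^{\#})^{\#}=g$, and that $\#$ maps $H_2$ onto $(H_2)^{\perp}$ and conversely) converts these two membership conditions into $b_r^*f\in (H_2^q)^\perp$ and $b_\ell sf\in H_2^p$, which is precisely \eqref{eq:1.350}, proving (2).

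Alternatively, and perhaps more transparently for the reader, one can give a direct proof that mirrors the proof of Theorem \ref{Hsinner} line by line: start from the representation \eqref{eq:1.15} with the right-hand side written multiplicatively, note that $s_r\in {\mathcal S}_{*in}^{\ptq}$ forces $\|\cdot\|_{\cH_*(s_r)}=\|\cdot\|_{st}$ so that by Theorems \ref{SUMK12} and \ref{FACTK} the decomposition \eqref{Hsdecom*} is orthogonal; then, using Lemma \ref{Corona1} to get mvf's $c_r,d_r$ with $c_rb_r+d_rs_r=I_q$, check $\cH_*(s_r)\cap\cH_*(b_r)=\{0\}$ to reconfirm orthogonality via Theorem \ref{deBrCompl}. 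For the membership characterization, decompose $f=b_r^{-\#}h+b_r^{-\#}x$ with $h\in\cH_*(s_r)$, $x\in\cH_*(b_r)$ following \eqref{Hsdecom*}, and compute $b_\ell sf=b_\ell(s_r^{\#\,-1})\cdots$ — in analogy with the computation of $b_r^*s^*f$ in Theorem \ref{Hsinner} — to see both conditions in \eqref{eq:1.350} hold; conversely, from \eqref{eq:1.350} produce the decomposition using $\Gamma_r$ and the identities \eqref{GfGg}, exactly as $\Gamma_\ell$ was used in the converse direction of Theorem \ref{Hsinner}.

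The main obstacle is purely bookkeeping: one must be scrupulous about which inner factor plays the role of ``$b_\ell$'' versus ``$b_r$'' after the $\#$-operation, and about the swap $p\leftrightarrow q$ and $H_2\leftrightarrow (H_2)^{\perp}$, since a single transposition error will flip a membership statement. I expect no genuine analytic difficulty beyond what is already contained in Theorem \ref{Hsinner}, Lemma \ref{Corona1}, and the identities \eqref{GfGg}; the safest exposition is to state explicitly the dictionary ``apply Theorem \ref{Hsinner} to $s^{\#}$'' and then verify that each of the two displayed conditions transforms correctly under $f\mapsto f^{\#}$.
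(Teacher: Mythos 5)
Your proposal is correct and follows the same route the paper intends: the corollary is stated without proof precisely because it is the dual of Theorem~\ref{Hsinner} under the $\#$-operation (which swaps $\Omega_+\leftrightarrow\Omega_-$, $H_2\leftrightarrow(H_2)^\perp$, left and right Kre\u{\i}n--Langer factorizations, and inner with $*$-inner), and your dictionary correctly sends the two conditions in~\eqref{eq:1.35a} to the two conditions in~\eqref{eq:1.350}. The only caution is the phrase ``$\cK_*(s)$ is, by definition, $\cK(s^{\#})$'': strictly, the kernel ${\mathsf L}^s_\omega(\lambda)$ lives on $\gh_{s^\#}^-\times\gh_{s^\#}^-$ with $-\rho_\omega(\lambda)$ in the denominator, so one must conjugate by the change of variable $\lambda\mapsto\lambda^\circ$ (or work with transposed vvf's, as the paper does for Lemma~\ref{Corona1}) — but you already flag and handle this in your bookkeeping.
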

Combining Theorem~\ref{Hsinner}  and Corollary~\ref{Hs*inner} one
obtains
\begin{corollary}\label{Hsin*in}
Let a mvf $s \in {\mathcal S}_{\kappa}^{m\times m}$ admit the
Kre\u{\i}n-Langer factorization (\ref{KLleft}) with $s_{\ell} \in {\mathcal
S}_{in}^{m\times m}$. Then:
\[
\cK_*(s)=s^*\cK(s).
\]
\end{corollary}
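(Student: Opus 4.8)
The plan is to combine the explicit descriptions of $\cK(s)$ and $\cK_*(s)$ that have just been established. By hypothesis $s\in{\mathcal S}_\kappa^{m\times m}$ has left Kre\u{\i}n--Langer factorization $s=b_\ell^{-1}s_\ell$ with $s_\ell\in{\mathcal S}_{in}^{m\times m}$. Since $s_\ell$ is inner and square, it is automatically $*$-inner; hence, writing the right factorization as $s=s_rb_r^{-1}$, the mvf $s_r$ is $*$-inner as well (a square inner function has an inner right factor, and the sizes force $s_r\in{\mathcal S}_{*in}^{m\times m}$). Thus both Theorem~\ref{Hsinner} and Corollary~\ref{Hs*inner} apply simultaneously, and the combined membership criteria read: $f\in\cK(s)$ iff $b_\ell f\in H_2^m$ and $b_r^*s^*f\in(H_2^m)^\perp$, while $g\in\cK_*(s)$ iff $b_r^*g\in(H_2^m)^\perp$ and $b_\ell sg\in H_2^m$.

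First I would show the set equality $\cK_*(s)=s^*\cK(s)$. If $f\in\cK(s)$, put $g=s^*f$; then $b_r^*g=b_r^*s^*f\in(H_2^m)^\perp$ by the second condition of Theorem~\ref{Hsinner}(2), and $b_\ell sg=b_\ell s s^*f=b_\ell f\in H_2^m$, using $ss^*=I_m$ a.e. on $\Omega_0$ (which holds because $s_\ell$, hence $s$, takes unitary boundary values in the square inner case) together with the first condition of Theorem~\ref{Hsinner}(2). So $g\in\cK_*(s)$ by Corollary~\ref{Hs*inner}(2). Conversely, if $g\in\cK_*(s)$, set $f=sg$ (equivalently $g=s^*f$ since $s$ is unitary-valued on $\Omega_0$); then $b_\ell f=b_\ell sg\in H_2^m$ and $b_r^*s^*f=b_r^*g\in(H_2^m)^\perp$, so $f\in\cK(s)$ by Theorem~\ref{Hsinner}(2). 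Hence $\cK_*(s)=s^*\cK(s)$ as sets, with $f\mapsto s^*f$ the bijection.

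Next I would check that the map $f\mapsto s^*f$ is isometric from $\cK(s)$ onto $\cK_*(s)$, which is what the claimed identity $\cK_*(s)=s^*\cK(s)$ really asserts (as spaces). The clean way is via the reproducing kernels: one verifies directly from the definitions of ${\mathsf\Lambda}^s_\omega(\lambda)$ and ${\mathsf L}^s_\omega(\lambda)$, using $s(\mu)s(\mu)^*=s(\mu)^*s(\mu)=I_m$ a.e.\ on $\Omega_0$, the $s^\#$-bookkeeping, and the fact that multiplication by $s^\#$ intertwines the relevant kernels, that $s^{\#}(\lambda)^{*}$-conjugation carries the kernel of $\cK_*(s)$ to that of $\cK(s)$; equivalently, one uses the orthogonal decompositions \eqref{Hsdecom} and \eqref{Hsdecom*} together with Theorem~\ref{FACTK}: multiplication by $s^*$ maps $b_\ell^{-1}\cH(s_\ell)=b_\ell^{-1}(H_2^m\ominus s_\ell H_2^m)$ onto $b_r^{-\#}\cH_*(s_r)$ isometrically and the antispace summand $\cK(b_\ell^{-1})$ onto $\cK_*(b_r^{-1})$ isometrically (both because in the square inner case $b_\ell$ and $b_r$ are related by $s_\ell b_r = b_\ell s_r$ up to unitary constants, so $s^*b_\ell^{-1}=b_r^{-\#}s_\ell^{\#}$ on $\Omega_0$), and the direct-sum decompositions are respected. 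Since both decompositions are orthogonal (by Theorem~\ref{Hsinner}(1) and Corollary~\ref{Hs*inner}(1)) and multiplication by a unitary-valued mvf preserves the standard inner product, the map is an isometry of Pontryagin spaces, and the negative indices match. This gives $\cK_*(s)=s^*\cK(s)$ with equality of norms.

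The main obstacle I anticipate is the bookkeeping around the relation between the left and right Blaschke--Potapov factors $b_\ell, b_r$ (and $s_\ell, s_r$) for a square inner $s$: one must pin down the identity $s_\ell b_r=V b_\ell s_r$ for some constant unitary $V$ (equivalently $b_\ell s = s_\ell b_r^{-1}$ type relations) from the coprimeness/noncancellation conditions \eqref{KLcanon}, \eqref{KRcanon}, so that the intertwining $s^* b_\ell^{-1}=b_r^{-\#}s_\ell^{\#}$ needed in the isometry step is legitimate. Everything else — membership via the two lemmas, the set-theoretic equality, and the identification of norms on each orthogonal summand — is routine once this intertwining and the boundary unitarity $ss^*=s^*s=I_m$ are in hand.
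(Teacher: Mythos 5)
Your proof is correct and follows essentially the same route as the paper: the paper's argument is precisely to combine the membership criteria of Theorem~\ref{Hsinner}(2) and Corollary~\ref{Hs*inner}(2), using that a square inner $s_\ell$ forces $s$ (hence $s_r$) to have unitary boundary values so that both criteria apply and $g=s^*f$ interchanges them. Your additional verification that $f\mapsto s^*f$ is isometric (via the kernel identity $s^\#(\lambda){\mathsf \Lambda}^s_\omega(\lambda)s^\#(\omega)^*={\mathsf L}^s_\omega(\lambda)$, which follows from $s^\#s=I_m$) goes slightly beyond what the paper records but is sound.
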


\subsection{The de Branges-Rovnyak space $\sD(s)$.}
The symbol  $A^{[-1]}$ stands for the Moore-Penrose pseudoinverse of the matrix $A$,
$$
dm(\mu)=\left\{\begin{array}{l}\frac{1}{2\pi}d\mu\quad\text{if}\ \Omega_0=\TT
\\ \\
d\mu\quad\text{if}\ \Omega_0=i\RR\end{array}\right.
$$
and
\begin{equation}\label{DeltaS}
\Delta_s(\mu):=\left[\begin{array}{cc}
I_p & -s(\mu)\\
-s(\mu)^* & I_q   \end{array} \right],  \quad \text{a.e. on}\  \Omega_0\quad
\text{for}\ s\in {\mathcal S}_{\kappa}^{\ptq}.
\end{equation}
\begin{definition}\label{WHDS}
    Let a mvf $s\in {\mathcal S}_{\kappa}^{\ptq}$ admit left and right Kre\u{\i}n-Langer
factorizations~\eqref{KLleft} and~\eqref{KLright}. Define $\sD(s)$
as the set of vvf's $ f(t)=\textup{col}\, (f^+(t), f^-(t))$, such
that  the following conditions hold:
\begin{enumerate}
\def\labelenumi{\rm (\roman{enumi})}
\item[(${\sD}1$)]
$b_{\ell} f^+\in H_2^p$;
\item[(${\sD}2$)]
$b_r^* f^-\in {H_2^q}^\perp$;
\item[(${\sD}3$)]
$ f(t)$ belongs to the range of the matrix $\Delta_s(\mu)$ $\Omega_0$-a.e. and the
following integral
\[
\int_{\Omega_0}f(\mu)^*\Delta_s(\mu)^{[-1]} f(\mu)dm(\mu)
\]
converges.
\end{enumerate}
 The inner product in $\sD(s)$ is defined by
\begin{equation}\label{whDsInner}
  \left< f, g\right>_{\sD(s)}=\int_{\Omega_0}g(\mu)^*\left(\Delta_s(\mu)^{[-1]}+\left[\begin{array}{cc}
     0 & \Gamma_r^* \\
     \Gamma_r & 0
    \end{array}
    \right]\right) f(\mu)dm(\mu),
\end{equation}
where $\Gamma_r$ is the operator from ${L_2^p}$ onto
$\cH(b_r)$ that is defined in~\eqref{GammaS2}.
\end{definition}

\begin{remark}\label{rem:1.16}
If $\kappa=0$, then $b_\ell=I_p$, $b_r=I_q$, $\Gamma_r=0$ and  $\sD(s)$
coincides with the Hilbert space introduced in \cite{dBR}.
\end{remark}

\begin{remark}\label{rem:1.17}
If $\kappa>0$, then  $\sD(s)$ is an indefinite inner
product space, and, since $\Gamma_\ell= \Gamma_\ell P_{{\cH}(b_r)}$
maps the $\kappa$ dimensional space ${\cH}(b_r)$ bijectively onto
${\cH}_*(b_\ell)$ and
$$
\wt\Gamma\,:=\begin{bmatrix}0 &\Gamma_r^*\\ \Gamma_r &
0\end{bmatrix}=\begin{bmatrix}0 &\Gamma_{\ell}\\ \Gamma_{\ell}^* &
0\end{bmatrix}=
\begin{bmatrix}\Gamma_{\ell} & 0\\ 0 & P_{\cH(b_r)}\end{bmatrix}
\begin{bmatrix}0 & I_{\cH(b_r)}\\ I_{\cH(b_r)} & 0\end{bmatrix}
\begin{bmatrix}\Gamma_{\ell}^* & 0\\ 0 & P_{\cH(b_r)}\end{bmatrix}\,,
$$
$\nu_-(\wt\Gamma)=\kappa$. Therefore,
\begin{equation}\label{IndD}
    \textup{ind}_-\sD(s)\le\kappa,
\end{equation}
since the operator $\Delta_s^{[-1]}$ is nonnegative. In fact
equality prevails in~\eqref{IndD}, as will be shown in Lemma
\ref{lem:1.15}.
\end{remark}
\begin{remark}
\label{rem:jul3a7}
Definitions~\ref{GammaS} and \eqref{GfGg} imply that if
$f\in\sD(s)$, then
\begin{equation}\label{eq:1.28}
\left(I_m+\Delta_s\begin{bmatrix}
    0    & \Gamma_{\ell}  \\
  \Gamma_r & 0
\end{bmatrix}\right)f=\begin{bmatrix}
    I_p-s \Gamma_r  \\
  \Gamma_r
\end{bmatrix}f^++\begin{bmatrix}
     \Gamma_{\ell}  \\
  I_q-s^*\Gamma_{\ell}
\end{bmatrix}f^-,
\end{equation}
where
\begin{equation}\label{eq:1.29}
  \begin{bmatrix}
    I_p-s \Gamma_r  \\
  \Gamma_r
\end{bmatrix}f^+\in H_2^m,\quad
\begin{bmatrix}
     \Gamma_{\ell}  \\
  I_q-s^*\Gamma_{\ell}
\end{bmatrix}f^-\in(H_2^m)^\perp.
\end{equation}
\end{remark}
\bigskip

Now let $s\in {\mathcal S}_{\kappa}^{p\times q}$, and let the mvf $\Phi_\omega$
be defined on $\Omega_0$ by the formula
\begin{equation}\label{eq:1.30}
    \Phi_\omega=\left\{
    \begin{array}{lll}
      {\displaystyle\frac{1}{\rho_\omega}
      \Delta_s\begin{bmatrix}
      I_p\\ \skthr
      s(\omega)^*
      \end{bmatrix}}&\textrm{if}& \omega\in\gh^+_s; \\
{\displaystyle \frac{-1}{\rho_\omega}
      \Delta_s\begin{bmatrix}
      s^\#(\omega)^*\\
    I_q
      \end{bmatrix}}&\textrm{if} & \omega\in\gh^-_{s^\#},
        \end{array}\right.
\end{equation}
and let
\begin{equation}\label{eq:1.31}
  \sD^+=\text{span}\ \{\Phi_\alpha u:\alpha\in\gh^+_s,\ u\in\CC^p\},
\quad  \sD^-=\text{span}\ \{\Phi_\beta v:\beta\in\gh^-_{s^\#},\ v\in\CC^q\},
\end{equation}
where $\text{span}$ stands for the set of finite linear
combinations. Then $\sD^\pm\subset\sD(s)$.

Indeed, if $\alpha\in \gh^+_s$ and $u\in \dC^p$, then the vvf
$f=\Phi_\alpha u=\textup{col}\,(f^+(\mu), f^-(\mu))$, where
\begin{equation}\label{eq:1.33}
   f^+(\mu)=\frac{I_p-s(\mu)s(\alpha)^*}{\rho_{\alpha}(\mu) }
u\quad\textrm{and}\quad
      f^-(\mu)=-\frac{s^*(\mu)-s(\alpha)^*}{{\rho_{\alpha}(\mu)}}u,
\end{equation}
have meromorphic continuations to $\gh^+_s$ and $\gh^-_s$,
respectively:
$$
f^+{(\lambda)}=\frac{I_p-s(\lambda)s(\alpha)^*}{\rho_{\alpha}(\lambda)}u,
\quad\textrm{and}\quad
f^-{(\lambda)}=-\frac{s^\#(\lambda)-s(\alpha)^*}{\rho_{\alpha}(\lambda)}u.
$$
It follows from (\ref{KLleft}), (\ref{KLright}) that
$$
b_\ell f^+\in H^p_2,\quad b^\#_r f^-\in H^{q\perp}_2.
$$
Since $f(\mu)$ belongs to the range of $\Delta_s(\mu)$, it is readily
checked that
$({\sD}3$) also holds and hence that $f=\Phi_\alpha u\in\sD(s)$, if
$\alpha\in \gh^+_s$
and $u\in \CC^p$.

If $\beta\in \gh^-_{s^\#}$ and $v\in\CC^q$, then the vvf $f=\Phi_\beta v=
\textup{col}\,(f^+ , f^-)$, where
\begin{equation}\label{eq:1.34}
f^+(\mu)=\frac{s(\mu)-s^\#(\beta)^*}{\rho_\beta(\mu)} v,\quad
f^-(\mu)=-\frac{I_q-s(\mu)^*s^\#(\beta)^*}{\rho_\beta(\mu)} v.
\end{equation}
Similar observations show that $f=\Phi_\beta v\in \sD(s)$ for
$\beta\in \gh^-_{s^\#}$ and $v\in\CC^q$.

\begin{lem}\label{lem:1.15}
Let $s\in {\mathcal S}^{\ptq}_\kappa(\kappa\in \NN)$, and let $\sD^+$
and $\sD^-$ be the subspaces of $\sD(s)$ defined by (\ref{eq:1.31}). Then:
\begin{enumerate}
    \item [\rm(1)] $\textup{span }\{\sD^+,\sD^-\}$ is dense in $\sD(s)$;
    \item [\rm(2)]for every choice of
\begin{equation}\label{eq:1.35}
\begin{split}
     \alpha_1,\ldots,\alpha_n\in \gh^+_s,&\quad  u_1,\ldots,u_n\in \CC^p,\\
     \beta_1,\ldots,\beta_m\in \gh^-_{s^\#}, &\quad v_1,\ldots,v_m\in \CC^q,
   \end{split}
\end{equation}
the Gram matrix of the system of vvf's
$
\{\Phi_{\alpha_j}u_j, \Phi_{\beta_k}v_k,\ 1\leqslant j\leqslant
n,\ 1\leqslant k\leqslant m\}
$
\begin{equation}\label{eq:1.36a}
G=\begin{bmatrix}
{\dsp \langle\Phi_{\alpha_i}u_i,\Phi_{\alpha_j}u_j\rangle_{\sD(s)}} &
{\dsp \langle\Phi_{\alpha_i}u_i,\Phi_{\beta_l}v_l\rangle_{\sD(s)}}\\
&\\
{\dsp \langle\Phi_{\beta_k}v_k,\Phi_{\alpha_j}u_j\rangle_{\sD(s)}} &
{\dsp \langle\Phi_{\beta_k}v_k,\Phi_{\beta_l}v_l\rangle_{\sD(s)}}
\end{bmatrix}
\end{equation}
takes the form
\begin{equation}\label{GramLL}
G=\begin{bmatrix} {\dsp u_j^*{\mathsf
    \Lambda}_{\alpha_i}(\alpha_j)u_i }&
{\dsp v_l^*\frac{s(\alpha_i)^*-s^\#(\beta_l)}{\rho_{\alpha_i}(\beta_l)}u_i}\\
{\dsp
u_j^*\frac{s(\alpha_j)-s^\#(\beta_k)^*}{\rho_{\beta_k}(\alpha_j)}v_k}
& {\dsp v_l^*{\mathsf
    L}_{\beta_k}(\beta_l)v_k}
\end{bmatrix};
\end{equation}
  \item [\rm(3)]the negative index of $\sD(s)$ is equal $\kappa$
    \begin{equation}\label{eq:1.37}
   ind_-\sD(s)=\kappa.
   \end{equation}
\end{enumerate}
\end{lem}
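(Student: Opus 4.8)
The plan is to establish the three assertions in sequence, with (2) serving as the computational engine for both (1) and (3). For assertion (2), I would start from the definition of the inner product \eqref{whDsInner} and substitute the explicit formulas \eqref{eq:1.33} and \eqref{eq:1.34} for the components $f^\pm$ of $\Phi_\alpha u$ and $\Phi_\beta v$. The key observation is that $\Delta_s(\mu)$ is a projection-like object onto its range (after composing with $\Delta_s^{[-1]}$), and on the range of $\Delta_s$ the vvf $\Phi_\alpha u$ is built precisely so that $\Delta_s^{[-1]}\Phi_\alpha u = \mathrm{col}\,(I_p,s(\alpha)^*)u/\rho_\alpha$ (up to the kernel of $\Delta_s$). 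Hence the first term $\int_{\Omega_0} g^*\Delta_s^{[-1]} f\, dm$ reduces, after using the reproducing kernel property and residue/contour integral evaluation against the kernels ${\mathsf \Lambda}^s$ and ${\mathsf L}^s$, to the diagonal blocks $u_j^*{\mathsf \Lambda}_{\alpha_i}(\alpha_j)u_i$ and $v_l^*{\mathsf L}_{\beta_k}(\beta_l)v_k$ exactly as in the classical de Branges--Rovnyak computation (\cite{dBR}) recalled in Example~\ref{dBspace}. The correction term involving $\widetilde\Gamma = \begin{bmatrix}0 & \Gamma_r^*\\ \Gamma_r & 0\end{bmatrix}$ must then be shown to vanish on the "pure" blocks and to produce the off-diagonal entries $\dfrac{s(\alpha_j)-s^\#(\beta_k)^*}{\rho_{\beta_k}(\alpha_j)}$: here I would invoke \eqref{GfGg} together with Remark~\ref{rem:Ker*2} to see that $\Gamma_r f^+$ and $\Gamma_\ell f^-$ pick out exactly the $\cH(b_r)$- and $\cH_*(b_\ell)$-parts, and then combine with the mixed ($+$ vs $-$) products where one component lives in $H_2^m$ and the other in $(H_2^m)^\perp$ (cf.\ Remark~\ref{rem:jul3a7}, equations \eqref{eq:1.28}--\eqref{eq:1.29}), so the Cauchy pairing survives only through the finite-rank $\Gamma$ terms. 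I expect this bookkeeping — keeping straight which pieces are holomorphic in $\Omega_+$ versus $\Omega_-$ and when a boundary integral reduces to a point evaluation — to be the main technical obstacle.

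For assertion (1), density of $\mathrm{span}\{\sD^+,\sD^-\}$ in $\sD(s)$, I would argue by orthogonality: suppose $g\in\sD(s)$ is orthogonal (in the $\sD(s)$-form, with respect to the Hilbert majorant) to every $\Phi_\alpha u$ and every $\Phi_\beta v$. Pairing $g$ against $\Phi_\alpha u$ via \eqref{whDsInner} and using that $\Delta_s^{[-1]}+\widetilde\Gamma$ applied to $\Phi_\alpha u$ reproduces $\mathrm{col}\,(I_p,s(\alpha)^*)u/\rho_\alpha$ modulo the relevant $H_2$/$(H_2)^\perp$ summands (again \eqref{eq:1.28}), the condition $\langle g,\Phi_\alpha u\rangle_{\sD(s)}=0$ becomes $u^*\big(g^+(\alpha)-s(\alpha)^{?}\,\cdots\big)=0$, i.e.\ the holomorphic-in-$\Omega_+$ part of the transform of $g$ vanishes at every $\alpha\in\gh_s^+$, and dually the anti-holomorphic part vanishes at every $\beta\in\gh_{s^\#}^-$. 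Since these parts are honest $H_2^p$ (resp.\ $(H_2^q)^\perp$) functions modulo the finite-dimensional Blaschke--Potapov corrections governed by conditions $(\sD1)$, $(\sD2)$, vanishing on the full domain of holomorphy forces them to be zero; feeding this back through the definition of the $\sD(s)$-norm (which is nondegenerate once one quotients by the $\Delta_s$-kernel) yields $g=0$. The degenerate directions in $\ker\Delta_s$ are handled by condition $(\sD3)$, which already confines $f$ to $\mathrm{ran}\,\Delta_s$ a.e.

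For assertion (3), that $\mathrm{ind}_-\sD(s)=\kappa$: Remark~\ref{rem:1.17} already gives $\mathrm{ind}_-\sD(s)\le\kappa$ from $\nu_-(\widetilde\Gamma)=\kappa$ and $\Delta_s^{[-1]}\ge 0$. For the reverse inequality I would exhibit a $\kappa$-dimensional negative subspace. By part (1) it suffices to find vectors among the $\Phi_{\alpha_i}u_i$, $\Phi_{\beta_k}v_k$ whose Gram matrix \eqref{GramLL} has $\kappa$ negative eigenvalues; but the diagonal blocks of \eqref{GramLL} are exactly the Gram matrices of $\cK(s)$ and $\cK_*(s)$ built from ${\mathsf \Lambda}^s$ and ${\mathsf L}^s$, and $\mathrm{sq}_-{\mathsf \Lambda}^s=\kappa$ by hypothesis $s\in\cS_\kappa^{\ptq}$. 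Choosing $\alpha_1,\dots,\alpha_n$ and $u_1,\dots,u_n$ so that $\big(u_j^*{\mathsf \Lambda}_{\alpha_i}(\alpha_j)u_i\big)$ already has $\kappa$ negative eigenvalues, and then taking $m=0$ (no $\beta$'s), gives a subsystem in $\sD^+\subset\sD(s)$ whose Gram matrix is this block alone; hence $\sD(s)$ contains a $\kappa$-dimensional negative subspace, so $\mathrm{ind}_-\sD(s)\ge\kappa$. Combining the two inequalities gives \eqref{eq:1.37}.
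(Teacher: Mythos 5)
Your proposal follows essentially the same route as the paper: establish the reproducing identities $\langle f,\Phi_\alpha u\rangle_{\sD(s)}=u^*f^+(\alpha)$ and $\langle f,\Phi_\beta v\rangle_{\sD(s)}=v^*f^-(\beta)$ via \eqref{eq:1.28}--\eqref{eq:1.29}, deduce density by orthogonality, obtain the Gram matrix \eqref{GramLL} by specializing $f$ to the generators, and get $\mathrm{ind}_-\sD(s)\ge\kappa$ from the upper-left block together with the upper bound of Remark~\ref{rem:1.17}. One detail to correct in your part (2): the off-diagonal entries of \eqref{GramLL} are not produced by the $\widetilde\Gamma$ correction alone (in the definite case $\widetilde\Gamma=0$ and they are already present), so both terms of \eqref{whDsInner} contribute to every block, and the clean way to organize the computation is the combined reproducing formula rather than a term-by-term split.
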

\begin{proof}
If $f\in \sD(s)$, $\alpha\in \gh^+_s$ and $u\in\CC^p$, then, in view of
Remark \ref{rem:jul3a7},
\begin{equation}\label{eq:1.42}
\begin{split}
\langle f,\Phi_\alpha u\rangle_{\sD(s)}&=\langle (I_m+\Delta_s\left[
    \begin{array}{cc}
    0&\Gamma_\ell\\
    \Gamma_r & 0
    \end{array}
    \right])
    f,\frac{1}{\rho_\alpha}\left[{{u}\atop{s(\alpha)^* u}}\right])
\rangle_{st}\\
&=\langle(I_p-s\Gamma_r)f^+,\frac{u}{\rho_\alpha}\rangle_{st}+\langle
  \Gamma_rf^+,\frac{s(\alpha)^*u}{\rho_\alpha}\rangle_{st}\\
  &=u^*(f^+-s\Gamma_r f^+)(\alpha)+u^*s(\alpha)(\Gamma_r
  f^+)(\alpha)\\
  &=u^* f^+(\alpha).
    \end{split}
\end{equation}

Similarly, if $\beta\in \gh^-_{s^\#}$ and $v\in \CC^q$, then, in view of
(\ref{eq:1.28})--(\ref{eq:1.30}),
\begin{equation}\label{eq:1.43}
\begin{split}
  \langle f,\Phi_\beta v
  \rangle_{\sD(s)}&=\left\langle(I_m+\Delta_s\left[{{0\quad
  \Gamma_\ell}\atop{\Gamma_r\quad 0}}\right])f,\
  -\frac{1}{\rho_\beta}\left[{{s^\#(\beta)^*v}\atop{v}}\right]
\right\rangle_{st}\\
  &=\langle f^--s^*\Gamma_\ell
  f^-,-\frac{v}{\rho_\beta}\rangle_{st}+\langle\Gamma_\ell
  f^-,\frac{-s^\#(\beta)^*v}{\rho_\beta}\rangle_{st}\\
  &=v^*(f^--s^\#\Gamma_\ell f^-)(\beta)+v^*s^\#(\beta)(\Gamma_\ell f^-)(\beta)\\
  &=v^* f^-(\beta).
    \end{split}
\end{equation}
Thus, if  $f\in{\sD}(s)$ is orthogonal to span $\{\sD^+,\sD^-\}$, then
$$
f(\mu)=0 \quad \text{a.e. on} \ \Omega_0,
$$
thanks to (\ref{eq:1.42}) and (\ref{eq:1.43}).

The entries in the matrix (\ref{eq:1.36a}) are now easily calculated
from the entries in the matrix (\ref{GramLL}) with the help of the
evaluations (\ref{eq:1.42}) and  (\ref{eq:1.43}) and formula
(\ref{eq:1.30}).

Finally,  since $s \in {\mathcal S}_{\kappa}^{\ptq}(\Omega_+)$ the
kernel ${\mathsf \Lambda}_\om^s(\lambda)$ has $\kappa$ negative
squares on $\gh_s^+$, and hence there is a choice of
$\alpha_j\in\gh_s^+$, $ u_j\in\dC^p$ $(1\le j\le n)$, such that the
Gram matrix
 \[
    \begin{bmatrix}
    \langle\Phi_{\alpha_i}u_i,\Phi_{\alpha_j}u_j\rangle_{\sD(s)}
    \end{bmatrix}_{i,j=1}^{n}
 \]
has exactly $\kappa$ negative eigenvalues.
Thus, $\mbox{ind}_-\sD(s)\ge \kappa$. On the other hand,
$\mbox{ind}_-\sD(s)\le \kappa$, by Remark \ref{rem:1.17}.
This proves~\eqref{eq:1.37}.
\end{proof}

\begin{lem}\label{HandD}
  Let $s\in {\mathcal S}_{\kappa}^{p\times q}(\Omega_+)$ and let
$f=\textup{col}\,(f^+, f^-)\in\sD(s)$. Then:
\begin{enumerate}
  \item[\rm(1)] $f^+\in\cK(s)$ and
\begin{equation}\label{eq:1.33A}
  \langle f^+,f^+\rangle_{\cK(s)}\le\langle f,f\rangle_{\sD(s)};
\end{equation}
    \item[\rm(2)]
    $f^-\in\cK_*(s)$ and
\begin{equation}\label{eq:1.33B}
  \langle f^-,f^-\rangle_{\cK_*(s)}\le\langle f,f\rangle_{\sD(s)}.
\end{equation}
\end{enumerate}
 \end{lem}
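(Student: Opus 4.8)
The plan is to recognize $f^{+}$ and $f^{-}$ through the variational descriptions of $\cK(s)$ and $\cK_{*}(s)$ furnished by the corollaries to Theorems~\ref{thm:1.10} and~\ref{thm:1.12} (i.e., by \eqref{eq:1.11}, \eqref{eq:jul3a7} for $\cK(s)$ and \eqref{eq:1.11a} for $\cK_{*}(s)$). I describe the argument for (1); part (2) follows by the symmetric argument with Theorem~\ref{thm:1.12} and \eqref{eq:1.11a} replacing Theorem~\ref{thm:1.10} and \eqref{eq:1.11}, and $\Gamma_{\ell},s_{\ell},b_{\ell}$ replacing $\Gamma_{r},s,b_{r}$. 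First, the membership facts needed: since $b_{\ell}$ and $b_{r}$ are finite Blaschke--Potapov products, $b_{\ell}^{-1}$ and $b_{r}^{-*}$ are bounded on $\Omega_{0}$, so $({\sD}1)$ and $({\sD}2)$ already give $f^{+}\in L_{2}^{p}$ and $f^{-}\in L_{2}^{q}$; moreover $b_{\ell}^{-1}H_{2}^{p}=H_{2}^{p}\oplus\cH_{*}(b_{\ell})$, so $({\sD}1)$ places $f^{+}$ in the ambient space $H_{2}^{p}\oplus\cH_{*}(b_{\ell})$ of \eqref{eq:1.11}. Finally, $\Gamma_{r}f^{+}\in\cH(b_{r})$ by Definition~\ref{GammaS}, hence $\Gamma_{r}f^{+}\perp b_{r}H_{2}^{q}$, and $\langle f^{-},b_{r}\varphi\rangle_{st}=\langle b_{r}^{*}f^{-},\varphi\rangle_{st}=0$ for all $\varphi\in H_{2}^{q}$ by $({\sD}2)$; thus $\Gamma_{r}f^{+}+f^{-}\perp b_{r}H_{2}^{q}$, which is the only way $({\sD}1)$ and $({\sD}2)$ enter below.

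Put $v:=(I_{p}-s\Gamma_{r})f^{+}$. By \eqref{eq:1.11} and \eqref{eq:jul3a7} it suffices to show
\[
\sup\bigl\{\|v-s_{r}\varphi\|_{st}^{2}-\|\varphi\|_{st}^{2}:\varphi\in H_{2}^{q}\bigr\}\le\langle f,f\rangle_{\sD(s)}+\|\Gamma_{r}f^{+}\|_{st}^{2};
\]
indeed this makes the supremum finite, so $f^{+}\in\cK(s)$, and then \eqref{eq:jul3a7} gives $\langle f^{+},f^{+}\rangle_{\cK(s)}\le\langle f,f\rangle_{\sD(s)}$. By $({\sD}3)$ write $f=\Delta_{s}k$ a.e.\ on $\Omega_{0}$ with $k=\Delta_{s}^{[-1]}f=\textup{col}\,(\kappa_{1},\kappa_{2})$, so $f^{+}=\kappa_{1}-s\kappa_{2}$, $f^{-}=-s^{*}\kappa_{1}+\kappa_{2}$, and set $w:=\kappa_{2}+\Gamma_{r}f^{+}$. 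Since $s=s_{r}b_{r}^{-1}$ on $\Omega_{0}$, for $\varphi\in H_{2}^{q}$ one has $v-s_{r}\varphi=\kappa_{1}-s_{r}\psi$ with $\psi:=b_{r}^{-1}w+\varphi$. Expanding $\|\kappa_{1}-s_{r}\psi\|_{st}^{2}$, using $\|s_{r}\psi\|_{st}^{2}\le\|\psi\|_{st}^{2}$ (as $\|s_{r}(\mu)\|\le1$ a.e.\ on $\Omega_{0}$), $s_{r}^{*}=b_{r}^{*}s^{*}$ and $s^{*}\kappa_{1}=\kappa_{2}-f^{-}$ on $\Omega_{0}$, and $\|b_{r}^{-1}x\|_{st}=\|x\|_{st}$, one obtains
\[
\|v-s_{r}\varphi\|_{st}^{2}-\|\varphi\|_{st}^{2}\le\|\kappa_{1}\|_{st}^{2}+\|w\|_{st}^{2}-2\,\RE\langle w,\kappa_{2}-f^{-}\rangle_{st}+2\,\RE\langle\Gamma_{r}f^{+}+f^{-},b_{r}\varphi\rangle_{st}.
\]
By the orthogonality from the first paragraph the last term vanishes, so the bound is $\varphi$-free; expanding $\|w\|_{st}^{2}-2\,\RE\langle w,\kappa_{2}-f^{-}\rangle_{st}$ with $w=\kappa_{2}+\Gamma_{r}f^{+}$ and collecting terms, the right side reduces to $\bigl(\|\kappa_{1}\|_{st}^{2}-\|\kappa_{2}\|_{st}^{2}+2\,\RE\langle f^{-},\kappa_{2}\rangle_{st}+2\,\RE\langle\Gamma_{r}f^{+},f^{-}\rangle_{st}\bigr)+\|\Gamma_{r}f^{+}\|_{st}^{2}$. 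Now $\langle\Delta_{s}^{[-1]}f,f\rangle_{st}=\langle k,f\rangle_{st}=\|\kappa_{1}\|_{st}^{2}-\|\kappa_{2}\|_{st}^{2}+2\,\RE\langle f^{-},\kappa_{2}\rangle_{st}$ (from $f^{+}=\kappa_{1}-s\kappa_{2}$ and $s^{*}\kappa_{1}=\kappa_{2}-f^{-}$), while the off-diagonal operator in \eqref{whDsInner} contributes $2\,\RE\langle\Gamma_{r}f^{+},f^{-}\rangle_{st}$ to $\langle f,f\rangle_{\sD(s)}$; hence the parenthesized quantity is exactly $\langle f,f\rangle_{\sD(s)}$, and taking the supremum over $\varphi$ proves the displayed inequality, hence (1).

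One point needs care: the manipulations above treat $k=\Delta_{s}^{[-1]}f$ as square-integrable, which need not hold; this is handled in the standard way, by running the argument with $(\Delta_{s}+\varepsilon I_{m})^{-1}$ in place of $\Delta_{s}^{[-1]}$ and letting $\varepsilon\downarrow0$, the integrals involved being monotone in $\varepsilon$. The substance of the proof is the computation just given: choosing the substitution $\psi=b_{r}^{-1}(\kappa_{2}+\Gamma_{r}f^{+})+\varphi$ that turns $v-s_{r}\varphi$ into $\kappa_{1}-s_{r}\psi$, noticing that the entire $\varphi$-dependence collapses to $2\,\RE\langle\Gamma_{r}f^{+}+f^{-},b_{r}\varphi\rangle_{st}$ and is annihilated by $({\sD}2)$ together with $\Gamma_{r}f^{+}\in\cH(b_{r})$, and checking that what is left matches $\langle f,f\rangle_{\sD(s)}+\|\Gamma_{r}f^{+}\|_{st}^{2}$ on the nose rather than merely up to a constant; the dual bookkeeping for (2) is entirely analogous.
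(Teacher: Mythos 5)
Your argument is correct and is essentially the paper's own proof: both reduce to the variational characterization \eqref{eq:jul3a7} of $\cK(s)$, write $f=\Delta_s k$ a.e.\ on $\Omega_0$, and observe that the entire $\varphi$-dependence collapses onto $b_rH_2^q$ and is annihilated by $({\sD}2)$ together with $\Gamma_rf^+\in\cH(b_r)$ --- indeed your slack term $\|\psi\|_{st}^2-\|s_r\psi\|_{st}^2$ is precisely the paper's exact nonnegative residual $\langle(I_q-s^*s)(h_2+\Gamma_rf^+-b_r\varphi),\,h_2+\Gamma_rf^+-b_r\varphi\rangle_{st}$. The only substantive difference is that the paper groups terms so that $h_2$ enters only through $f^{\pm}$ and $(I_q-s^*s)^{1/2}h_2\in L_2^q$, so every intermediate quantity is finite and the $\varepsilon$-regularization you invoke (because $\|\kappa_1\|_{st}^2$ and $\|\kappa_2\|_{st}^2$ may separately be infinite) becomes unnecessary.
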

\begin{proof}
Since $f\in\sD(s)$, there exists a measurable vvf $h=\textup{col}(h_1,h_2)$ 
with components $h_1$ of height $p$ and $h_2$ of height $q$ such that
\[
  f(\mu)=\begin{bmatrix}f^+(\mu)\\ f^-(\mu)\end{bmatrix}
=\Delta_s(\mu)h(\mu)=\begin{bmatrix}
    h_1(\mu) -s(\mu)h_2(\mu) \\
    -s(\mu)^*h_1(\mu) +h_2(\mu)\\
  \end{bmatrix}\quad (\Omega_0 -a.e.).
\]
The decomposition
\[
\Delta_s(\mu)=\begin{bmatrix}
  I_p \\
  -s(\mu)^*\\
\end{bmatrix}[I_p,-s(\mu)]+\begin{bmatrix}
  0 & 0 \\
  0 & I_q-s(\mu)^*s(\mu)\\
\end{bmatrix}
\]
implies that if $f\in\sD(s)$, then
\[
(I-s^*s)^{1/2}h_2\in L_2^q
\]
and
\begin{equation}\label{eq:1.38A}
\begin{split}
\langle f,f\rangle_{\sD(s)}
&=\langle\Delta_s h,h\rangle_{st}+\langle\begin{bmatrix}
  0 & \Gamma_{\ell} \\
  \Gamma_r & 0 \\
\end{bmatrix}\Delta_s h,\Delta_s h\rangle_{st}\\
&=\|f^+\|_{st}^2+\langle(I_q-s^*s)h_2,h_2\rangle_{st}+2{\gR}
\langle\Gamma_r f^+, f^-\rangle_{st}.
\end{split}
\end{equation}
Let
$$
\alpha_\varphi(f^+)= \|f^+ -s\Gamma_r f^+ +s_r \varphi\|_{st}^2
-\|\Gamma_r f^+\|_{st}^2-\|\varphi\|_{st}^2
$$
for $\varphi\in H_2^q$.  Then
\begin{equation}\label{eq:1.39A}
\begin{split}
  \|f^+ -s\Gamma_r f^+ +s_r \varphi\|_{st}^2
  &=\|f^+ -s\Gamma_r f^+\|_{st}^2 +\|s_r \varphi\|_{st}^2\\
  & +2{\gR}\langle f^+ -s\Gamma_r f^+ ,s_r \varphi\rangle_{st},
\end{split}
\end{equation}
and, as $f^-\in b_r(H_2^q)^{\bot}$, $\Gamma_r f^+\in{\cH}(b_r)$
and $b_r \varphi\in b_r H_2^q$,
\begin{equation}\label{eq:1.40A}
\begin{split}
\langle f^+ -s\Gamma_r f^+ , s_r \varphi\rangle_{st}
&=\langle h_1 -sh_2 , s_r \varphi\rangle_{st}-\langle s\Gamma_r f^+ ,
s_r \varphi\rangle_{st}\\
&=\langle s^*h_1 -h_2 , b_r \varphi\rangle_{st}+\langle(I_q -s^*s)h_2 ,
b_r \varphi\rangle_{st}-\langle s^*s\Gamma_r f^+ , b_r \varphi\rangle_{st}\\
&=\langle(I_q -s^*s)(h_2 +\Gamma_r f^+), b_r \varphi\rangle_{st}.
\end{split}
\end{equation}
Therefore,
\begin{equation}\label{eq:1.41A}
\begin{split}
\alpha_\varphi(f^+) &=\|f^+ -s\Gamma_r f^+\|_{st}^2-\|\Gamma_r f^+\|_{st}^2
+\| s_r\varphi\|_{st}^2
-\|\varphi\|_{st}^2\\
&+2{\gR}\langle(I_q -s^*s)(h_2 +\Gamma_r f^+), b_r \varphi\rangle_{st}\\
&=\|f^+\|_{st}^2-\langle(I_q -s^*s)\Gamma_r f^+, \Gamma_r
f^+\rangle_{st}-
\langle(I_q -s^*s)b_r \varphi, b_r \varphi\rangle_{st}\\
&+2{\gR}\langle(I_q -s^*s)(h_2 +\Gamma_r f^+),b_r
\varphi\rangle_{st}-2{\gR}\langle s\Gamma_r f^+ , f^+\rangle_{st},
\end{split}
\end{equation}
and hence,
\begin{equation}\label{eq:1.42A}
\begin{split}
\langle f,f\rangle_{\sD(s)}-\alpha_\varphi(f^+)
&=\langle(I_q -s^*s)h_2 , h_2\rangle+\langle(I_q -s^*s)b_r \varphi, b_r
\varphi\rangle_{st}\\
&+\langle(I_q -s^*s)\Gamma_r f^+ , \Gamma_r f^+\rangle_{st}\\
&-2{\gR}\langle(I_q -s^*s)(\Gamma_r f^+ +h_2), b_r \varphi\rangle_{st}\\
&+2{\gR}\left\{\langle\Gamma_r f^+ , h_2-s^*h_1 \rangle_{st}
+\langle\Gamma_r f^+ , s^*(h_1-sh_2) \rangle_{st}\right\}\\
&=\langle(I_q -s^*s)(h_2+\Gamma_r f^+ -b_r\varphi),
(h_2+\Gamma_r f^+ -b_r\varphi)\rangle_{st}\\
&\ge 0.
\end{split}
\end{equation}
Therefore, by Theorem~\ref{thm:1.10}, $f^+\in{\cK}(s)$ and
\eqref{eq:1.33A} holds.

The proof of the second set of assertions is similar.
\end{proof}

\subsection{The space $\wh\sD(s)$}
 Let $s \in {\mathcal S}_{\kappa}^{\ptq}(\Omega_+)$ and let
the kernel ${\mathsf D}_\om^s(\lambda)$ be defined on
$({\mathfrak h}_s^+\cup {\mathfrak h}_{s^{\#}}^-)\times
({\mathfrak h}_s^+\cup {\mathfrak h}_{s^{\#}}^-)$
by the formulas
\begin{equation}\label{DOL+}
    {\mathsf D}_\om^s(\lambda)=\left\{\begin{array}{cc}
      {\mathsf \Lambda}_\om^s(\lambda) &\quad \text{if}\ (\lam, \omega)
\in{\mathfrak h}_{s}^+\times{\mathfrak h}_{s}^+,\\
     {\dsp\frac{s^{\#}(\lambda)- s(\om)^*}{-\rho_\om
    (\lambda)}} & \quad \text{if}\ (\lam, \omega)\in{\mathfrak h}_{s^{\#}}^-
\times
{\mathfrak h}_{s}^+,
    \end{array}
    \right.
\end{equation}
\begin{equation}\label{DOL-}
    {\mathsf D}_\om^s(\lambda)=\left\{\begin{array}{cc}
      {\dsp\frac{s(\lambda)- s^{\#}(\om)^*}{\rho_\om
    (\lambda)}}  & \quad \text{if}\ (\lam, \omega)\in{\mathfrak h}_{s}^+\times
{\mathfrak h}_{s^{\#}}^-,\\
    {\mathsf L}_\om^s(\lambda) & \quad \text{if}\ (\lam, \omega)
\in{\mathfrak h}_{s^{\#}}^-\times{\mathfrak h}_{s^{\#}}^- .
    \end{array}
    \right.
\end{equation}
It will be shown below that the kernel ${\mathsf D}_\om^s(\lambda)$ has a
finite number of negative squares and that the corresponding
reproducing kernel Pontryagin space $\wh\sD(s)$ is unitarily equivalent to
${\sD(s)}$.

\begin{thm}\label{DswhDs}
   Let $s\in {\mathcal S}_{\kappa}^{\ptq}$. Then:
\begin{enumerate}
\item[\rm(1)]
The kernel ${\mathsf D}_\om^s(\lambda)$ has a finite number of negative
squares on $\gh_{s}^+\times\gh_{s^\#}^-$; \vskip 6pt
\item[\rm(2)]
The de Branges-Rovnyak space $\sD(s)$ is unitarily equivalent to the
reproducing kernel space $\wh\sD(s)$ via the mapping
\begin{equation}\label{eq:1.300}
  U:f=\begin{bmatrix}
    f^+ \\
    f^- \
  \end{bmatrix}\mapsto \wh f\in\wh\sD(s),
\end{equation}
where $\wh f|{\Omega_+}$ is the meromorphic continuation of
$f^+$ to $\gh_s^+$, and $\wh f|{\Omega_-}$ is the meromorphic continuation
of $f^-$ to $\gh_{s^\#}^-$ such that $f^\pm$ are nontangential limits
of $\wh f|{\Omega_\pm}$ from $\Omega_\pm$.
\end{enumerate}
\end{thm}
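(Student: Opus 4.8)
The plan is to deduce both assertions from the Gram matrix computation in Lemma~\ref{lem:1.15}, which already carries out the analytic work; what remains is bookkeeping together with one standard Pontryagin-space extension argument. Since $\gh_s^+\subset\Omega_+$ and $\gh_{s^\#}^-\subset\Omega_-$ are disjoint, the four cases in \eqref{DOL+}--\eqref{DOL-} define ${\mathsf D}^s_\om(\lambda)$ unambiguously on $(\gh_s^+\cup\gh_{s^\#}^-)\times(\gh_s^+\cup\gh_{s^\#}^-)$, and a short computation using $\overline{\rho_\lambda(\om)}=\rho_\om(\lambda)$ together with the Hermitian symmetry of ${\mathsf \Lambda}^s$ and ${\mathsf L}^s$ shows ${\mathsf D}^s_\om(\lambda)={\mathsf D}^s_\lambda(\om)^*$. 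The key observation is that for any finite system $\om_1,\dots,\om_N\in\gh_s^+\cup\gh_{s^\#}^-$ and vectors $\xi_1,\dots,\xi_N$, relabelling the points lying in $\gh_s^+$ as $\alpha_i$ and those in $\gh_{s^\#}^-$ as $\beta_k$ and comparing \eqref{GramLL} with \eqref{DOL+}--\eqref{DOL-} shows that the test matrix $\left(\langle {\mathsf D}^s_{\om_i}(\om_j)\xi_i,\xi_j\rangle\right)_{i,j=1}^N$ is precisely the Gram matrix in $\sD(s)$ of the functions $\Phi_{\om_i}\xi_i\in\sD^+\cup\sD^-$.

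Part (1) follows at once. By Lemma~\ref{lem:1.15}(3), $\textup{ind}_-\sD(s)=\kappa$, so every Gram matrix of finitely many elements of $\sD(s)$ has at most $\kappa$ negative eigenvalues, whence $\textup{sq}_-{\mathsf D}^s\le\kappa$; on the other hand the restriction of ${\mathsf D}^s$ to $\gh_s^+\times\gh_s^+$ is ${\mathsf \Lambda}^s$, which has exactly $\kappa$ negative squares because $s\in\cS_\kappa^{\ptq}$, so $\textup{sq}_-{\mathsf D}^s=\kappa$. By the uniqueness theorem for reproducing kernel Pontryagin spaces (\cite{Sch}, recalled in Section~\ref{preli}) there is then a unique such space $\wh\sD(s)$ with kernel ${\mathsf D}^s$, and $\textup{ind}_-\wh\sD(s)=\kappa$.

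For part (2) I would proceed as follows. If $f=\textup{col}\,(f^+,f^-)\in\sD(s)$, then $(\sD1)$ gives $b_\ell f^+\in H_2^p$, so $f^+=b_\ell^{-1}(b_\ell f^+)$ continues meromorphically to $\gh_s^+$, and $(\sD2)$ likewise yields a meromorphic continuation of $f^-$ to $\gh_{s^\#}^-$; this makes $\wh f$ well defined. The formulas \eqref{eq:1.30}, \eqref{eq:1.33}, \eqref{eq:1.34} identify the continuation of $\Phi_\alpha u$ with the reproducing kernel element $\lambda\mapsto{\mathsf D}^s_\alpha(\lambda)u$ of $\wh\sD(s)$, and that of $\Phi_\beta v$ with ${\mathsf D}^s_\beta(\cdot)v$. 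Hence the map $U$ of \eqref{eq:1.300}, restricted to the subspace $\textup{span}\{\sD^+,\sD^-\}$ (dense in $\sD(s)$ by Lemma~\ref{lem:1.15}(1)), is a linear bijection onto the dense span of reproducing kernel elements of $\wh\sD(s)$ that preserves inner products. Both ambient spaces have negative index $\kappa$; choosing finitely many generators whose span contains a $\kappa$-dimensional negative definite — hence maximal negative — subspace $\cN$ of $\sD(s)$, one gets $\sD(s)=\cN\oplus\cN^{[\perp]}$ and $\wh\sD(s)=U\cN\oplus(U\cN)^{[\perp]}$ with Hilbert positive parts, and on the dense intersection of $\textup{span}\{\sD^+,\sD^-\}$ with $\cN^{[\perp]}$ the map $U$ is a densely defined Hilbert-space isometry with dense range, hence extends continuously; assembling with $U|_\cN$ yields a unitary operator $\sD(s)\to\wh\sD(s)$.

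It remains to check that this unitary is the map in \eqref{eq:1.300}. For $f\in\sD(s)$, $\alpha\in\gh_s^+$ and $u\in\CC^p$, the reproducing property in $\wh\sD(s)$, the isometry of $U$, and \eqref{eq:1.42} give $u^*(Uf)(\alpha)=\langle Uf,{\mathsf D}^s_\alpha(\cdot)u\rangle_{\wh\sD(s)}=\langle f,\Phi_\alpha u\rangle_{\sD(s)}=u^*f^+(\alpha)$, where $f^+(\alpha)$ is the value of the meromorphic continuation; so $Uf$ restricts on $\Omega_+$ to the continuation of $f^+$, and the parallel computation with \eqref{eq:1.43} shows it restricts on $\Omega_-$ to the continuation of $f^-$. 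Injectivity is then automatic, since $Uf=0$ forces $f^\pm$, hence their boundary values, to vanish a.e. I expect the one genuinely delicate point to be the unitary extension above — in particular pairing a maximal negative subspace of $\sD(s)$ with one of $\wh\sD(s)$ so that the residual Hilbert parts are put in correspondence — while the Hermitian symmetry of ${\mathsf D}^s$ and the identification of $\Phi_\alpha u$, $\Phi_\beta v$ with reproducing kernel elements are routine, being essentially already contained in the material preceding Lemma~\ref{lem:1.15}.
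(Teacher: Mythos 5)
Your proposal is correct and follows essentially the same route as the paper: identify the test matrices of ${\mathsf D}^s$ with the Gram matrices of $\{\Phi_{\omega_i}\xi_i\}$ computed in Lemma~\ref{lem:1.15}, deduce (1) from $\textup{ind}_-\sD(s)=\kappa$, and obtain (2) by noting that $U$ carries $\sD^\pm$ isometrically onto the dense spans $\wh\sD^\pm$ and extending by density. The only difference is that you spell out the Pontryagin-space extension step (splitting off a maximal negative subspace and extending on the residual Hilbert parts) and the final identification of the extension with the map \eqref{eq:1.300} via the reproducing property, both of which the paper leaves implicit; these additions are sound.
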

\begin{proof}
For every choice of $\alpha_i,\,\beta_j,\,u_i,\,v_j$ as in~\eqref{eq:1.35}
the Gram matrix in~\eqref{eq:1.36a} coincides with the matrix
\[
G=\begin{bmatrix}{\dsp u_j^*{\mathsf D}^s_{\alpha_i}(\alpha_j)u_i}
&{\dsp
v_l^*{\mathsf D}^s_{\alpha_i}(\beta_l)u_i}\\
& \\
 {\dsp u_j^*{\mathsf D}^s_{\beta_k}(\alpha_j)v_k} & {\dsp
v_l^*{\mathsf D}^s_{\beta_k}(\beta_l)v_k}
\end{bmatrix}.
\]
Therefore, the first statement of the theorem~ is implied by
Lemma~\ref{lem:1.15}.

Next, it follows from (\ref{DOL+}) and \eqref{eq:1.30} that the restriction of
$U$ to the subspace $\sD^+$ is given by
\begin{equation}\label{eq:1.31a}
  U:\Phi_\alpha u\mapsto{\mathsf D}_\alpha^su
\end{equation}
and maps $\sD^+$ onto
\[
\wh\sD^+:=\left\{{\mathsf D}_\alpha^su:\,\alpha\in\gh_s^+,\,u\in\dC^p\,
\right\}.
\]
Similarly, the formulas (\ref{DOL+}) and \eqref{eq:1.16} show that  $U$
maps the subspace $\sD^-$  onto
\[
\wh\sD^-:=\left\{{\mathsf D}_\beta^sv:\,\beta\in\gh_{s^\#}^-,\,v\in\dC^q\, \right\}.
\]
Moreover, the restriction of $U$ to $\sD^++\sD^-$ is isometric by the definition of the kernel ${\mathsf D}_\om^s$, since
\begin{equation}\label{eq:1.32}
  \langle{\mathsf D}_{\om_j}^su_j,{\mathsf D}_{\om_k}^su_k\rangle_{\wh\sD(s)}=u_k^*{\mathsf D}_{\om_j}^s(\om_k)u_j
=\langle\Phi_{\om_j}u_j,\Phi_{\om_k}u_k\rangle_{\sD(s)}
\end{equation}
for every choice of $\omega_j$, $u_j$ such as in \eqref{eq:1.35}. Since the sets $\sD^++\sD^-$ and $\wh\sD^++\wh\sD^-$ are dense in $\sD(s)$ and $\wh\sD(s)$, respectively, this proves the second statement.
\end{proof}

\section{The class $\cU_\kappa(j_{pq})$  and the basic theorem}\label{Sec3}
\subsection{The class $\cU_\kappa(J)$ and the space ${\cK}(W)$.}

If $W\in\cU_\kappa(J)$, then assumption (ii) in the definition of
$\cU_\kappa(J)$  guarantees
that $W(\lambda)$ is
invertible in $\Omega_+$ except for an isolated set of points.
Define  $W$ in $\Omega_-$
by the formula
\begin{equation}\label{eq:1.78}
    W(\lambda)=JW^{\#} (\lambda)^{-1} J= JW(\lambda^\circ)^{-*}
    J\quad \text{if}\
    \lambda^\circ \in \gh_W^+\ \text{and}\ \det W(\lam^\circ)\ne 0.
\end{equation}
Recall ~\cite{DSS}, \cite{Fuhr68}, \cite{ArovD08}, that a mvf $\wt
f$ of bounded type in
$\Omega_-$ 
is said to be a pseudocontinuation of 
a mvf $f$ of bounded type in $\Omega_+$, if $f(\zeta)=\wt f(\zeta)$
a.e. on $\Omega_0$. Since $W$ is of bounded type both  in $\Omega_+$
and  in $\Omega_-$, the nontangential limits
\[
W_\pm(\mu)=\angle\lim_{\lam\to \mu}\{W(\lam):
  \lam\in\Omega_\pm  \}
\]
exist a.e. on $\Omega_0$; and
assumption (ii) in the definition of $\cU_\kappa(J)$ implies that
the nontangential limits $W_\pm(\mu)$ coincide a.e. in $\Omega_0$,
that is $W|_{\Omega_-}$ is a pseudocontinuation of $W|_{\Omega_+}$.
If $W(\lambda)$ is rational this extension is meromorphic on
${\dC}$.  Formula~\eqref{eq:1.78} implies that
$W(\lam)$ is holomorphic and invertible in
\begin{equation}\label{eq:OW}
    \Omega_W:=\gh_W\cap\gh_{W^\#}.
\end{equation}

Let $W\in  \cU_\kappa(J)$ and let ${\cK}(W)$ be the reproducing kernel
Pontryagin
space associated with the kernel ${\mathsf K}^W_\omega(\lambda)$. The
kernel ${\mathsf
K}^W_\omega (\lambda)$ extended to $\Omega_W$ by the
equality~\eqref{eq:1.78} has the same number $\kappa$ of negative squares.
This fact is
due to a generalization of the Ginzburg inequality \cite[Theorem~2.5.2]{ADRS}.

\subsection{Admissibility and linear fractional transformations}

\begin{definition}\label{Adm}\cite{AD86} A mvf
$X(\lambda)$ that is meromorphic in $\Omega_+$ is said to be
$(\Omega_+,J)_\kappa$-admissible if the kernel
\[
\frac{X(\lambda)JX(\omega)^*}{\rho_\omega(\lambda)}
\]
has $\kappa$ negative squares on $\gh_X^+$, the domain of holomorphy
of $X$ in $\Omega_+$.
\end{definition}
\begin{lem}\label{Inv}
Let $\varphi(\lambda)$ and $\psi(\lambda)$ be $p\times p$ and
$p\times q$ meromorphic mvf's in $\Omega_+$ and assume that:
\begin{enumerate}
\item[(i)]
the mvf
$X(\lambda)=\begin{bmatrix}\varphi(\lambda)&\psi(\lambda)\end{bmatrix}$
is $(\Omega_+,j_{pq})_\kappa$-admissible; \vskip 6pt
\item[(ii)]
$\ker X(\lam)^*=\{0\}$ for all $\lambda\in\gh_X^+$.
\end{enumerate}
 Then:
\begin{enumerate}
\item[\rm(1)] $\varphi(\lambda)$ is invertible for all
$\lambda\in\gh_X^+$ except for at most $\kappa$ points; \vskip 6pt
\item[\rm(2)]
The mvf $\varepsilon(\lambda)=-\varphi(\lambda)^{-1}\psi(\lambda)$
belongs to ${\mathcal S}_{\kappa}^{p\times q}$.
\end{enumerate}
\end{lem}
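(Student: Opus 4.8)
The starting point is the identity $X(\lambda)j_{pq}X(\lambda)^{*}=\varphi(\lambda)\varphi(\lambda)^{*}-\psi(\lambda)\psi(\lambda)^{*}$: hypothesis (i) says that the kernel ${\mathsf K}^{X}_{\omega}(\lambda)=\rho_{\omega}(\lambda)^{-1}\bigl(\varphi(\lambda)\varphi(\omega)^{*}-\psi(\lambda)\psi(\omega)^{*}\bigr)$ has $\kappa$ negative squares on $\gh_{X}^{+}$, while (ii) says that $X(\lambda)$ is surjective, i.e. $\rank X(\lambda)=p$, for every $\lambda\in\gh_{X}^{+}$. My plan is to (a) show $\det\varphi\not\equiv0$, so that $Z:=\{\lambda\in\gh_{X}^{+}:\det\varphi(\lambda)=0\}$ is a discrete set; (b) put $\varepsilon:=-\varphi^{-1}\psi$ and identify ${\mathsf \Lambda}^{\varepsilon}_{\omega}(\lambda)$ with a congruence of ${\mathsf K}^{X}_{\omega}(\lambda)$, which yields assertion (2); and (c) bound $\#Z$ by $\kappa$ by transferring the zeros of $\det\varphi$ onto the Blaschke--Potapov factor of a Kre\u{\i}n--Langer factorization of $\varepsilon$, which yields assertion (1).

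For (a): suppose $\det\varphi\equiv0$ on the connected set $\gh_{X}^{+}$. Then $\rank\varphi\le p-1$ there, and Cramer's rule produces a disc $D\subset\gh_{X}^{+}$ and a holomorphic, nowhere vanishing vvf $v$ on $D$ with $\varphi(\lambda)^{*}v(\lambda)=0$ on $D$. Writing $w:=\psi^{*}v$, hypothesis (ii) forces $w\not\equiv0$ on $D$ (if $w(\lambda_{0})=0$ then $v(\lambda_{0})\in\ker X(\lambda_{0})^{*}=\{0\}$, a contradiction). A direct computation gives $v(\lambda)^{*}{\mathsf K}^{X}_{\omega}(\lambda)v(\omega)=-\,w(\lambda)^{*}w(\omega)/\rho_{\omega}(\lambda)$ on $D\times D$. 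The kernel $w(\lambda)^{*}w(\omega)/\rho_{\omega}(\lambda)$ is positive with infinite--dimensional reproducing kernel space, since for an index $l$ with $w_{l}\not\equiv0$ it dominates $\overline{w_{l}(\lambda)}w_{l}(\omega)/\rho_{\omega}(\lambda)$, which on a subdisc where $w_{l}$ is zero free is congruent to the Cauchy kernel $1/\rho_{\omega}(\lambda)$ (whose reproducing kernel space is $H_{2}$). Hence $-\,w(\lambda)^{*}w(\omega)/\rho_{\omega}(\lambda)$ has arbitrarily many negative squares; choosing $u_{j}:=v(\omega_{j})$ for suitable $\omega_{j}\in D$ shows the same for ${\mathsf K}^{X}_{\omega}(\lambda)$, contradicting (i). So $\det\varphi\not\equiv0$ and $Z$ is discrete; in particular $\varphi$ is invertible on $\gh_{X}^{+}\setminus Z$.

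For (b) and (c): on $\gh_{X}^{+}\setminus Z$ set $\varepsilon:=-\varphi^{-1}\psi$, a mvf meromorphic on $\Omega_{+}$. From $\varphi\varepsilon=-\psi$ one gets $X=\varphi\,[\,I_{p}\quad-\varepsilon\,]$ and
\[
{\mathsf \Lambda}^{\varepsilon}_{\omega}(\lambda)=\frac{I_{p}-\varepsilon(\lambda)\varepsilon(\omega)^{*}}{\rho_{\omega}(\lambda)}=\varphi(\lambda)^{-1}{\mathsf K}^{X}_{\omega}(\lambda)\varphi(\omega)^{-*}\qquad(\lambda,\omega\in\gh_{X}^{+}\setminus Z).
\]
Since $u\mapsto\varphi(\omega)^{-*}u$ is a bijection of $\CC^{p}$, every Gram matrix built from ${\mathsf \Lambda}^{\varepsilon}$ equals one built from ${\mathsf K}^{X}$, so the two kernels have the same number of negative squares on $\gh_{X}^{+}\setminus Z$; because deleting a discrete set does not change this number, it equals $\kappa$, whence $\varepsilon\in{\mathcal S}^{\ptq}_{\kappa}$, which is (2). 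Next let $\varepsilon=b_{\ell}^{-1}\varepsilon_{\ell}$ be the left Kre\u{\i}n--Langer factorization, $b_{\ell}$ a Blaschke--Potapov product of degree $\kappa$ and $\varepsilon_{\ell}\in{\mathcal S}^{\ptq}$. By Lemma~\ref{Corona}, applied to $b_{\ell}$ and $-\varepsilon_{\ell}$ (which meet the rank condition~\eqref{KLcanon2} since $\varepsilon=b_{\ell}^{-1}\varepsilon_{\ell}$ is a left Kre\u{\i}n--Langer factorization), there are $c_{\ell}\in H_{\infty}^{p\times p}$ and $d_{\ell}\in H_{\infty}^{q\times p}$ with $[\,b_{\ell}\quad-\varepsilon_{\ell}\,]\binom{c_{\ell}}{d_{\ell}}=I_{p}$. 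Since $[\,I_{p}\quad-\varepsilon\,]=b_{\ell}^{-1}[\,b_{\ell}\quad-\varepsilon_{\ell}\,]$ we obtain, on $\gh_{X}^{+}\setminus Z$, $X=\gamma\,[\,b_{\ell}\quad-\varepsilon_{\ell}\,]$ with $\gamma:=\varphi b_{\ell}^{-1}$, hence $\gamma=X\binom{c_{\ell}}{d_{\ell}}$; the right side is holomorphic on all of $\gh_{X}^{+}$, so $\gamma$ extends holomorphically there and $X=\gamma\,[\,b_{\ell}\quad-\varepsilon_{\ell}\,]$ persists on $\gh_{X}^{+}$. Then $\ran X(\lambda)\subset\ran\gamma(\lambda)$ together with $\rank X(\lambda)=p$ forces $\gamma(\lambda)$ invertible for every $\lambda\in\gh_{X}^{+}$, so $\det\varphi=\det\gamma\cdot\det b_{\ell}$ vanishes on $\gh_{X}^{+}$ precisely at the zeros of the scalar Blaschke product $\det b_{\ell}$, of which there are at most $\kappa$. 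Thus $\#Z\le\kappa$, which is (1).

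The main obstacle is step (c). The naive observation that ${\mathsf K}^{X}_{\lambda}(\lambda)$ takes a strictly negative value at each point of $Z$ (by (ii), as in step (a)) is useless for bounding $\#Z$, because a Hermitian matrix with negative diagonal can have far fewer than one negative eigenvalue per diagonal entry. What actually works is to absorb all the singularities of $\varphi$ into the Blaschke--Potapov factor $b_{\ell}$ of the Kre\u{\i}n--Langer factorization of $\varepsilon$ --- a factor whose degree is controlled by $\kappa$ --- so the corona lemma (Lemma~\ref{Corona}) and the essential uniqueness of the Kre\u{\i}n--Langer factorization are doing the real work. Step (a) is only mildly delicate: it rests on the infinite dimensionality of the Hardy space, without which $\det\varphi\equiv0$ could not be excluded.
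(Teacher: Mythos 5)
Your argument is correct, and for assertion (2) it is essentially the paper's: both rest on the congruence ${\mathsf \Lambda}^{\varepsilon}_{\omega}(\lambda)=\varphi(\lambda)^{-1}{\mathsf K}^{X}_{\omega}(\lambda)\varphi(\omega)^{-*}$ together with a small perturbation of the test points to move them off the singular set of $\varphi$. For assertion (1), however, you take a genuinely different and heavier route. The paper's proof is a one\-/step Gram\-/matrix computation: if $\omega_{1},\dots,\omega_{n}$ are distinct points of $\gh_{X}^{+}$ at which $\varphi$ is singular and $u_{j}\in\ker\varphi(\omega_{j})^{*}$, $u_{j}\neq 0$, then the $\varphi\varphi^{*}$ contribution drops out of \emph{every} entry of the matrix $\bigl[u_{k}^{*}{\mathsf K}^{X}_{\omega_{j}}(\omega_{k})u_{j}\bigr]$, which therefore equals $-\bigl[\langle v_{j}/\rho_{\omega_{j}},v_{k}/\rho_{\omega_{k}}\rangle_{st}\bigr]$ with $v_{j}=\psi(\omega_{j})^{*}u_{j}\neq0$ by (ii); this is the negative of a Gram matrix of linearly independent elements of $H_{2}^{q}$, hence has $n$ negative eigenvalues, forcing $n\le\kappa$. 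Your closing remark dismisses this as a "diagonal" argument, but it is not: the off\-/diagonal entries simplify too, and the resulting matrix is negative definite, so the direct approach does work -- indeed your step (a) is exactly this computation in the special case of $n>\kappa$ points inside a disc where $\det\varphi\equiv0$. Your replacement for the counting step -- prove (2) first, take the Kre\u{\i}n--Langer factorization $\varepsilon=b_{\ell}^{-1}\varepsilon_{\ell}$, use Lemma~\ref{Corona} to show $\gamma=\varphi b_{\ell}^{-1}$ extends holomorphically and invertibly to $\gh_{X}^{+}$, and conclude that $\det\varphi$ vanishes exactly at the zeros of $\det b_{\ell}$ -- is valid and even yields slightly more (it locates the exceptional set precisely, consistent with Lemma~\ref{pr:2}), at the price of invoking the Kre\u{\i}n--Langer theorem and the corona lemma where the paper needs only the reproducing property of $1/\rho_{\omega}$ in $H_{2}$. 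Two minor imprecisions, neither fatal: the holomorphic selection $v(\lambda)\in\ker\varphi(\lambda)^{*}$ is not really available as stated (that kernel varies antiholomorphically), but it is also not needed -- any pointwise choice of nonzero vectors at distinct points suffices; and the assertion that deleting the discrete set $Z$ does not change $\textup{sq}_{-}{\mathsf K}^{X}$ deserves the same one\-/line perturbation justification the paper itself uses.
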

\begin{proof}
Let $\omega_1,\ldots,\omega_n$ be $n$ distinct points in $\gh_X^+$
such that $\varphi(\omega_j)^* u_j=0$ for some nonzero vectors
$u_j\in\dC^p$, $j=1,\ldots,n$. Since $\ker X(\lam)^*=\{0\}$ this
implies that
\begin{equation}\label{eq:2.91}
    v_j:=\psi(\omega_j)^*{u_j}\neq 0\quad \text{for}\ j=1,\dots,n\,,
\end{equation}
and hence that the matrix
\begin{equation}\label{eq:2.92}
\begin{split}
G&=\left[u_k^*\frac{X(\omega_k)j_{pq}X(\omega_j)^*}{\rho_{\omega_j}(\omega_k)}u_j\right]_{j,k=1}^n\\
&=
\left[u_k^*\frac{\varphi(\omega_k)\varphi(\omega_j)^*-\psi(\omega_k)\psi(\omega_j)^*}{\rho_{\omega_j}(\omega_k)}u_j\right]_{j,k=1}^n\\
&=
-\left[\frac{v_k^*v_j}{\rho_{\omega_j}(\omega_k)}\right]_{j,k=1}^n\\
&=-\left[\left\langle
{v_j}/{\rho_{\omega_j}},
{v_k}/{\rho_{\omega_k}}\right\rangle_{st}\right]_{j,k=1}^n
\end{split}
\end{equation}
differs in sign from the Gram matrix of a system
$\{{v_j}/{\rho_{\omega_j}}\}_{j=1}^n$ of linearly independent
vectors in $H_2^p$. Therefore, ${\nu}_-(G)=n\le \kappa$, since $G$
has at most $\kappa$ negative eigenvalues, by assumption.

Suppose next that $\omega_1,\ldots,\omega_n$ and $u_1,\ldots,u_n$
are chosen so that $\nu_-(G)=\kappa$. Then, by perturbing these
points slightly, if necessary, one can insure that the matrices
$\varphi(\omega_1),\ldots, \varphi(\omega_n)$ are all invertible,
$\nu_-(G)=\kappa$ and
\[
G=\left[u_k^*\varphi(\omega_k)
\frac{I_q-\varepsilon(\omega_k)\varepsilon(\omega_j)^*}{\rho_{\omega_j}
(\omega_k)}\varphi(\omega_j)^* u _j \right]_{j,k=1}^n\,.
\]
The $(\Omega_+ ,j_{pq})_\kappa$-admissibility of $X$ implies that
$\varepsilon\in {\mathcal S}_{\kappa}^{p\times q}$.
\end{proof}
\begin{remark}
    The assumption (ii) in Lemma~\ref{Inv} can be relaxed by invoking
a   version of Leech's theorem that is valid in Pontryagin spaces; see e.g., \cite{ADRS1} for
the latter.
\end{remark}

Let
\begin{equation}\label{eq:2.2}
  T_W[\varepsilon]:=(w_{11}(\lam)\varepsilon(\lam)+w_{12}(\lam))
  (w_{21}(\lam)\varepsilon(\lam)+w_{22}(\lam))^{-1}
\end{equation}
denote the linear fractional transformation of a mvf $\varepsilon\in
{\mathcal S}^{\ptq}_{\kappa_2}$ $(\kappa_2\in \ZZ_+)$ based on the
block decomposition
\begin{equation}\label{eq:2.1}
W(\lam)=\left[\begin{array}{ll}
 w_{11}(\lam) & w_{12}(\lam) \\
  w_{21}(\lam) & w_{22}(\lam)
\end{array}\right]
\end{equation}
of a mvf $W\in\cU_\kappa(j_{pq})$ with blocks $w_{11}(\lam)$ and
$w_{22}(\lam)$ of sizes $p\times p$ and $q\times q$, respectively.
Let  $\Omega_W$ be defined by~\eqref{eq:OW} and let
\begin{equation}\label{eq:Lambda}
    \Lambda=\{\lambda\in \Omega_W\cap \gh_\varepsilon^+:\,
{\det}\left(w_{21}(\lambda)\varepsilon(\lambda)+w_{22}(\lambda)\right)=0\}.
\end{equation}
 The transformation $T_W[\varepsilon]$ is well defined for
$\lam\in(\Omega_W\cap \gh_\varepsilon^+)\setminus\Lambda.$


\begin{lem}\label{pr:2}
Let $\textup W\in \cU_{\kappa_1}(j_{pq})$, $\varepsilon\in {\mathcal
S}_{\kappa_2}^{p\times q}$ and let $\Lambda$ be defined
by~\eqref{eq:Lambda}. Then
\begin{equation}
\label{eq:may11a9}
\Lambda=\{\lambda\in \Omega_W\cap \gh_\varepsilon^+:\,
\det(w_{11}^\#(\lambda)+
\varepsilon(\lambda)w_{12}^\#(\lambda))=0\}
\end{equation}
and:
\begin{enumerate}
\item[\rm(1)]
 $T_W[\varepsilon]$ admits the supplementary representation
\begin{equation}\label{eq:2.7}
    T_W[\varepsilon]=(w_{11}^\#(\lam)+\varepsilon(\lam)w^\#_{12}(\lam))^{-1}
    (w_{21}^\#(\lam)+\varepsilon(\lam)w^\#_{22}(\lam))\quad
    \lam\in(\Omega_W\cap \gh_\varepsilon^+)\setminus\Lambda;
\end{equation}
\item[\rm(2)]
$\Lambda$ consists of at most $\kappa_1+\kappa_2$ points and
$T_W[\varepsilon]\in {\mathcal S}_{\kappa'}^{p\times q}$ with
$\kappa'\leq\kappa_2+\kappa_1$; \vskip 6pt
\item[\rm(3)]
The following equalities hold:
\begin{equation}\label{eq:Hol}
\Omega_W\cap \gh_\varepsilon^+\cap \gh_s^+=(\Omega_W\cap
\gh_\varepsilon^+)\setminus\Lambda,\quad \Omega_W\cap
\gh_{\varepsilon^\#}^-\cap \gh_{s^\#}^-=(\Omega_W\cap
\gh_{\varepsilon^\#}^-)\setminus\Lambda^\circ,
\end{equation}
\[
\begin{split}
\Lambda^\circ&=\{\lambda\in \Omega_W\cap \gh_{\varepsilon^\#}^-:\, \det (w_{11}(\lambda)\varepsilon(\lambda)+w_{12}(\lambda))=0\}\\
&=\{\lambda\in \Omega_W\cap \gh_{\varepsilon^\#}^-:\,\det
(w_{21}^\#(\lambda)+\varepsilon(\lambda) w_{22}^\#(\lambda))=0\}.
\end{split}
\]
\end{enumerate}
\end{lem}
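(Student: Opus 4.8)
The plan is to exploit the $J$-unitarity relation $W(\mu)j_{pq}W(\mu)^*=j_{pq}$ on $\Omega_0$, which via the pseudocontinuation formula~\eqref{eq:1.78} extends to the algebraic identity $W(\lambda)j_{pq}W^\#(\lambda)=j_{pq}$ on $\Omega_W$. Writing this out in $2\times2$ block form and comparing blocks gives relations between the $w_{ij}$ and the $w_{ij}^\#$; in particular the first block column of $Wj_{pq}W^\#=j_{pq}$ reads
\[
w_{11}w_{11}^\#-w_{12}w_{21}^\#=I_p,\qquad w_{21}w_{11}^\#-w_{22}w_{21}^\#=0,
\]
and the second block column gives the analogous pair with the roles adjusted. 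From these one reads off that
\[
\begin{bmatrix}w_{11}&w_{12}\\ w_{21}&w_{22}\end{bmatrix}
\begin{bmatrix}\varepsilon\\ I_q\end{bmatrix}
=\begin{bmatrix}w_{11}^\#+ \ast\\ \ast\end{bmatrix},
\]
more precisely that $(w_{11}\varepsilon+w_{12})$ and $(w_{11}^\#+\varepsilon w_{12}^\#)$, and likewise $(w_{21}\varepsilon+w_{22})$ and $(w_{21}^\#+\varepsilon w_{22}^\#)$, are linked by the identity
\[
(w_{11}^\#+\varepsilon w_{12}^\#)(w_{11}\varepsilon+w_{12})
=(w_{21}^\#+\varepsilon w_{22}^\#)(w_{21}\varepsilon+w_{22})
\quad\text{on }\Omega_W\cap\gh_\varepsilon^+,
\]
which is the algebraic heart of the supplementary representation~\eqref{eq:2.7} and, by taking determinants, yields~\eqref{eq:may11a9}. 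So the first step is to derive this pair of block identities carefully from $Wj_{pq}W^\#=j_{pq}$.

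For statement (1), once the identity above is in hand, on the set where $w_{21}\varepsilon+w_{22}$ is invertible one rearranges to get
\[
T_W[\varepsilon]=(w_{11}\varepsilon+w_{12})(w_{21}\varepsilon+w_{22})^{-1}
=(w_{11}^\#+\varepsilon w_{12}^\#)^{-1}(w_{21}^\#+\varepsilon w_{22}^\#),
\]
provided one also knows $w_{11}^\#+\varepsilon w_{12}^\#$ is invertible there; but that follows from~\eqref{eq:may11a9}, which identifies the two singular sets. For statement (2): write $X(\lambda)=[\,w_{11}(\lambda)\ w_{12}(\lambda)\,]$, observe that its admissibility kernel is a principal block of ${\mathsf K}^W_\omega(\lambda)$ so $X$ is $(\Omega_+,j_{pq})_{\kappa_1'}$-admissible with $\kappa_1'\le\kappa_1$, and feed $X[\begin{smallmatrix}\varepsilon\\ I_q\end{smallmatrix}]$-type considerations into the argument of Lemma~\ref{Inv}. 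Concretely, the counting argument in Lemma~\ref{Inv} shows that at each singular point of $w_{21}\varepsilon+w_{22}$ one picks up a negative square, and the total number of negative squares available is bounded by $\kappa_1+\kappa_2$ — the $\kappa_1$ from $W$ and the $\kappa_2$ from $\varepsilon$ combining additively in the kernel of $T_W[\varepsilon]$ — whence $\#\Lambda\le\kappa_1+\kappa_2$; the same estimate gives $\kappa'\le\kappa_1+\kappa_2$ for $s=T_W[\varepsilon]$, since writing ${\mathsf\Lambda}^s_\omega(\lambda)$ in terms of ${\mathsf K}^W_\omega(\lambda)$ and ${\mathsf\Lambda}^\varepsilon_\omega(\lambda)$ exhibits it as a sum of kernels with $\kappa_1$ and $\kappa_2$ negative squares (use Theorem~\ref{SUMK12} and Theorem~\ref{FACTK}).

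Statement (3) is then essentially bookkeeping: holomorphy of $s=T_W[\varepsilon]$ at a point of $\Omega_W\cap\gh_\varepsilon^+$ is equivalent to invertibility of $w_{21}\varepsilon+w_{22}$ there, i.e.\ to the point not lying in $\Lambda$, which gives the first equality in~\eqref{eq:Hol}. Applying $f\mapsto f^\#$ to the whole picture — note that $W\in\cU_{\kappa_1}(j_{pq})$ forces $W^\#=j_{pq}W^{-1}j_{pq}$ and $\varepsilon^\#$ plays the role of $\varepsilon$ for $s^\#$ — transports the identity $T_W[\varepsilon]=T_{W^\#}[\varepsilon^\#]^\#$ (a direct check from~\eqref{eq:2.7}) to $\Omega_-$, and the singular set of the $\Omega_-$ version is exactly $\Lambda^\circ$, whose two descriptions come from the block identities applied on $\Omega_W\cap\gh_{\varepsilon^\#}^-$. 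The main obstacle I anticipate is the careful handling of the ``slight perturbation of the points'' in the negative-squares counting: one must ensure the perturbed points remain in $\Omega_W\cap\gh_\varepsilon^+$, keep the relevant matrices invertible, and preserve the count $\nu_-(G)=\kappa_1+\kappa_2$ simultaneously — this is where the argument of Lemma~\ref{Inv} has to be adapted rather than merely cited, because here $X$ carries $\kappa_1$ negative squares of its own (not $0$) and $\varepsilon$ contributes $\kappa_2$ more, so the Gram matrix is genuinely indefinite and one needs the additivity of negative squares (Theorem~\ref{SUMK12}) to pin down the total.
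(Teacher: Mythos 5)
Your outline follows the paper's proof almost exactly: the key identity
\[
(w_{11}^\#+\varepsilon w_{12}^\#)(w_{11}\varepsilon+w_{12})
=(w_{21}^\#+\varepsilon w_{22}^\#)(w_{21}\varepsilon+w_{22})
\]
derived from $[I_p\ \ \varepsilon]\,W^\#j_{pq}W\,[\varepsilon^*\ \ I_q]^*$ $=0$, the kernel decomposition of ${\mathsf\Lambda}^s$ into a ${\mathsf K}^W$-part and an ${\mathsf\Lambda}^\varepsilon$-part fed into Lemma~\ref{Inv} for the count, and the $\#$-symmetry for part (3) are all what the paper does.

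One step as written would fail: you cannot prove \eqref{eq:may11a9} ``by taking determinants'' of the identity above, because both sides are $p\times q$ matrices (e.g.\ $w_{21}^\#$ is $p\times q$ and $\varepsilon w_{22}^\#$ is $p\times q$, so the left factor on the left-hand side is not square), and determinants are undefined unless $p=q$. The argument the paper uses, and the one you need, is via kernel vectors: if $(w_{21}\varepsilon+w_{22})\eta=0$ with $\eta\neq0$, then $\xi:=(w_{11}\varepsilon+w_{12})\eta\neq0$ because $W(\lambda)\bigl[\begin{smallmatrix}\varepsilon\eta\\ \eta\end{smallmatrix}\bigr]=\bigl[\begin{smallmatrix}\xi\\ 0\end{smallmatrix}\bigr]$ and $W(\lambda)$ is invertible on $\Omega_W$; the identity then forces $(w_{11}^\#+\varepsilon w_{12}^\#)\xi=0$, so the singular set of $w_{21}\varepsilon+w_{22}$ is contained in that of $w_{11}^\#+\varepsilon w_{12}^\#$, and symmetrically for the reverse inclusion. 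Also, in (2) the detour through the admissibility of $[w_{11}\ \ w_{12}]$ (with $\le\kappa_1$ negative squares) is unnecessary and not justified as stated; the paper applies Lemma~\ref{Inv} directly to $X=[I_p\ \ \varepsilon]W^\#$, whose $(\Omega_+,j_{pq})_{\kappa'}$-admissibility with $\kappa'\le\kappa_1+\kappa_2$ follows from the displayed kernel identity together with Theorem~\ref{SUMK12} — this single application simultaneously bounds $\#\Lambda$ and yields $T_W[\varepsilon]\in\cS^{\ptq}_{\kappa'}$.
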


\begin{proof}
(1) The identity
\[
[I_p\quad \varepsilon(\lambda)]W^\#(\lambda)j_{pq}W(\lambda)
\begin{bmatrix} \varepsilon(\lambda)\\ I_q\end{bmatrix}=
[I_p\quad \varepsilon(\lambda)]j_{pq}
\begin{bmatrix} \varepsilon(\lambda)\\ I_q\end{bmatrix}=0
\]
implies that for all $\lambda\in\Omega_W\cap \gh_\varepsilon^+$
\begin{equation}\label{eq:2.95}
\begin{split}
(w_{21}^\#(\lambda)&+\varepsilon(\lambda)
w_{22}^\#(\lambda))(w_{21}(\lambda)\varepsilon(\lambda)+w_{22}(\lambda))\\
&=(w_{11}^\#(\lambda)+\varepsilon(\lambda)
w_{12}^\#(\lambda))(w_{11}(\lambda)\varepsilon(\lambda)+w_{12}(\lambda)).
\end{split}
\end{equation}
Thus, if
$
(w_{21}(\lambda)\varepsilon(\lambda)+w_{22}(\lambda))\eta=0 $ for
some vector $\eta\neq 0$, then
\begin{equation}\label{eq:2.95a}
\xi=(w_{11}(\lambda)\varepsilon(\lambda)+w_{12}(\lambda))\eta\neq 0
\end{equation}
since $W(\lambda)$ is invertible for all $\lambda\in\Omega_W$. It
follows from~\eqref{eq:2.95} that
\begin{equation}\label{eq:2.95b}
(w_{11}^\#(\lambda)+\varepsilon(\lambda) w_{12}^\#(\lambda))\xi=0.
\end{equation}
Therefore,
\[
\Lambda\subseteq\{\lambda\in \Omega_W\cap \gh_\varepsilon^+:\,
\det(w_{11}^\#(\lambda)+
\varepsilon(\lambda)w_{12}^\#(\lambda))=0\}.
\]
A similar argument shows that the opposite inclusion is also
valid. Therefore, the two sets are equal.   The
identity~\eqref{eq:2.7} is immediate from~\eqref{eq:2.95}.

(2) Next, let
\[
X(\lambda)=\begin{bmatrix}\varphi(\lambda) &
\psi(\lambda)\end{bmatrix}=\begin{bmatrix}I_p
&\varepsilon(\lambda)\end{bmatrix}W^\#(\lambda).
\]
Then the formula
\[
\begin{split}
 \frac{X(\lambda)j_{pq}X(\omega)^*}{\rho_\omega(\lambda)}&=
\frac{1}{\rho_\omega(\lambda)}\begin{bmatrix}I_p
&\varepsilon(\lambda)\end{bmatrix}W^\#(\lambda)
j_{pq}W^\#(\omega)^*\left[%
\begin{array}{c}
   I_p\\
   \varepsilon(\omega)^*
\end{array}%
\right]\\
&=
\begin{bmatrix}I_p
&-\varepsilon(\lambda)\end{bmatrix}W(\lambda)^{-1}
{\mathsf K}_\omega^W(\lambda)W(\omega)^{-*}\left[%
\begin{array}{c}
   I_p\\
   -\varepsilon(\omega)^*
\end{array}%
\right]+{\mathsf \Lambda}_\omega^\varepsilon (\lambda)
\end{split}
\]
implies that $X(\lambda)$ is $(\Omega_+, j_{pq})_{\kappa'}$
admissible with $\kappa'\leq\kappa_1+\kappa_2$, thanks to Theorem
\ref{SUMK12}. Clearly, condition (ii) in Lemma~\ref{Inv} is also
satisfied, and, by Lemma~\ref{Inv},
\[
 \varphi(\lambda)=w_{11}^\#(\lambda)+\varepsilon(\lambda)w_{12}^\#(\lambda)
 \]
is invertible for all $\lambda\in\Omega_W\cap \gh_\varepsilon^+$
except for at most $\kappa'$ points, and the mvf
$s(\lambda):=T_W[\varepsilon](\lambda)$ given by~\eqref{eq:2.7}
belongs to ${\mathcal S}_{\kappa'}^{p\times q}$ with
$\kappa'\leq\kappa_1+\kappa_2$.

(3) If $\lam\in\Lambda$ then it follows from \eqref{eq:2.95a} and
\eqref{eq:2.95b} that $\lam$ is a pole of $s$. This proves the
inclusion
\[
\Omega_W\cap \gh_\varepsilon^+\cap \gh_s^+\subseteq (\Omega_W\cap
\gh_\varepsilon^+)\setminus\Lambda,
\]
and hence the first of the equalities in~\eqref{eq:Hol}, since the
converse is obvious. The proof of the second one is similar. The last equality is implied by~\eqref{eq:2.95}.
\end{proof}

The next theorem characterizes the range of the linear fractional
transform $T_W$ in terms of admissibility; it extends a result of de
Branges and Rovnyak to an indefinite setting.
\begin{thm}\label{thm:3.3}
Let $s\in {\mathcal S}_{\kappa}^{p\times q}(\Omega_+)$ and let $W\in
\cU_{\kappa_1}(j_{pq})$ with $\kappa_1\leq\kappa$. Then $s\in
T_W[{\mathcal S}_{\kappa-\kappa_1}^{p\times q}]$ if and only if
$\begin{bmatrix}I_p &-s\end{bmatrix}W$ is $(\Omega_+
,j_{pq})_{\kappa-\kappa_1}$- admissible.
\end{thm}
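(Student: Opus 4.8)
The plan is to work with the $\ptp$ mvf $\varphi:=w_{11}-sw_{21}$ and the $\ptq$ mvf $\psi:=w_{12}-sw_{22}$, so that
\[
X:=\begin{bmatrix}I_p&-s\end{bmatrix}W=\begin{bmatrix}\varphi&\psi\end{bmatrix},
\]
and to set $\kappa_2:=\kappa-\kappa_1$. Everything rests on the kernel identity
\[
{\mathsf \Lambda}_\omega^s(\lambda)
=\frac{X(\lambda)j_{pq}X(\omega)^*}{\rho_\omega(\lambda)}
+\begin{bmatrix}I_p&-s(\lambda)\end{bmatrix}{\mathsf K}_\omega^W(\lambda)\begin{bmatrix}I_p\\-s(\omega)^*\end{bmatrix},
\]
which is immediate from $W(\lambda)j_{pq}W(\omega)^*=j_{pq}-\rho_\omega(\lambda){\mathsf K}_\omega^W(\lambda)$ together with $\begin{bmatrix}I_p&-s(\lambda)\end{bmatrix}j_{pq}\begin{bmatrix}I_p\\-s(\omega)^*\end{bmatrix}=I_p-s(\lambda)s(\omega)^*$. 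Write ${\mathsf A}_\omega(\lambda)$ for the first summand on the right, i.e.\ the admissibility kernel of $X$; the second summand has the form $R(\lambda){\mathsf K}_\omega^W(\lambda)R(\omega)^*$ with $R=\begin{bmatrix}I_p&-s\end{bmatrix}$. All three kernels agree on $\gh_s^+\cap\gh_W^+$, whose complement in $\gh_X^+$ is discrete and hence negligible for counting negative squares.

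For necessity, let $s=T_W[\varepsilon]$ with $\varepsilon\in{\mathcal S}_{\kappa_2}^{\ptq}$. The relation $s(w_{21}\varepsilon+w_{22})=w_{11}\varepsilon+w_{12}$ gives $X\begin{bmatrix}\varepsilon\\I_q\end{bmatrix}=\varphi\varepsilon+\psi=0$, so $X=\varphi\begin{bmatrix}I_p&-\varepsilon\end{bmatrix}$ and ${\mathsf A}_\omega(\lambda)=\varphi(\lambda){\mathsf \Lambda}_\omega^\varepsilon(\lambda)\varphi(\omega)^*$. Hence $\textup{sq}_-{\mathsf A}\le\kappa_2$ by Theorem~\ref{FACTK}, while $\textup{sq}_-(R{\mathsf K}^W R^*)\le\kappa_1$ by the same theorem. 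Feeding both into Theorem~\ref{SUMK12} and the displayed identity,
\[
\kappa=\textup{sq}_-{\mathsf \Lambda}^s\le\textup{sq}_-{\mathsf A}+\textup{sq}_-(R{\mathsf K}^W R^*)\le\kappa_2+\kappa_1=\kappa,
\]
so equality holds throughout and $\textup{sq}_-{\mathsf A}=\kappa_2$; that is, $X$ is $(\Omega_+,j_{pq})_{\kappa_2}$-admissible.

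For sufficiency, assume $X=\begin{bmatrix}\varphi&\psi\end{bmatrix}$ is $(\Omega_+,j_{pq})_{\kappa_2}$-admissible. Since $W$ is holomorphic and invertible off a discrete subset of $\gh_W^+$, any $u$ with $u^*X(\lambda)=0$ satisfies $u^*\begin{bmatrix}I_p&-s(\lambda)\end{bmatrix}=0$, whence $u=0$; thus condition (ii) of Lemma~\ref{Inv} holds off a discrete set, which is all that is needed (or one may invoke the Remark following Lemma~\ref{Inv}). Lemma~\ref{Inv} then yields that $\varphi$ is invertible off at most $\kappa_2$ points and that $\varepsilon:=-\varphi^{-1}\psi\in{\mathcal S}_{\kappa_2}^{\ptq}$. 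From $\varphi\varepsilon+\psi=0$, i.e.\ $(w_{11}-sw_{21})\varepsilon+(w_{12}-sw_{22})=0$, one obtains $w_{11}\varepsilon+w_{12}=s(w_{21}\varepsilon+w_{22})$, and since $w_{21}\varepsilon+w_{22}$ is invertible off a finite set by Lemma~\ref{pr:2}, this gives $s=T_W[\varepsilon]$ with $\varepsilon\in{\mathcal S}_{\kappa_2}^{\ptq}={\mathcal S}_{\kappa-\kappa_1}^{\ptq}$, as required.

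I expect the main obstacle to be purely organizational: keeping track of the several domains of holomorphy ($\gh_X^+$, $\gh_s^+$, $\gh_\varepsilon^+$, $\Omega_W$, and the exceptional set $\Lambda$) and confirming that the discrete sets of bad points affect neither the negative-square counts (a Hermitian kernel and its restriction to a subdomain with discrete complement have the same $\textup{sq}_-$, since the finitely many points of any negative Gram configuration can be perturbed into the subdomain without destroying negative eigenvalues) nor the final equality $s=T_W[\varepsilon]$ (two meromorphic mvf's that coincide off a discrete set coincide). The genuinely substantive steps are the two-sided squeeze in the necessity half and the invocation of Lemma~\ref{Inv} in the sufficiency half.
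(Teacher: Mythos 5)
Your proposal is correct and takes essentially the same route as the paper: necessity via the factorization $\begin{bmatrix}I_p&-s\end{bmatrix}W=\varphi\begin{bmatrix}I_p&-\varepsilon\end{bmatrix}$ (the paper writes the same multiplier as $(w_{11}^\#+\varepsilon w_{12}^\#)^{-1}$ using \eqref{eq:2.100}), and sufficiency via Lemma~\ref{Inv}. The only difference is cosmetic: you pin down the exact count $\textup{sq}_-{\mathsf A}=\kappa-\kappa_1$ by the squeeze through Theorem~\ref{SUMK12}, a step the paper leaves implicit.
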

\begin{proof}
If $s=T_W[\varepsilon]$ for some $\varepsilon\in {\mathcal
S}_{\kappa-\kappa_1}^{p\times q}$ and  $\lam \in \Omega_W\cap
\gh_\varepsilon^+$, then, in view of \eqref{eq:2.7},
\begin{equation}\label{eq:2.100}
\begin{split}
\begin{bmatrix}I_{p} & -s(\lambda)\end{bmatrix}W(\lambda) &=(w_{11}^\# +\varepsilon
w_{12}^\#)^{-1}
\begin{bmatrix}w_{11}^\#+\varepsilon w^\#_{12} & w_{21}^\#+\varepsilon w^\#_{22}\end{bmatrix}
j_{pq}W(\lambda)\\
&=(w_{11}^\# +\varepsilon w_{12}^\#)^{-1}
\begin{bmatrix}I_{p} & \varepsilon(\lambda)\end{bmatrix}
W^\# (\lambda)j_{pq}W(\lambda)\\
&=(w_{11}^\# +\varepsilon w_{12}^\#)^{-1}
\begin{bmatrix}I_{p} & -\varepsilon(\lambda)\end{bmatrix}.
\end{split}
\end{equation}
Therefore, the mvf $\begin{bmatrix}I_{p} & -s(\lambda)\end{bmatrix}
W(\lambda)$ is $(\Omega_+,j_{pq})_{\kappa-\kappa_1}$- admissible.

Conversely, if the mvf
$$
\begin{bmatrix}I_{p} &
-s(\lambda)\end{bmatrix}W(\lambda)=\begin{bmatrix}
w_{11}(\lambda)-s(\lambda)w_{21}(\lambda)&
w_{12}(\lambda)-s(\lambda)w_{22}(\lambda)\end{bmatrix}
$$
is $(\Omega_+, j_{pq})_{\kappa-\kappa_1}$- admissible, then, by
Lemma~\ref{Inv},  there is a mvf $\varepsilon\in {\mathcal
S}_{\kappa-\kappa_1}^{p\times q}$ such that
\[
w_{12}(\lambda)-s(\lambda)w_{22}(\lambda)
=-[w_{11}(\lambda)-s(\lambda)w_{21}(\lambda)]\varepsilon(\lambda).
\]
Thus, $s=T_W[\varepsilon]$, i.e., $s\in T_W[{\mathcal
S}_{\kappa-\kappa_1}^{p\times q}]$.
\end{proof}

\subsection{Proof of Theorem \ref{thm:0.1}}
Since the boundary values $s(\mu)$ of a mvf $s\in {\mathcal
S}^{\ptq}_\kappa$ exist a.e. on $\Omega_0$, the mvf $\Delta_s(\mu)$
may be defined a.e. on $\Omega_0$ by formula \eqref{DeltaS} for such
$s$.
\begin{proof}\quad {\it Necessity}. Let $s=T_W[\varepsilon]$
for some mvf $\varepsilon\in {\mathcal S}^{\ptq}_{\kappa_2}$,
($\kappa=\kappa_1+\kappa_2$) and let
\begin{equation}
\label{eq:2.11} \Delta (\lambda)=\left\{
  \begin{array}{ll}
     \Delta_+(\lam) = \begin{bmatrix}I_{p} & -s(\lambda)\end{bmatrix} \quad
&\text{if}\quad \lam\in \mathfrak{h}_s^+,\\ \\
     \Delta_-(\lam)=\begin{bmatrix}-s^\#(\lam)&\ I_q\end{bmatrix}\quad&
\text{if}\quad \lam \in \mathfrak{h}_{s^{\#}}^-
  \end{array}\right.
\end{equation}
and
\begin{equation}\label{eq:2.13}
   R(\lam)=\left\{\begin{array}{ll}(w^\#_{11}(\lam)
   +\varepsilon(\lam)w^\#_{12}(\lam))^{-1}\quad&\text{if}\quad
\lam\in\Omega_W\cap\gh_\varepsilon^+\setminus\Lambda,\\ \\
(\varepsilon^\#(\lam) w^\#_{21}(\lam)
    +w^\#_{22}(\lam))^{-1}\quad &\text{if}\quad \lam \in
 \Omega_W\cap \gh_{\varepsilon^\#}^-\setminus\Lambda^\circ.
\end{array}\right.
\end{equation}
Then, since
\begin{equation}\label{eq:2.12}
\Delta_+(\lam)W(\lam)=  R(\lam)\begin{bmatrix}I_{p} &
-\varepsilon(\lambda)\end{bmatrix}\quad\text{for}\ \lam \in
\Omega_W\cap\gh_\varepsilon^+\cap{\mathfrak h}_s^+
\end{equation}
by (\ref{eq:2.100}), (\ref{eq:2.12}) implies that
\begin{equation}\label{eq:2.15}
    \begin{split}
      {\mathsf \Lambda} ^s_\omega(\lam)&= \begin{bmatrix}I_{p} &
-s(\lam)\end{bmatrix} \frac{j_{pq}}{\rho_\omega(\lam)}
    \begin{bmatrix}I_p\\
      -s(\omega)^*
    \end{bmatrix}\\
       & = \Delta_+(\lam)\frac{j_{pq}-W(\lam)j_{pq}W(\omega)^*}
       {\rho_\omega(\lam)} \Delta_+(\omega)^*+
       \Delta_+(\lam)\frac{W(\lam)j_{pq}W(\omega)^*}
       {\rho_\omega(\lam)}\Delta_+(\omega)^*\\
       &=\Delta_+(\lam){\mathsf K}^W_\omega(\lam)\Delta_+(\omega)^*+R(\lam){\mathsf \Lambda}^\varepsilon_\omega(\lam)R(\om)^*\quad\text{for}\
\lam \in \Omega_W\cap\gh_\varepsilon^+\cap{\mathfrak h}_s^+.
    \end{split}
\end{equation}
Similarly, \eqref{eq:2.2} implies that
\begin{equation}\label{eq:2.17}
    \begin{array}{ll}
      \Delta_-(\lam)\ W(\lam) &= -\begin{bmatrix}s^\#(\lam)&\ I_q\end{bmatrix}
      j_{pq}\ W(\lam) \\
      \\
       & =-(\varepsilon^\# w^\#_{21}+w^\#_{22})^{-1}
    \begin{bmatrix}\varepsilon^\# w^\#_{11}+w^\#_{12} &\ \varepsilon^\# w^\#_{21}+w^\#_{22}\end{bmatrix}
       j_{pq}\ W(\lam)\\
       \\
      & = -(\varepsilon^\# w^\#_{21}+w^\#_{22})^{-1}
      \begin{bmatrix}\varepsilon^\# &\ I_q\end{bmatrix}
      W^\#(\lam)
     j_{pq}\ W(\lam)\\
     \\
      & =R(\lam)\begin{bmatrix}-\varepsilon^\#(\lam)&\ I_q
\end{bmatrix}\quad\text{for}\ \lam\in \Omega_W\cap \gh_{\varepsilon^\#}^-\cap
{\mathfrak h}_{s^{\#}}^-.
         \end{array}
\end{equation}
Therefore,
\begin{equation}\label{eq:2.19}
   \begin{split}
   \frac{s^\#(\lam)-s(\omega)^*}{-\rho_\omega(\lam)}&=
   \begin{bmatrix}-s^\#(\lam)&\ I_q\end{bmatrix}
       {\displaystyle\frac{j_{pq}}{\rho_\omega(\lam)}}\begin{bmatrix}I_p\\
      -s(\omega)^*
    \end{bmatrix}\\
&=\Delta_-(\lam){\mathsf K}^W_\omega(\lam)\Delta_+(\omega)^*+R(\lam)
\frac{\varepsilon^\#(\lam)-\varepsilon(\omega)^*}{-\rho_\omega(\lam)}
R(\om)^*
\end{split}
\end{equation}
for $\lam\in \Omega_W\cap \gh_{\varepsilon^\#}^-\cap{\mathfrak
h}_{s^{\#}}^-$ and $\omega\in\Omega_W\cap{\mathfrak
h}_\varepsilon^+\cap{\mathfrak h}_s^+$.


In much the same way, (\ref{eq:2.17}) implies that
\begin{eqnarray}\label{eq:2.23}
{\mathsf L}^s_\omega(\lam)&=& \begin{bmatrix}-s^\#(\lam)&\
I_q\end{bmatrix} \frac{j_{pq}}{\rho_\omega(\lam)}\left[
\begin{array}{c}
-s^\#(\omega)^*\\
I_q
\end{array}\right]\nonumber\\
&=& \Delta_-(\lam)\ {\mathsf
K}^W_\omega(\lam)\Delta_-(\omega)^*+R(\lam){\mathsf
L}^\varepsilon_\omega(\lam)R(\omega)^*
\end{eqnarray}
for $\lam,\omega \in
 \Omega_W\cap \gh_{\varepsilon^\#}^-\cap{\mathfrak h}_{s^{\#}}^-$.
Thus, it follows from~\eqref{DOL+}, \eqref{DOL-},
\eqref{eq:2.15}--\eqref{eq:2.23} that
\begin{equation}\label{eq:2.21}
    {\mathsf D}^s_\omega(\lam)=\Delta(\lam)
    {\mathsf K}^W_\omega(\lam)\Delta(\omega)^*+
    R(\lam){\mathsf D}^\varepsilon_\omega(\lam)R(\omega)^*
\end{equation}
for all $\lam,\omega\in
\Omega_W\cap(\gh_\varepsilon^+\cup\gh_{\varepsilon^\#}^-)\cap({\mathfrak
H}_s^+\cup {\mathfrak h}_{s^{\#}}^-)$.

Applying Theorems \ref{SUMK12} and \ref{FACTK} to the identity
(\ref{eq:2.21}) and using the fact that the number
$\textup{sq}_-{\mathsf D}^s_\omega(\lambda)$ coincides with the sum
of the numbers of negative squares of the kernels on the right hand
side of \eqref{eq:2.21} one obtains the following decomposition of
the space $\wh\sD(s)$
\begin{equation}\label{eq:2.27}
   \wh\sD(s)=\Delta(\lam){\cK}(W)+R(\lam)\wh\sD(\varepsilon)
\end{equation}
where the mapping $f\mapsto \Delta  f$ from ${\cK}(W)$ into
$\wh\sD(s)$ is a contraction. The necessity part of the theorem now
follows from Definition~\ref{WHDS} and Theorem~\ref{DswhDs}, since
the mapping $f\mapsto \Delta_s  f$ from ${\cK}(W)$ into $\sD(s)$ is
the composition of the mapping $ \Delta:\,{\cK}(W)\to\wh\sD(s)$ and
the unitary mapping $U^{-1}:\,\wh\sD(s)\to\sD(s)$
from~\eqref{eq:1.300}.

{\it Sufficiency.} If conditions (1)--(3) of the theorem hold, then
\begin{equation}
    \label{IneqDK}
f\in\cK(W)\Longrightarrow\Delta_sf\in\sD(s)\quad\text{and}\quad
\langle\Delta_sf,\Delta_sf\rangle_{\sD(s)}\le \langle
f,f\rangle_{\cK(W)}.
\end{equation}
Moreover, in view of Lemma~\ref{HandD}, $\begin{bmatrix} I_p &
-s\end{bmatrix} f\in\cK(s)$ and
\begin{equation}
    \label{IneqKsW}
\langle \begin{bmatrix} I_p & -s
\end{bmatrix}f,\begin{bmatrix}
I_p & -s
\end{bmatrix}f\rangle_{\cK(s)}\le \langle\Delta_sf,\Delta_sf\rangle_{\sD(s)}\le
\langle f,f\rangle_{\cK(W)}.
\end{equation}
Therefore, the operator $T:\, f\mapsto \begin{bmatrix} I_p & -s
\end{bmatrix}f$ maps ${\cK}(W)$ into $\cK(s)$ contractively.
To find the adjoint operator
\[
T^*:\, {\cK}(s)\to\cK(W),
\]
let $\alpha\in\gh_W\cap\gh_s^+$, $u\in\dC^p$ and $f\in\cK(W)$. Then
\[
\begin{split}
\langle f,T^*{\mathsf \Lambda}^s_\alpha u\rangle_{\cK(W)} &=\langle
Tf,{\mathsf \Lambda}^s_\alpha u\rangle_{\cK(s)} =\langle
\begin{bmatrix} I_p & -s
\end{bmatrix}f,{\mathsf \Lambda}^s_\alpha u\rangle_{\cK(s)}\\
&=u^*\begin{bmatrix} I_p & -s(\alpha)
\end{bmatrix}f(\alpha)=\left\langle f,{\mathsf K}_{\alpha}^W\begin{bmatrix}
  I_p \\
  -s(\alpha)^*
\end{bmatrix} u\right\rangle_{\cK(W)}.
\end{split}
\]
Therefore,
\begin{equation}\label{eq:2.270}
 T^*{\mathsf \Lambda}^s_\alpha u={\mathsf K}_{\alpha}^W\begin{bmatrix}
  I_p \\
  -s(\alpha)^*
\end{bmatrix} u\quad(\alpha\in\gh_W\cap \gh_s^+,\,u\in\dC^p).
\end{equation}

Since $T:\, \cK(W)\to\cK(s)$ is a contraction,  Theorem 1.3.4 of
\cite{ADRS} implies that
\[
\nu_-(I-TT^*)=\textup{ind}_-\cK(s)-\textup{ind}_-\cK(W)=\kappa_2,
\]
and hence that for every choice of
$\alpha_j\in\gh_W\cap\gh_s^+,\,u_j\in\dC^p$ and $\xi_j\in\dC$ ($1\le
j\le n$) the form
\begin{equation}\label{eq:2.31}
  \sum_{j,k=1}^n\left\{
\left\langle{\mathsf \Lambda}_{\alpha_j}^su_j,{\mathsf
\Lambda}_{\alpha_k}^su_k\right\rangle_{\cH(s)} -\left\langle{\mathsf
K}_{\alpha_j}^W\begin{bmatrix}
  I_p \\
  -s(\alpha_j)^*
\end{bmatrix} u_j,{\mathsf K}_{\alpha_k}^W\begin{bmatrix}
  I_p \\
  -s(\alpha_k)^*
\end{bmatrix} u_k\right\rangle_{\cK(W)}\right\}\xi_j\bar\xi_k
\end{equation}
has at most $\kappa_2$ (and for some choice of
$\alpha_j\in\gh_W\cap\gh_s^+,\,u_j\in\dC^p$ and $\xi_j\in\dC$
exactly $\kappa_2$) negative squares. Since \eqref{eq:2.31} can be
rewritten in the form
\[
 \begin{split}
 \sum_{j,k=1}^n&\left\{
u_k^*{\mathsf \Lambda}_{\alpha_j}^s(\alpha_k)u_j
-u_k^*\begin{bmatrix} I_p & -s(\alpha_k)
\end{bmatrix}{\mathsf K}_{\alpha_j}^W(\alpha_k)\begin{bmatrix}
  I_p \\
  -s(\alpha_j)^*
\end{bmatrix} u_j\right\}\xi_j\bar\xi_k\\
&=\sum_{j,k=1}^n\left\{ u_k^*\begin{bmatrix} I_p & -s(\alpha_k)
\end{bmatrix}
\frac{W(\alpha_k)j_{pq}W(\alpha_j)^*}{\rho_{\alpha_j}(\alpha_k)}\begin{bmatrix}
  I_p \\
  -s(\alpha_j)^*
\end{bmatrix} u_j\right\}\xi_j\bar\xi_k,
\end{split}
\]
it follows that the mvf $ \begin{bmatrix} I_p & -s(\lambda)
\end{bmatrix}W (\lambda)$ is
$(\Omega_+,j_{pq})_{\kappa_2}$--admissible. Theorem~\ref{thm:3.3}
serves to complete the proof.
\end{proof}

\section{The resolvent matrix and the class $\cU_\kappa^\circ(j_{pq})$}
\label{Sec4}
\subsection{The class $H_{\kappa,\infty}^{\ptq}$.}
Let  $G(\lambda)$ be a $p\times q$ mvf that is meromorphic on $\Omega_+$ with
a Laurent expansion
\begin{equation}\label{Gexp}
    G(\lambda)=(\lambda-\lambda_0)^{-k}G_{-k}+\cdots
+(\lambda-\lambda_0)^{-1}G_{-1}+G_0+\cdots
\end{equation}
 in a neighborhood of a pole $\lambda_0\in\Omega_+$. The pole
multiplicity $M_{\pi}(G, \lambda_0)$ is defined by (see~\cite{KL})
\begin{equation}\label{MTexp}
M_{\pi}(G, \lambda_0)=\rank L(G,\lam_0), \quad
L(G,\lam_0)=\begin{bmatrix}
  G_{-k} &  & {\bf 0} \\
  \vdots & \ddots &  \\
  G_{-1} &\hdots & G_{-k}
\end{bmatrix}.
\end{equation}
The pole multiplicity of $G$ over $\Omega_+$ is given by
\begin{equation}\label{eq:7.11p}
  M_{\pi}(G, \Omega_+)=\sum_{\lambda\in\Omega_+}M_{\pi}(G, \lambda).
\end{equation}
This definition of pole multiplicity coincides with that based on the
Smith-McMillan representation of $G$ (see \cite{BGR}).
\begin{remark}\label{rem:3.1}
For a Blaschke-Potapov product $b$
of the form (\ref{BPprod}) the following statements are equivalent:
\begin{enumerate}
  \item[(1)] the degree of the Blaschke-Potapov product $b$ is equal $\kappa$;
  \vskip 6pt
  \item[(2)] $M_{\pi}(b^{-1}, \Omega_+)=\kappa$;\vskip 6pt
  \item[(3)] the kernel ${\mathsf \Lambda}_\om^{b^{-1}}(\lambda)$ has $\kappa$ negative squares in $\Omega_+$.
\end{enumerate}
\end{remark}

The zero multiplicity of a square mvf $F\in H_\infty^{\ptp}$ over $\Omega_+$ is defined in~\cite{KL81}
as the degree of the maximal left Blaschke-Potapov factor $b$ of $F$.
Remark~\ref{rem:3.1} implies that
the zero multiplicity of a square mvf $F\in H_\infty^{\ptp}$ with nontrivial determinant is connected with the
pole multiplicity of  $F^{-1}$ via
\[
M_{\zeta}(F, \Omega_+)=M_{\pi}(F^{-1}, \Omega_+).
\]

The class $H_{\kappa,\infty}^{p\times q}(\Omega_+)$ consists of
$p\times q$ mvf's $G$ of the form $G=H+B$, where $B$ is a rational
$p\times q$ mvf of pole multiplicity $M_{\pi}(B, \Omega_+)\le\kappa$
and $H\in H_{\infty}^{p\times q}(\Omega_+)$ (see~\cite{AAK71}).

The next lemma is useful for calculating
pole multiplicities.
\begin{lem}\label{lem:7.7}
Let  $H_1\in H_{\infty}^{r\times p}$, $H_2\in H_{\infty}^{q\times
r}$, and let mvf's $G_1\in H_{\kappa_1,\infty}^{\ptq}$, $G_2\in
H_{\kappa_2,\infty}^{\ptq}$ have the following Laurent expansions
\begin{equation}\label{Gexp1}
    G_j(\lambda)=\frac{G_{-k}^{(j)}}{(\lambda-\lambda_0)^{k}}+\cdots
+\frac{G_{-1}^{(j)}}{\lambda-\lambda_0}+G_0^{(j)}+\cdots\quad
(j=1,2)
\end{equation}
at $\lambda_0\in\Omega_+$ and let
\begin{equation}\label{M1}
   \textup{rng }L(G_1,\lambda_0)\subseteq \textup{rng }L(G_2,\lambda_0)\quad\forall \  \lambda_0\in\Omega_+.
\end{equation}
Then
\begin{equation}\label{M2}
    M_{\pi} (H_1G_1, \Omega_+)\le M_{\pi} (H_1G_2, \Omega_+).
\end{equation}
If
\begin{equation}\label{M3}
   \textup{ker }L(G_1,\lambda_0)\supseteq \textup{ker }L(G_2,\lambda_0)\quad\forall \  \lambda_0\in\Omega_+,
\end{equation}
then
\begin{equation}\label{M4}
    M_{\pi} (G_1H_2, \Omega_+)\le M_{\pi} (G_2H_2, \Omega_+).
\end{equation}
 \end{lem}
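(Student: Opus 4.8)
The plan is to prove both inequalities \eqref{M2} and \eqref{M4} by working one pole $\lambda_0\in\Omega_+$ at a time and reducing everything to a single factorization identity for the matrix $L(\,\cdot\,,\lambda_0)$ of \eqref{MTexp} under multiplication by a mvf that is holomorphic at $\lambda_0$. Granting that identity, the hypotheses \eqref{M1} and \eqref{M3} will be fed into the two elementary facts of linear algebra that $\textup{rng }A\subseteq\textup{rng }B$ holds if and only if $A=BM$ for some matrix $M$, and that $\ker A\supseteq\ker B$ holds if and only if $A=NB$ for some matrix $N$ (both follow from the Moore--Penrose pseudoinverse, via $A=B(B^{[-1]}A)$ and $A=(AB^{[-1]})B$ respectively).

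First I would fix $\lambda_0\in\Omega_+$; there is nothing to prove at $\lambda_0$ unless it is a pole of $G_1$ or $G_2$, so I assume it is. Choose $k\in\NN$ at least as large as the pole orders at $\lambda_0$ of $G_1$ and $G_2$ (hence also of $H_1G_1$, $H_1G_2$, $G_1H_2$, $G_2H_2$, since $H_1,H_2$ are holomorphic in $\Omega_+$), and read every matrix $L(\,\cdot\,,\lambda_0)$ below as a $k\times k$ block matrix formed from the principal part at $\lambda_0$ via \eqref{MTexp}, zero-padding the principal part when its true order is smaller than $k$; this leaves all ranks, hence all pole multiplicities $M_{\pi}(\,\cdot\,,\lambda_0)$, unchanged, and it is in this common-size sense that \eqref{M1} and \eqref{M3} are read. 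For a mvf $H$ holomorphic at $\lambda_0$, with Taylor coefficients $H_0,H_1,\dots$ there, let $\mathcal T_k(H)$ be the lower triangular block Toeplitz matrix with $(i,j)$ block $H_{i-j}$ for $i\ge j$ and $0$ for $i<j$. The key identity I would establish is that, for every mvf $G$ with a pole of order at most $k$ at $\lambda_0$,
\[
L(HG,\lambda_0)=\mathcal T_k(H)\,L(G,\lambda_0),\qquad L(GH,\lambda_0)=L(G,\lambda_0)\,\mathcal T_k(H),
\]
whenever the matrix products are of compatible block sizes. Each identity is checked by comparing $(i,l)$ blocks: the $(i,l)$ block of $L(HG,\lambda_0)$ is the coefficient of $(\lambda-\lambda_0)^{-k+i-l}$ in the Cauchy product $HG$, which equals $\sum_{b=0}^{i-l}H_b\,G_{-k+i-l-b}$, and this is precisely the $(i,l)$ block of $\mathcal T_k(H)\,L(G,\lambda_0)$; the second identity is identical with $H$ on the right. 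The content is simply that the principal part of $HG$ (of $GH$) at $\lambda_0$ is built, by convolution, from the principal part of $G$ and the finitely many coefficients $H_0,\dots,H_{k-1}$.

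With the identity in hand, \eqref{M2} follows: by \eqref{M1} there is a matrix $M$ with $L(G_1,\lambda_0)=L(G_2,\lambda_0)M$, and since $H_1\in H_\infty^{r\times p}$ is holomorphic in $\Omega_+$,
\[
L(H_1G_1,\lambda_0)=\mathcal T_k(H_1)L(G_1,\lambda_0)=\mathcal T_k(H_1)L(G_2,\lambda_0)M=L(H_1G_2,\lambda_0)M,
\]
so that $M_{\pi}(H_1G_1,\lambda_0)=\rank L(H_1G_1,\lambda_0)\le\rank L(H_1G_2,\lambda_0)=M_{\pi}(H_1G_2,\lambda_0)$. Summing over the finitely many poles $\lambda_0\in\Omega_+$ of $G_1$ and $G_2$ (the pole multiplicities vanish elsewhere) gives \eqref{M2}. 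Symmetrically, for \eqref{M4}: by \eqref{M3} there is a matrix $N$ with $L(G_1,\lambda_0)=N\,L(G_2,\lambda_0)$, and since $H_2\in H_\infty^{q\times r}$ is holomorphic in $\Omega_+$,
\[
L(G_1H_2,\lambda_0)=L(G_1,\lambda_0)\mathcal T_k(H_2)=N\,L(G_2,\lambda_0)\mathcal T_k(H_2)=N\,L(G_2H_2,\lambda_0),
\]
whence $\ker L(G_1H_2,\lambda_0)\supseteq\ker L(G_2H_2,\lambda_0)$ and therefore $\rank L(G_1H_2,\lambda_0)\le\rank L(G_2H_2,\lambda_0)$; summing over $\lambda_0\in\Omega_+$ gives \eqref{M4}.

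The only step with real content is the block Toeplitz factorization identity; everything after it is the two divisibility-versus-range/kernel equivalences together with the additivity of $M_{\pi}$ over $\Omega_+$. The one routine point that still needs a careful look is the claim that passing to a common block size $k$ by zero-padding principal parts changes neither ranks nor the inclusions \eqref{M1}, \eqref{M3}, so that the whole argument can be carried out uniformly at one size $k$ at each pole $\lambda_0$.
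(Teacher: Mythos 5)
Your proof is correct and follows essentially the same route as the paper: you establish the block Toeplitz factorizations $L(H_1G,\lambda_0)=\mathcal T_k(H_1)L(G,\lambda_0)$ and $L(GH_2,\lambda_0)=L(G,\lambda_0)\mathcal T_k(H_2)$ (the paper's $T(H_j,\lambda_0)$) and then compare ranks pole by pole using the range and kernel inclusions. Your extra remark about zero-padding to a common block size $k$ is a point the paper leaves implicit, but it changes nothing of substance.
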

\begin{proof}
Let the mvf's $H_j$ $(j=1,2)$ have the following expansions at
$\lambda_0$
\[
   H_j(\lam)=H_0^{(j)}+H_1^{(j)}(\lambda-\lambda_0)+\cdots
+H_{n-1}^{(j)}(\lambda-\lambda_0)^{k-1}+\cdots
   \quad
\]
and let
\[
T({H_j},\lam_0)=\begin{bmatrix}
  H_0^{(j)} &  & {\bf 0} \\
  \vdots & \ddots &  \\
  H_{k-1}^{(j)} &\hdots & H_0^{(j)}
\end{bmatrix} \quad \text{for}\ j=1,2.
\]
 Then
$$
    L(H_1G_j,\lam_0)=T(H_1,\lambda_0)L(G_j,\lam_0)\quad\text{and}\quad
    L(G_jH_2,\lam_0)=L(G_j,\lam_0)T(H_2,\lam_0).
$$
Therefore,  in view of~\eqref{M1},  one obtains for every
$\lambda_0\in\Omega_+$
\[
\begin{split}
M_{\pi} (H_1G_1, \lambda_0)&=\rank T(H_1,\lambda_0)L(G_1,\lam_0)\\
&\le  \rank T(H_1,\lambda_0)L(G_2,\lam_0)= M_{\pi} (H_1G_2,
\lam_0)\quad\text{for}\ \lambda_0\in\Omega_+,
\end{split}
\]
which implies~\eqref{M2}. Next~\eqref{M3} yields the inequality
\[
\begin{split}
M_{\pi} (G_1H_2, \lambda_0)&=\rank L(G_1,\lam_0)T(H_2,\lam_0)\\
&\le  \rank L(G_2,\lam_0)T(H_2,\lam_0)=M_{\pi} (G_2H_2, \lam_0)
\quad\text{for}\ \lambda_0\in\Omega_+,
\end{split}
\]
which proves~\eqref{M4}.
\end{proof}
As a corollary we obtain the following generalization of a noncancellation
lemma from~\cite{Der03}.
\begin{lem}\label{lem:5.4}
Let $G\in H_{\kappa,\infty}^{\ptq}$, $H_1\in
H_{\infty}^{p\times p}$ and $H_2\in H_{\infty}^{q\times q}$. Then
\begin{equation}\label{b1G}
    M_\pi(H_1G,\Omega_+)=M_\pi(G,\Omega_+)\Longrightarrow
M_\pi(H_1GH_2,\Omega_+)=M_\pi(GH_2,\Omega_+),
\end{equation}
whereas
\begin{equation}\label{Gb2}
    M_\pi(GH_2,\Omega_+)=M_\pi(G,\Omega_+)\Longrightarrow
M_\pi(H_1GH_2,\Omega_+)=M_\pi(H_1G,\Omega_+).
\end{equation}
\end{lem}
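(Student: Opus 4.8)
The plan is to prove both implications pole by pole, over the finite set of poles of $G$ in $\Omega_+$, using the block-Toeplitz factorization identities already recorded inside the proof of Lemma~\ref{lem:7.7}. Fix a pole $\lambda_0\in\Omega_+$ of $G$, let $k$ be its order, and form all the matrices $L(\cdot,\lambda_0)$ and $T(\cdot,\lambda_0)$ with this common block size $k$. Then the computations in the proof of Lemma~\ref{lem:7.7} give $L(H_1G,\lambda_0)=T(H_1,\lambda_0)L(G,\lambda_0)$, $L(GH_2,\lambda_0)=L(G,\lambda_0)T(H_2,\lambda_0)$, and consequently $L(H_1GH_2,\lambda_0)=T(H_1,\lambda_0)L(G,\lambda_0)T(H_2,\lambda_0)$. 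Since $H_1$ and $H_2$ are holomorphic throughout $\Omega_+$, the poles in $\Omega_+$ of $H_1G$, $GH_2$ and $H_1GH_2$ all lie among those of $G$, so each of the four pole multiplicities over $\Omega_+$ is the same kind of finite sum, taken over the poles $\lambda_0$ of $G$, of the local ranks $\rank L(\cdot,\lambda_0)$.

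The one nontrivial step is to localize the hypotheses. From the factorizations, $\rank L(H_1G,\lambda_0)\le\rank L(G,\lambda_0)$ and $\rank L(GH_2,\lambda_0)\le\rank L(G,\lambda_0)$ at every pole $\lambda_0$ of $G$; since the two global quantities in the hypothesis of~\eqref{b1G} (respectively of~\eqref{Gb2}) are equal and are sums of finitely many nonzero, term-wise dominated summands, equality must already hold summand by summand. Hence the hypothesis of~\eqref{b1G} yields $\rank\big(T(H_1,\lambda_0)L(G,\lambda_0)\big)=\rank L(G,\lambda_0)$ at every pole $\lambda_0$, while the hypothesis of~\eqref{Gb2} yields $\rank\big(L(G,\lambda_0)T(H_2,\lambda_0)\big)=\rank L(G,\lambda_0)$ at every pole $\lambda_0$. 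From here I would invoke two elementary facts of linear algebra: $\rank(AB)=\rank B$ exactly when $A$ is injective on $\ran B$, and $\rank(AB)=\rank A$ exactly when $\ran B+\ker A$ is the whole domain of $A$. For~\eqref{b1G}, the localized hypothesis says $T(H_1,\lambda_0)$ is injective on $\ran L(G,\lambda_0)$, hence also on its subspace $\ran L(GH_2,\lambda_0)=\ran\big(L(G,\lambda_0)T(H_2,\lambda_0)\big)$, so $\rank L(H_1GH_2,\lambda_0)=\rank\big(T(H_1,\lambda_0)L(GH_2,\lambda_0)\big)=\rank L(GH_2,\lambda_0)$ at each pole, and summing proves~\eqref{b1G}. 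Dually, for~\eqref{Gb2}, the localized hypothesis says $\ran T(H_2,\lambda_0)+\ker L(G,\lambda_0)$ fills the whole space; since $\ker L(H_1G,\lambda_0)=\ker\big(T(H_1,\lambda_0)L(G,\lambda_0)\big)\supseteq\ker L(G,\lambda_0)$, also $\ran T(H_2,\lambda_0)+\ker L(H_1G,\lambda_0)$ fills the whole space, so $\rank L(H_1GH_2,\lambda_0)=\rank\big(L(H_1G,\lambda_0)T(H_2,\lambda_0)\big)=\rank L(H_1G,\lambda_0)$ at each pole, and summing proves~\eqref{Gb2}. Once these pointwise inclusions of ranges, respectively kernels, are in hand, the two inequalities underlying each equality may alternatively just be quoted from Lemma~\ref{lem:7.7}.

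The load-bearing point is clearly the localization in the second paragraph: the passage from equality of two sums to equality of all their summands. This is legitimate here because term-by-term domination is automatic from $\rank(AB)\le\min\{\rank A,\rank B\}$ and because there are only finitely many nonzero summands, their number being at most $\kappa$ since $G\in H_{\kappa,\infty}^{\ptq}$. Everything else is routine linear algebra together with the factorization identities already supplied by Lemma~\ref{lem:7.7}, so I do not anticipate any further obstacle; the resulting statement is precisely the generalization of the noncancellation lemma of~\cite{Der03} that will be needed later for mvf's $W\in\cU_\kappa^\circ(j_{pq})$.
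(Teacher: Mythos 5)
Your proof is correct and follows essentially the same route as the paper's: both hinge on localizing the global equality of pole multiplicities to a pointwise equality of ranks at each pole (via the termwise domination coming from the identities $L(H_1G,\lambda_0)=T(H_1,\lambda_0)L(G,\lambda_0)$ and $L(GH_2,\lambda_0)=L(G,\lambda_0)T(H_2,\lambda_0)$), after which the conclusion follows from Lemma~\ref{lem:7.7} or, equivalently, from the elementary rank/kernel/range arguments you spell out.
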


\begin{proof}
Since
$$
\textup{ker }L(H_1G, \lambda)\supseteq \textup{ker }L(G,\lambda)
$$
for every point $\lambda\in\Omega_+$, it follows that
\begin{equation}
\label{eq:jul9a8} M_\pi(H_1G,\lambda)=\textup{rank
}L(H_1G,\lambda)\le \textup{rank }L(G,\lambda)=M_\pi(G,\lambda)
\end{equation}
for every point $\lambda\in\Omega_+$. Therefore, the first equality in
(\ref{b1G})
implies that
equality prevails in (\ref{eq:jul9a8}) and so too in (\ref{M1}) and
(\ref{M3}) with $G_1=H_1G$ and $G_2=G$. The
second equality in  (\ref{b1G}) then follows easily from Lemma
\ref{lem:7.7}. The verification of the
second assertion is similar.
\end{proof}

It follows from the results of \cite{KL}, \cite{Der03} (see also
\cite[Theorem 5.2]{Ki}) that every  mvf $G\in
H_{\kappa,\infty}^{p\times q}(\Omega_+)$ admits 
factorizations
\begin{equation}\label{eq:7.110}
  G(\lambda)=G_\ell(\lambda)^{-1}H_\ell(\lambda)=H_r(\lambda)G_r(\lambda)^{-1},
\end{equation}
where $G_\ell\in {\mathcal S}_{in}^{p\times p}$, $G_r\in {\cS}_{in}^{q\times q}$ are
Blaschke-Potapov factors of degree $M_{\pi} (G, \Omega_+)(\le\kappa)$ and
$H_\ell,\,H_r\in H_{\infty}^{p\times q}$ such that
\begin{equation}\label{KLcanon2a}
\rank \left[\begin{array}{cc}
 G_\ell(\lam) & H_\ell(\lam)
\end{array}\right]
=p\quad\text{for}\ \lam\in\Omega_+,
\end{equation}
\begin{equation}\label{KRcanon2a}
\rank \left[\begin{array}{cc}
 G_r^\#(\lam) & H^\#_r(\lam)
\end{array}\right]
=q\quad \text{for} \ \lam\in\Omega_-.
\end{equation}

It is easily shown  that Lemma~\ref{Corona} can be extended to this more general
situation. (The details are left to the reader.)

\begin{lem}\label{CoronaA}
 Let $H_\ell, H_r\in H_{\infty}^{p\times q}$ and let
 $G_\ell\in H_\infty^{\ptp}$, $G_r\in H_{\infty}^{q\times q}$ be a pair
 of mvf's such that $
G_\ell^{-1}\in H_{\kappa,\infty}^{p\times p}$ and
$G_r^{-1}\in H_{\kappa,\infty}$ for some
$\kappa\in\dN\cup\{0\}$. Then:
\begin{enumerate}
\item[(i)]  $H_\ell$ and $G_\ell$ meet the rank condition
(\ref{KLcanon2a}), if and only if there exists a pair of mvf's
$C_\ell\in H^{p\times p}_\infty$ and $D_\ell\in H^{q\times
p}_\infty$ such that
\begin{equation}\label{CorFormulaA}
G_\ell(\lam)C_\ell(\lam)+H_\ell(\lam)D_\ell(\lam)=I_p\ \text{ for }\
\lam\in \Omega_+;
\end{equation}
\item[(ii)]  $H_r$ and $G_r$ meet the rank condition
(\ref{KRcanon2a}), if and only if there exists a pair of mvf's
$C_r\in H^{p\times p}_\infty$ and $D_r\in H^{q\times p}_\infty$
such that
\begin{equation}\label{CorFormula1A}
C_r(\lam)G_r(\lam)+D_r(\lam)H_r(\lam)=I_q\ \text{ for }\ \lam\in
\Omega_+.
\end{equation}
\end{enumerate}
\end{lem}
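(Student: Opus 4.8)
The plan is to imitate the proof of Lemma~\ref{Corona} almost line for line, the only new ingredient being a substitute for the estimate $|\det b_\ell(\lambda)|\ge 1/2$ near $\Omega_0$, which there rested on $b_\ell$ being a finite Blaschke product. One implication is immediate: if \eqref{CorFormulaA} holds then $\begin{bmatrix}G_\ell(\lambda)&H_\ell(\lambda)\end{bmatrix}\begin{bmatrix}C_\ell(\lambda)\\ D_\ell(\lambda)\end{bmatrix}=I_p$ for every $\lambda\in\Omega_+$, so the $p\times m$ matrix $\begin{bmatrix}G_\ell(\lambda)&H_\ell(\lambda)\end{bmatrix}$, $m=p+q$, is surjective, i.e.\ \eqref{KLcanon2a} holds. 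For the converse, assume \eqref{KLcanon2a}, put $a(\lambda)=\begin{bmatrix}G_\ell(\lambda)&H_\ell(\lambda)\end{bmatrix}\in H_\infty^{p\times m}$ with columns $a_1,\dots,a_m$, and set $\alpha_{i_1\dots i_p}(\lambda)=\det\begin{bmatrix}a_{i_1}(\lambda)&\cdots&a_{i_p}(\lambda)\end{bmatrix}\in H_\infty$ for $1\le i_1<\cdots<i_p\le m$. The crux is the corona lower bound $\sum_{i_1<\cdots<i_p}|\alpha_{i_1\dots i_p}(\lambda)|\ge\delta>0$ on $\Omega_+$: once this is in hand, the scalar Carleson corona theorem furnishes $\beta_{i_1\dots i_p}\in H_\infty$ with $\sum\alpha_{i_1\dots i_p}\beta_{i_1\dots i_p}\equiv1$ on $\Omega_+$, and the Laplace-expansion bookkeeping from the proof of Lemma~\ref{Corona} turns this into $a(\lambda)B(\lambda)=I_p$ with $B\in H_\infty^{m\times p}$; splitting $B=\begin{bmatrix}C_\ell\\ D_\ell\end{bmatrix}$ into its first $p$ and last $q$ rows then yields \eqref{CorFormulaA}.

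To establish the corona bound I would use the hypothesis $G_\ell^{-1}\in H_{\kappa,\infty}^{p\times p}$ as follows. Pick a scalar Blaschke product $\pi$ of degree $M_\pi(G_\ell^{-1},\Omega_+)\ (\le\kappa)$ vanishing at the poles of $G_\ell^{-1}$ to sufficiently high order; then $\pi G_\ell^{-1}\in H_\infty^{p\times p}$, say with $\|\pi G_\ell^{-1}\|_\infty=M_0$, so that $\|G_\ell^{-1}(\lambda)\|\le M_0/|\pi(\lambda)|$ throughout $\Omega_+$. On $\{\lambda\in\Omega_+:|\pi(\lambda)|\ge1/2\}$ this gives $\|G_\ell^{-1}(\lambda)\|\le2M_0=:M$, hence $\sigma_{\min}(G_\ell(\lambda))\ge M^{-1}$, and since adjoining the columns of $H_\ell(\lambda)$ cannot decrease the smallest singular value, $\sigma_{\min}(a(\lambda))\ge M^{-1}$ as well; the Cauchy--Binet identity $\sum_{i_1<\cdots<i_p}|\alpha_{i_1\dots i_p}(\lambda)|^2=\det\bigl(a(\lambda)a(\lambda)^*\bigr)\ge\sigma_{\min}(a(\lambda))^{2p}$ then forces $\sum_{i_1<\cdots<i_p}|\alpha_{i_1\dots i_p}(\lambda)|\ge M^{-p}$ there. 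The complementary set $\{\lambda\in\Omega_+:|\pi(\lambda)|<1/2\}$ is relatively compact in $\Omega_+$, because $|\pi|$ extends continuously to $\overline{\Omega_+}$ --- including the point at infinity when $\Omega_+=\Pi_+$ --- with boundary value $1$; on its compact closure the continuous function $\lambda\mapsto\sum_{i_1<\cdots<i_p}|\alpha_{i_1\dots i_p}(\lambda)|$ is strictly positive by \eqref{KLcanon2a}, hence bounded below by some $\delta_1>0$. Taking $\delta=\min(M^{-p},\delta_1)$ finishes part~(i).

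Part~(ii) follows by applying part~(i) to the transposed mvf's $G_r^{T}\in H_\infty^{q\times q}$ and $H_r^{T}\in H_\infty^{q\times p}$: one has $(G_r^{T})^{-1}=(G_r^{-1})^{T}\in H_{\kappa,\infty}^{q\times q}$, and, using that a matrix and its conjugate transpose have equal rank and that $\lambda\mapsto\lambda^\circ$ maps $\Omega_-$ onto $\Omega_+$, condition \eqref{KRcanon2a} is precisely the rank condition \eqref{KLcanon2a} for the pair $(G_r^{T},H_r^{T})$; transposing the Bezout identity furnished by part~(i) gives \eqref{CorFormula1A}. The one genuinely new point in the argument, and the step I expect to require the most care, is the opening claim of the second paragraph: that $G_\ell^{-1}\in H_{\kappa,\infty}^{p\times p}$ forces $\pi G_\ell^{-1}\in H_\infty^{p\times p}$ for a suitable finite Blaschke product $\pi$ --- equivalently, that $\|G_\ell^{-1}\|$ is uniformly bounded off some relatively compact subset of $\Omega_+$. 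This is a direct consequence of the decomposition $G_\ell^{-1}=H+B$ defining membership in $H_{\kappa,\infty}$, needing only a small amount of additional care about the behaviour at infinity when $\Omega_+=\Pi_+$; everything else --- Cauchy--Binet, the compactness argument, the scalar corona theorem, and the Laplace-expansion manipulation --- is routine and of exactly the same form as in the proof of Lemma~\ref{Corona}.
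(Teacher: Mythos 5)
Your proof is correct and is exactly the extension of Lemma~\ref{Corona} that the paper has in mind (the paper itself explicitly leaves the details to the reader). The one genuinely new ingredient --- the uniform lower bound on $\sum|\alpha_{i_1\dots i_p}|$ near $\Omega_0$, obtained from $G_\ell^{-1}\in H_{\kappa,\infty}^{\ptp}$ by extracting a finite Blaschke product $\pi$ with $\pi G_\ell^{-1}\in H_\infty^{\ptp}$ and combining Cauchy--Binet with a compactness argument on $\{|\pi|<1/2\}$ --- is handled correctly, granted the standard convention (which the paper also relies on) that the rational summand in the definition of $H_{\kappa,\infty}$ has all of its poles in $\Omega_+$, and in particular none on $\Omega_0$ or at infinity when $\Omega_+=\Pi_+$.
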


Pairs of mvf's $H_\ell$, $G_\ell$ and $H_r$, $G_r$ which satisfy the
assumptions of Lemma~\ref{CoronaA} and the
conditions~\eqref{CorFormulaA} and~\eqref{CorFormula1A} are called
left coprime and right coprime, respectively, over $\Omega_+$. Left
and right coprime factorizations may be characterized in terms of
the pole
 multiplicities of their factors.
\begin{proposition}\label{Prop:5.1}  Let $H_\ell, H_r\in
H_{\infty}^{p\times q}$ and let  $G_\ell\in H_{\infty}^{p\times p}$ and
$G_r\in H_{\infty}^{q\times q}$ be a pair
 of mvf's such that $
G_\ell^{-1}\in H_{\kappa,\infty}^{p\times p}$ and
$G_r^{-1}\in H_{\kappa,\infty}$ for some
$\kappa\in\dN\cup\{0\}$. Then:
\begin{enumerate}
\item[(i)] The pair $G_\ell$, $H_\ell$ is left coprime over $\Omega_+$
$\Longleftrightarrow$ $M_{\pi} (G_\ell^{-1}H_\ell, \Omega_+)=M_{\pi}
(G_\ell^{-1}, \Omega_+)$.
\vskip 6pt
\item[(ii)] The pair $G_r$, $H_r$ is right
coprime over $\Omega_+$ $\Longleftrightarrow$ $M_{\pi} (H_rG_r^{-1},
\Omega_+)=M_{\pi} (G_r^{-1}, \Omega_+)$.
\end{enumerate}
\end{proposition}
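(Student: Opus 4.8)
I will prove~(i); part~(ii) follows from it by the transposition device used to pass from Lemma~\ref{Corona} to Lemma~\ref{Corona1} (replace $C_rG_r+D_rH_r=I_q$ by $G_r^{T}C_r^{T}+H_r^{T}D_r^{T}=I_q$), since transposition preserves membership in $H_{\kappa,\infty}$, carries $H_rG_r^{-1}$ to $(G_r^{T})^{-1}H_r^{T}$, and leaves all pole multiplicities unchanged. Write $G:=G_\ell^{-1}H_\ell$. The common ingredient is the elementary inequality $M_\pi(G,\Omega_+)\le M_\pi(G_\ell^{-1},\Omega_+)$: at each $\lambda_0\in\Omega_+$, exactly as in the proof of Lemma~\ref{lem:7.7}, one has $L(G,\lambda_0)=L(G_\ell^{-1},\lambda_0)\,T(H_\ell,\lambda_0)$ with $T(H_\ell,\lambda_0)$ the block Toeplitz matrix of the Taylor coefficients of the holomorphic mvf $H_\ell$, so $M_\pi(G,\lambda_0)=\rank L(G,\lambda_0)\le M_\pi(G_\ell^{-1},\lambda_0)$; summing over $\Omega_+$ gives the inequality. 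Hence~(i) reduces to the claim that $(G_\ell,H_\ell)$ is left coprime if and only if this inequality is an equality.

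If $(G_\ell,H_\ell)$ is left coprime, Lemma~\ref{CoronaA}(i) provides $C_\ell\in H_\infty^{\ptp}$ and $D_\ell\in H_\infty^{q\times p}$ with $G_\ell C_\ell+H_\ell D_\ell=I_p$; left multiplication by $G_\ell^{-1}$ gives $G_\ell^{-1}=C_\ell+GD_\ell$. Since $C_\ell$ and $D_\ell$ are holomorphic on $\Omega_+$, the principal part of $G_\ell^{-1}$ at any $\lambda_0\in\Omega_+$ coincides with that of $GD_\ell$, so $L(G_\ell^{-1},\lambda_0)=L(G,\lambda_0)\,T(D_\ell,\lambda_0)$ and therefore $M_\pi(G_\ell^{-1},\lambda_0)\le M_\pi(G,\lambda_0)$. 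Summing and combining with the elementary inequality yields $M_\pi(G,\Omega_+)=M_\pi(G_\ell^{-1},\Omega_+)$.

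For the converse I argue by contraposition. If $(G_\ell,H_\ell)$ is not left coprime, then by Lemma~\ref{CoronaA}(i) the rank condition~\eqref{KLcanon2a} fails, so there are $\lambda_0\in\Omega_+$ and a unit vector $\eta\in\CC^p$ with $\eta^*G_\ell(\lambda_0)=0=\eta^*H_\ell(\lambda_0)$. Let $b_0$ be the elementary Blaschke--Potapov factor of degree one with a single zero at $\lambda_0$ built from the rank-one orthoprojection $P=\eta\eta^*$, so that $\eta^*b_0(\lambda_0)=0$. Since $PG_\ell(\lambda_0)=0$ and $PH_\ell(\lambda_0)=0$, neither $b_0^{-1}G_\ell$ nor $b_0^{-1}H_\ell$ has a pole at $\lambda_0$; hence $\widetilde G_\ell:=b_0^{-1}G_\ell\in H_\infty^{\ptp}$ and $\widetilde H_\ell:=b_0^{-1}H_\ell\in H_\infty^{\ptq}$, and plainly $G=\widetilde G_\ell^{-1}\widetilde H_\ell$. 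The crucial point is that cancelling the single elementary left factor drops the pole multiplicity of the inverse by exactly one, $M_\pi(\widetilde G_\ell^{-1},\Omega_+)=M_\pi(G_\ell^{-1},\Omega_+)-1$; granting this, the elementary inequality applied to $(\widetilde G_\ell,\widetilde H_\ell)$ gives
\[
M_\pi(G,\Omega_+)=M_\pi(\widetilde G_\ell^{-1}\widetilde H_\ell,\Omega_+)\le M_\pi(\widetilde G_\ell^{-1},\Omega_+)=M_\pi(G_\ell^{-1},\Omega_+)-1<M_\pi(G_\ell^{-1},\Omega_+),
\]
so the equality in~(i) fails, which is the required contrapositive.

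The main obstacle is this strict-drop identity. I would obtain it from the identification, for $F\in H_\infty^{\ptp}$ with $F^{-1}\in H_{\kappa,\infty}$, of $M_\pi(F^{-1},\Omega_+)$ with the number of zeros of $\det F$ in $\Omega_+$ counted with multiplicity: applied to $F=G_\ell$ and $F=\widetilde G_\ell$, the factorization $\det G_\ell=\det b_0\cdot\det\widetilde G_\ell$, in which $\det b_0$ has exactly one simple zero in $\Omega_+$, then yields the drop at once. That identification rests in turn on $M_\pi(F^{-1},\Omega_+)=M_\zeta(F,\Omega_+)$ (recalled in the text) together with the observation that in the maximal left Blaschke--Potapov factorization $F=bF_o$ the factor $F_o$ has zero-free determinant on $\Omega_+$: for if $F_o(\mu)$ were singular at some $\mu\in\Omega_+$, extracting an elementary degree-one left factor from $F_o$ in the manner above would exhibit a left Blaschke--Potapov factor of $F$ of degree exceeding $\deg b$, contradicting maximality. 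Everything else in the argument is routine bookkeeping with the matrices $L(\cdot,\lambda_0)$ of~\eqref{MTexp} and with Lemma~\ref{CoronaA}.
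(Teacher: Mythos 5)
Your proof is correct. The forward implication coincides with the paper's: the Bezout identity yields $G_\ell^{-1}=C_\ell+G D_\ell$ with $G=G_\ell^{-1}H_\ell$, so $M_\pi(G_\ell^{-1},\Omega_+)\le M_\pi(G,\Omega_+)$, while the reverse inequality is the block-Toeplitz rank computation of Lemma~\ref{lem:7.7}. Where you genuinely diverge is the converse. The paper proves it directly: it invokes the existence of left coprime factorizations $G_\ell^{-1}=b_\ell^{-1}\varphi_\ell$ and $G_\ell^{-1}H_\ell=\widetilde b_\ell^{-1}\widetilde\varphi_\ell$ of equal degree $\kappa'$, shows $\varphi_\ell^{-1}\in H_\infty^{\ptp}$ by the maximum principle, deduces from Lemma~\ref{lem:7.7} that $\widetilde b_\ell b_\ell^{-1}\in H_\infty^{\ptp}$, hence that $b_\ell$ and $\widetilde b_\ell$ agree up to a constant unitary factor, and then reads the rank condition~\eqref{KLcanon2a} off $H_\ell=\varphi_\ell^{-1}\widetilde\varphi_\ell$ together with the coprimeness of the second factorization. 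You argue by contraposition instead: a failure of~\eqref{KLcanon2a} at some $\lambda_0$ lets you peel a common elementary left Blaschke--Potapov factor $b_0$ off $G_\ell$ and $H_\ell$ simultaneously, and the identity $\det G_\ell=\det b_0\cdot\det\widetilde G_\ell$, combined with the identification of $M_\pi(F^{-1},\Omega_+)$ with the zero count of $\det F$ in $\Omega_+$, forces the strict drop $M_\pi(G,\Omega_+)\le M_\pi(G_\ell^{-1},\Omega_+)-1$. Both routes are sound. The paper's rests on the existence and essential uniqueness of coprime factorizations for $H_{\kappa,\infty}$ mvf's (cited from \cite{KL}, \cite{Der03}); yours rests on the determinant characterization of $M_\zeta(F,\Omega_+)=M_\pi(F^{-1},\Omega_+)$, which the paper asserts near Remark~\ref{rem:3.1} and which your maximality argument for the zero-freeness of $\det F_o$ correctly completes (it also follows at once from the local Smith form of $G_\ell$ at each zero of its determinant). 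Your reduction of (ii) to (i) by transposition is the same device the paper uses to pass from Lemma~\ref{Corona} to Lemma~\ref{Corona1} (the paper says ``by passing to adjoints'') and is legitimate, since block transposition changes each $L(\cdot,\lambda_0)$ only by a block-reversal permutation and so preserves its rank.
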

\begin{proof}
Suppose first  that $H_\ell$ and $ G_\ell $ are left coprime over
$\Omega_+$. Then it follows from~\eqref{CorFormulaA} that
\[
G_\ell^{-1}= G_\ell^{-1} H_\ell D_\ell+ C_\ell
\]
and hence, that
\[
M_{\pi} (G_\ell^{-1}, \Omega_+)=M_\pi(G_\ell^{-1}H_\ell D_\ell, \Omega_+)\le M_{\pi}
(G_\ell^{-1}H_\ell, \Omega_+).
\]
The converse inequality is obvious. This proves the implication
$\Longrightarrow$ in (i).

Next, assume that
\[
M_{\pi} (G_\ell^{-1}H_\ell, \Omega_+)=M_{\pi} (G_\ell^{-1},
\Omega_+)=\kappa'
\]
for some finite nonnegative integer $\kappa^\prime$ and
let $ G_\ell^{-1}$ and $ G_\ell^{-1} H_\ell$ have the
following left coprime factorizations
\begin{equation}\label{eq:7.110a}
 G_\ell (\lambda)^{-1}=b_\ell(\lambda)^{-1}\varphi_\ell(\lambda),
\end{equation}
\begin{equation}\label{eq:7.110b}
 G_\ell (\lambda)^{-1}H_\ell(\lambda) =\wt b_\ell(\lambda)^{-1}\wt\varphi_\ell(\lambda),
\end{equation}
where $b_\ell,\,\wt b_\ell \in {\mathcal S}_{in}^{p\times p}$ are
Blaschke-Potapov factors of degree $\kappa'$ and $\varphi_\ell,\,
\wt\varphi_\ell \in H_{\infty}^{p\times q}$. Then $G_\ell$ admits
the left coprime factorization
\begin{equation}\label{eq:7.110c}
 G_\ell (\lambda)= \varphi_\ell (\lambda)^{-1}b_\ell(\lambda) ,
\end{equation}
and it follows from the first part of the proof that
\[
M_{\pi} (\varphi_\ell^{-1}, \Omega_+)=M_{\pi} (G_\ell,
\Omega_+)=0.
\]
Then as  $\Vert\varphi_\ell(\mu)^{-1}\Vert=\Vert G_\ell(\mu)\Vert$ a.e. on $\Omega_0$, the maximum principle  implies that $\varphi_\ell^{-1}\in H_\infty^{\ptp}$. By Lemma~\ref{lem:7.7}, \[
M_{\pi} (\wt b_\ell b_\ell^{-1}\varphi_\ell, \Omega_+)=M_{\pi}
(\wt b_\ell \wt b_\ell^{-1}\wt\varphi_\ell, \Omega_+)=M_{\pi}
(\wt\varphi_\ell, \Omega_+)=0.
\]
Since $\varphi_\ell^{-1}\in H_\infty^{\ptp}$, this implies $\wt b_\ell b_\ell^{-1}\in H_\infty^{\ptp}$ which shows that $b_\ell$ and $\widetilde{b}_\ell$ coincide up to a constant right unitary factor.

Now it follows from (\ref{eq:7.110b}) and (\ref{eq:7.110c}) that
$H_\ell= \varphi_\ell^{-1}\wt \varphi_\ell$. Since the
factorization~\eqref{eq:7.110b} is also left coprime over
$\Omega_+$,
\[
\begin{split}
\rank \left[\begin{array}{cc}
 G_\ell(\lam) & H_\ell(\lam)
\end{array}\right]
&=\rank \left[\begin{array}{cc}
 \varphi_\ell(\lam)^{-1}b_\ell (\lam)& \varphi_\ell(\lam)^{-1}\wt \varphi_\ell (\lam)
\end{array}\right]\\
&=\rank \left[\begin{array}{cc}
b_\ell (\lam)& \wt \varphi_\ell (\lam)
\end{array}\right]=p
 \quad \text{for} \ \lam\in\Omega_+.
 \end{split}
\]
This proves the implication $\Leftarrow$ in (i) and completes the proof of       (i). Assertion (ii) follows from (i) by passing to adjoints.
\end{proof}
In the rational case this statement can be found in \cite[Theorem 11.1.4]{BGR}.
\subsection{A class of generalized Schur mvf's}
In this section we study the
class of mvf's
\begin{equation}
\label{M5}
S=\begin{bmatrix}s_{11}&s_{12}\\s_{21}&s_{22}\end{bmatrix}
\in\cS^{\mtm}_\kappa \quad\text{for which}\quad s_{21}\in{\mathcal S}^{\qtp}_\kappa,
\end{equation}
where the indicated block decomposition of $S$ is conformal with $j_{pq}$.

\begin{thm}
\label{thm:11.2} If
$S\in\cS^{\mtm}_\kappa$ meets the constraint (\ref{M5})
 and the  Kre\u{\i}n-Langer factorizations of $s_{21}$ are
\begin{equation}\label{M6}
 s_{21}={\gb}_{\ell}^{-1}{\gs}_{\ell}={\gs}_r{\gb}_r^{-1},
 \end{equation}
then ${\gb}_{\ell}s_{22}\in \cS^{q\times q}$ and $s_{11}{\gb}_r\in
\cS^{p\times p}$.
\end{thm}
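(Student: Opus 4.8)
The plan is to show that $\gb_\ell s_{22}$, and symmetrically $s_{11}\gb_r$, is holomorphic on $\Omega_+$ by a pole--multiplicity comparison, and then to read off from the boundary values that it is a Schur mvf. Since $S\in\cS^{\mtm}_\kappa$ and $s_{21}\in\cS^{\qtp}_\kappa$ have the same number $\kappa$ of negative squares, their Kre\u{\i}n--Langer factorizations are left, resp.\ right, coprime (the rank conditions built into~\eqref{KLcanon2},~\eqref{KRcanon2} are precisely coprimeness, by Lemma~\ref{Corona}); hence, writing $S=B_\ell^{-1}S_\ell$ for a Kre\u{\i}n--Langer factorization of $S$, Proposition~\ref{Prop:5.1} and Remark~\ref{rem:3.1} give
\[
M_\pi(S,\Omega_+)=M_\pi(B_\ell^{-1},\Omega_+)=\kappa=M_\pi(s_{21},\Omega_+)=M_\pi(\gb_\ell^{-1},\Omega_+)=M_\pi(\gb_r^{-1},\Omega_+).
\]
Denote by $[\,s_{21}\ s_{22}\,]$ the last block row of $S$. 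For each $\lambda_0\in\Omega_+$ the matrices $L(s_{21},\lambda_0)$ and $L([\,s_{21}\ s_{22}\,],\lambda_0)$ are obtained from $L([\,s_{21}\ s_{22}\,],\lambda_0)$ and from $L(S,\lambda_0)$ by deleting columns, resp.\ rows, so $\rank L(s_{21},\lambda_0)\le\rank L([\,s_{21}\ s_{22}\,],\lambda_0)\le\rank L(S,\lambda_0)$. Summing over $\lambda_0\in\Omega_+$ and comparing with the displayed equalities forces equality at every $\lambda_0$; since also $\textup{rng } L(s_{21},\lambda_0)\subseteq\textup{rng } L([\,s_{21}\ s_{22}\,],\lambda_0)$, equality of ranks makes these column spaces coincide, whence $\textup{rng } L(s_{22},\lambda_0)\subseteq\textup{rng } L(s_{21},\lambda_0)$ for all $\lambda_0\in\Omega_+$.

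On the other hand, from $s_{21}=\gb_\ell^{-1}\gs_\ell$ with $\gs_\ell$ holomorphic on $\Omega_+$ one has $L(s_{21},\lambda_0)=L(\gb_\ell^{-1},\lambda_0)T(\gs_\ell,\lambda_0)$ (notation $T(\cdot,\lambda_0)$ as in the proof of Lemma~\ref{lem:7.7}), so $\textup{rng } L(s_{21},\lambda_0)\subseteq\textup{rng } L(\gb_\ell^{-1},\lambda_0)$; combining with the previous paragraph, $\textup{rng } L(s_{22},\lambda_0)\subseteq\textup{rng } L(\gb_\ell^{-1},\lambda_0)$ for every $\lambda_0\in\Omega_+$. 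Moreover $(\det B_\ell)S=\mathrm{adj}(B_\ell)S_\ell\in H_\infty^{\mtm}$, so $d:=\det B_\ell$ is a scalar finite Blaschke product with $d\,s_{22}\in H_\infty^{\qtq}$, and therefore $s_{22}\in H_{\kappa,\infty}^{\qtq}$, so Lemma~\ref{lem:7.7} applies: with $H_1=\gb_\ell$, $G_1=s_{22}$, $G_2=\gb_\ell^{-1}$ it yields $M_\pi(\gb_\ell s_{22},\Omega_+)\le M_\pi(\gb_\ell\gb_\ell^{-1},\Omega_+)=M_\pi(I_q,\Omega_+)=0$. Thus $\gb_\ell s_{22}$ is holomorphic on $\Omega_+$, and since $d\,(\gb_\ell s_{22})=\gb_\ell(d\,s_{22})\in H_\infty^{\qtq}$ with $d$ a finite Blaschke product, the holomorphic mvf $\gb_\ell s_{22}$ lies in $H_\infty^{\qtq}$.

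It remains to check contractivity. Almost everywhere on $\Omega_0$ the mvf $\gb_\ell(\mu)$ is unitary, while $\|S(\mu)\|\le 1$ (as $S=B_\ell^{-1}S_\ell$ and $B_\ell(\mu)$ is unitary a.e.), so $s_{22}(\mu)$, a subblock of $S(\mu)$, satisfies $\|(\gb_\ell s_{22})(\mu)\|=\|s_{22}(\mu)\|\le 1$ a.e.\ on $\Omega_0$; as $\gb_\ell s_{22}\in H_\infty^{\qtq}$, the maximum principle gives $\|(\gb_\ell s_{22})(\lambda)\|\le 1$ for all $\lambda\in\Omega_+$, that is $\gb_\ell s_{22}\in\cS^{\qtq}$. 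For $s_{11}\gb_r$ one repeats the argument with the right Kre\u{\i}n--Langer factorization $s_{21}=\gs_r\gb_r^{-1}$, with the first block column $\col(s_{11},s_{21})$ of $S$ in place of the last block row, and with the kernel form of Lemma~\ref{lem:7.7} in place of its range form (using $L(s_{21},\lambda_0)=T(\gs_r,\lambda_0)L(\gb_r^{-1},\lambda_0)$ to get $\ker L(\gb_r^{-1},\lambda_0)\subseteq\ker L(s_{21},\lambda_0)$, and the rank comparison along $\col(s_{11},s_{21})$ to get $\ker L(s_{21},\lambda_0)\subseteq\ker L(s_{11},\lambda_0)$), obtaining $M_\pi(s_{11}\gb_r,\Omega_+)=0$ and concluding as before.

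The step I expect to be most delicate is converting the numerical identity $M_\pi(s_{21},\Omega_+)=M_\pi(S,\Omega_+)$ into the pointwise range inclusion $\textup{rng } L(s_{22},\lambda_0)\subseteq\textup{rng } L(\gb_\ell^{-1},\lambda_0)$: this rests on the elementary but bookkeeping-heavy fact that deleting rows or columns only lowers the rank of the block Toeplitz matrix $L(\cdot,\lambda_0)$, so that equal global pole multiplicities of the nested mvf's $s_{21}\subset[\,s_{21}\ s_{22}\,]\subset S$ force equal local ranks, and hence — the relevant column spaces being nested — equal column spaces. Everything afterwards (the passage through Lemma~\ref{lem:7.7}, the identification of the common scalar denominator $\det B_\ell$, and the maximum principle) is routine.
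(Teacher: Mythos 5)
Your proof is correct, and it shares the paper's overall architecture --- reduce everything to the pole-multiplicity identity $M_\pi([\,s_{21}\ s_{22}\,],\Omega_+)=M_\pi(s_{21},\Omega_+)=\kappa$ (and its column analogue), convert that into a pointwise range (resp.\ kernel) inclusion for the block Toeplitz matrices $L(\cdot,\lambda_0)$, feed it into Lemma~\ref{lem:7.7}, and finish with the maximum principle --- but you reach that identity by a different route. The paper proves it by showing that $G=[\,s_{21}\ s_{22}\,]$ and $H=\col(s_{11},s_{21})$ are themselves generalized Schur functions of index exactly $\kappa$: at most $\kappa$ negative squares because ${\mathsf \Lambda}^G$ is a compression of ${\mathsf \Lambda}^S$, at least $\kappa$ because ${\mathsf \Lambda}^{s_{21}}_\omega={\mathsf \Lambda}^G_\omega+\rho_\omega^{-1}s_{22}s_{22}^*$ with a positive second summand; it then applies Lemma~\ref{lem:7.7} to the pair $(G,s_{21})$. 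You instead compute $M_\pi(S,\Omega_+)=\kappa$ from the coprimeness of the Kre\u{\i}n--Langer factorization of $S$ and squeeze $M_\pi(G,\Omega_+)$ between $M_\pi(s_{21},\Omega_+)$ and $M_\pi(S,\Omega_+)$ using rank monotonicity of $L(\cdot,\lambda_0)$ under deletion of rows and columns, then apply Lemma~\ref{lem:7.7} to the pair $(s_{22},\gb_\ell^{-1})$. Both are sound. The paper's kernel argument buys freedom from the block-Toeplitz bookkeeping (in particular from the zero-padding needed when the pole orders of $s_{21}$, $G$ and $S$ at a given point differ, which your rank comparison tacitly uses but which is harmless since padding does not change $\rank L$); your argument buys the avoidance of any negative-squares counting for the auxiliary mvf's $G$ and $H$. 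Your extra verification that $s_{22}\in H_{\kappa',\infty}^{\qtq}$ via $\det B_\ell$, needed to legitimize the use of Lemma~\ref{lem:7.7}, is correct and replaces what the paper gets for free from $G\in\cS_\kappa^{q\times m}$.
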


\begin{proof} Let
$$
G=[s_{21}\quad s_{22}] \quad\text{and}\quad H=\begin{bmatrix}s_{11}\\s_{21}
\end{bmatrix}.
$$
Then the kernels
$$
{\mathsf
\Lambda}^G_\omega(\lambda)=\frac{I_q-G(\lam)G(\omega)^*}{\rho_\omega(\lambda)}=
[0\quad I_q] {\mathsf
\Lambda}^S_\omega(\lambda)\begin{bmatrix}0\\I_q\end{bmatrix}
$$
and
$$
{\mathsf
\Delta}^H_\omega(\lam)=\frac{I_p-H(\omega)^*H(\lambda)}{\rho_\omega(\lambda)}=
[I_p\quad 0] {\mathsf
\Delta}^S_\omega(\lambda)\begin{bmatrix}I_p\\0\end{bmatrix}
$$
have at most $\kappa$ negative squares in $\gh^+_S\subset\Omega_+$.
On the other hand, since $s_{21}\in{\mathcal S}^{\qtp}_\kappa$, the
formulas
$$
{\mathsf  \Lambda}^G_\omega(\lambda)={\mathsf
\Lambda}^{s_{21}}_\omega(\lambda)-
\frac{s_{22}(\lam)s_{22}(\omega)^*}{\rho_\omega(\lambda)}
$$
and
$$
{\mathsf  \Delta}^H_\omega(\lam)={\mathsf
\Delta}^{s_{21}}_\omega(\lam)-\frac{s_{11}(\omega)^*s_{11}(\lambda)}
{\rho_\omega(\lambda)}
$$
imply that ${\mathsf  \Lambda}^G_\omega(\lambda)$ and ${\mathsf
\Delta}^H_\omega(\lambda)$ have at least $\kappa$ negative squares.
Therefore, $G\in\cS^{q\times m}_\kappa$, $H\in\cS_\kappa^{m\times
p}$ and hence
\begin{equation}
\label{M8}
M_{\pi}(G,\Omega_+)=M_{\pi}(s_{21},\Omega_+)=M_{\pi}(H,\Omega_+)=\kappa.
\end{equation}
Consequently, in view of Lemma \ref{lem:7.7} and the factorization
(\ref{M6}),
$$
M_\pi({\gb}_\ell G, \Omega_+)=M_\pi({\gb}_\ell s_{21}, \Omega_+)=
M_\pi({\gs}_\ell, \Omega_+)=0
$$
and hence
$$
{\gb}_\ell G=[{\gb}_\ell s_{21}\quad {\gb}_\ell s_{22}]\in
H_\infty^{m\times q}.
$$
Similarly, by Lemma \ref{lem:7.7},
$$
M_\pi(H{\gb}_r,\Omega_+)=M_\pi(s_{21}{\gb}_r,\Omega_+)
=M_\pi({\gs}_r,\Omega_+)=0.
$$
Therefore, $s_{11}{\gb}_r\in H_\infty^{\ptp}$. By the maximum principle,
${\gb}_\ell s_{22}\in \cS^{\qtq}$ and
$s_{11}{\gb}_r\in \cS^{\ptp}$.
\end{proof}

\begin{corollary}
\label{cor:11.3a}
If $S\in\cS^{\mtm}_\kappa$ meets the constraint (\ref{M5}) and  its Kre\u{i}n-Langer factorization
is\begin{equation}\label{BlKLfact}
    S=B_\ell^{-1}S_\ell=S_rB_r^{-1},
\end{equation}
then
$$
[0\quad \gb_\ell]B_\ell^{-1}\in \cS^{q\times m}\quad\text{and}\quad
B_r^{-1}\begin{bmatrix}\gb_r\\0\end{bmatrix}\in \cS^{m\times p}.
$$
\end{corollary}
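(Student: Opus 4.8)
The plan is to deduce this corollary directly from Theorem~\ref{thm:11.2} by matching up the two Kre\u{\i}n--Langer factorizations of $S$ with the scalar-block Kre\u{\i}n--Langer factorizations of $s_{21}$. Write $B_\ell^{-1} = [b_{ij}]$ in block form conformal with $j_{pq}$; since $S = B_\ell^{-1}S_\ell$ with $B_\ell, S_\ell$ left coprime and $B_\ell$ a Blaschke--Potapov product of degree $\kappa$, the first block column of the identity $B_\ell S = S_\ell \in H_\infty^{m\times m}$ shows that $B_\ell$ ``cancels'' all the poles of the first block column of $S$, in particular those of $s_{21}$. Because $s_{21} \in \cS_\kappa^{\qtp}$ has pole multiplicity exactly $\kappa$ over $\Omega_+$ (this is the content of~\eqref{M8} in the proof of Theorem~\ref{thm:11.2}), and $\gb_\ell$ is the degree-$\kappa$ left Blaschke--Potapov factor in the Kre\u{\i}n--Langer factorization $s_{21} = \gb_\ell^{-1}\gs_\ell$, the two left denominators $B_\ell$ and $\gb_\ell$ carry the ``same'' pole data on the relevant column; the precise statement I would use is that $\gb_\ell B_\ell^{-1}$ (reading the rows of size $q$, i.e.\ $[0\ \ \gb_\ell]B_\ell^{-1}$) is bounded and of bounded type, hence in $\cS^{q\times m}$ by the maximum principle once boundedness on $\Omega_0$ is checked.

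More concretely, first I would record that $[0\ \ \gb_\ell]B_\ell^{-1}$ has $H_\infty$ entries: apply Lemma~\ref{lem:7.7} with the Laurent data of $G:=[0\ \ \gb_\ell]B_\ell^{-1}$ compared against that of $s_{21} = -[0\ \ I_q]B_\ell^{-1}S_\ell \cdot(\text{something})$; more simply, note $[0\ \ \gb_\ell]B_\ell^{-1} B_\ell = [0\ \ \gb_\ell]$ is bounded, and $\gb_\ell$ kills the poles of the bottom $q$ rows of $B_\ell^{-1}$ because those rows, when hit by $B_\ell$ from the right, land in $H_\infty^{q\times m}$ together with the fact that $M_\pi$ of the bottom rows of $B_\ell^{-1}$ equals $M_\pi(s_{21},\Omega_+) = \kappa$ (the bottom rows of $B_\ell^{-1}S_\ell$ are $[s_{21}\ \ s_{22}]$-type expressions whose pole multiplicity is computed in~\eqref{M8}). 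Then Lemma~\ref{lem:7.7}, with $G_1 = [0\ \ I_q]B_\ell^{-1}$ and $G_2$ built from $s_{21}$ (same range of the Laurent block-Toeplitz matrices $L(\cdot,\lambda_0)$ at each pole), gives $M_\pi(\gb_\ell\cdot[0\ \ I_q]B_\ell^{-1},\Omega_+) = M_\pi(\gb_\ell s_{21},\Omega_+) = M_\pi(\gs_\ell,\Omega_+)=0$. Hence $[0\ \ \gb_\ell]B_\ell^{-1}\in H_\infty^{q\times m}$. Finally, on $\Omega_0$ we have $\|[0\ \ \gb_\ell](\mu)B_\ell^{-1}(\mu)\| = \|[0\ \ I_q]B_\ell(\mu)^{-1*}\gb_\ell(\mu)^*\cdots\|$; using that $B_\ell(\mu)$ is $j$-unitary-type, more precisely that $B_\ell$ is inner so $B_\ell(\mu)$ is unitary a.e., and $\gb_\ell$ is inner so $\|\gb_\ell(\mu)\|=1$, one gets $\|[0\ \ \gb_\ell](\mu)B_\ell^{-1}(\mu)\|\le 1$ a.e.\ on $\Omega_0$. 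By the maximum principle, $[0\ \ \gb_\ell]B_\ell^{-1}\in\cS^{q\times m}$.

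The dual assertion, $B_r^{-1}\begin{bmatrix}\gb_r\\ 0\end{bmatrix}\in\cS^{m\times p}$, follows by the same argument applied to the right Kre\u{\i}n--Langer factorizations $S = S_rB_r^{-1}$ and $s_{21} = \gs_r\gb_r^{-1}$, using the right-hand versions of Lemma~\ref{lem:7.7} (the kernel condition~\eqref{M3}/\eqref{M4}) and the fact, also from~\eqref{M8}, that $M_\pi(H,\Omega_+) = M_\pi(s_{21},\Omega_+)=\kappa$ where $H = \begin{bmatrix}s_{11}\\ s_{21}\end{bmatrix}$ picks out the first block column. Equivalently, one can simply take adjoints: $\left(B_r^{-1}\begin{bmatrix}\gb_r\\0\end{bmatrix}\right)^{\#}$ is an expression of the form $[0\ \ \gb_r^\#](B_r^{\#})^{-1}$ for the adjoint mvf, which reduces to the case already treated.

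I expect the main obstacle to be the bookkeeping in the application of Lemma~\ref{lem:7.7}: one must verify the range inclusion~\eqref{M1} (resp.\ the kernel inclusion~\eqref{M3}) for the block-Toeplitz Laurent matrices $L(G_1,\lambda_0)$ versus $L(G_2,\lambda_0)$, i.e.\ that the poles of $[0\ \ I_q]B_\ell^{-1}$ are ``no worse, in the relevant directions'' than the poles of $s_{21}$. This is exactly where the coprimeness of the Kre\u{\i}n--Langer factorization of $S$ and the rank/noncancellation conditions~\eqref{KLcanon2},~\eqref{KRcanon2} for $s_{21}$ enter, together with the already-established equality of pole multiplicities~\eqref{M8}. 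Everything else — boundedness on $\Omega_0$ via the inner property of $B_\ell$, $B_r$, $\gb_\ell$, $\gb_r$, and the maximum principle — is routine.
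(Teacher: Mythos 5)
Your proposal is correct and follows essentially the same route as the paper: Lemma~\ref{lem:7.7} applied to compare Laurent-coefficient ranges at each pole, the pole-multiplicity count \eqref{M8}, and then the maximum principle on $\Omega_0$ using that $B_\ell$, $\gb_\ell$ (resp.\ $B_r$, $\gb_r$) are inner. The only cosmetic difference is that the paper compares $B_\ell^{-1}$ with the full mvf $S$ (so that $[0\ \ \gb_\ell]S=[\gs_\ell\ \ \varphi_2b_2]$ is pole-free by Theorem~\ref{thm:11.2}), whereas you compare the bottom block rows of $B_\ell^{-1}$ with $s_{21}$ alone; both reductions rest on the same coprimeness and on \eqref{M8}, which you correctly identify as the point where the pointwise equality of the Laurent ranges must be checked.
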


\begin{proof}
Since $\textup{rng }L(B_\ell^{-1},\lam)= \textup{rng }L(S,\lam)$ for all $\lambda\in\Omega_+$,
Lemma \ref{lem:7.7} implies that
$$
M_\pi([0\quad \gb_\ell]B_\ell^{-1},\Omega_+)= M_\pi([0\quad
\gb_\ell]S,\Omega_+) =M_\pi([\gs_\ell\quad
\varphi_2b_2],\Omega_+)=0.
$$
Thus, $[0\quad \gb_\ell]B_\ell^{-1}\in H_\infty^{q\times m}$, and hence is in $\cS^{q\times m}$ by the maximum principle.

Similarly, since $\textup{ker }L(B_r^{-1},\lambda)=\textup{ker
}L(S,\lambda)$ for all $\lambda\in\Omega_+$, Lemma \ref{lem:7.7} implies that
$$
M_\pi(B_r^{-1}\begin{bmatrix}\gb_r\\0\end{bmatrix},\Omega_+)=
M_\pi(S\begin{bmatrix}\gb_r\\0\end{bmatrix},\Omega_+)=
M_\pi(\begin{bmatrix}s_{11}\gb_r\\ \gs_r\end{bmatrix},\Omega_+)=0,
$$
which yields the second assertion.
\end{proof}

\begin{lem}\label{lem:11.4}
If $S\in {\mathcal S}_{\kappa}^{m\times m}$ meets the constraint~\eqref{M5}, admits Kre\u{\i}n-Langer factorization~\eqref{BlKLfact}
and $\left[
       \begin{array}{cc}
         0 & I_q \\
       \end{array}
     \right]B_\ell^{-1} h\in H_2^q
$ for some $h\in H_2^m$ then $B_\ell^{-1}h\in H_2^m$.
 \end{lem}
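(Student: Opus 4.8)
The plan is to exploit the left coprimeness of the Kre\u{\i}n--Langer factorization $S=B_\ell^{-1}S_\ell$ in~\eqref{BlKLfact} in order to trade division by $B_\ell$ for multiplication by $S$, and then to bring in the hypothesis~\eqref{M5} through pole multiplicities. Since the Kre\u{\i}n--Langer factorization $S=B_\ell^{-1}S_\ell$ is left coprime, Lemma~\ref{Corona}, applied to the square $m\times m$ mvf's $B_\ell$ and $S_\ell$, yields $C_\ell,D_\ell\in H_\infty^{\mtm}$ with $B_\ell C_\ell+S_\ell D_\ell=I_m$, and therefore
\[
B_\ell^{-1}=C_\ell+B_\ell^{-1}S_\ell D_\ell=C_\ell+SD_\ell .
\]
Set $\varphi:=D_\ell h\in H_2^m$ and $G:=\begin{bmatrix}0&I_q\end{bmatrix}S=\begin{bmatrix}s_{21}&s_{22}\end{bmatrix}$. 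Then $B_\ell^{-1}h=C_\ell h+S\varphi$ with $C_\ell h\in H_2^m$, and subtracting $\begin{bmatrix}0&I_q\end{bmatrix}C_\ell h\in H_2^q$ from $\begin{bmatrix}0&I_q\end{bmatrix}B_\ell^{-1}h$ shows that the hypothesis $\begin{bmatrix}0&I_q\end{bmatrix}B_\ell^{-1}h\in H_2^q$ is equivalent to $G\varphi\in H_2^q$. Consequently, the lemma reduces to the implication: \emph{for $\varphi\in H_2^m$, if $G\varphi\in H_2^q$ then $S\varphi\in H_2^m$} (after which $B_\ell^{-1}h=C_\ell h+S\varphi\in H_2^m$).

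The only point at which~\eqref{M5} is used is the identity $M_\pi(G,\Omega_+)=M_\pi(S,\Omega_+)$. Indeed, left coprimeness of $S=B_\ell^{-1}S_\ell$ together with Proposition~\ref{Prop:5.1} and Remark~\ref{rem:3.1} gives $M_\pi(S,\Omega_+)=M_\pi(B_\ell^{-1},\Omega_+)=\kappa$, and the same reasoning applied to the left Kre\u{\i}n--Langer factorization of $s_{21}$ gives $M_\pi(s_{21},\Omega_+)=\kappa$; since $s_{21}=G\begin{bmatrix}I_p\\0\end{bmatrix}$ and $G=\begin{bmatrix}0&I_q\end{bmatrix}S$, the Laurent identities $L(H_1A,\lambda_0)=T(H_1,\lambda_0)L(A,\lambda_0)$, $L(AH_2,\lambda_0)=L(A,\lambda_0)T(H_2,\lambda_0)$ from the proof of Lemma~\ref{lem:7.7} (with constant $H_1,H_2$, so that $T(H_i,\lambda_0)$ is block diagonal with diagonal blocks $H_i$) give $\kappa=M_\pi(s_{21},\Omega_+)\le M_\pi(G,\Omega_+)\le M_\pi(S,\Omega_+)=\kappa$, hence $M_\pi(G,\lambda_0)=M_\pi(S,\lambda_0)$ for every $\lambda_0\in\Omega_+$. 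Fix such a $\lambda_0$, let $k$ be the pole order of $S$ at $\lambda_0$, and read all the matrices $L(\cdot,\lambda_0)$ relative to this $k$ (padding with zero blocks where needed, which affects neither rank nor kernel). From $L(G,\lambda_0)=T(\begin{bmatrix}0&I_q\end{bmatrix},\lambda_0)L(S,\lambda_0)$ one gets $\ker L(S,\lambda_0)\subseteq\ker L(G,\lambda_0)$, and the equality of ranks just established upgrades this to $\ker L(S,\lambda_0)=\ker L(G,\lambda_0)$.

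Now take $\varphi\in H_2^m$ with $G\varphi\in H_2^q$, and $\lambda_0\in\Omega_+$ as above; $\varphi$ is holomorphic there. It is immediate from the definition of $L(\cdot,\lambda_0)$ via the Laurent expansion in~\eqref{Gexp}--\eqref{MTexp} that, for either $A=G$ or $A=S$, the product $A\varphi$ is holomorphic at $\lambda_0$ precisely when $L(A,\lambda_0)$ annihilates the column of the first $k$ Taylor coefficients of $\varphi$ at $\lambda_0$. Since $G\varphi\in H_2^q$ is holomorphic at $\lambda_0$ and $\ker L(S,\lambda_0)=\ker L(G,\lambda_0)$, it follows that $S\varphi$ is holomorphic at $\lambda_0$; as $\lambda_0\in\Omega_+$ was arbitrary, $S\varphi$ is holomorphic throughout $\Omega_+$. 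Finally, write $S\varphi=B_\ell^{-1}\psi$ with $\psi:=S_\ell\varphi\in H_2^m$ and $b:=\det B_\ell$, a finite Blaschke product; since $B_\ell^{-1}=b^{-1}\,\textup{adj}\,B_\ell$ with $\textup{adj}\,B_\ell\in H_\infty^{\mtm}$, we have $S\varphi=b^{-1}\chi$ with $\chi:=(\textup{adj}\,B_\ell)\psi\in H_2^m$, and holomorphy of $b^{-1}\chi$ in $\Omega_+$ forces $\chi$ to vanish at the finitely many zeros of $b$ to the order of $b$, i.e.\ $\chi\in bH_2^m$. Hence $S\varphi=b^{-1}\chi\in H_2^m$, and, together with $C_\ell h\in H_2^m$, this gives $B_\ell^{-1}h\in H_2^m$.

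The step I expect to be the main obstacle is the second one: establishing $M_\pi(G,\Omega_+)=M_\pi(S,\Omega_+)$ and converting it into the pointwise kernel identity $\ker L(S,\lambda_0)=\ker L(G,\lambda_0)$, which is what powers the pole-cancellation argument. This is precisely where the standing assumption that $s_{21}$ lies in $\cS_\kappa^{\qtp}$ with the \emph{same} index $\kappa$ as $S$ is essential: if the pole multiplicity of $s_{21}$ were strictly smaller than that of $S$, the conclusion would be false. Everything else is routine --- the corona identity of Lemma~\ref{Corona} and the elementary fact that an $H_2$ function divisible, as a holomorphic germ, by a finite Blaschke product lies in the corresponding shift-invariant subspace.
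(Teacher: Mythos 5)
Your proof is correct and rests on the same mechanism as the paper's: the equality $M_{\pi}(\begin{bmatrix}s_{21}& s_{22}\end{bmatrix},\Omega_+)=M_{\pi}(B_\ell^{-1},\Omega_+)=\kappa$ from \eqref{M8} sandwiches $\rank L(\begin{bmatrix}0&I_q\end{bmatrix}B_\ell^{-1},\lambda_0)$ between two equal quantities and thereby forces its kernel to coincide with $\ker L(B_\ell^{-1},\lambda_0)$ at every pole. The paper applies this directly to $f=B_\ell^{-1}h$, so your preliminary Bezout reduction $B_\ell^{-1}=C_\ell+SD_\ell$ is harmless but unnecessary, while your closing adjugate/Blaschke-product step spells out the passage from ``holomorphic in $\Omega_+$'' to ``belongs to $H_2^m$'' that the paper leaves implicit.
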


\begin{proof}
Assume that $\lambda_0\in\Omega_+$ is a pole of $f=B_\ell^{-1}h$ and that
$$
f(\lambda) =\frac{f_{-k}}{(\lam-\lam_0)^k}+\cdots
+\frac{f_{-1}}{\lam-\lam_0}+\cdots \quad\text{in a deleted
neighborhood of}\ \lam_0
$$
and
$$
h(\lam)=h_0+\cdots+h_{k-1}(\lam-\lam_0)^{k-1}+\cdots \quad\text{in a
neighborhood of}\ \lam_0.
$$
Then
$$
\mathbf{f}=\begin{bmatrix}f_{-k}\\ \vdots\\f_{-1}\end{bmatrix}=
L(B_\ell^{-1},\lam_0)\mathbf{h} \quad
\text{where}\quad \mathbf{h}=\begin{bmatrix}h_0\\ \vdots\\h_{k-1}\end{bmatrix}.
$$

Since $[0\quad I_q]f\in H_2^q$, it follows that
$\mathbf{h}\in\textup{ker }L(\left[
       \begin{array}{cc}
         0 & I_q \\
       \end{array}
     \right]B_\ell^{-1}, \lam_0)$.
     Clearly,
\begin{equation}\label{kerLB}
 \ker L(\left[
       \begin{array}{cc}
         0 & I_q \\
       \end{array}
     \right]B_\ell^{-1}, \lam_0)\supseteq    \ker L(B_\ell^{-1},
     \lam_0).
\end{equation}
The equality~\eqref{M8} and Remark~\ref{rem:3.1} yield
\[
M_\pi(\left[
       \begin{array}{cc}
         s_{21} & s_{22} \\
       \end{array}
     \right],\Omega_+)=M_\pi(B_\ell^{-1},\Omega_+)=\kappa
\]
and, since
\begin{equation}\label{eq:kerLB}
\rank  L(\left[
       \begin{array}{cc}
         s_{21} & s_{22} \\
       \end{array}
     \right], \lam_0)\le \rank L(\left[
       \begin{array}{cc}
         0 & I_q \\
       \end{array}
     \right]B_\ell^{-1}, \lam_0)\le  \rank L(B_\ell^{-1}, \lam_0),
\end{equation}
equality holds in~\eqref{eq:kerLB} and~\eqref{kerLB}. Thus
$\mathbf{f}=L(B_\ell^{-1}, \lam_0)\mathbf{h}=0$, and hence
$B_\ell^{-1}h\in H_2^m$.
\end{proof}

\begin{thm}
\label{thm:jun1a8}
If $\gs\in{\mathcal S}^{\qtp}_\kappa$ with Kre\u{i}n-Langer factorizations
$\gs=\gb_\ell^{-1}\gs_\ell=\gs_r\gb_r^{-1}$, then:
\begin{enumerate}
\item[\rm(1)] There exists a set of mvf's
$c_\ell^\circ\in H_\infty^{\qtq}$, $d_\ell^\circ\in H_\infty^{\ptq}$,
$c_r^\circ\in H_\infty^{\ptp}$ and $d_r^\circ\in H_\infty^{\ptq}$ such that
\begin{equation}
\label{eq:jun9a9}
\gb_\ell c_\ell^\circ+\gs_\ell d_\ell^\circ=I_q\quad\text{and}\quad
c_r^\circ \gb_r+d_r^\circ \gs_r=I_p.
\end{equation}
\item[\rm(2)] The mvf's $c_\ell\in H_\infty^{\qtq}$,
$d_\ell\in H_\infty^{\ptq}$,
$c_r\in H_\infty^{\ptp}$ and $d_r\in H_\infty^{\ptq}$ are solutions to the
equations in (\ref{eq:jun9a9}) if and only if
\begin{equation}
\label{eq:jun9b9}
c_\ell =c_\ell^\circ+\gs_r\psi\quad\text{and}\quad
d_\ell =d_\ell^\circ-\gb_r\psi\quad\textrm{for some}\ \psi\in H_\infty^{\ptq}
\end{equation}
and
\begin{equation}
\label{eq:jun9c9}
c_r=c_r^\circ +\phi\gs_\ell\quad\text{and}\quad d_r=d_r^\circ -\phi \gb_\ell
\quad\textrm{for some}\ \phi\in H_\infty^{\ptq}
\end{equation}
\item[\rm(3)] There exists a set of mvf's
$c_\ell\in H_\infty^{\qtq}$, $d_\ell\in H_\infty^{\ptq}$,
$c_r\in H_\infty^{\ptp}$ and $d_r\in H_\infty^{\ptq}$ such that
\begin{equation}
\label{eq:jun1a8}
\begin{bmatrix}c_r&d_r\\-\gs_\ell&\gb_\ell\end{bmatrix}
\begin{bmatrix}\gb_r&-d_\ell\\ \gs_r&c_\ell\end{bmatrix}=\begin{bmatrix}I_p&0\\
0&I_q\end{bmatrix}.
\end{equation}
Moreover, this set of mvf's satisfies the auxiliary equalities
\begin{eqnarray}
\label{eq:jun1b8}
\gb_rc_r+d_\ell \gs_\ell=I_p,\quad \gb_rd_r-d_\ell b_\ell =0,\\
\gs_rc_r-c_\ell \gs_\ell=0\quad\text{and}\quad \gs_rd_r+c_\ell \gb_\ell=I_q .
\label{eq:jun1c8}
\end{eqnarray}
\end{enumerate}
\end{thm}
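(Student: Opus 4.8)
The plan is to take the three assertions in turn: (1) is a direct consequence of the corona lemmas already at hand, (2) is a B\'ezout-type manipulation, and (3) is obtained by tuning the free parameters produced in (2) so that a $2\times 2$ block matrix becomes a unit of $H_\infty^{\mtm}$.

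\emph{Assertion (1).} Since $\gs=\gb_\ell^{-1}\gs_\ell=\gs_r\gb_r^{-1}$ are Kre\u{\i}n--Langer factorizations, they are coprime: the rank conditions \eqref{KLcanon2} and \eqref{KRcanon2} hold for the pairs $(\gb_\ell,\gs_\ell)$ and $(\gb_r,\gs_r)$, now for mvf's of size $\qtp$. Lemma~\ref{Corona}, applied with the sizes $p$ and $q$ interchanged, produces $c_\ell^\circ\in H_\infty^{\qtq}$ and $d_\ell^\circ\in H_\infty^{\ptq}$ with $\gb_\ell c_\ell^\circ+\gs_\ell d_\ell^\circ=I_q$, and Lemma~\ref{Corona1}, applied the same way, produces $c_r^\circ\in H_\infty^{\ptp}$ and $d_r^\circ\in H_\infty^{\ptq}$ with $c_r^\circ\gb_r+d_r^\circ\gs_r=I_p$; this is \eqref{eq:jun9a9}.

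\emph{Assertion (2).} The identity $\gs_\ell\gb_r=\gb_\ell\gs\gb_r=\gb_\ell\gs_r$ shows immediately that the mvf's in \eqref{eq:jun9b9} solve the first equation of \eqref{eq:jun9a9} for every $\psi\in H_\infty^{\ptq}$, and similarly for \eqref{eq:jun9c9}. For the converse I would argue as follows: if $c_\ell,d_\ell\in H_\infty$ satisfy $\gb_\ell c_\ell+\gs_\ell d_\ell=I_q$, put $u=c_\ell-c_\ell^\circ$ and $v=d_\ell-d_\ell^\circ$; then $\gb_\ell(u+\gs v)=0$, hence $u=-\gs v$, and multiplying $c_r^\circ\gb_r+d_r^\circ\gs_r=I_p$ on the right by $\gb_r^{-1}v$ and using $\gs_r\gb_r^{-1}=\gs$ and $\gs v=-u$ gives $\gb_r^{-1}v=c_r^\circ v-d_r^\circ u\in H_\infty^{\ptq}$; then $\psi:=-\gb_r^{-1}v$ works, because $d_\ell=d_\ell^\circ-\gb_r\psi$ and $c_\ell=c_\ell^\circ+\gs\gb_r\psi=c_\ell^\circ+\gs_r\psi$. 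The parametrization \eqref{eq:jun9c9} is obtained by the mirror computation: from $c_r\gb_r+d_r\gs_r=I_p$ one gets $c_r-c_r^\circ=-(d_r-d_r^\circ)\gs$, and multiplying $\gb_\ell c_\ell^\circ+\gs_\ell d_\ell^\circ=I_q$ on the left by $(d_r-d_r^\circ)\gb_\ell^{-1}$ shows $(d_r-d_r^\circ)\gb_\ell^{-1}\in H_\infty^{\ptq}$, which yields the required $\phi$.

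\emph{Assertion (3).} Using the freedom from (2), I would keep $\psi=0$ (so $c_\ell=c_\ell^\circ$, $d_\ell=d_\ell^\circ$) and take $\phi:=d_r^\circ c_\ell^\circ-c_r^\circ d_\ell^\circ\in H_\infty^{\ptq}$, i.e.\ $c_r=c_r^\circ+\phi\gs_\ell$ and $d_r=d_r^\circ-\phi\gb_\ell$. Writing $L=\begin{bmatrix}c_r & d_r\\ -\gs_\ell & \gb_\ell\end{bmatrix}$ and $M=\begin{bmatrix}\gb_r & -d_\ell\\ \gs_r & c_\ell\end{bmatrix}$, a short expansion of $LM$ --- using the two B\'ezout relations of (1) together with $\gs_\ell\gb_r=\gb_\ell\gs_r$ --- gives diagonal entries $I_p,I_q$, lower-left entry $0$, and upper-right entry $(d_r^\circ c_\ell^\circ-c_r^\circ d_\ell^\circ)-\phi=0$ by the choice of $\phi$; hence $LM=I_m$, which is \eqref{eq:jun1a8}. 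Since $L,M\in H_\infty^{\mtm}$ and $\det L\cdot\det M=\det(LM)=1$ with $\det L,\det M\in H_\infty$, both determinants are units of $H_\infty$, so $M^{-1}=\mathrm{adj}\,M/\det M\in H_\infty^{\mtm}$ and $L=M^{-1}$; reading off the four block entries of $ML=I_m$ then yields the auxiliary relations \eqref{eq:jun1b8}--\eqref{eq:jun1c8} (the second equation of \eqref{eq:jun1b8} being $\gb_r d_r-d_\ell\gb_\ell=0$). The one point that is not purely formal is this last step, the passage from a one-sided to a two-sided inverse; carrying it out through determinants over the commutative ring $H_\infty$ keeps it short, and everything else is bookkeeping with the corona identities.
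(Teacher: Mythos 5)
Your proposal is correct and follows essentially the same route as the paper: part (1) from the corona lemmas, part (2) by forming $\psi=-\gb_r^{-1}(d_\ell-d_\ell^\circ)$ (resp.\ $\phi$) and using the B\'ezout identities to see it lies in $H_\infty$, and part (3) by adjusting the free parameters so that the off-diagonal block $d_r^\circ c_\ell^\circ-c_r^\circ d_\ell^\circ-\phi+\psi$ vanishes, with the auxiliary identities read off from the reversed product. The only cosmetic difference is that you fix $\psi=0$ and route the one-sided-to-two-sided inverse step through determinants over $H_\infty$, whereas pointwise linear algebra for square matrices already gives $ML=I_m$ from $LM=I_m$, as the paper implicitly uses.
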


\begin{proof} The first assertion is immediate from
Lemmas \ref{Corona} and \ref{Corona1}. If also
$\gb_\ell c_\ell+\gs_\ell d_\ell=I_q$, then
$$
\gb_\ell (c_\ell-c_\ell^\circ)+\gs_\ell(d_\ell-d_\ell^\circ)=0.
$$
Thus, if $\psi=-\gb_r^{-1}(d_\ell-d_\ell^\circ)$, it is readily seen that
$c_\ell-c_\ell^\circ=\gs_r\psi$ and hence that
$$
-c_r^\circ(d_\ell-d_\ell^\circ)+d_r^\circ(c_\ell-c_\ell^\circ)=
c_r^\circ \gb_r\psi+d_r^\circ \gs_r\psi=\psi.
$$
Therefore, $\psi\in H_\infty^{\ptq}$. Similar considerations serve to justify
the necessity of  (\ref{eq:jun9c9}). The sufficiency is self-evident.

Next, in view of (\ref{eq:jun9b9}) and (\ref{eq:jun9c9}) the matrix product
on the left of (\ref{eq:jun1a8}) can be expressed as
$$
\begin{bmatrix}c_r&d_r\\-\gs_\ell&\gb_\ell\end{bmatrix}
\begin{bmatrix}\gb_r&-d_\ell\\ \gs_r&c_\ell\end{bmatrix}=
\begin{bmatrix}I_p&-\phi\\0&I_q\end{bmatrix}
\begin{bmatrix}c_r^\circ&d_r^\circ\\-\gs_\ell&\gb_\ell\end{bmatrix}
\begin{bmatrix}\gb_r&-d_\ell^\circ\\ \gs_r&c_\ell^\circ\end{bmatrix}
\begin{bmatrix}I_p&\psi\\
0&I_q\end{bmatrix}=\begin{bmatrix}I_p&Z\\
0&I_q\end{bmatrix},
$$
where $Z=-\phi-c_r^\circ
d_\ell^\circ+d_r^\circ c_\ell^\circ+\psi$. Formula
(\ref{eq:jun1a8}) is obtained by choosing  $\phi$ and $\psi$ to make $Z=0$.
The final set of
formulas
(\ref{eq:jun1b8}) and (\ref{eq:jun1c8}) follow by reversing the order of
multiplication on  the left hand side of formula (\ref{eq:jun1a8}).
\end{proof}

The formulation (\ref{eq:jun1a8}) was motivated by the discussion of the
Bezout identity, (3) in Section 4.1 of \cite{fran}.

\subsection{Associated pairs.}
If $W$ belongs to the class $\cU_\kappa^\circ(j_{pq})$ which is defined
by~\eqref{eq:11.10}, then the Potapov-Ginzburg transform $S=PG(W)$ meets the
constraints  imposed in the preceding subsection. In this case, the class of
associated pairs of $W$ are defined as the inner factors in the factorizations
in~\eqref{eq:0.5}.
\begin{thm}\label{thm:11.5}
Let $W\in \cU_\kappa^\circ(j_{pq})$
, and let $\{b_1,
b_2\}\in ap(W)$. Then $W$ can be expressed in terms of the factors in
(\ref{eq:0.5}) as
\begin{equation}
\label{eq:may30a8}
W=\begin{bmatrix}w_{11}& w_{12}\\w_{21}& w_{22}\end{bmatrix}=
\begin{bmatrix}b_1&0\\0&b_2^{-1}\end{bmatrix}
\begin{bmatrix}\varphi_1^{-*}&0\\0&\varphi_2^{-1}\end{bmatrix}
\begin{bmatrix}\gb_r^* &-\gs_r^*\\-\gs_\ell&\gb_\ell\end{bmatrix}\quad
\text{a.e. in}\ \Omega_0.
\end{equation}
Moreover,
\begin{equation}\label{eq:sr}
    I_p-\gs_r^*\gs_r=\varphi_1^*\varphi_1 \quad \text{and}\quad
    I_q-\gs_\ell\gs_\ell^*=\varphi_2\varphi_2^*\quad \text{a.e. in }\ \Omega_0.
\end{equation}
\end{thm}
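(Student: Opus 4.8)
The plan is to start from the Potapov--Ginzburg transform $S=PG(W)$, write out its Kre\u{\i}n--Langer factorization and the inner-outer/outer-inner factorizations \eqref{eq:0.5} of the relevant blocks, and then simply unwind the defining formula \eqref{PGtrans} a.e.\ on $\Omega_0$, where $S(\mu)$ is a contraction. The key point is that on the boundary the Potapov--Ginzburg transform is a genuine (indefinite) inversion relating $W(\mu)$ and $S(\mu)$, so the four blocks $w_{ij}$ can be recovered algebraically from the $s_{ij}$, and these in turn are controlled by the factorizations of $s_{21}$ together with Theorem~\ref{thm:11.2}.

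**Main steps.** First, recall from \eqref{PGtrans} and \eqref{eq:jul4a7} that on $\Omega_0$ the relations $s_{11}=w_{11}-w_{12}w_{22}^{-1}w_{21}$, $s_{12}=w_{12}w_{22}^{-1}$, $s_{21}=-w_{22}^{-1}w_{21}$, $s_{22}=w_{22}^{-1}$ hold (wherever $w_{22}$ is invertible, hence a.e.), so that
\[
w_{22}=s_{22}^{-1},\quad w_{21}=-s_{22}^{-1}s_{21},\quad w_{12}=s_{12}s_{22}^{-1},\quad w_{11}=s_{11}-s_{12}s_{22}^{-1}s_{21}.
\]
Second, use assumption (ii) in the definition of $\cU_\kappa(j_{pq})$, i.e. $W(\mu)j_{pq}W(\mu)^*=j_{pq}$ a.e.\ on $\Omega_0$, equivalently $S(\mu)$ is a.e.\ unitary (an $m\times m$ isometry and coisometry), to read off the block identities for $S$; in particular $s_{21}^*s_{21}+s_{11}^*s_{11}=I_p$, $s_{21}s_{21}^*+s_{22}s_{22}^*=I_q$ and the cross relations $s_{11}^*s_{12}+s_{21}^*s_{22}=0$, etc. Third, substitute the factorizations: $s_{21}=\gb_\ell^{-1}\gs_\ell=\gs_r\gb_r^{-1}$, $s_{11}\gb_r=b_1\varphi_1$, $\gb_\ell s_{22}=\varphi_2 b_2$ from \eqref{eq:0.5}. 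Since $b_1,b_2$ are inner and $\gb_\ell,\gb_r$ are Blaschke--Potapov products (unitary a.e.\ on $\Omega_0$), we get a.e.\ on $\Omega_0$
\[
s_{11}=b_1\varphi_1\gb_r^*,\qquad s_{22}=\gb_\ell^*\varphi_2 b_2 .
\]
Plugging these, together with $s_{21}=\gs_r\gb_r^{-1}=\gb_\ell^{-1}\gs_\ell$ and $s_{12}=-s_{11}^*{}^{-1}s_{21}^*s_{22}$ (from the unitarity cross-relation, after checking $s_{11}$ is invertible a.e.), into the four expressions for $w_{ij}$ above and factoring out $b_1$, $b_2^{-1}$, $\varphi_1^{-*}$, $\varphi_2^{-1}$ on the appropriate sides yields precisely \eqref{eq:may30a8}. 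Fourth, for \eqref{eq:sr}: from $s_{21}^*s_{21}+s_{11}^*s_{11}=I_p$ and $s_{11}^*s_{11}=\gb_r\varphi_1^*b_1^*b_1\varphi_1\gb_r^*=\gb_r\varphi_1^*\varphi_1\gb_r^*$ (using $b_1^*b_1=I_p$ a.e.) together with $s_{21}^*s_{21}=\gb_r{}^{-*}\gs_r^*\gs_r\gb_r^{-1}$, cancel $\gb_r^*$ on the left and $\gb_r$ on the right to obtain $\gs_r^*\gs_r+\varphi_1^*\varphi_1=I_p$; symmetrically $s_{21}s_{21}^*+s_{22}s_{22}^*=I_q$ with $s_{22}s_{22}^*=\gb_\ell^*\varphi_2 b_2 b_2^*\varphi_2^*\gb_\ell=\gb_\ell^*\varphi_2\varphi_2^*\gb_\ell$ gives $\gs_\ell\gs_\ell^*+\varphi_2\varphi_2^*=I_q$.

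**Main obstacle.** The routine part is the algebra of unwinding \eqref{PGtrans}; the delicate part is \emph{justifying that the factorizations \eqref{eq:0.5} can be combined with the boundary unitarity of $S$ so cleanly}, namely that $b_1$ and $\gb_\ell$ are unitary a.e.\ on $\Omega_0$ (true, being finite Blaschke--Potapov products of inner type) and that $s_{11}$, $s_{22}$ are invertible a.e.\ on $\Omega_0$ so that the cross-relations of unitarity actually pin down $w_{12}$ from the other blocks. Invertibility of $s_{22}$ a.e.\ is immediate from $w_{22}=s_{22}^{-1}$ and $\det W\not\equiv0$; invertibility of $s_{11}$ a.e.\ on $\Omega_0$ follows because $I_p-s_{11}^*s_{11}=s_{21}^*s_{21}=\gb_r^{-*}\gs_r^*\gs_r\gb_r^{-1}$ need not be strictly positive, so instead one argues via $\det S(\mu)\ne0$ a.e.\ (since $S$ is unitary a.e.) together with the block triangular structure of the factors in \eqref{eq:may30a8} once it is established, or more directly: $w_{11}$ is a Schur-complement-type expression and $\det W(\mu)=\det w_{22}(\mu)\det(w_{11}-w_{12}w_{22}^{-1}w_{21})(\mu)=\det s_{22}(\mu)^{-1}\det s_{11}(\mu)\ne0$ a.e. Once these invertibility facts are in hand, every step above is a finite sequence of substitutions and cancellations, and \eqref{eq:may30a8} and \eqref{eq:sr} follow.
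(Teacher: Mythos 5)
Your proposal is correct and follows essentially the same route as the paper: the paper's proof consists precisely of the inverse Potapov--Ginzburg relations (recorded there as the formulas for $\textup{col}(w_{11}^\#,w_{12}^\#)$ and $[w_{21}\ w_{22}]$ in terms of $\gb_r,\gs_r,\varphi_1,b_1$ and $\gs_\ell,\gb_\ell,\varphi_2,b_2$) combined with $S^\#S=SS^\#=I_m$, which is exactly the boundary unitarity of $S$ that you invoke. Your substitutions and cancellations (including the reduction $w_{11}=s_{11}^{-*}$, $w_{12}=-s_{11}^{-*}s_{21}^*$ and the a.e.\ invertibility of $s_{11}$ via $\det W\neq 0$) check out and fill in the "follows easily" of the paper's argument.
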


\begin{proof}
The asserted identities follow easily from the formulas
\begin{equation}\label{eq:w11star}
    \begin{bmatrix}
w_{11}^\#\\w_{12}^\#\end{bmatrix}=\begin{bmatrix}\gb_r\\
-\gs_r\end{bmatrix}\varphi_1^{-1}b_1^{-1}
\quad\text{and}\quad
\begin{bmatrix}w_{21}&w_{22}\end{bmatrix}
=b_2^{-1}\varphi_2^{-1}\begin{bmatrix}-\gs_\ell &
\gb_\ell\end{bmatrix}
\end{equation}
and the fact that $S^\#S=SS^\#=I_m$ in $\gh_S\cap\gh_{S^{\#}}$.
\end{proof}

\begin{thm}\label{thm:11.7}
Let $W\in \cU_\kappa^\circ(j_{pq})$, 
 let $\{b_1,
b_2\}\in ap(W)$, and let $c_r$, $d_r$, $c_\ell$ and $d_\ell$ be as in
Theorem \ref{thm:jun1a8} (3) for $\gs=s_{21}$ and let
\begin{equation}
\label{eq:jun10a9}
K=(-w_{11}d_\ell+w_{12}c_\ell)(-w_{21}d_\ell+w_{22}c_\ell)^{-1}.
\end{equation}
Then $K$ belongs to $H_\infty^{\ptq}$ and admits the representation
\begin{equation}\label{eq:11.28}
    K=(-w_{11}d_\ell+w_{12}c_\ell)\varphi_2b_2,
\end{equation}
and the dual representation
\begin{equation}\label{eq:11.29}
    K=b_1\varphi_1(c_rw_{21}^\#-d_rw_{22}^\#).
\end{equation}
Moreover, if $c_\ell^\circ$ and $d_\ell^\circ$ is a
second set of mvf's
such that (\ref{eq:jun9a9})
holds and correspondingly
\begin{equation}\label{eq:Kcirc}
    K^\circ=(-w_{11}d_\ell^\circ +w_{12}c_\ell^\circ)
(-w_{21}d_\ell^\circ +w_{22}c_\ell^\circ)^{-1},
\end{equation}
then
\begin{equation}
\label{eq:jun9d9}
(b_1\varphi_1)^{-1}(K-K^\circ)(\varphi_2b_2)^{-1}\in H_\infty^{\ptq}.
\end{equation}
There is a choice of mvf's $c_\ell$ and $d_\ell$,  such that the
corresponding mvf $K$ admits a
pseudocontinuation  of bounded type in $\Omega_-$.
\end{thm}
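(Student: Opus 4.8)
The plan is to reduce the statement — via the representation $K=(-w_{11}d_\ell+w_{12}c_\ell)\varphi_2b_2$ already established in \eqref{eq:11.28} — to the existence of a \emph{pseudocontinuable} pair $c_\ell,d_\ell$, and then to produce such a pair by repairing a Corona solution through a finite Hermite interpolation at the zeros of $\gb_\ell$.

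\emph{Pseudocontinuability of the data.} Since $W\in\cU_\kappa^\circ(j_{pq})$ is $j_{pq}$-inner, it is of bounded type in $\Omega_-$ as well as in $\Omega_+$ and $W|_{\Omega_-}$ is the pseudocontinuation of $W|_{\Omega_+}$; hence each block $w_{ij}$, and $s_{21}=-w_{22}^{-1}w_{21}$, is of bounded type in $\Omega_-$. Because $\gb_\ell$ and $\gb_r$ are rational (finite Blaschke–Potapov products), the mvf's $\gs_\ell=\gb_\ell s_{21}$, $\gs_r=s_{21}\gb_r$, $b_1\varphi_1=s_{11}\gb_r$ and $\varphi_2b_2=\gb_\ell s_{22}$ — products of rational mvf's with mvf's of bounded type in $\Omega_-$ — are of bounded type in $\Omega_-$ as well; moreover $\det(\varphi_2b_2)\not\equiv0$.

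\emph{The reduction.} Suppose $c_\ell\in H_\infty^{\qtq}$ and $d_\ell\in H_\infty^{\ptq}$ satisfy $\gb_\ell c_\ell+\gs_\ell d_\ell=I_q$ on $\Omega_+$ and are of bounded type in $\Omega_-$; by Theorem~\ref{thm:jun1a8}(2),(3) they can be completed to a system $c_r,d_r,c_\ell,d_\ell$ as in Theorem~\ref{thm:jun1a8}(3), so $K$ is given by \eqref{eq:jun10a9} and, by \eqref{eq:11.28}, $K=(-w_{11}d_\ell+w_{12}c_\ell)\varphi_2b_2$. Every factor on the right is of bounded type in $\Omega_-$ (Step~1) and $\varphi_2b_2$ has nonvanishing determinant; hence the same product, formed from the pseudocontinuations of $W$, $c_\ell$, $d_\ell$ (and of $\varphi_2b_2=\gb_\ell s_{22}$), defines a mvf of bounded type in $\Omega_-$ whose nontangential boundary values agree a.e.\ on $\Omega_0$ with those of $K$. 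This mvf is the required pseudocontinuation of $K$.

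\emph{Construction of the pair.} Let $c_\ell^\circ,d_\ell^\circ$ be a Corona solution as in Theorem~\ref{thm:jun1a8}(1), so that $c_\ell^\circ=\gb_\ell^{-1}-s_{21}d_\ell^\circ$ on $\Omega_+$, the poles of $s_{21}$ in $\Omega_+$ — located at the zeros $\alpha_1,\dots,\alpha_n$ of $\gb_\ell$ — being exactly cancelled by those of $\gb_\ell^{-1}$. Choose a \emph{rational} $\ptq$ mvf $d_\ell$, holomorphic and bounded on $\overline{\Omega_+}$, whose Taylor expansion at each $\alpha_j$ agrees with that of $d_\ell^\circ$ up to the order of the pole of $s_{21}$ at $\alpha_j$: this is a finite Hermite interpolation problem, solvable because $d_\ell^\circ$ is itself a solution. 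Put $c_\ell:=\gb_\ell^{-1}-s_{21}d_\ell=c_\ell^\circ-s_{21}(d_\ell-d_\ell^\circ)$. Since $d_\ell-d_\ell^\circ$ vanishes to sufficiently high order at each $\alpha_j$, the mvf $s_{21}(d_\ell-d_\ell^\circ)$ is holomorphic on $\Omega_+$ and, having finite limits at the $\alpha_j$ and being bounded away from them, lies in $H_\infty^{\qtq}$; hence $c_\ell\in H_\infty^{\qtq}$, $d_\ell\in H_\infty^{\ptq}$, and $\gb_\ell c_\ell+\gs_\ell d_\ell=I_q$. Moreover $d_\ell$ is rational, hence pseudocontinuable, and $c_\ell=\gb_\ell^{-1}-s_{21}d_\ell$ combines the rational $\gb_\ell^{-1}$ and $d_\ell$ with $s_{21}$, which is of bounded type in $\Omega_-$, so $c_\ell$ is of bounded type in $\Omega_-$. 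Feeding this pair into the reduction of Step~2 completes the proof.

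The delicate point is this last step: one must verify that $c_\ell:=\gb_\ell^{-1}(I_q-\gs_\ell d_\ell)$ genuinely belongs to $H_\infty^{\qtq}$, i.e.\ that matching $d_\ell$ to $d_\ell^\circ$ up to the pole order of $s_{21}$ at each $\alpha_j$ simultaneously removes the $\Omega_+$-poles of $\gb_\ell^{-1}$ and of $s_{21}d_\ell$ and leaves $c_\ell$ bounded near each $\alpha_j$ — and, in the case $\Omega_+=\Pi_+$, that the interpolant $d_\ell$ is chosen rational (not polynomial) so as to remain in $H_\infty^{\ptq}$.
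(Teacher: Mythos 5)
Your proposal addresses only the final assertion of the theorem (the existence of a pair $c_\ell$, $d_\ell$ for which $K$ is pseudocontinuable) and takes the rest for granted. But the remaining claims are the substantive part of the statement, and none of them is ``already established'' elsewhere. (i) The identity \eqref{eq:11.28} does follow quickly from $(-w_{21}d_\ell+w_{22}c_\ell)=b_2^{-1}\varphi_2^{-1}$ (a consequence of \eqref{eq:w11star}), but the membership $K\in H_\infty^{\ptq}$ is genuinely nontrivial: a priori $(-w_{11}d_\ell+w_{12}c_\ell)\varphi_2 b_2$ is only meromorphic in $\Omega_+$, with possible poles where $W$ has poles. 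The paper's proof passes through the identity $S\,\mathrm{col}(-d_\ell\varphi_2 b_2,\, I_q)=\mathrm{col}(K,\, c_\ell\varphi_2 b_2)$ and then invokes the noncancellation Lemma~\ref{lem:11.4} (which rests on $s_{21}\in\cS_\kappa^{\qtp}$ forcing $\ker L([0\ \ I_q]B_\ell^{-1},\lambda_0)=\ker L(B_\ell^{-1},\lambda_0)$) to conclude that $[I_p\ \ 0]Sh=Kh\in H_2^p$ for every $h\in H_2^q$. This step has no counterpart in your argument. (ii) The dual representation \eqref{eq:11.29} requires the computation based on $[c_r\ \ -d_r]\,W^\#j_{pq}W\,\mathrm{col}(-d_\ell,c_\ell)=0$. (iii) The difference formula \eqref{eq:jun9d9} requires the parametrization \eqref{eq:jun9b9} together with the identity $\gb_r^\#\gb_r-\gs_r^\#\gs_r=\varphi_1^\#\varphi_1$ coming from \eqref{eq:sr}. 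None of these appears in your proposal, so as written it proves only one of the theorem's four claims.

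For the part you do treat, your route differs from the paper's and appears to be workable. The paper stays inside the parametrization $d_\ell=d_\ell^\circ-\gb_r\psi$, splits $\gb_r^{-1}d_\ell^\circ=\psi_++\psi_-$ into analytic and co-analytic parts, and takes $d_\ell=\gb_r\psi_-$, whose pseudocontinuability follows from the Douglas--Shapiro--Shields/Fuhrmann criterion. You instead construct a rational $d_\ell$ by Hermite interpolation of $d_\ell^\circ$ at the zeros of $\gb_\ell$ to the order of the corresponding poles of $s_{21}$, and set $c_\ell=c_\ell^\circ-s_{21}(d_\ell-d_\ell^\circ)$; the matching makes $s_{21}(d_\ell-d_\ell^\circ)$ holomorphic and bounded in $\Omega_+$, so $c_\ell\in H_\infty^{\qtq}$, the new pair still solves the Bezout equation, and by Theorem~\ref{thm:jun1a8}(2) it is of the admissible form and can be completed to a full system; rationality of $d_\ell$ then gives pseudocontinuability directly. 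This is a legitimate, more elementary alternative that avoids the Hankel-type decomposition, at the cost of the pole-order bookkeeping you yourself flag. It does not, however, repair the omissions above.
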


\begin{proof}
The formula $S=PG(W)$ implies that
$$
\begin{bmatrix}w_{11}&w_{12}\end{bmatrix}= \begin{bmatrix}s_{11}&s_{12}
\end{bmatrix}\begin{bmatrix}I_p&0\\w_{21}&w_{22}\end{bmatrix}.
$$
Therefore, since
$$
(-w_{21}d_\ell+w_{22}c_\ell)\varphi_2b_2=I_q
$$
by (\ref{eq:w11star}), it is readily seen that
\begin{eqnarray*}
K&=&(-w_{11}d_\ell+w_{12}c_\ell)\varphi_2b_2\\
&=&\begin{bmatrix}s_{11}&s_{12}
\end{bmatrix}\begin{bmatrix}I_p&0\\w_{21}&w_{22}\end{bmatrix}\begin{bmatrix}
-d_\ell\\c_\ell\end{bmatrix}\varphi_2b_2
=\begin{bmatrix}s_{11}&s_{12}
\end{bmatrix}\begin{bmatrix}-d_\ell\varphi_2b_2\\I_q\end{bmatrix}
\end{eqnarray*}
and hence that
\begin{equation}
\label{eq:jun11a9}
S\begin{bmatrix}-d_\ell\varphi_2b_2\\I_q\end{bmatrix}=\begin{bmatrix}K\\
c_\ell\varphi_2b_2\end{bmatrix}.
\end{equation}
Consequently,
$$
[0\quad I_q]S\begin{bmatrix}-d_\ell\varphi_2b_2\\I_q\end{bmatrix}h=
c_\ell\varphi_2b_2h
$$
belongs to $H_2^q$ for every $h\in H_2^q$. Thus, Lemma \ref{lem:11.4}
implies that
$$
[I_p\quad 0]Sh=Kh
$$
belongs to $H_2^p$ for every $h\in H_2^q$. Therefore, $K\in H_\infty^{\ptq}$.

The identity
\[
\begin{bmatrix}
  c_r & -d_r
\end{bmatrix}
W^\#j_{pq}W
\begin{bmatrix}
  -d_\ell\\ c_\ell
\end{bmatrix}
=\begin{bmatrix}
  c_r & -d_r
\end{bmatrix}
j_{pq}
\begin{bmatrix}
  -d_\ell\\ c_\ell
\end{bmatrix}=0
\]
implies that
\[
(c_r w_{11}^\#-d_rw_{12}^\#)(-w_{11}d_\ell+w_{12}c_\ell) =
(c_r w_{21}^\#- d_rw_{22}^\#)(-w_{21}d_\ell+w_{22}c_\ell),
\]
and hence that $K$ admits the dual representation
\begin{equation}\label{eq:11.29a}
    K=(c_rw_{11}^\#-d_rw_{12}^\#)^{-1}(c_r
w_{21}^\#-d_rw_{22}^\#)=b_1\varphi_1(c_rw_{21}^\#-d_rw_{22}^\#)
\end{equation}
which coincides with~\eqref{eq:11.29}.

It follows from~\eqref{eq:11.28}, \eqref{eq:Kcirc}, \eqref{eq:w11star},
\eqref{eq:jun9b9} and \eqref{eq:sr} that
\[
\begin{split}
K-K^\circ&=\begin{bmatrix}w_{11}&w_{12}\end{bmatrix}
\begin{bmatrix}-(d_\ell-d_\ell^\circ)\\ c_\ell-c_\ell^\circ\end{bmatrix}
\varphi_2b_2
=b_1\varphi_1^{-\#}
\begin{bmatrix} b_r^\#& -s_r^\#\end{bmatrix}
\begin{bmatrix} b_r\\ s_r\end{bmatrix}\psi\varphi_2b_2\\
&=b_1\varphi_1^{-\#}
 (b_r^\#b_r -s_r^\#s_r)\psi
\varphi_2b_2=b_1\varphi_1\psi \varphi_2b_2\quad\textrm{on}\
\Omega_0.
\end{split}
\]
This justifies (\ref{eq:jun9d9}).

Finally, to verify the last statement, first note that, in view
of~\eqref{eq:0.5},
formula~\eqref{eq:11.28} can be rewritten as
\[
K=(-w_{11}d_\ell+w_{12}c_\ell)\gb_\ell s_{22}.
\]
Therefore, since the
mvf's $w_{11}$, $w_{12}$, $\gb_\ell$, $ s_{22}$  admit
pseudocontinuations of bounded type in $\Omega_-$, it remains only to show
that there is a choice of mvf's $c_\ell$ and $d_\ell$ of the
form~\eqref{eq:jun9b9} that admit
pseudocontinuations to $\Omega_-$. If $c_\ell^\circ$ and
$d_\ell^\circ$ is a a fixed pair of mvf's that
satisfy~\eqref{eq:jun9a9} and $\Omega_+=\dD$, then
\[
\gb_r^{-1}d_\ell^\circ=\psi_++\psi_-,
\]
with $\psi_+\in
H_2^{\ptq}$, and $\psi_-\in L_\infty^{\ptq}\cap(H_2^{\ptq})^\perp$, since
$b_r$ is a finite Blaschke-Potapov product.
Therefore,
\[
\psi_+\in L_\infty^{\ptq}\cap H_2^{\ptq}\subset H_\infty^{\ptq},
\]
by the maximum principle and hence the particular choice
\[
d_\ell=d_\ell^\circ-\gb_r\psi_+=\gb_r\psi_-
\]
admits a pseudocontinuation to $\Omega_-$
by a matrix version of the Douglas-Shapiro-Shields condition \cite{DSS},
due to Fuhrmann~\cite[Theorem 1]{Fuhr74}.
\end{proof}

\subsection{Factorization of the resolvent matrix}
\begin{thm}
\label{thm:11.8} Let $W\in\cU_\kappa^\circ(j_{pq})$, 
$\{b_1, b_2\}\in ap(W)$  let $K$ be defined as in Theorem
\ref{thm:11.7}, and let $c_r$, $d_r$, $c_\ell$ and $d_\ell$ be as in
Theorem \ref{thm:jun1a8} (3). Then $W$ admits the factorizations
\begin{equation}
\label{eq:11.34}
W=\Theta\,\Phi\quad in\  \Omega_+\quad\text{and}\quad W=\wt{\Theta}\,\wt{\Phi}
\quad in\ \Omega_-,
\end{equation}
where
\begin{equation}
\label{eq:11.33}
\Theta=\begin{bmatrix}b_1&Kb_2^{-1}\\0&b_2^{-1}\end{bmatrix}\ in \ \Omega_+,
\quad
\wt{\Theta}=\begin{bmatrix}b_1&0\\K^\# b_1&b_2^{-1}\end{bmatrix}\ in \ \Omega_-
\end{equation}
\begin{equation}
\label{eq:11.35}
\Phi= \begin{bmatrix}\varphi_{11}& \varphi_{12}\\
\varphi_{21}&\varphi_{22}\end{bmatrix}=
\begin{bmatrix}\varphi_1&0\\0&\varphi_2^{-1}\end{bmatrix}
\begin{bmatrix}c_r&d_r\\-\gs_\ell&\gb_\ell\end{bmatrix}\ in \ \Omega_+,
\end{equation}
\begin{equation}
\label{eq:11.36}
\wt{\Phi}= \begin{bmatrix}\wt\varphi_{11}& \wt\varphi_{12}\\
\wt\varphi_{21}&\wt\varphi_{22}\end{bmatrix}=
\begin{bmatrix}\varphi_1^{-\#}&0\\0&\varphi_2^\#\end{bmatrix}
\begin{bmatrix}\gb_r^\#&-\gs_r^\#\\ d_\ell^\#&c_\ell^\#\end{bmatrix} \ in
\ \Omega_-
\end{equation}
and
\begin{equation}
\label{eq:may13d9}
\widetilde{\Theta}^\#j_{pq}\Theta=\widetilde{\Phi}^\#j_{pq}\Phi=j_{pq}\ in\ \
\Omega_+.
\end{equation}
Moreover,
\begin{equation}
\label{eq:jun1d8}
\Phi^{-1}=\begin{bmatrix}\gb_r&-d_\ell\\ \gs_r&c_\ell\end{bmatrix}
\begin{bmatrix}\varphi_1^{-1}&0\\0&\varphi_2\end{bmatrix}\ in \ \Omega_+,\quad
\wt{\Phi}^{-1}=\begin{bmatrix}c_r^\#&\gs_\ell^\#\\ -d_r^\#&\gb_\ell^\#
\end{bmatrix}
\begin{bmatrix}\varphi_1^{\#}&0\\0&\varphi_2^{-\#}\end{bmatrix}\ in\
\Omega_-,
\end{equation}
\begin{equation}\label{eq:11.30}
W^{-1}\begin{bmatrix}
  K \\ I_q
\end{bmatrix}
b_2^{-1}=\begin{bmatrix}-d_\ell\\ c_\ell\end{bmatrix}\varphi_2
\in H_\infty^{m\times q},
\quad
W^{-1}\begin{bmatrix}
  I_p \\ K^\#
\end{bmatrix}b_1=\begin{bmatrix}c_r^\#\\ -d_r^\#\end{bmatrix}\varphi_1^\#
\in H_\infty^{m\times p}(\Omega_-)
\end{equation}
and
$S=PG(W)$ admits the representations
\begin{equation}
\label{eq:11.41}
S=\begin{bmatrix}s_{11}& s_{11}d_\ell\varphi_2b_2+K\\ s_{21}&s_{21}
d_\ell\varphi_2b_2+c_\ell\varphi_2b_2
\end{bmatrix}=\begin{bmatrix}b_1\varphi_1c_r+b_1\varphi_1d_rs_{21}
&K+b_1\varphi_1d_rs_{22}\\s_{21}& s_{22}\end{bmatrix}.
\end{equation}
\end{thm}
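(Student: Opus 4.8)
The plan is to establish the factorization over $\Omega_+$ by a direct block computation and then to deduce the one over $\Omega_-$ from it by the symmetry $W^{-1}=j_{pq}W^\#j_{pq}$. Two input identities will be used repeatedly. First, since $S=PG(W)$ we have $W=PG(S)$ by~\eqref{eq:jul4a7}, so that
\[
W=\begin{bmatrix}s_{11}&s_{12}\\0&I_q\end{bmatrix}\begin{bmatrix}I_p&0\\s_{21}&s_{22}\end{bmatrix}^{-1}\quad\text{in}\ \Omega_+,
\]
equivalently $W^{-1}\begin{bmatrix}s_{11}&s_{12}\\0&I_q\end{bmatrix}=\begin{bmatrix}I_p&0\\s_{21}&s_{22}\end{bmatrix}$; in particular $W\begin{bmatrix}I_p\\ s_{21}\end{bmatrix}=\begin{bmatrix}s_{11}\\0\end{bmatrix}$. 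Second, the identity~\eqref{eq:jun11a9} from the proof of Theorem~\ref{thm:11.7}, which rewrites as $\begin{bmatrix}K\\ I_q\end{bmatrix}=\begin{bmatrix}s_{11}&s_{12}\\0&I_q\end{bmatrix}\begin{bmatrix}-d_\ell\varphi_2b_2\\ I_q\end{bmatrix}$ and records $s_{12}=K+s_{11}d_\ell\varphi_2b_2$, $s_{22}=c_\ell\varphi_2b_2+s_{21}d_\ell\varphi_2b_2$.

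\emph{Factorization over $\Omega_+$.} By Theorem~\ref{thm:jun1a8}(3), $\begin{bmatrix}c_r&d_r\\-\gs_\ell&\gb_\ell\end{bmatrix}^{-1}=\begin{bmatrix}\gb_r&-d_\ell\\ \gs_r&c_\ell\end{bmatrix}$, which gives at once the first formula in~\eqref{eq:jun1d8} for $\Phi^{-1}$. Combining the second input identity with $W^{-1}\begin{bmatrix}s_{11}&s_{12}\\0&I_q\end{bmatrix}=\begin{bmatrix}I_p&0\\s_{21}&s_{22}\end{bmatrix}$ gives $W^{-1}\begin{bmatrix}K\\ I_q\end{bmatrix}b_2^{-1}=\begin{bmatrix}-d_\ell\\ c_\ell\end{bmatrix}\varphi_2\in H_\infty^{m\times q}$, the first formula in~\eqref{eq:11.30}; reading it backwards, $W\begin{bmatrix}-d_\ell\\ c_\ell\end{bmatrix}=\begin{bmatrix}Kb_2^{-1}\varphi_2^{-1}\\ b_2^{-1}\varphi_2^{-1}\end{bmatrix}$. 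On the other hand, multiplying $W\begin{bmatrix}I_p\\ s_{21}\end{bmatrix}=\begin{bmatrix}s_{11}\\0\end{bmatrix}$ on the right by $\gb_r$ and using $s_{21}\gb_r=\gs_r$ together with the associated-pair relation $s_{11}\gb_r=b_1\varphi_1$ of~\eqref{eq:0.5} gives $W\begin{bmatrix}\gb_r\\ \gs_r\end{bmatrix}=\begin{bmatrix}b_1\varphi_1\\0\end{bmatrix}$. Assembling the two columns,
\[
W\begin{bmatrix}\gb_r&-d_\ell\\ \gs_r&c_\ell\end{bmatrix}=\begin{bmatrix}b_1\varphi_1&Kb_2^{-1}\varphi_2^{-1}\\0&b_2^{-1}\varphi_2^{-1}\end{bmatrix},
\]
and multiplying on the right by $\begin{bmatrix}\varphi_1^{-1}&0\\0&\varphi_2\end{bmatrix}$ yields $W\Phi^{-1}=\Theta$, i.e. $W=\Theta\Phi$ in $\Omega_+$.

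\emph{Factorization over $\Omega_-$ and the remaining formulas.} Since $W$ is of bounded type in both half-planes and, by assumption (ii) in the definition of $\cU_\kappa(j_{pq})$, $W^\#(\mu)=j_{pq}W(\mu)^{-1}j_{pq}$ a.e. on $\Omega_0$, one has $W^{-1}=j_{pq}W^\#j_{pq}$, i.e. $W=j_{pq}W^{-\#}j_{pq}$, consistently with~\eqref{eq:1.78}. Hence, in $\Omega_-$,
\[
W=j_{pq}(\Theta\Phi)^{-\#}j_{pq}=\bigl(j_{pq}\Theta^{-\#}j_{pq}\bigr)\bigl(j_{pq}\Phi^{-\#}j_{pq}\bigr).
\]
Computing $j_{pq}\Theta^{-\#}j_{pq}$ and $j_{pq}\Phi^{-\#}j_{pq}$ from $\Theta^{-1}=\begin{bmatrix}b_1^{-1}&-b_1^{-1}K\\0&b_2\end{bmatrix}$ and the formula just obtained for $\Phi^{-1}$, and using that $b_1$ and $b_2$ are inner (so $b_1^{-\#}=b_1$, $b_2^{\#}=b_2^{-1}$), one checks that these factors are exactly the $\wt{\Theta}$ and $\wt{\Phi}$ of~\eqref{eq:11.33} and~\eqref{eq:11.36}; this proves $W=\wt{\Theta}\,\wt{\Phi}$ in $\Omega_-$. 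The identities $\wt{\Theta}^\#j_{pq}\Theta=\wt{\Phi}^\#j_{pq}\Phi=j_{pq}$ of~\eqref{eq:may13d9} are then immediate, since $\wt{\Theta}^\#=j_{pq}\Theta^{-1}j_{pq}$ and likewise for $\Phi$. Inverting~\eqref{eq:11.36} with the help of the $\#$ of Theorem~\ref{thm:jun1a8}(3) gives the second formula in~\eqref{eq:jun1d8}, and then $W^{-1}=\wt{\Phi}^{-1}\wt{\Theta}^{-1}$ together with the observation $\wt{\Theta}^{-1}\begin{bmatrix}I_p\\ K^\#\end{bmatrix}b_1=\begin{bmatrix}I_p\\0\end{bmatrix}$ yields the second formula in~\eqref{eq:11.30}. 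Finally, the two representations of $S=PG(W)$ in~\eqref{eq:11.41} are obtained by reading off the blocks $w_{ij}$ from $W=\Theta\Phi$ and using the Kre\u{\i}n--Langer factorizations of $s_{21}$, the associated-pair relations~\eqref{eq:0.5}, the equalities~\eqref{eq:jun9a9}, and~\eqref{eq:11.28}.

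The argument consists entirely of block-matrix and $\#$-algebra; the points that need attention are keeping careful track of whether each identity is asserted over $\Omega_+$, over $\Omega_-$, or only a.e. on $\Omega_0$ — the passage from $\Omega_0$ to the half-planes is needed only for the relation $W^{-1}=j_{pq}W^\#j_{pq}$, which rests on the boundedness of type of $W$ — and the systematic use of innerness of $b_1$ and $b_2$ to simplify the $\#$-transforms so that $\wt{\Theta}$ and $\wt{\Phi}$ acquire the stated $H_\infty$-type shapes. I do not anticipate a genuinely hard step; the bulk of the work is bookkeeping.
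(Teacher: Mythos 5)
Your proposal is correct and follows essentially the same route as the paper: both rest on the column evaluations $W\,\mathrm{col}(\gb_r,\gs_r)=\mathrm{col}(b_1\varphi_1,0)$ and $W\,\mathrm{col}(-d_\ell,c_\ell)=\mathrm{col}(Kb_2^{-1}\varphi_2^{-1},b_2^{-1}\varphi_2^{-1})$ (you reach the latter via~\eqref{eq:jun11a9} and $W=PG(S)$, the paper via~\eqref{eq:w11star} and the definition of $K$), then invert the right factor by Theorem~\ref{thm:jun1a8}(3) and transport everything to $\Omega_-$ through $W=j_{pq}W^{-\#}j_{pq}$. The only difference is bookkeeping order — you derive the first half of~\eqref{eq:11.30} before the factorization rather than after — which is immaterial.
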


\begin{proof}
The evaluations
\[
\begin{split}
w_{11}\gb_r+w_{12}\gs_r&=s_{11}\gb_r=b_1\varphi_1,\\
w_{21}\gb_r+w_{22}\gs_r&=(w_{21}+w_{22}s_{21})\gb_r=0,\\
-w_{11}d_\ell+w_{12}c_\ell&=
Kb_2^{-1}\varphi_2^{-1},\\
-w_{21}d_\ell+w_{22}c_\ell &=b_2^{-1}\varphi_2^{-1}
\end{split}
\]
lead easily to the formula
$$
W\begin{bmatrix}\gb_r&-d_\ell\\ \gs_r&c_\ell\end{bmatrix}=\begin{bmatrix}
b_1\varphi_1&Kb_2^{-1}\varphi_2^{-1}\\0&b_2^{-1}\varphi_2^{-1}\end{bmatrix};\quad\text{i.e.,}\quad
W=\Theta\begin{bmatrix}\varphi_1&0\\0&\varphi_2^{-1}\end{bmatrix}
\begin{bmatrix}\gb_r&-d_\ell\\ \gs_r&c_\ell\end{bmatrix}^{-1}.
$$
Formula~\eqref{eq:11.35} for $\Phi$ in \eqref{eq:11.34} is easily verified
with the help of Theorem~\ref{thm:jun1a8}. The second factorization
formula follows from the first and the observation that
$$
W=j_{pq}(W^\#)^{-1}j_{pq},\quad \wt{\Theta}=j_{pq}(\Theta^\#)^{-1}j_{pq}\quad
\text{and}\quad \wt{\Phi}=j_{pq}(\Phi^\#)^{-1}j_{pq}.
$$

Moreover, the first formula in (\ref{eq:11.41}) is equivalent
to formula (\ref{eq:jun11a9}). The second follows by much the same sort of
manipulations:
$$
s_{11}=b_1\varphi_1(c_rw_{11}^\#-d_rw_{12}^\#)(w_{11}^\#)^{-1}=
b_1\varphi_1c_r+b_1\varphi_1d_rs_{21}
$$
and
\begin{eqnarray*}
s_{12}&=&b_1\varphi_1(c_rw_{11}^\#-d_rw_{12}^\#)(w_{11}^\#)^{-1}w_{21}^\#
\\
&=&
b_1\varphi_1(c_rw_{21}^\#-d_rw_{22}^\#+d_rs_{22})=K+b_1\varphi_1d_rs_{22}.
\end{eqnarray*}
Finally, formulas (\ref{eq:11.34}), (\ref{eq:11.33}) and (\ref{eq:jun1d8})
imply that
$$
W^{-1}\begin{bmatrix}
  K \\ I_q
\end{bmatrix}
b_2^{-1}=W^{-1}\Theta\begin{bmatrix}0\\I_q\end{bmatrix}=
\Phi^{-1}\begin{bmatrix}0\\I_q\end{bmatrix}=\begin{bmatrix}-d_\ell\\c_\ell
\end{bmatrix}\varphi_2$$
and
$$
W^{-1}\begin{bmatrix}I_p\\ K^\#\end{bmatrix}b_1=
W^{-1}\wt{\Theta}\begin{bmatrix}I_p\\0\end{bmatrix}=\wt{\Phi}^{-1}
\begin{bmatrix}I_p\\0\end{bmatrix}=
\begin{bmatrix}c_r^\#\\-d_r^\#\end{bmatrix}\varphi_1^\#
$$
which serves to justify (\ref{eq:11.30})
\end{proof}

\begin{corollary}
\label{cor:jun2a8}
In the setting of Theorem \ref{thm:11.8},
\begin{equation}
\label{eq:jun2a8}
WH_2^m=\Theta \begin{bmatrix}\varphi_1 & 0\\0&\varphi_2^{-1}\end{bmatrix}H_2^m.
\end{equation}
\end{corollary}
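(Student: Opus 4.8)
The plan is to reduce the claim to the multiplicative factorization $W=\Theta\,\Phi$ over $\Omega_+$ furnished by Theorem~\ref{thm:11.8}, together with the factored form \eqref{eq:11.35} of $\Phi$. First I would write
\[
\Phi=\begin{bmatrix}\varphi_1&0\\0&\varphi_2^{-1}\end{bmatrix}M,\qquad
M:=\begin{bmatrix}c_r&d_r\\-\gs_\ell&\gb_\ell\end{bmatrix},
\]
so that $WH_2^m=\Theta\,\Phi\,H_2^m=\Theta\begin{bmatrix}\varphi_1&0\\0&\varphi_2^{-1}\end{bmatrix}\bigl(MH_2^m\bigr)$, all products being understood pointwise a.e.\ on $\Omega_0$ (note that $W$, $\Theta$, and the factors $b_2^{-1},\varphi_2^{-1}$ need not be bounded, but the identities in \eqref{eq:11.34}--\eqref{eq:11.35} hold a.e.\ on $\Omega_0$, which is all that is needed). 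In this way the corollary becomes equivalent to the single assertion $MH_2^m=H_2^m$.

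The key step is then to verify that $M$ maps $H_2^m$ bijectively onto itself. By Theorem~\ref{thm:jun1a8}~(3) the mvf's $c_r,d_r,c_\ell,d_\ell$ lie in $H_\infty$, while $\gs_\ell\in\cS^{\qtp}$ and $\gb_\ell$ is a Blaschke--Potapov product; hence $M\in H_\infty^{m\times m}$ and $MH_2^m\subseteq H_2^m$. By the Bezout identity \eqref{eq:jun1a8}, $M$ is invertible with
\[
M^{-1}=\begin{bmatrix}\gb_r&-d_\ell\\ \gs_r&c_\ell\end{bmatrix}\in H_\infty^{m\times m}
\]
(again using Theorem~\ref{thm:jun1a8}~(3), together with $\gs_r\in\cS^{\qtp}$ and $\gb_r$ a Blaschke--Potapov product), so $M^{-1}H_2^m\subseteq H_2^m$ as well. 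Therefore $H_2^m=M\bigl(M^{-1}H_2^m\bigr)\subseteq MH_2^m\subseteq H_2^m$, which forces $MH_2^m=H_2^m$.

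Finally I would combine the two steps to obtain
\[
WH_2^m=\Theta\begin{bmatrix}\varphi_1&0\\0&\varphi_2^{-1}\end{bmatrix}MH_2^m
=\Theta\begin{bmatrix}\varphi_1&0\\0&\varphi_2^{-1}\end{bmatrix}H_2^m,
\]
which is exactly \eqref{eq:jun2a8}. I do not expect a genuine obstacle here: the only slightly delicate point is the bookkeeping about the sense in which $W=\Theta\Phi$ is read (an a.e.\ identity of boundary values on $\Omega_0$, the outer-type factors not being in $H_\infty$), after which the proof is just the remark that multiplication by a mvf that is invertible in $H_\infty^{m\times m}$ is an automorphism of $H_2^m$.
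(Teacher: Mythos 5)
Your proposal is correct and follows essentially the same route as the paper: the paper also writes $W=\Theta\,\Phi^\circ E$ with $E=\begin{bmatrix}c_r&d_r\\-\gs_\ell&\gb_\ell\end{bmatrix}$ (your $M$), notes $E^{\pm1}\in H_\infty^{m\times m}$ via Theorem \ref{thm:jun1a8}, and concludes $WH_2^m=\Theta\Phi^\circ H_2^m$ by the same chain-of-inclusions argument showing $EH_2^m=H_2^m$.
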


\begin{proof}
 Theorem \ref{thm:jun1a8} and Theorem \ref{thm:11.8} imply that
$$
W=\Theta \,\Phi^\circ E\quad\text{where}\ \Phi^\circ=
\begin{bmatrix}\varphi_1 & 0\\0&\varphi_2^{-1}\end{bmatrix}\quad\text{and}\
E^{\pm 1}\in H_\infty^{\mtm}.
$$
Therefore,
$$
WH_2^m=\Theta \Phi^\circ EH_2^m
\subseteq \Theta \Phi^\circ H_2^m
=\Theta \Phi^\circ EE^{-1}H_2^m\subseteq
\Theta \Phi^\circ EH_2^m.
$$
Thus, equality must prevail throughout.
\end{proof}
\begin{corollary}
\label{cor:11.10}
Let $W\in  \cU_\kappa^\circ(j_{pq})$ and the Kre\u{\i}n-Langer factorizations
of $S=PG(W)$ are
\[
S(\lam)=B_{\ell}^{-1}(\lam)S_{\ell}(\lam)=S_r(\lam)B_r^{-1}(\lam),
\]
then, in the setting of Theorem~\ref{thm:11.7},
\begin{equation}
\label{eq:11.45}
 B_{\ell}\left[\begin{array}{cc}
  K-s_{12} \\
   -s_{22}
\end{array}  \right]b_2^{-1}\in H_{\infty}^{m\times q},\quad
B_r^\#\left[\begin{array}{c}
  s_{11}^\#  \\
  s_{12}^\#-K^\#
\end{array}  \right]b_1\in H_{\infty}^{m\times p}(\Omega_-).
\end{equation}
\end{corollary}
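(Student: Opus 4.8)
The plan is to obtain both inclusions directly from the two explicit representations of $S=PG(W)$ recorded in~\eqref{eq:11.41}, by the same device already used in the proof of Theorem~\ref{thm:11.8}: left multiplication by the Blaschke--Potapov numerator $B_\ell$ of the left Kre\u{\i}n--Langer factorization of $S$ (and, dually, right multiplication by $B_r$ after passing to the $\#$-transform) absorbs the poles of $S$ in $\Omega_+$, so that what remains is visibly a product of mvf's in $H_\infty$. Throughout I would use that a finite Blaschke--Potapov product lies in $\cS^{\mtm}\subset H_\infty^{\mtm}(\Omega_+)$ (hence its $\#$ lies in $H_\infty^{\mtm}(\Omega_-)$), that $B_\ell S=S_\ell$ and $SB_r=S_r$, and that $b_1^{\#}b_1=I_p$ as meromorphic mvf's since $b_1\in\cS_{in}^{\ptp}$.

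For the first inclusion I would read off from the first representation in~\eqref{eq:11.41} that $s_{12}=s_{11}d_\ell\varphi_2b_2+K$ and $s_{22}=(s_{21}d_\ell+c_\ell)\varphi_2b_2$; subtracting, and writing the first block column of $S$ as $S\left[\begin{smallmatrix}I_p\\0\end{smallmatrix}\right]$, gives
\[
\begin{bmatrix}K-s_{12}\\ -s_{22}\end{bmatrix}
=-\left(S\begin{bmatrix}I_p\\ 0\end{bmatrix}d_\ell+\begin{bmatrix}0\\ c_\ell\end{bmatrix}\right)\varphi_2b_2 .
\]
Multiplying on the left by $B_\ell$, on the right by $b_2^{-1}$, and replacing $B_\ell S$ by $S_\ell$ then turns the right-hand side into
\[
-\left(S_\ell\begin{bmatrix}I_p\\ 0\end{bmatrix}d_\ell+B_\ell\begin{bmatrix}0\\ c_\ell\end{bmatrix}\right)\varphi_2 ,
\]
which is a product of mvf's in $H_\infty$ over $\Omega_+$ ($S_\ell,B_\ell\in\cS^{\mtm}$; $c_\ell,d_\ell\in H_\infty$; $\varphi_2\in\cS_{out}^{\qtq}$), so $B_\ell\left[\begin{smallmatrix}K-s_{12}\\ -s_{22}\end{smallmatrix}\right]b_2^{-1}\in H_\infty^{m\times q}$ as claimed.

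The second inclusion is the mirror image under $\#$. I would begin from the second representation in~\eqref{eq:11.41}, $s_{11}=b_1\varphi_1(c_r+d_rs_{21})$ and $s_{12}-K=b_1\varphi_1d_rs_{22}$; passing to $\#$ and using $b_1^{\#}b_1=I_p$ together with $\left[\begin{smallmatrix}s_{21}^{\#}\\ s_{22}^{\#}\end{smallmatrix}\right]=S^{\#}\left[\begin{smallmatrix}0\\ I_q\end{smallmatrix}\right]$ yields
\[
\begin{bmatrix}s_{11}^{\#}\\ s_{12}^{\#}-K^{\#}\end{bmatrix}b_1
=\left(\begin{bmatrix}I_p\\ 0\end{bmatrix}c_r^{\#}+S^{\#}\begin{bmatrix}0\\ I_q\end{bmatrix}d_r^{\#}\right)\varphi_1^{\#} .
\]
Multiplying on the left by $B_r^{\#}$ and using $B_r^{\#}S^{\#}=(SB_r)^{\#}=S_r^{\#}$ then gives $\bigl(B_r^{\#}\left[\begin{smallmatrix}I_p\\ 0\end{smallmatrix}\right]c_r^{\#}+S_r^{\#}\left[\begin{smallmatrix}0\\ I_q\end{smallmatrix}\right]d_r^{\#}\bigr)\varphi_1^{\#}$, a product of mvf's that are holomorphic and bounded on $\Omega_-$, these being the $\#$-transforms of $B_r$, $S_r$, $c_r$, $d_r$, $\varphi_1$, all of which lie in $H_\infty$ over $\Omega_+$; this is the second assertion.

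I do not expect a genuine obstacle: the statement is a formal consequence of Theorems~\ref{thm:11.7} and~\ref{thm:11.8}. The only points that require care are the block bookkeeping (the sizes of $s_{ij}$, $c_\ell,d_\ell,c_r,d_r$, $\varphi_1,\varphi_2$, $b_1,b_2$, and of the $m\times p$ and $m\times q$ selector matrices) and the order reversal under $\#$; in particular, pseudocontinuability of $K$ is not needed for this corollary, only for the sharper final assertion of Theorem~\ref{thm:11.7}.
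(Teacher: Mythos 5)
Your proposal is correct and follows essentially the same route as the paper: both derive from~\eqref{eq:11.41} that $\bigl[\begin{smallmatrix}K-s_{12}\\ -s_{22}\end{smallmatrix}\bigr]b_2^{-1}=-\bigl(S\bigl[\begin{smallmatrix}I_p\\0\end{smallmatrix}\bigr]d_\ell+\bigl[\begin{smallmatrix}0\\ c_\ell\end{smallmatrix}\bigr]\bigr)\varphi_2$ and its $\#$-dual, and then absorb the poles via $B_\ell S=S_\ell$ and $B_r^{\#}S^{\#}=S_r^{\#}$. The only difference is presentational: you make explicit the selector-matrix identities and the use of $b_1^{\#}b_1=I_p$ that the paper leaves implicit.
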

\begin{proof}
The relations in~\eqref{eq:11.45} are implied by~\eqref{eq:11.41}, since
\[
 B_{\ell}\left[\begin{array}{cc}
  K-s_{12} \\
   -s_{22}
\end{array}  \right]b_2^{-1}= -B_{\ell}\left[\begin{array}{cc}
  s_{11} \\
   s_{21}
\end{array}  \right]d_\ell\varphi_2 - B_{\ell}\left[\begin{array}{cc}
 0 \\
 c_\ell\varphi_2
\end{array}  \right]\in H_{\infty}^{m\times q},
\]
\[
B_r^\#\left[\begin{array}{c}
  s_{11}^\#  \\
  s_{12}^\#-K^\#
\end{array}  \right]b_1=B_r^\#\left[\begin{array}{cc}
  -s_{21}^\#  \\
  -s_{22}^\#
\end{array}  \right]d_r^\#\varphi_1^\#+
B_r^\#\left[\begin{array}{cc}
   c_r^\#\varphi_1^\# \\
  0
\end{array}  \right]\in H_{\infty}^{m\times p}(\Omega_-).
\]
\end{proof}
\begin{corollary}
\label{cor:jun2b8} If in the setting of Theorem \ref{thm:11.8},
$\wt\varphi_{11}\in H_\infty^{p\times p}(\Omega_-)$,\,
  $\wt\varphi_{12}\in H_\infty^{p\times q}(\Omega_-)$,
$\varphi_{21}\in H_\infty^{q\times p}(\Omega_+)$,\, $\varphi_{22}\in
H_\infty^{q\times q}(\Omega_+)$, then:
\begin{enumerate}
\item[(1)]
the mvf $\begin{bmatrix}w_{11}& w_{12}\end{bmatrix}$ admits a right
coprime factorization over $\Omega_-$
\begin{equation}\label{eq:7.43}
 \begin{bmatrix}w_{11} & w_{12}\end{bmatrix}
 =b_1\begin{bmatrix}\wt\varphi_{11}& \wt\varphi_{12}\end{bmatrix};
\end{equation}
\item[(2)]
the mvf
 $\begin{bmatrix}w_{21} & w_{22}\end{bmatrix}$ admits a left coprime
factorization over $\Omega_+$
\begin{equation}\label{eq:7.41}
 \begin{bmatrix}w_{21}& w_{22}\end{bmatrix}=
b_2^{-1}\begin{bmatrix}\varphi_{21}&\varphi_{22}\end{bmatrix}.
\end{equation}
\end{enumerate}
\end{corollary}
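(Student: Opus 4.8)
The plan is to read the two factorization identities straight off the representations $W=\wt\Theta\,\wt\Phi$ in $\Omega_-$ and $W=\Theta\,\Phi$ in $\Omega_+$ supplied by Theorem~\ref{thm:11.8}, and then to establish coprimeness by exhibiting, with the help of the auxiliary formulas \eqref{eq:11.30}, the Bezout identities demanded by Lemma~\ref{CoronaA} and its $\Omega_-$ analog.

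First I would extract the block rows. By \eqref{eq:11.33} one has $[\,I_p\ \ 0\,]\wt\Theta=[\,b_1\ \ 0\,]$, so $W=\wt\Theta\,\wt\Phi$ forces $[\,w_{11}\ \ w_{12}\,]=[\,I_p\ \ 0\,]W=b_1[\,\wt\varphi_{11}\ \ \wt\varphi_{12}\,]$, which is \eqref{eq:7.43}; similarly $[\,0\ \ I_q\,]\Theta=[\,0\ \ b_2^{-1}\,]$, so $W=\Theta\,\Phi$ forces $[\,w_{21}\ \ w_{22}\,]=[\,0\ \ I_q\,]W=b_2^{-1}[\,\varphi_{21}\ \ \varphi_{22}\,]$, which is \eqref{eq:7.41}. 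Multiplying these identities on the left by $b_1^{-1}$ and $b_2$ respectively also records $[\,\wt\varphi_{11}\ \ \wt\varphi_{12}\,]=[\,b_1^{-1}\ \ 0\,]W$ and $[\,\varphi_{21}\ \ \varphi_{22}\,]=[\,0\ \ b_2\,]W$, and it is these forms that make the Bezout check immediate.

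For (2), the hypotheses place $[\,\varphi_{21}\ \ \varphi_{22}\,]$ in $H_\infty^{q\times m}(\Omega_+)$, while $b_2^{-1}\in H_{\kappa,\infty}^{q\times q}(\Omega_+)$ because $b_2$ is a finite Blaschke--Potapov product of degree $\le\kappa$. By \eqref{eq:11.30} the mvf $D:=W^{-1}\left[\begin{smallmatrix}K\\ I_q\end{smallmatrix}\right]b_2^{-1}$ belongs to $H_\infty^{m\times q}(\Omega_+)$, and $[\,\varphi_{21}\ \ \varphi_{22}\,]D=[\,0\ \ b_2\,]WW^{-1}\left[\begin{smallmatrix}K\\ I_q\end{smallmatrix}\right]b_2^{-1}=[\,0\ \ b_2\,]\left[\begin{smallmatrix}K\\ I_q\end{smallmatrix}\right]b_2^{-1}=I_q$; hence $b_2\cdot 0+[\,\varphi_{21}\ \ \varphi_{22}\,]D=I_q$ solves \eqref{CorFormulaA}, and Lemma~\ref{CoronaA}(i) shows $b_2$ and $[\,\varphi_{21}\ \ \varphi_{22}\,]$ to be left coprime over $\Omega_+$, i.e.\ \eqref{eq:7.41} is a left coprime factorization. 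For (1) I would run the same argument over $\Omega_-$ (equivalently, transport it to $\Omega_+$ by the $\#$ operation, using $b_1^\#=b_1^{-1}$, where it becomes a right coprime factorization in the sense of Theorem~\ref{thm:0.2}): $[\,\wt\varphi_{11}\ \ \wt\varphi_{12}\,]\in H_\infty^{p\times m}(\Omega_-)$ by hypothesis, $b_1^{-1}\in H_{\kappa,\infty}^{p\times p}(\Omega_-)$, and by \eqref{eq:11.30} the mvf $\wt D:=W^{-1}\left[\begin{smallmatrix}I_p\\ K^\#\end{smallmatrix}\right]b_1$ lies in $H_\infty^{m\times p}(\Omega_-)$ with $[\,\wt\varphi_{11}\ \ \wt\varphi_{12}\,]\wt D=[\,b_1^{-1}\ \ 0\,]\left[\begin{smallmatrix}I_p\\ K^\#\end{smallmatrix}\right]b_1=I_p$, so the $\Omega_-$ analog of Lemma~\ref{CoronaA} gives the asserted coprime factorization.

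I do not expect a genuine obstacle here, since the statement is a corollary of Theorem~\ref{thm:11.8} and the Bezout identities collapse to the trivial choices $C=0$. The points that deserve a word of care are: that $b_1^{-1}$ and $b_2^{-1}$ indeed lie in the classes $H_{\kappa,\infty}$ over $\Omega_-$ and $\Omega_+$ (clear, since $b_1,b_2$ are finite Blaschke--Potapov products of degree $\le\kappa$); that the hypotheses of the corollary are exactly what is needed to put the numerator factors $[\,\varphi_{21}\ \ \varphi_{22}\,]=\varphi_2^{-1}[\,-\gs_\ell\ \ \gb_\ell\,]$ and $[\,\wt\varphi_{11}\ \ \wt\varphi_{12}\,]=\varphi_1^{-\#}[\,\gb_r^\#\ \ -\gs_r^\#\,]$ into $H_\infty$, which may fail in general because $\varphi_1,\varphi_2$ are merely outer; and the (routine) formulation of the $\Omega_-$ analog of Lemma~\ref{CoronaA} via the $\#$ operation.
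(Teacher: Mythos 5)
The paper states this corollary without proof, so there is no argument of the authors' to compare against; your proof is correct and is clearly the intended one. The two identities are just the block rows of $W=\wt\Theta\,\wt\Phi$ and $W=\Theta\,\Phi$ from Theorem~\ref{thm:11.8} (equivalently $[\,\wt\varphi_{11}\ \ \wt\varphi_{12}\,]=[\,b_1^{-1}\ \ 0\,]W$ and $[\,\varphi_{21}\ \ \varphi_{22}\,]=[\,0\ \ b_2\,]W$), and the Bezout identities
$[\,\varphi_{21}\ \ \varphi_{22}\,]\left[\begin{smallmatrix}-d_\ell\\ c_\ell\end{smallmatrix}\right]\varphi_2=I_q$ over $\Omega_+$ and
$[\,\wt\varphi_{11}\ \ \wt\varphi_{12}\,]\left[\begin{smallmatrix}c_r^{\#}\\ -d_r^{\#}\end{smallmatrix}\right]\varphi_1^{\#}=I_p$ over $\Omega_-$, read off from \eqref{eq:11.30} (equivalently from \eqref{eq:jun1a8}), establish coprimeness; you also correctly identify that the corollary's hypotheses are exactly what is needed to place the numerator rows in $H_\infty$. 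One correction: $b_1$ and $b_2$ are the inner factors in the outer--inner factorizations \eqref{eq:0.5}, i.e.\ general inner mvf's forming the associated pair, \emph{not} finite Blaschke--Potapov products (those are $\gb_\ell,\gb_r,b_\ell,b_r$), so your parenthetical assertion that $b_1^{-1},b_2^{-1}\in H_{\kappa,\infty}$ ``since $b_1,b_2$ are finite Blaschke--Potapov products of degree $\le\kappa$'' is unjustified. This does not damage the substance of the argument --- the Bezout identity is the operative content of coprimeness throughout the paper, and your cofactors are exhibited explicitly --- but you should drop the appeal to the $H_{\kappa,\infty}$ hypothesis of Lemma~\ref{CoronaA} rather than justify it with a false claim.
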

\subsection{Characterization of $\cK(W)$ spaces.}
The next theorem characterizes $\cK(W)$ spaces in
terms of the Kre\u{\i}n-Langer factorizations of  $S=PG(W)$.
\begin{thm}\label{cHW}
Let $W\in  \cU_\kappa(j_{pq})$, let the Kre\u{\i}n-Langer factorizations
of $S=PG(W)$ be
\[
S(\lam)=B_{\ell}(\lam)^{-1}S_{\ell}(\lam)=S_r(\lam)B_r(\lam)^{-1}
\]
and let $h_1$, $h_2$ be a pair of measurable vvf's on $\Omega_0$ of
height $p$ and $q$, respectively. Then $h=\textup{col}(h_1,h_2)\in\cK(W)$
if and only if:
$$
B_{\ell}\left[\begin{array}{cc}
  I_p & -s_{12} \\
  0 & -s_{22}
\end{array}  \right]h\in H_2^m\quad\text{and}\quad
B_r^*\left[\begin{array}{cc}
  s_{11}^* & 0 \\
  s_{12}^* & -I_q
\end{array}  \right]h\in (H_2^m)^\perp.
$$
Moreover, in this case
\begin{equation}\label{cHWinnerPr}
\left\langle h,h\right\rangle_{\cK(W)}=\|f\|_{st}^2
-2\Re\left\langle f,\wt\Gamma_{\ell}(S^*f)\right\rangle_{st},
\end{equation}
where $f\in\cK(B_r)$, $S^*f$ and $\wt\Gamma_{\ell}(S^*f)$ are
defined by the formulas (cf.~\eqref{GammaS2})
\begin{equation}\label{fS*f}
    f:=\left[\begin{array}{cc}
  I_p & -s_{12} \\
  0 & -s_{22}
\end{array}  \right]
h,\quad
S^* f=\left[\begin{array}{cc}
  s_{11}^* & 0 \\
  s_{12}^* & -I_q
\end{array}  \right]h,
\end{equation}
$$ \wt \Gamma_{\ell} (S^*f):=\widetilde{X}_{\ell}^{-1}P_+(S^*f)\quad
\textrm{and} \quad \widetilde{X}_{\ell}:g\in\cH_*(B_\ell)\longrightarrow
P_+S^*g\in\cH(B_r).
$$
\end{thm}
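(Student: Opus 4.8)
The plan is to reduce the statement to the description of the reproducing kernel Pontryagin space $\cK(S)$ of the Potapov--Ginzburg transform $S=PG(W)$ developed in Section~\ref{preli}, by exhibiting an explicit unitary multiplier between $\cK(W)$ and $\cK(S)$. Write $\sP=\tfrac12(I_m+j_{pq})$ and $\sQ=\tfrac12(I_m-j_{pq})$ for the orthogonal projections onto the first $p$ and the last $q$ coordinates, so that $j_{pq}=\sP-\sQ$, $\sP\sQ=\sQ\sP=0$, $j_{pq}\sP=\sP$, $j_{pq}\sQ=-\sQ$, and
\[
\begin{bmatrix}I_p&-s_{12}\\0&-s_{22}\end{bmatrix}=\sP-S\sQ,\qquad
\begin{bmatrix}s_{11}^*&0\\s_{12}^*&-I_q\end{bmatrix}=S^*\sP-\sQ .
\]
The defining relation \eqref{PGtrans} reads $SN=M$ with $M=\sP W+\sQ$ and $N=\sQ W+\sP$; since $\sP W=M-\sQ$ and $\sQ W=N-\sP$, it follows that $(\sP-S\sQ)\,W=(M-\sQ)-S(N-\sP)=S\sP-\sQ$. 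Inserting this into ${\mathsf K}^W_\omega(\lambda)=\rho_\omega(\lambda)^{-1}\bigl(j_{pq}-W(\lambda)j_{pq}W(\omega)^*\bigr)$ and simplifying, one obtains the kernel identity
\[
(\sP-S(\lambda)\sQ)\,{\mathsf K}^W_\omega(\lambda)\,(\sP-S(\omega)\sQ)^*={\mathsf \Lambda}^S_\omega(\lambda),
\]
i.e. ${\mathsf K}^W_\omega(\lambda)=R(\lambda){\mathsf \Lambda}^S_\omega(\lambda)R(\omega)^*$ with $R:=(\sP-S\sQ)^{-1}=\sP-W\sQ$, which is holomorphic on $\gh_W^+$ and invertible there except at the (at most $\kappa$) points where $w_{22}$ is singular.

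Since $S\in\cS^{\mtm}_\kappa$ we have $\textup{sq}_-{\mathsf \Lambda}^S=\kappa=\textup{sq}_-{\mathsf K}^W$, so Theorem~\ref{FACTK} applied to the factorization ${\mathsf K}^W_\omega=R\,{\mathsf \Lambda}^S_\omega R^*$ forces multiplication by $R$ to be a coisometry from $\cK(S)$ onto $\cK(W)$ whose kernel is a Hilbert space; being pointwise invertible, $R$ has trivial kernel, so this multiplication is in fact unitary. Hence $h\in\cK(W)$ if and only if $f:=(\sP-S\sQ)h\in\cK(S)$, and then $\langle h,h\rangle_{\cK(W)}=\langle f,f\rangle_{\cK(S)}$. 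To turn this into the stated conditions I would use that $S$ is inner: assumption~(ii) in the definition of $\cU_\kappa(j_{pq})$ gives $W(\mu)j_{pq}W(\mu)^*=j_{pq}$, hence also $W(\mu)^*j_{pq}W(\mu)=j_{pq}$, a.e. on $\Omega_0$, so $M^*M-N^*N=W^*\sP W+\sQ-(W^*\sQ W+\sP)=W^*j_{pq}W-j_{pq}=0$ and therefore $S^*S=N^{-*}(M^*M)N^{-1}=I_m$ a.e. on $\Omega_0$. Consequently the Kre\u{\i}n--Langer factors $S_\ell$ and $S_r$ (from $S=B_\ell^{-1}S_\ell=S_rB_r^{-1}$) are both inner, and Theorem~\ref{Hsinner} characterizes $\cK(S)$: $f\in\cK(S)$ iff $B_\ell f\in H_2^m$ and $B_r^*S^*f\in(H_2^m)^\perp$. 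As $f=(\sP-S\sQ)h$ and, a.e. on $\Omega_0$, $S^*f=(S^*\sP-S^*S\sQ)h=(S^*\sP-\sQ)h$, these are exactly the two displayed conditions of the theorem, which proves the equivalence.

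For the norm formula I would expand $\langle f,f\rangle_{\cK(S)}$ by \eqref{eq:1.4} for $s=S$: $\langle f,f\rangle_{\cK(S)}=\|f-S\Gamma_rf\|^2_{\cH(S_r)}-\|\Gamma_rf\|^2_{st}$, where $\Gamma_r$ is the operator \eqref{GammaS2} for $S$ (the $\wt\Gamma_r$ of the statement). Since $S_r$ is inner, $\|\cdot\|_{\cH(S_r)}=\|\cdot\|_{st}$ by Example~\ref{dBspace}; since $\Gamma_rf\in\cH(B_r)$ and $S^*S=I_m$ a.e. on $\Omega_0$, $\|S\Gamma_rf\|_{st}=\|\Gamma_rf\|_{st}$; so expanding the square gives $\langle f,f\rangle_{\cK(S)}=\|f\|^2_{st}-2\Re\langle S^*f,\Gamma_rf\rangle_{st}$. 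Finally $\Gamma_r=\Gamma_\ell^*$ by Lemma~\ref{Ker*2}(1), whence $\langle S^*f,\Gamma_rf\rangle_{st}=\langle\wt\Gamma_\ell(S^*f),f\rangle_{st}$, where $\wt\Gamma_\ell(S^*f)=\wt X_\ell^{-1}P_+(S^*f)$ is well defined because the second condition forces $S^*f\in(H_2^m)^\perp\oplus\cH(B_r)$, so that $P_+(S^*f)\in\cH(B_r)$, the domain of $\wt X_\ell^{-1}$. Combining, $\langle h,h\rangle_{\cK(W)}=\|f\|^2_{st}-2\Re\langle f,\wt\Gamma_\ell(S^*f)\rangle_{st}$, as asserted (so that $f$ in fact lies in $\cK(S)$, which is presumably what the ``$f\in\cK(B_r)$'' in the statement abbreviates).

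The main obstacle I anticipate is the book-keeping behind the unitarity in the second paragraph: ${\mathsf K}^W_\omega$ is really considered on $\Omega_W$ after the extension~\eqref{eq:1.78}, ${\mathsf \Lambda}^S_\omega$ on $\gh_S^+$, and $R=\sP-W\sQ$ degenerates at the finitely many points of $\gh_W^+$ where $w_{22}$ is singular, so one must check that these exceptional points do not affect the number of negative squares (as in the argument that $PG$ preserves $\textup{sq}_-$, via the generalized Ginzburg inequality) in order for Theorem~\ref{FACTK} to deliver a genuine unitary. A subsidiary point is justifying the a.e.-on-$\Omega_0$ replacement $S^*S\sQ h=\sQ h$ in the formula for $S^*f$, which requires that every $f\in\cK(S)$ have $L_2^m$ boundary values --- true because, $S_\ell$ being inner, $\cK(S)$ consists of mvf's with $B_\ell f\in H_2^m$, from which $f=B_\ell^{-1}(B_\ell f)$ is recovered.
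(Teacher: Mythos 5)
Your proof is correct and follows essentially the same route as the paper's: the kernel identity ${\mathsf K}^W_\omega(\lambda)=R(\lambda){\mathsf \Lambda}^S_\omega(\lambda)R(\omega)^*$ makes $h\mapsto \left[\begin{smallmatrix}I_p&-s_{12}\\0&-s_{22}\end{smallmatrix}\right]h$ an isometry of $\cK(W)$ onto $\cK(S)$, and Theorem~\ref{Hsinner} (applicable because $S$ is unitary a.e.\ on $\Omega_0$, so $S_\ell$ and $S_r$ are inner) yields exactly the two membership conditions. The only variation is in the norm formula, where the paper reads \eqref{cHWinnerPr} off the left decomposition \eqref{Hsdecom} by writing $f=B_\ell^{-1}y+x$ with $x=\wt\Gamma_\ell(S^*f)\in\cH_*(B_\ell)$, while you expand \eqref{eq:1.4} for the right decomposition and then pass from $\langle S^*f,\Gamma_rf\rangle_{st}$ to $\langle f,\wt\Gamma_\ell(S^*f)\rangle_{st}$ via Lemma~\ref{Ker*2}(1) --- both computations land on the same identity.
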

\begin{proof}
The formula for the inverse Potapov-Ginzburg transform
\[
 W(\lambda)=\left[\begin{array}{cc}
     I_p & -s_{12}(\lambda) \\
      0 &  -s_{22}(\lam)
    \end{array}      \right]^{-1}
    \left[\begin{array}{cc}
            s_{11}(\lambda) &       0\\
            s_{21}(\lambda) &       -I_q
    \end{array}      \right]
\]
leads to the following representation of the kernel
$K_\omega^W(\lam)$
\[
K_\omega^W(\lam)=\left[\begin{array}{cc}
     I_p & -s_{12}(\lambda) \\
      0 &  -s_{22}(\lam)
    \end{array}      \right]^{-1}
    \frac{I_m-S(\lam)S(\om)^*}{\rho_\om(\lam)}
    \left[\begin{array}{cc}
     I_p & -s_{12}(\om) \\
      0 &  -s_{22}(\om)
    \end{array}      \right]^{-*}.
\]
This identity implies that the mapping
\[
   h \mapsto f=\left[\begin{array}{cc}
     I_p & -s_{12}(\lambda) \\
      0 &  -s_{22}(\lam)
    \end{array}      \right]h
\]
is an isometry from $\cK(W)$ onto $\cK(S)$ (cf.~\cite{ArovD97}).

Since $S_{\ell}$ is a square inner mvf,
Corollary~\ref{Hsin*in} guarantees that the inclusion $f\in\cK(S)$ is
equivalent to
the inclusion $S^*f\in\cK_*(S)$. Now the first statement of the
theorem is implied by Theorem~\ref{Hsinner}.

To verify formula~\eqref{cHWinnerPr}, consider the orthogonal
decomposition of the vector $f\in\cK(S)$ corresponding to the
fundamental decomposition of $\cK(S)$ (see~\eqref{Hsdecom})
\begin{equation}\label{Decom5}
    f=B_{\ell}^{-1}y+x,\quad y\in\cH(S_{\ell}),\,\,x\in
\cH_*(B_{\ell}),
\end{equation}
where $x\in\cH_*(B_{\ell})$ is the unique solution of the equation
\begin{equation}\label{eq:Xl}
    P_+S_{\ell}^*x=P_+S^*f\,(=P_{\cH(B_r)}S^*f).
\end{equation}
In the notation of Definition~\ref{GammaS},
\begin{equation}\label{x6}
    x=\wt\Gamma_{\ell}(S^*f)=\wt{X}_\ell^{-1}P_+S^*f.
\end{equation}
Now ~\eqref{Decom5} yields the formula
\[
\left\langle f,f\right\rangle_{\cK(S)}=\| f-x\|^2_{st}-\|x\|^2_{st}=\|f\|^2_{st}-\left\langle
f,x\right\rangle_{st}-\left\langle
x,f\right\rangle_{st},
\]
which is equivalent to~\eqref{cHWinnerPr}.
\end{proof}

\begin{remark}
If $\kappa=0$, then $B_{\ell}=B_r=I_m$, $\wt\Gamma_{\ell}=0$ and the
statement of Theorem~\ref{cHW} reduces to the characterization of
${\cH}(W)$ spaces given in Theorem 2.4 of \cite{D89} and the next
theorem reduces to Theorem 2.7 of \cite{D89} (see
also~\cite{Arva} for another proof of the latter).
\end{remark}
\begin{thm}\label{thm:11.11}
    Let $W\in \cU_\kappa^\circ(j_{pq})$,
$S=[s_{ij}]_{i,j=1}^2$ be the   Potapov-Ginzburg transform
of $W$,  let $\{b_1, b_2\}$ be an associated pair of $W$, and let
the mvf $K\in H_\infty^{p\times q}$ be defined as in
Theorem~\ref{thm:11.7}. Then $h=\textup{col }(h_1,h_2)\in\cK(W)$ if
and only if:
\begin{enumerate}
\item[\rm(1)]
$\gb_\ell s_{22}h_2\in H_2^q;$ \vskip 6pt
\item[\rm(2)]
$\gb_r^*s_{11}^*h_1\in (H_2^p)^\perp;$ \vskip 6pt
\item[\rm(3)]
$\begin{bmatrix}I_p & -K \end{bmatrix}h\in H_2^p$; \vskip 6pt
\item[\rm(4)] $\begin{bmatrix} -K^*&I_q\end{bmatrix}h\in (H_2^q)^\perp$.
\end{enumerate}
\end{thm}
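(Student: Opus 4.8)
The plan is to deduce Theorem~\ref{thm:11.11} from the general characterization of $\cK(W)$ spaces in Theorem~\ref{cHW} by specializing to $W\in\cU_\kappa^\circ(j_{pq})$, where the extra structure coming from $s_{21}\in\cS^{\qtp}_\kappa$ and the associated pair $\{b_1,b_2\}$ is available. Recall that $S=PG(W)$ satisfies the constraint~\eqref{M5}, so by Theorem~\ref{thm:11.2} the mvf's $\gb_\ell s_{22}$ and $s_{11}\gb_r$ belong to $\cS^{\qtq}$ and $\cS^{\ptp}$, respectively, and they admit the inner-outer / outer-inner factorizations~\eqref{eq:0.5}. Using these, together with the representations of $S$ in~\eqref{eq:11.41} and of the mvf's $w_{ij}^\#$ in~\eqref{eq:w11star}, one can rewrite the two membership conditions
$$
B_{\ell}\begin{bmatrix} I_p & -s_{12}\\ 0 & -s_{22}\end{bmatrix}h\in H_2^m
\quad\text{and}\quad
B_r^*\begin{bmatrix} s_{11}^* & 0\\ s_{12}^* & -I_q\end{bmatrix}h\in (H_2^m)^\perp
$$
of Theorem~\ref{cHW} as four scalar-block conditions, namely (1)--(4) in the statement.

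First I would analyze the ``$+$'' condition. Writing $B_\ell$ in terms of its Kre\u{\i}n-Langer data and invoking Corollary~\ref{cor:11.3a} (so that $[0\ \gb_\ell]B_\ell^{-1}\in\cS^{q\times m}$), together with the factorization $\gb_\ell s_{22}=\varphi_2 b_2$ from~\eqref{eq:0.5} and formula~\eqref{eq:11.41} for $s_{12}=s_{11}d_\ell\varphi_2 b_2+K$, the inclusion $B_\ell\begin{bmatrix} I_p & -s_{12}\\ 0 & -s_{22}\end{bmatrix}h\in H_2^m$ should split: the second row gives $\gb_\ell s_{22}h_2\in H_2^q$, i.e. condition~(1), and once this is known the first row, after subtracting the contribution $s_{11}d_\ell\varphi_2 b_2 h_2=s_{11}\gb_\ell s_{22}d_\ell h_2$, which already lies in $H_2^p$ by Theorem~\ref{thm:11.2} and~(1), reduces to $[I_p\ -K]h=h_1-Kh_2\in H_2^p$, i.e. condition~(3). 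The key bookkeeping point is that the passage from $B_\ell$ to $\gb_\ell$ (and to $I_p$ in the $(1,1)$ corner) only improves membership, as guaranteed by Lemma~\ref{lem:11.4} and Lemma~\ref{lem:7.7}: the extra factors are in $H_\infty$ and the relevant pole/kernel inclusions hold. The ``$-$'' condition is handled dually, using $B_r^{-1}\begin{bmatrix}\gb_r\\0\end{bmatrix}\in\cS^{m\times p}$ from Corollary~\ref{cor:11.3a}, the factorization $s_{11}\gb_r=b_1\varphi_1$, and the second representation of $S$ in~\eqref{eq:11.41}; this yields conditions~(2) and~(4).

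For the converse direction I would reverse these steps: assume (1)--(4) hold, and reconstruct the two $H_2^m$ / $(H_2^m)^\perp$ memberships required by Theorem~\ref{cHW}. Here one uses that $B_\ell=\gb_\ell$ up to left multiplication by an $H_\infty^{\ptp}$ mvf with $H_\infty$ inverse-type behaviour only on the relevant Blaschke part — more precisely, one writes $B_\ell^{-1}=B_\ell^{-1}\gb_\ell^{-1}\gb_\ell$ and tracks that the correction factor between $B_\ell$ and $\gb_\ell$ is bounded, so no new poles are introduced. Conditions~(3) and~(1) rebuild the first and second rows of the ``$+$'' vector; conditions~(4) and~(2) rebuild the ``$-$'' vector. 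Finally, the inner-product formula is inherited directly from~\eqref{cHWinnerPr} of Theorem~\ref{cHW}: since the mapping $h\mapsto f$ there is an isometry of $\cK(W)$ onto $\cK(S)$, and the present $f=\begin{bmatrix} I_p & -s_{12}\\ 0 & -s_{22}\end{bmatrix}h$ is the same, the value of $\langle h,h\rangle_{\cK(W)}$ is already pinned down; one needs only to check that conditions~(1)--(4) are compatible with $f\in\cK(S)$, which follows from Theorem~\ref{Hsinner} applied to $S$ with $S_\ell$ inner (guaranteed since $S=PG(W)$ and $W\in\cU_\kappa^\circ(j_{pq})$, as in Corollary~\ref{cor:11.10}).

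The main obstacle I anticipate is the careful handling of the reduction from $B_\ell,B_r$ (the Blaschke-Potapov denominators of the $\mtm$ mvf $S$) down to $\gb_\ell,\gb_r$ (the $\qtp$-size denominators of $s_{21}$) and further to the identity blocks in the corners: one must verify at each stage that no negative-square count is lost and that the relevant pole-multiplicity equalities $M_\pi(\cdot)=\kappa$ hold, so that Lemma~\ref{lem:7.7} and Lemma~\ref{lem:5.4} apply. This is exactly the content of~\eqref{M8} and Corollary~\ref{cor:11.3a}, but it must be threaded through each of the four conditions, and the interplay with the formulas~\eqref{eq:11.41} for the off-diagonal entries $s_{12}$ (in terms of $K$ and $d_\ell$) is where the computation is most delicate. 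Everything else is essentially substitution of the factorization formulas from Theorems~\ref{thm:11.2},~\ref{thm:11.7} and~\ref{thm:11.8} into the two conditions of Theorem~\ref{cHW}.
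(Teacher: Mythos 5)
Your overall route is the paper's: reduce to the general characterization of Theorem~\ref{cHW}, peel off conditions (1)--(2) by left multiplication with $[0\ \ \gb_\ell]B_\ell^{-1}\in\cS^{q\times m}$ and $B_r^{-1}\textup{col}(\gb_r,0)\in\cS^{m\times p}$ from Corollary~\ref{cor:11.3a}, and then use the representation $s_{12}=s_{11}d_\ell\varphi_2 b_2+K$ from \eqref{eq:11.41} together with Lemma~\ref{lem:11.4} to extract (3)--(4); sufficiency reverses these steps. That is exactly how the paper argues, so in structure your plan is correct.

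There is, however, one concrete misstep at the crux of the necessity argument for (3). You claim that the term $s_{11}d_\ell\varphi_2 b_2 h_2$ ``already lies in $H_2^p$ by Theorem~\ref{thm:11.2} and (1).'' It does not: Theorem~\ref{thm:11.2} only gives $s_{11}\gb_r\in\cS^{\ptp}$, so $s_{11}=b_1\varphi_1\gb_r^{-1}$ generally has poles in $\Omega_+$, and multiplying the $H_2^p$ function $d_\ell\varphi_2b_2h_2$ by $s_{11}$ can introduce poles. (The identity $s_{11}d_\ell\varphi_2 b_2 h_2=s_{11}\gb_\ell s_{22}d_\ell h_2$ you invoke is also dimensionally inconsistent, since $d_\ell$ is $p\times q$ while $\gb_\ell s_{22}$ is $q\times q$.) Consequently you cannot argue ``row by row'': the poles of $s_{11}$ are only cancelled after left multiplication by the full $m\times m$ factor $B_\ell$, because $B_\ell\,\textup{col}(s_{11},s_{21})$ is the first block column of $S_\ell=B_\ell S\in\cS^{\mtm}$ and hence lies in $H_\infty^{m\times p}$. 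The correct maneuver --- which is the paper's --- is to split
\[
B_\ell\begin{bmatrix}h_1-s_{12}h_2\\ -s_{22}h_2\end{bmatrix}
=B_\ell\begin{bmatrix}h_1-Kh_2\\ 0\end{bmatrix}
+B_\ell\begin{bmatrix}-s_{11}d_\ell\\ -s_{21}d_\ell-c_\ell\end{bmatrix}\varphi_2b_2h_2,
\]
observe that the second summand is in $H_2^m$ because $B_\ell\,\textup{col}(s_{11},s_{21})\in H_\infty^{m\times p}$, $c_\ell\in H_\infty^{\qtq}$ and $\varphi_2b_2h_2=\gb_\ell s_{22}h_2\in H_2^q$ by (1), conclude that $B_\ell\,\textup{col}(h_1-Kh_2,0)\in H_2^m$, and only then invoke Lemma~\ref{lem:11.4} to strip off $B_\ell$ and obtain (3). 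The dual step for (4) needs the same care with $B_r^*$ and $\textup{col}(s_{21}^*d_r^*+c_r^*,\,s_{22}^*d_r^*)$. With this repair your plan coincides with the paper's proof.
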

\begin{proof}
{\it Necessity.}  Let $h=\textup{col}(h_1,h_2)\in\cK(W)$.
Then by Theorem~\ref{cHW}
\begin{equation}\label{eq:11.51}
    B_{\ell}\left[\begin{array}{cc}
  I_p & -s_{12} \\
  0 & -s_{22}
\end{array}  \right]h\in H_2^m,\quad
B_r^*\left[\begin{array}{cc}
  s_{11}^* & 0 \\
  s_{12}^* & -I_q
\end{array}  \right]h\in (H_2^m)^\perp
\end{equation}
and hence, by Corollary~\ref{cor:11.3a} and the formulas in \eqref{eq:0.5},
\begin{equation}
\label{eq:jun4a8}
\left[
  \begin{array}{cc}
    0 & \gb_\ell \\
  \end{array}
\right]
\left[\begin{array}{cc}
  I_p & -s_{12} \\
  0 & -s_{22}
\end{array}  \right]h=-\gb_\ell s_{22}h_2=-\varphi_2b_2h_2\in H_2^q,
\end{equation}
and
\begin{equation}
\label{eq:jun4b8}
\left[
  \begin{array}{cc}
    \gb_r^* & 0 \\
  \end{array}
\right]
\left[\begin{array}{cc}
  s_{11}^* & 0 \\
  s_{12}^* & -I_q
\end{array}  \right]h=\gb_r^*s_{11}^* h_1=\varphi_1^* b_1^* h_1
\in (H_2^p)^\perp.
\end{equation}
The first condition in~\eqref{eq:11.51} can be rewritten as
\[
     B_{\ell}\left[\begin{array}{cc}
  h_1-Kh_2 \\
  0
\end{array}  \right]+  B_{\ell}\left[\begin{array}{cc}
  K-s_{12} \\
   -s_{22}
\end{array}  \right]h_2\in H_2^m,
\]
where due to \eqref{eq:11.41}
\begin{equation}
\label{eq:jun4e8} B_{\ell}\left[\begin{array}{cc}
  K-s_{12} \\
   -s_{22}
\end{array}  \right]h_2
=B_\ell\begin{bmatrix}
-s_{11}d_\ell\\ -s_{21}d_\ell -c_\ell\end{bmatrix}\varphi_2b_2h_2.
\end{equation}
Thus, as
\[
B_\ell\begin{bmatrix}s_{11}\\
s_{21}\end{bmatrix}\in H_\infty^{m\times q}\quad\mbox{and}\quad\varphi_2b_2h_2
\in H_2^q,
\]
it is readily seen that
\[
 B_{\ell}\left[\begin{array}{cc}
  K-s_{12} \\
   -s_{22}
\end{array}  \right]h_2
\in H_2^m.
\]
Therefore,
\[
B_{\ell}\left[\begin{array}{cc}
  h_1-Kh_2 \\
  0
\end{array}  \right]\in H_2^m,
\]
which in view of Lemma~\ref{lem:11.4} implies (3).

Similarly, since
$$
B_r^*\begin{bmatrix}s_{11}^*&0\\s_{12}^*&-I_q\end{bmatrix}h=
B_r^*\begin{bmatrix}s_{21}^*d_r^*+c_r^*\\s_{22}^*d_r^*\end{bmatrix}
\varphi_1^*b_1^*h_1+B_r^*\begin{bmatrix}0&0\\K^*&-I_q\end{bmatrix}
\begin{bmatrix}h_1\\h_2\end{bmatrix}\in (H_2^m)^\perp
$$
and $\varphi_1^*b_1^*h_1\in (H_2^p)^\perp$, it follows readily that
$$
B_r^*\begin{bmatrix}0&0\\K^*&-I_q\end{bmatrix}
\begin{bmatrix}h_1\\h_2\end{bmatrix}\in (H_2^m)^\perp,
$$
which justifies (4) with the help of a self-evident version of
Lemma \ref{lem:11.4}.

{\it Sufficiency.}
Let $h=\textup{col}(h_1,h_2)$ satisfy assumptions (1)--(4). Then
\[
    B_{\ell}\left[\begin{array}{cc}
  h_1-s_{12}h_2 \\
  -s_{22}h_2
\end{array}  \right]= B_{\ell}\left[\begin{array}{cc}
  h_1-Kh_2 \\
  0
\end{array}  \right]+  B_{\ell}\left[\begin{array}{cc}
  K-s_{12} \\
   -s_{22}
\end{array}  \right]h_2\in H_2^m,
\]
thanks to assumptions (1) and (3) and formula (\ref{eq:jun4e8}), whereas
\[
B_r^*\left[\begin{array}{cc}
  s_{11}^*h_1\\
  s_{12}^*h_1-h_2
\end{array}  \right]=B_r^*\left[\begin{array}{cc}
   0 \\
  K^*h_1-h_2
\end{array}  \right]+B_r^*\left[\begin{array}{c}
  s_{11}^* \\
  s_{12}^*-K^*
\end{array}  \right]h_1\in (H_2^m)^\perp
\]
by much the same sort of arguments. Thus, Theorem~\ref{cHW}
guarantees that $h\in\cK(W)$.
\end{proof}

\subsection{Description of $\cK(W)\cap L_2^m$.} In this subsection we supply a
description of the space $\cK(W)\cap L_2^m$ analogous to the
description of the space $\cH(W)\cap L_2^m$ that is presented in Section
5.14 of \cite{ArovD08}. The main formulas
(\ref{eq:jun3a9})--(\ref{eq:Pick2}) look the same as their
counterparts in the Hilbert space setting and the proofs of all but one of
them are also the same; only the verification (\ref{eq:jun3d9})
requires a different argument.

\begin{thm}
\label{thm:jun2a9}
Let $W\in\cU_\kappa^\circ(j_{pq})$, $\{b_1,b_2\}\in ap(W)$, let $K$ be
defined by (\ref{eq:jun10a9}) and let
$$
\Gamma_{11}:\,f\in H_2^q\longrightarrow P_{\cH(b_1)}Kf,\quad
\Gamma_{22}:\,f\in \cH_*(b_2)\longrightarrow
P_{(H_2^p)^\perp}Kf,
$$
$$
\textrm{and}\quad \Gamma_{12}:\,f\in \cH_*(b_2)\longrightarrow
P_{\cH(b_1)}Kf.
$$
Then:
\begin{equation}
\label{eq:jun3a9}
\cK(W)\cap H_2^m=\left\{\begin{bmatrix}u_1\\ \Gamma_{11}^*u_1\end{bmatrix}:\,
u_1\in\cH(b_1)\right\},
\end{equation}
\begin{equation}
\label{eq:jun3b9}
\cK(W)\cap (H_2^m)^\perp=\left\{\begin{bmatrix}\Gamma_{22}u_2\\ u_2
\end{bmatrix}:\,u_2\in\cH_*(b_2)\right\},
\end{equation}
\begin{equation}
\label{eq:jun3c9}
\cK(W)\cap L_2^m=(\cK(W)\cap H_2^m)\dot{+}(\cK(W)\cap (H_2^m)^\perp)
\end{equation}
and
\begin{equation}
\label{eq:jun3d9}
\langle h,h\rangle_{\cK(W)}=\left\langle \begin{bmatrix}I_p&-K\\-K^*&I_q
\end{bmatrix}h,h\right\rangle_{st}\quad\text{for every}\ h\in\cK(W)\cap L_2^m.
\end{equation}
Moreover, if
\[
h=\left[\begin{array}{cc}
  I   &   \Gamma_{22}\\
  \Gamma_{11}^* &   I
\end{array}  \right]u,\quad with \quad u=\left[\begin{array}{cc}
  u_1\\
  u_2
\end{array}  \right],\quad u_1\in\cH(b_1) \quad and \quad u_2\in\cH_*(b_2),
\]
then
\begin{equation}\label{eq:Pinner}
    \langle h,h\rangle_{\cK(W)}= \langle P u,u\rangle_{st}, \quad
\end{equation}
where
\begin{equation}\label{eq:Pick2}
   P=\left[\begin{array}{cc}
  I-\Gamma_{11}^*\Gamma_{11}   &   -\Gamma_{12}\\
  -\Gamma_{12}^* &   I-\Gamma_{22}^*\Gamma_{22}
\end{array}  \right]\end{equation}
\end{thm}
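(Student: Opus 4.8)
The plan is to build everything on Theorem~\ref{thm:11.11}, which characterizes membership in $\cK(W)$ for $h=\textup{col}(h_1,h_2)$ via the four conditions on $\gb_\ell s_{22}h_2$, $\gb_r^*s_{11}^*h_1$, $[I_p\ -K]h$, $[-K^*\ I_q]h$. First I would intersect these with $H_2^m$: if $h\in\cK(W)\cap H_2^m$ then $h_2\in H_2^q$, so (4) gives $K^*h_1=h_2$ in the part of $L_2^q$ that, combined with $h_2\in H_2^q$, forces $h_2=P_+K^*h_1$; meanwhile (2) says $\gb_r^*s_{11}^*h_1\in(H_2^p)^\perp$, and since $h_1\in H_2^p$ one checks (using $s_{11}\gb_r\in\cS^{\ptp}$, Theorem~\ref{thm:11.2}) that this is equivalent to $h_1\in\cH(b_1)$; conversely (3) with $h_1\in\cH(b_1)\subset H_2^p$ and $h_2=P_+K^*h_1$ is automatic. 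This yields \eqref{eq:jun3a9} with $\Gamma_{11}^*u_1=P_+K^*u_1$; note $\Gamma_{11}:H_2^q\to\cH(b_1)$, $f\mapsto P_{\cH(b_1)}Kf$, so its adjoint with respect to $\langle\cdot,\cdot\rangle_{st}$ acts as $u_1\mapsto P_+K^*u_1$ after verifying that $P_+K^*u_1$ indeed lies in $H_2^q$ — this uses $K\in H_\infty^{\ptq}$ and $u_1\in\cH(b_1)$, so $K^*u_1\in b_1^{-*}(H_2^p)^\perp=(H_2^p)^\perp\oplus\cH(b_1)^{*}$-type decomposition; I would spell this out carefully since it is the crux of identifying the operator. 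The formula \eqref{eq:jun3b9} is obtained dually, intersecting the four conditions with $(H_2^m)^\perp$ and using $\gb_\ell s_{22}\in\cS^{\qtq}$ together with the $*$-inner structure so that (1) becomes $h_2\in\cH_*(b_2)$ and (3) becomes $h_1=P_-Kh_2=\Gamma_{22}h_2$.

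For \eqref{eq:jun3c9} I would argue as follows. Let $h=\textup{col}(h_1,h_2)\in\cK(W)\cap L_2^m$ and write $h_1=h_1^++h_1^-$, $h_2=h_2^++h_2^-$ with the superscripts denoting $H_2$ and $(H_2)^\perp$ components. Using conditions (3)--(4) of Theorem~\ref{thm:11.11} together with $K\in H_\infty^{\ptq}$ one shows that $h^+:=\textup{col}(h_1^+,P_+K^*h_1^+)$ and $h^-:=\textup{col}(P_-Kh_2^-,h_2^-)$ each lie in $\cK(W)$ (by checking the four conditions for each), and that $h=h^++h^-$; the decomposition is direct because $\cK(W)\cap H_2^m$ and $\cK(W)\cap(H_2^m)^\perp$ meet only in $\{0\}$ (a vvf that is both in $H_2^m$ and $(H_2^m)^\perp$ vanishes). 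The inner-product formula \eqref{eq:Pinner}--\eqref{eq:Pick2} is then a bookkeeping computation: substitute $h=\bigl[\begin{smallmatrix}I&\Gamma_{22}\\ \Gamma_{11}^*&I\end{smallmatrix}\bigr]u$ into \eqref{eq:jun3d9}, expand $\bigl\langle\bigl[\begin{smallmatrix}I_p&-K\\-K^*&I_q\end{smallmatrix}\bigr]h,h\bigr\rangle_{st}$, and use the adjoint relations $\Gamma_{11}^*=P_+K^*|_{\cH(b_1)}$, $\Gamma_{22}^*=P_-K|_{\cH_*(b_2)}$ and $\Gamma_{12}=P_{\cH(b_1)}K|_{\cH_*(b_2)}$, together with the orthogonality of $\cH(b_1)\subset H_2^p$ to $(H_2^p)^\perp$, to collect terms into the stated $2\times2$ block Pick-type matrix $P$.

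The main obstacle is \eqref{eq:jun3d9}, which the authors themselves flag as the one formula whose proof differs from the Hilbert-space case. The issue is that $\langle\cdot,\cdot\rangle_{\cK(W)}$ is an \emph{indefinite} form, so one cannot simply invoke density of point-kernel elements and positivity. My plan here is to use the isometry $h\mapsto f=\bigl[\begin{smallmatrix}I_p&-s_{12}\\0&-s_{22}\end{smallmatrix}\bigr]h$ from $\cK(W)$ onto $\cK(S)$ (from the proof of Theorem~\ref{cHW}) and the inner-product formula \eqref{cHWinnerPr}: $\langle h,h\rangle_{\cK(W)}=\|f\|_{st}^2-2\RE\langle f,\wt\Gamma_\ell(S^*f)\rangle_{st}$. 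When $h\in\cK(W)\cap L_2^m$ one has $f\in\cK(S)\cap L_2^m$, and on this intersection the $B_\ell$-factors disappear and $\wt\Gamma_\ell$ can be computed explicitly in terms of $K$; pushing the identity back through the isometry and using the representations \eqref{eq:11.41} of $S$ in terms of $K,\gs_\ell,\gb_\ell,\ldots$ should reduce the right-hand side to $\|h_1-Kh_2\|_{st}^2+\ldots$ and ultimately to $\langle\bigl[\begin{smallmatrix}I_p&-K\\-K^*&I_q\end{smallmatrix}\bigr]h,h\rangle_{st}$. The delicate point will be handling the cross term $\RE\langle f,\wt\Gamma_\ell(S^*f)\rangle_{st}$: I expect to need that for $f\in\cK(S)\cap L_2^m$ the vector $\wt\Gamma_\ell(S^*f)$ lies in $\cH_*(b_\ell)$-type space and pairs with $f$ exactly against the $K$-dependent off-diagonal part, so that the indefinite correction is precisely $-2\RE\langle Kh_2,h_1\rangle_{st}$ up to the terms already accounted for. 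Once \eqref{eq:jun3d9} is in hand, \eqref{eq:Pinner} and \eqref{eq:Pick2} follow by the routine substitution described above.
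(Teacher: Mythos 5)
Your plan follows the paper's own proof essentially step for step: the structural statements \eqref{eq:jun3a9}--\eqref{eq:jun3c9} are extracted from conditions (1)--(4) of Theorem~\ref{thm:11.11} exactly as in the text, and \eqref{eq:jun3d9} is obtained from the isometry of $\cK(W)$ onto $\cK(S)$ together with formula \eqref{cHWinnerPr} and the representation \eqref{eq:11.41}, which is precisely the authors' argument. The only piece you leave unexecuted is the verification that the cross term $\RE\langle f,\wt\Gamma_{\ell}(S^*f)\rangle_{st}$ combines with the $s_{12}$-versus-$K$ discrepancy to cancel exactly; in the paper this amounts to showing a residual quantity $Y$ vanishes via the Bezout-type identity $\gb_\ell^*-s_{21}d_\ell=c_\ell$ and the inclusion $c_\ell\gb_\ell s_{22}h_2\in H_2^q$, which is routine once the computation is set up as you describe.
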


\begin{proof} Let $h=\textup{col}(h_1,h_2)\in\cK(W)\cap L_2^m$. Then,
in view of (1) and (2) of Theorem \ref{thm:11.11} and the identities
$\gb_\ell s_{22}=\varphi_2b_2$ and $s_{11}{\gb}_r=b_1\varphi_1$, it is
readily checked that
$b_2h_2\in H_2^q$ and $b_1^*h_1\in (H_2^p)^\perp$. Thus, if $h_1=h_1^++h_1^-$
with $h_1^+\in H_2^p$,  $h_1^-\in (H_2^p)^\perp$, and $h_2=h_2^++h_2^-$
with $h_2^+\in H_2^q$ and $h_2^-\in (H_2^q)^\perp$, then the condition
$$
b_1^*h_1\in (H_2^p)^\perp\Longrightarrow b_1^*h_1^+\in (H_2^p)^\perp
\Longrightarrow h_1^+\in\cH(b_1),
$$
whereas the condition
$$
b_2h_2\in H_2^q\Longrightarrow b_2h_2^-\in H_2^q
\Longrightarrow h_2^-\in\cH_*(b_2).
$$
Next, (3) and (4) of Theorem \ref{thm:11.11} imply that
$$
h_1^- - Kh_2^-\in H_2^p\quad\textrm{and}\quad -K^*h_1^+ +h_2^+\in
(H_2^q)^\perp.
$$
Therefore,
$$
h_1^-=P_-Kh_2^-=\Gamma_{22}h_2^-\quad\textrm{and}\quad h_2^+=P_+K^*h_1^+=
\Gamma_{11}^*h_1^+.
$$


Since $h\in L_2^m$ and $S^*S=I_m$ a.e. on $\Omega_0$,
\[
\left\| \left[\begin{array}{cc}
  I_p   &   -s_{12}\\
   0 &-s_{22}  \end{array}  \right]h
   \right\|^2_{st}=\left\langle \left[\begin{array}{cc}
  I_p   &   -s_{12}\\
  -s_{12}^* &   I_q
\end{array}  \right]h,h
\right\rangle_{st}
\]
and  formula~\eqref{cHWinnerPr} can be expressed as
\[
\langle h,h \rangle_{\cK(W)}=\left\langle \left[\begin{array}{cc}
  I_p   &   -s_{12}\\
  -s_{12}^* &   I_q
\end{array}  \right]h,h
\right\rangle_{st} -2\Re\left\langle \left[\begin{array}{cc}
  I_p   &   -s_{12}\\
   0 &-s_{22}  \end{array}  \right]h
 ,x
\right\rangle_{st} ,
\]
where $x\in\cH_*(B_\ell)$ is the unique solution of~\eqref{eq:Xl}.
Therefore, since $K=s_{12}+s_{11}d_\ell \varphi_2b_2$ by~\eqref{eq:11.41},
\[
\langle h,h \rangle_{\cK(W)}=\left\langle \left[\begin{array}{cc}
  I_p   &   -K\\
  -K^* &   I_q
\end{array}  \right]h,h
\right\rangle_{st} +2\Re Y,
\]
where
\[
\begin{split}
Y=&\left\langle \left[\begin{array}{cc}
  0   &   -s_{11}d_\ell\varphi_2b_2\\
  0   &   0
\end{array}  \right]h,h
\right\rangle_{st} -\left\langle \left[\begin{array}{cc}
  I_p   &   -s_{12}\\
   0    &   -s_{22}  \end{array}  \right]h
 ,x
\right\rangle_{st} \\
=&-\langle s_{11}d_\ell\varphi_2b_2h_2, h_1 \rangle_{st} -\langle h_1-Kh_2, h_1
\rangle_{st}\\
&+\langle s_{11}d_\ell\varphi_2b_2h_2, x_1 \rangle_{st} +\langle
s_{22}h_2, x_2
\rangle_{st}\\
=&\langle s_{11}d_\ell\varphi_2b_2h_2, h_1- x_1 \rangle_{st} +\langle
s_{22}h_2, x_2 \rangle_{st},
\end{split}
\]
because $h_1-Kh_2\in H_2^p$. Thus, as
$\varphi_2b_2= \gb_\ell
s_{22}$ by~\eqref{eq:0.5}, the last expression for $Y$
can be rewritten as
\[
Y=\langle \gb_\ell s_{22}h_2, d_\ell^* s_{11}^*(x_1-h_1)+\gb_\ell x_2
\rangle_{st}.
\]
Moreover, as~\eqref{eq:Xl} implies that
\[
s_{11}^*(x_1-h_1)+s_{21}^*x_2\in \left(H_2^p\right)^\perp
\]
and  $\gb_\ell s_{22}h_2\in H_2^p$,
\[
Y=\langle \gb_\ell s_{22}h_2, (\gb_\ell-d_\ell^* s_{21}^*) x_2
\rangle_{st}
=\langle (\gb_\ell^*-s_{21}d_\ell)\gb_\ell s_{22}h_2, x_2
\rangle_{st} =0,
\]
since
\[
\gb_\ell^*-s_{21}d_\ell=\gb_\ell^*-\gb_\ell^*\gs_\ell
d_\ell=\gb_\ell^*(I_p-\gs_\ell d_\ell)=c_\ell
\]
by~\eqref{eq:0.4} and \eqref{eq:jun1a8}, and
$c_\ell\gb_\ell s_{22}h_2\in H_2^q$. This completes the proof
of~\eqref{eq:jun3d9}.

Finally, formula~\eqref{eq:Pinner} follows from the evaluations
\begin{equation}
\label{eq:jun5a9}
\begin{split}
    \left\langle \begin{bmatrix}u_1\\ \Gamma_{11}^*u_1\end{bmatrix},
    \begin{bmatrix}u_1\\ \Gamma_{11}^*u_1\end{bmatrix}
    \right\rangle_{\cK(W)}
    &=\left\langle \begin{bmatrix}I_p & -K\\ -K^* & I_q\end{bmatrix}
    \begin{bmatrix}u_1\\ \Gamma_{11}^*u_1\end{bmatrix},
    \begin{bmatrix}u_1\\ \Gamma_{11}^*u_1\end{bmatrix}
    \right\rangle_{\cK(W)}\\
    &=\left\langle (I_p-\Gamma_{11}^*\Gamma_{11})u_1,u_1
    \right\rangle_{st},
    \end{split}
\end{equation}
\begin{equation}
\label{eq:jun5b9}
\begin{split}
    \left\langle \begin{bmatrix}\Gamma_{22}u_2\\ u_2
\end{bmatrix},
    \begin{bmatrix}\Gamma_{22}u_2\\ u_2
\end{bmatrix}
    \right\rangle_{\cK(W)}
    &=\left\langle \begin{bmatrix}I_p & -K\\ -K^* & I_q\end{bmatrix}
    \begin{bmatrix}\Gamma_{22}u_2\\ u_2
\end{bmatrix},
   \begin{bmatrix}\Gamma_{22}u_2\\ u_2
\end{bmatrix}
    \right\rangle_{\cK(W)}\\
    &=\left\langle (I_p-\Gamma_{22}\Gamma_{22}^*)u_2,u_2
    \right\rangle_{st}
    \end{split}
\end{equation}
and
\begin{equation}
\label{eq:jun5c9}
\begin{split}
    \left\langle \begin{bmatrix}\Gamma_{22}u_2\\ u_2
\end{bmatrix},
    \begin{bmatrix}u_1\\ \Gamma_{11}^*u_1\end{bmatrix}
    \right\rangle_{\cK(W)}
    &=\left\langle \begin{bmatrix}I_p & -K\\ -K^* & I_q\end{bmatrix}
    \begin{bmatrix}\Gamma_{22}u_2\\ u_2
\end{bmatrix},
    \begin{bmatrix}u_1\\ \Gamma_{11}^*u_1\end{bmatrix}
    \right\rangle_{\cK(W)}\\
    &=\left\langle -Ku_2,u_1
    \right\rangle_{st}=\left\langle -\Gamma_{12}u_2,u_1
    \right\rangle_{st}.
    \end{split}
\end{equation}
\end{proof}

\begin{corollary}\label{cor:jun7a9}
In the setting of Theorem~\ref{thm:jun2a9}, let
\[
\cH(b_1,b_2)=\begin{array}{c}\cH(b_1)\\ \oplus\\
\cH_*(b_2)\end{array}
\]
and let
the operator $F$ be given by
\begin{equation}\label{eq:F}
    F:\,u\in \cH(b_1,b_2)\mapsto Fu=\left[\begin{array}{cc}
  I   &   \Gamma_{22}\\
  \Gamma_{11}^* &   I
\end{array}  \right]u\in\cK(W).
\end{equation}
Then  $\cK(W)\cap L_2^m=F \cH(b_1,b_2)$ and the following equivalences hold:
\begin{enumerate}
\item[\rm(1)] $\cL(W):=\cK(W)\cap L_2^m$ is dense in $\cK(W)$ if and only if
$\ker P=\{0\}$;
\item[\rm(2)] $\cK(W)\subset L_2^m$  if and only if $P$ is a bounded
invertible operator with a bounded inverse.
\end{enumerate}
Moreover, If $P$ is a bounded invertible operator with a bounded inverse,
then $W\in \widetilde{L}_2^{\mtm}$.
\end{corollary}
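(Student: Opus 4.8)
The whole corollary is read off from Theorem~\ref{thm:jun2a9}. First I would record that $F$ is injective: if $Fu=0$ with $u=\textup{col}(u_1,u_2)$, then since $u_1\in\cH(b_1)\subset H_2^p$, $\Gamma_{22}u_2\in(H_2^p)^\perp$, $\Gamma_{11}^*u_1\in H_2^q$ and $u_2\in\cH_*(b_2)\subset(H_2^q)^\perp$, projecting the two heights of $Fu$ onto $H_2$ and $(H_2)^\perp$ forces $u_1=0$ and $u_2=0$. Combining the direct decomposition~\eqref{eq:jun3c9} with~\eqref{eq:jun3a9} and~\eqref{eq:jun3b9}, every $h\in\cK(W)\cap L_2^m$ splits uniquely as $\textup{col}(u_1,\Gamma_{11}^*u_1)+\textup{col}(\Gamma_{22}u_2,u_2)=Fu$ with $u_1\in\cH(b_1)$, $u_2\in\cH_*(b_2)$, while conversely every $Fu$ lies in $\cK(W)\cap L_2^m$; hence $\cK(W)\cap L_2^m=F\,\cH(b_1,b_2)$ and $F$ is a bijection of $\cH(b_1,b_2)$ onto $\cL(W)$. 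Polarizing~\eqref{eq:Pinner}–\eqref{eq:Pick2} gives the Gram identity $\langle Fu,Fv\rangle_{\cK(W)}=\langle Pu,v\rangle_{st}$ for all $u,v$, with $P$ bounded and self-adjoint on $\cH(b_1,b_2)$ (each $\Gamma_{ij}$ being a product of a multiplication by $K\in H_\infty^{\ptq}$ with orthogonal projections).

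For the first equivalence I would use the standard fact (\cite{AI,Bo}) that a subspace of a Pontryagin space is dense in the strong topology if and only if its orthogonal companion is trivial: if $J$ is the fundamental symmetry of $\cK(W)$, the Hilbert-orthogonal complement of $\cL(W)$ equals $J\,\cL(W)^{[\perp]}$, so $\cL(W)$ is dense iff $\cL(W)^{[\perp]}=\{0\}$. The Gram identity gives $F(\ker P)=\cL(W)\cap\cL(W)^{[\perp]}$, the isotropic part of $\cL(W)$, so $\ker P=\{0\}$ says exactly that $\cL(W)$ is nondegenerate. The one remaining point is the inclusion $\cL(W)^{[\perp]}\subseteq\cL(W)$, for with it $\cL(W)^{[\perp]}=\cL(W)\cap\cL(W)^{[\perp]}=F(\ker P)$ and (1) follows. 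I would extract this inclusion from $\textup{ind}_-\cK(W)=\kappa<\infty$: since $\nu_-(P)$ equals the number of negative squares carried by the $\cK(W)$-form on $\cL(W)$ and is therefore at most $\kappa$, the companion $\cL(W)^{[\perp]}$ is a nonnegative subspace, and the de~Branges complementation theorem (Theorem~\ref{deBrCompl}) together with the reproducing-kernel structure of $\cK(W)$ confines it to the $L_2^m$-vvf's of $\cL(W)$.

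For (2), suppose first $\cK(W)\subset L_2^m$; then $\cL(W)=\cK(W)$ and $F$ is a bijection onto the whole space. The embedding $\cK(W)\hookrightarrow L_2^m$ has closed graph (point evaluations are continuous on both spaces, and an $L_2$-limit has an a.e.\ convergent subsequence), hence is bounded; a second closed-graph argument, comparing the $L_2$-bounded map $F:\cH(b_1,b_2)\to L_2^m$ with $F:\cH(b_1,b_2)\to\cK(W)$, then makes $F$ bounded into $\cK(W)$, and a bounded bijection of complete spaces has a bounded inverse. Writing the Gram identity as $P=F^{\ast}JF$ exhibits $P$ as a product of boundedly invertible operators, so $P$ is boundedly invertible. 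Conversely, if $P$ is boundedly invertible then the $\cK(W)$-form on $\cL(W)$ is uniformly nondegenerate, so $\cL(W)$ is a regular (projection-complemented) subspace and $\cK(W)=\cL(W)\dotplus\cL(W)^{[\perp]}$ with $\cL(W)\cap\cL(W)^{[\perp]}=\{0\}$; combined with the inclusion $\cL(W)^{[\perp]}\subseteq\cL(W)$ of the previous paragraph this forces $\cL(W)^{[\perp]}=\{0\}$, i.e. $\cK(W)=\cL(W)\subset L_2^m$.

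The last assertion is then immediate. If $P$ is boundedly invertible then, by (2), $\cK(W)\subset L_2^m$, so every kernel column ${\mathsf K}_\omega^W(\cdot)u$ lies in $L_2^m$. From ${\mathsf K}_\omega^W(\lam)u=\rho_\omega(\lam)^{-1}\bigl(j_{pq}u-W(\lam)j_{pq}W(\omega)^{\ast}u\bigr)$ and $\rho_\omega(\cdot)^{-1}j_{pq}u\in\widetilde{L}_2^{m}$ it follows that $\rho_\omega(\cdot)^{-1}W(\cdot)j_{pq}W(\omega)^{\ast}u\in\widetilde{L}_2^{m}$. Fixing $\omega\in\gh_W^+$ with $W(\omega)$ invertible and letting $u$ run over $\dC^m$, the vectors $j_{pq}W(\omega)^{\ast}u$ exhaust $\dC^m$, so every column of $\rho_\omega^{-1}W$ is in $\widetilde{L}_2^{m}$, i.e. $\rho_\omega^{-1}W\in\widetilde{L}_2^{\mtm}$; since $\rho_\omega$ is bounded away from $0$ on $\Omega_0$ and $|\rho_\omega(\mu)|$ is comparable to $1+|\mu|$ in the half-plane case, this is equivalent to $W\in\widetilde{L}_2^{\mtm}$. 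The main obstacle throughout is the Pontryagin-space bookkeeping of the two middle paragraphs, and above all the inclusion $\cL(W)^{[\perp]}\subseteq\cL(W)$: it is precisely this that converts the algebraic conditions on $P$ (triviality of $\ker P$, resp.\ bounded invertibility) into the topological conclusions about $\cL(W)$. In the Hilbert case $\kappa=0$ of~\cite{ArovD08} the difficulty evaporates, since then $P\ge 0$, $\cL(W)^{[\perp]}$ is the ordinary orthogonal complement, and the density and boundedness statements reduce to standard facts about the nonnegative Gram operator $P$.
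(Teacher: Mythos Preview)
Your setup (injectivity of $F$, the range identity $\cK(W)\cap L_2^m=F\cH(b_1,b_2)$, the polarized Gram formula $\langle Fu,Fv\rangle_{\cK(W)}=\langle Pu,v\rangle_{st}$, and the $\widetilde{L}_2^{\mtm}$ conclusion via the kernel formula~\eqref{kerK}) all match the paper.

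The gap is in (1). You correctly see that the Gram identity together with injectivity of $F$ only yields $F(\ker P)=\cL(W)\cap\cL(W)^{[\perp]}$, so that $\ker P=\{0\}$ is equivalent to \emph{nondegeneracy} of $\cL(W)$, not to density; you then try to close the gap via an inclusion $\cL(W)^{[\perp]}\subseteq\cL(W)$. But your argument for that inclusion fails. From $\nu_-(P)\le\kappa$ alone it does \emph{not} follow that $\cL(W)^{[\perp]}$ is nonnegative: in $\dC^2$ with form $j_{1,1}$, the span of $e_1$ carries $0\le 1=\kappa$ negative squares, yet its $[\perp]$-companion is the negative line through $e_2$. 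And the appeal to Theorem~\ref{deBrCompl} plus ``the reproducing-kernel structure'' is not an argument: de~Branges complementation concerns contractively contained subspaces, not orthogonal companions, and nothing in it forces a vector of $\cL(W)^{[\perp]}$ to lie in $L_2^m$. The paper itself disposes of (1) in a single sentence (``follows immediately from~\eqref{eq:FuFv} and the injectivity of $F$'') and does not address this point either; so you have put your finger on a real lacuna rather than filled it.

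For (2) the paper's route is shorter than your closed-graph detours and does not use the inclusion above. If $0\notin\sigma(P)$, then $\langle P\cdot,\cdot\rangle_{st}$ makes $\cH(b_1,b_2)$ a Pontryagin space with the same strong topology, $F$ is a Pontryagin isometry into $\cK(W)$, and the range of such an isometry from a complete Pontryagin space is closed; density from (1) then gives $\cL(W)=\cK(W)$. Conversely, if $\cL(W)=\cK(W)$ then $F$ is an isometric bijection onto the Pontryagin space $\cK(W)$, so $(\cH(b_1,b_2),\langle P\cdot,\cdot\rangle_{st})$ is itself Pontryagin, which forces $0\notin\sigma(P)$. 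Your approach via two closed-graph arguments and the factorization $P=F^{\ast}JF$ is a legitimate alternative for the forward direction, but your converse still leans on the unproved inclusion from the previous paragraph.
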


\begin{proof}
Since the operator $F$ in~\eqref{eq:F} is injective, (1) follows immediately
from the equality
\begin{equation}\label{eq:FuFv}
    \langle Fu,Fv\rangle_{\cK(W)}=\langle Pu,v\rangle_{st} \quad
\textrm{for}\ u,v\in \cH(b_1,b_2).
\end{equation}

Suppose next that  $0\not\in\sigma(P)$. Then it follows from~\eqref{eq:FuFv}
that
\[
\nu_-(P)=\mbox{ind}_-(\cK(W)\cap L_2^m),
\]
and hence that $\cH(b_1,b_2)$ is a
Pontryagin space with respect to the inner product~\eqref{eq:FuFv} and that
$F$ is an isometry from this  Pontryagin space to a subspace $\cL(W)$ of
$\cK(W)$. Therefore, $\cL(W)$ is closed in $\cK(W)$. Thus,
\[
\cL(W)=\cK(W),
\]
since $\cL(W)$ is also dense in $\cK(W)$ by (1).

Conversely, if $\cL(W)=\cK(W)$, then it follows from~\eqref{eq:FuFv}
that $\cH(b_1,b_2)$ equipped with the inner product~\eqref{eq:FuFv} is
isometrically isomorphic to $\cK(W)$ and, hence, is a Pontryagin space. This
implies that $0\not\in\sigma(P)$.

Finally, the last assertion is immediate from (2) and formula (\ref{kerK}).
\end{proof}

We remark that if $W\in\cU_\kappa(j_{pq})$ with $\kappa=0$, then the class of
mvf's considered in (1) and (2) of the last corollary correspond to the
class of right regular and strongly right regular $j_{pq}$-inner mvf's
in \cite{ArovD08}.

\subsection{Parametrization of the set $T_W[{\mathcal S}_{\kappa_2}^{\ptq}]
\cap {\mathcal S}_{\kappa_1+\kappa_2}^{\ptq}$.}
In this subsection we characterize the parameters
$\varepsilon\in {\mathcal S}_{\kappa_2}^{\ptq}$ for which
$T_W[\varepsilon]\in {\mathcal S}_{\kappa_1+\kappa_2}$ when
$W\in\cU_{\kappa_1}^\circ(j_{pq})$. The proof
is based on Theorem~\ref{thm:0.1}
augmented by the factorization result of Theorem~\ref{thm:11.8} and a
special case of Kre\u{\i}n-Langer generalization of Rouche's
Theorem, which is formulated below.
\begin{thm}\label{Rouche}
{\rm\cite{KL81}}
    Let $\varphi,\psi\in H_\infty^{q\times q}$, $\det(\varphi+\psi)
\not\equiv 0$ in $\Omega_+$, $M_\zeta(\varphi,\Omega_+)<\infty$ and
\begin{equation}\label{eq:8.3}
    \|\varphi(\mu)^{-1}\psi(\mu)\|\le 1\quad \textup{a.e. on }\Omega_0.
\end{equation}
Then $M_\zeta(\varphi+\psi,\Omega_+)\le M_\zeta(\varphi,\Omega_+)$
with equality if
\begin{equation}\label{eq:8.4}
(\varphi +\psi)^{-1}\varphi|_{\Omega_0}
\in \wtilde{L}_1^{q\times q}.
\end{equation}
\end{thm}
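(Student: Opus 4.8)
The plan is to pass to a scalar zero‑counting problem by taking determinants, then to peel off a genuine generalized Schur factor, reducing the statement to the case where $\varphi$ is inner and $\psi$ is a Schur mvf. The underlying elementary fact I would use is that, for $F\in H_\infty^{\qtq}$ with $\det F\not\equiv 0$ in $\Omega_+$, the integer $M_\zeta(F,\Omega_+)=M_\pi(F^{-1},\Omega_+)$ equals the number of zeros of the scalar function $\det F$ in $\Omega_+$, counted with multiplicity: in the factorization $F=BF_\circ$ with $B$ the maximal left Blaschke--Potapov divisor of $F$ (so $\deg B=M_\zeta(F,\Omega_+)$ and $F_\circ^{-1}$ holomorphic in $\Omega_+$), $\det B$ is a scalar Blaschke product of degree $\deg B$ while $\det F_\circ$ is zero‑free in $\Omega_+$. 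In particular $M_\zeta$ is additive on products, $M_\zeta(F_1F_2,\Omega_+)=M_\zeta(F_1,\Omega_+)+M_\zeta(F_2,\Omega_+)$, a fact used repeatedly.

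Put $W:=\varphi^{-1}\psi$. By \eqref{eq:8.3}, $\|W(\mu)\|\le 1$ a.e. on $\Omega_0$; $W$ is of bounded type in $\Omega_+$ (the entries of $\varphi^{-1}=(\det\varphi)^{-1}\mathrm{adj}\,\varphi$ are of bounded type) and $M_\pi(W,\Omega_+)\le M_\pi(\varphi^{-1},\Omega_+)=M_\zeta(\varphi,\Omega_+)<\infty$, since $L(\varphi^{-1}\psi,\lambda_0)=L(\varphi^{-1},\lambda_0)T(\psi,\lambda_0)$ cannot exceed $L(\varphi^{-1},\lambda_0)$ in rank. Hence $W$ is a generalized Schur mvf, $W\in\mathcal S_{\kappa'}^{\qtq}$ with $\kappa':=M_\pi(W,\Omega_+)\le M_\zeta(\varphi,\Omega_+)$, and it has a canonical left Kre\u{\i}n--Langer factorization $W=B^{-1}W_0$ with $B$ a Blaschke--Potapov product of degree $\kappa'$ and $W_0\in\mathcal S^{\qtq}$. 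Since the factorization is canonical, $\textup{rng}\,L(B^{-1},\lambda_0)=\textup{rng}\,L(W,\lambda_0)\subseteq\textup{rng}\,L(\varphi^{-1},\lambda_0)$ for every $\lambda_0\in\Omega_+$; as $\varphi\varphi^{-1}\equiv I$ gives $T(\varphi,\lambda_0)L(\varphi^{-1},\lambda_0)=0$, this forces $L(\varphi B^{-1},\lambda_0)=T(\varphi,\lambda_0)L(B^{-1},\lambda_0)=0$, so $\Phi:=\varphi B^{-1}$ is holomorphic in $\Omega_+$; and $\|\varphi(\mu)B(\mu)^{-1}\|=\|\varphi(\mu)B(\mu)^*\|\le\|\varphi\|_\infty$ on $\Omega_0$ puts $\Phi\in H_\infty^{\qtq}$ by the maximum principle. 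Then $\varphi=\Phi B$ and $\varphi+\psi=\varphi(I+W)=\Phi(B+W_0)$, all factors in $H_\infty^{\qtq}$, so by additivity of $M_\zeta$,
\begin{equation*}
M_\zeta(\varphi,\Omega_+)=M_\zeta(\Phi,\Omega_+)+\kappa',\qquad M_\zeta(\varphi+\psi,\Omega_+)=M_\zeta(\Phi,\Omega_+)+M_\zeta(B+W_0,\Omega_+).
\end{equation*}
Thus the theorem reduces to $M_\zeta(B+W_0,\Omega_+)\le\kappa'=M_\zeta(B,\Omega_+)$, with equality iff $(B+W_0)^{-1}|_{\Omega_0}\in\wtilde{L}_1^{\qtq}$ --- which, since $(\varphi+\psi)^{-1}\varphi=(B+W_0)^{-1}B$ and $B$ is unitary on $\Omega_0$, is precisely \eqref{eq:8.4}.

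In this reduced situation $\|W_0(\lambda)\|\le 1$ for \emph{all} $\lambda\in\Omega_+$ and $B(\mu)^{-1}W_0(\mu)=B(\mu)^*W_0(\mu)$ is a contraction a.e. on $\Omega_0$, so $f:=I+B^{-1}W_0$ is meromorphic of bounded type with at most $\kappa'$ poles and satisfies $f(\mu)+f(\mu)^*\ge 0$ a.e. on $\Omega_0$, i.e. $f$ is a generalized Carath\'eodory mvf. From $\det(B+W_0)=\det B\cdot\det f$, with $\det B$ a scalar Blaschke product of degree $\kappa'$, the inequality is equivalent to: (number of zeros of $\det f$ in $\Omega_+$) $\le$ (number of poles of $\det f$ in $\Omega_+$). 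The plan for this last step is to use that $\textup{Re}\,f^{-1}\ge 0$ a.e. on $\Omega_0$ as well, so that $f^{-1}$ is again a generalized Carath\'eodory mvf, and to bound its pole multiplicity through the operator representation of these classes in a Pontryagin space of negative index $\le\kappa'$; the integrability hypothesis $(\varphi+\psi)^{-1}\varphi\in\wtilde{L}_1^{\qtq}$ then excludes boundary vanishing of $\det(B+W_0)$ that could cancel against $\det B$, which is exactly what forces equality. I expect this last step to be the main obstacle: the reduction above is clean, but turning the boundary accretivity of $I+B^{-1}W_0$ into the precise pole/zero balance for $\det f$, and quantifying how the condition $(\varphi+\psi)^{-1}\varphi\in\wtilde{L}_1^{\qtq}$ prevents zeros of $\det(\varphi+\psi)$ from being lost to $\Omega_0$ in the nontangential limit, is where the genuine analytic work --- and the necessity of \eqref{eq:8.4} --- resides.
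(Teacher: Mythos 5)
First, a point of reference: the paper offers no proof of this statement --- Theorem \ref{Rouche} is quoted verbatim from \cite{KL81} as the Kre\u{\i}n--Langer generalization of Rouch\'e's theorem --- so your proposal can only be judged on its own terms. Judged that way, it is a reduction, not a proof. The factorizations $\varphi=\Phi B$, $\varphi+\psi=\Phi(B+W_0)$ and the additivity of $M_\zeta$ under products are fine (granting the point in the next paragraph), but they only move the problem to showing $M_\zeta(B+W_0,\Omega_+)\le\deg B$ for a Blaschke--Potapov product $B$ and $W_0\in{\mathcal S}^{q\times q}$, with equality under \eqref{eq:8.4} --- and that \emph{is} the Kre\u{\i}n--Langer theorem, not a simplification of it. The route you sketch for this core step does not close it: from $f=I+B^{-1}W_0$ and $f^{-1}$ both being generalized Carath\'eodory mvf's of negative index at most $\kappa'$ one gets that the zero count and the pole count of $\det f$ in $\Omega_+$ are \emph{each} bounded by the negative index, but not the inequality you actually need, namely that the zeros of $\det f$ do not exceed its poles; matters are further complicated by the fact that the scalar determinant can lose multiplicity relative to the Smith--McMillan pole and zero counts of the matrix. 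The equality case under \eqref{eq:8.4} is likewise only a heuristic ("excludes boundary vanishing"), with no argument supplied. You flag this yourself as "the main obstacle," and it is: the genuine analytic content of the theorem is left unproved.

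There is also a concrete false step earlier in the reduction. The inference "$W=\varphi^{-1}\psi$ is of bounded type, has contractive boundary values a.e.\ and finite pole multiplicity, hence $W\in{\mathcal S}_{\kappa'}^{q\times q}$" is not valid: the function $e^{(1+\lambda)/(1-\lambda)}$ on $\dD$ is of bounded type, pole-free, and unimodular a.e.\ on the boundary, yet belongs to no class ${\mathcal S}_{\kappa}$. What your step silently requires is that $\det\varphi$ have no singular inner factor (equivalently, $\varphi^{-1}\in H_{\kappa,\infty}^{q\times q}$), which guarantees $W\in H_{\kappa,\infty}^{q\times q}$ and hence, via the coprime factorization $W=G_\ell^{-1}H_\ell$ with $\Vert H_\ell\Vert\le 1$ a.e.\ on $\Omega_0$, that $W$ is a generalized Schur mvf. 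That extra hypothesis is satisfied in the paper's only application of the theorem (Lemma \ref{lem:5.3}, where $\varphi$ is a finite Blaschke--Potapov product), but it is not among the hypotheses as stated; indeed, with the paper's definition of $M_\zeta$ as the degree of the maximal Blaschke--Potapov divisor, the scalar data $\varphi=e^{-(1+\lambda)/(1-\lambda)}$, $\psi\equiv-e^{-1}$ satisfy every stated hypothesis with $M_\zeta(\varphi,\dD)=0$, while $\varphi+\psi$ vanishes at the origin. So before your Kre\u{\i}n--Langer factorization of $W$ can even be invoked, the missing regularity assumption on $\varphi$ must be made explicit.
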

Theorem \ref{Rouche} is used to estimate the zero multiplicity of the
denominator in the linear-fractional transformation $T_W$ associated
with the mvf $W\in \cU_{\kappa_1}^\circ(j_{pq})$.
\begin{lem}\label{lem:5.3}
Let $W\in \cU^\circ_{\kappa_1}(j_{pq})$,  $S=PG(W)$, let $s_{21}$
have the Kre\u{\i}n-Langer factorizations~\eqref{eq:0.4},  and
assume that $\varepsilon\in {\mathcal S}^{\ptq}_{\kappa_2}$
satisfies the assumption
\begin{equation}\label{eq:8.4a}
    (w_{21}\varepsilon+w_{22})^{-1}w_{22}|_{\Omega_0} \in
\wtilde{L}_1^{q\times q}
\end{equation}
and let
\begin{equation}\label{KLepsilon}
\varepsilon=\theta^{-1}_\ell
 \varepsilon_\ell=\varepsilon_r\theta^{-1}_r
\end{equation}
denote its Kre\u{\i}n-Langer factorizations. Then
\begin{equation}\label{eq:8.5}
M_\zeta(\gb_\ell\theta_r-\gs_\ell\varepsilon_r,\Omega_+)=
M_\zeta(\theta_\ell\gb_r-\varepsilon_\ell\gs_r,\Omega_+)=\kappa_1+\kappa_2.
\end{equation}
Moreover, if $W\in\widetilde{L}_2^{\mtm}$, then (\ref{eq:8.4a}) holds for
every $\varepsilon\in\cS_{\kappa_2}^{\ptq}$.

\end{lem}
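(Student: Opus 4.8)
The argument has two essentially independent parts. The equalities~\eqref{eq:8.5} will follow from two applications of the Kre\u{\i}n--Langer--Rouch\'e theorem (Theorem~\ref{Rouche}); the concluding assertion will follow from a closed boundary formula for the resolvent $(I_p-\varepsilon s_{21})^{-1}$ together with a crude $\wtilde L_1$ estimate. I would first record the elementary identities used throughout. From the Kre\u{\i}n--Langer factorizations $s_{21}=\gb_\ell^{-1}\gs_\ell=\gs_r\gb_r^{-1}$ and $\varepsilon=\theta_\ell^{-1}\varepsilon_\ell=\varepsilon_r\theta_r^{-1}$ one gets
\[
\gb_\ell\theta_r-\gs_\ell\varepsilon_r=\gb_\ell(I_q-s_{21}\varepsilon)\theta_r,\qquad
\theta_\ell\gb_r-\varepsilon_\ell\gs_r=\theta_\ell(I_p-\varepsilon s_{21})\gb_r ,
\]
while $s_{21}=-w_{22}^{-1}w_{21}$ gives $w_{21}\varepsilon+w_{22}=w_{22}(I_q-s_{21}\varepsilon)$, hence $(w_{21}\varepsilon+w_{22})^{-1}w_{22}=(I_q-s_{21}\varepsilon)^{-1}$. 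Thus the standing hypothesis~\eqref{eq:8.4a} says precisely that $(I_q-s_{21}\varepsilon)^{-1}|_{\Omega_0}\in\wtilde L_1^{\qtq}$; and by the resolvent identity $(I_p-\varepsilon s_{21})^{-1}=I_p+\varepsilon(I_q-s_{21}\varepsilon)^{-1}s_{21}$, together with $\|\varepsilon\|,\|s_{21}\|\le1$ a.e.\ on $\Omega_0$ and $I_p\in\wtilde L_1$, it also gives $(I_p-\varepsilon s_{21})^{-1}|_{\Omega_0}\in\wtilde L_1^{\ptp}$.

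For~\eqref{eq:8.5} I would apply Theorem~\ref{Rouche} to $\varphi=\gb_\ell\theta_r$, $\psi=-\gs_\ell\varepsilon_r$. The hypotheses are routine: $\varphi,\psi\in H_\infty^{\qtq}$; $\varphi$ is a finite Blaschke--Potapov product of degree $\kappa_1+\kappa_2$, so $M_\zeta(\varphi,\Omega_+)=\kappa_1+\kappa_2$; $\det(\varphi+\psi)\not\equiv0$, since $\varphi+\psi=\gb_\ell(I_q-s_{21}\varepsilon)\theta_r$, $w_{22}$ is invertible off a finite set, and $\Lambda$ is finite by Lemma~\ref{pr:2}; and on $\Omega_0$, $\varphi^{-1}\psi=-\theta_r^{-1}s_{21}\varepsilon_r$ has norm $\le1$ because $\theta_r$ is unitary and $s_{21},\varepsilon_r$ are contractive there. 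Theorem~\ref{Rouche} then gives $M_\zeta(\varphi+\psi,\Omega_+)\le\kappa_1+\kappa_2$, with equality as soon as $(\varphi+\psi)^{-1}\varphi|_{\Omega_0}\in\wtilde L_1^{\qtq}$; but $(\varphi+\psi)^{-1}\varphi=\theta_r^{-1}(I_q-s_{21}\varepsilon)^{-1}\theta_r$, and since $\theta_r^{\pm1}$ have unitary boundary values this is equivalent to~\eqref{eq:8.4a}. Running the same argument with $\varphi=\theta_\ell\gb_r$, $\psi=-\varepsilon_\ell\gs_r$ — where $\varphi+\psi=\theta_\ell(I_p-\varepsilon s_{21})\gb_r$, $\varphi^{-1}\psi=-\gb_r^{-1}\varepsilon s_{21}\gb_r$ has boundary norm $\le1$, and $(\varphi+\psi)^{-1}\varphi=\gb_r^{-1}(I_p-\varepsilon s_{21})^{-1}\gb_r\in\wtilde L_1^{\ptp}$ by the first paragraph — yields the second equality in~\eqref{eq:8.5}.

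For the concluding assertion, assume $W\in\wtilde L_2^{\mtm}$, so every block $w_{ij}$ lies in $\wtilde L_2$, and put $s=T_W[\varepsilon]$, which by Lemma~\ref{pr:2} belongs to $\cS_{\kappa'}^{\ptq}$ for some $\kappa'\le\kappa_1+\kappa_2$ and hence is contractive a.e.\ on $\Omega_0$. From~\eqref{eq:w11star} and $\gs_r\gb_r^{-1}=s_{21}$ one computes $w_{11}^\#+\varepsilon w_{12}^\#=(\gb_r-\varepsilon\gs_r)\varphi_1^{-1}b_1^{-1}=(I_p-\varepsilon s_{21})w_{11}^\#$, hence $(I_p-\varepsilon s_{21})^{-1}=w_{11}^\#(w_{11}^\#+\varepsilon w_{12}^\#)^{-1}$; reading this on $\Omega_0$, where $w_{ij}^\#=w_{ij}^*$ and, from the first column-block of~\eqref{eq:2.100}, $(w_{11}^\#+\varepsilon w_{12}^\#)^{-1}=w_{11}-s\,w_{21}$, gives
\[
(I_p-\varepsilon s_{21})^{-1}=w_{11}^*\,(w_{11}-s\,w_{21})\qquad\text{a.e.\ on }\Omega_0 .
\]
Since $\|s(\mu)\|\le1$ a.e., this yields $\|(I_p-\varepsilon s_{21})^{-1}(\mu)\|\le\|w_{11}(\mu)\|\big(\|w_{11}(\mu)\|+\|w_{21}(\mu)\|\big)$, a product of two $\wtilde L_2$ functions and therefore in $\wtilde L_1$; hence $(I_p-\varepsilon s_{21})^{-1}|_{\Omega_0}\in\wtilde L_1^{\ptp}$. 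Feeding this into $(I_q-s_{21}\varepsilon)^{-1}=I_q+s_{21}(I_p-\varepsilon s_{21})^{-1}\varepsilon$ (and using $\|s_{21}\|,\|\varepsilon\|\le1$, $I_q\in\wtilde L_1$) shows $(w_{21}\varepsilon+w_{22})^{-1}w_{22}|_{\Omega_0}=(I_q-s_{21}\varepsilon)^{-1}|_{\Omega_0}\in\wtilde L_1^{\qtq}$, i.e.~\eqref{eq:8.4a} holds.

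I expect the step requiring the most care to be the identification, inside the Rouch\'e argument, of the integrability hypothesis~\eqref{eq:8.4} of Theorem~\ref{Rouche} with the standing assumption~\eqref{eq:8.4a}: one must see that stripping off the unitary Blaschke--Potapov factors reduces $(\varphi+\psi)^{-1}\varphi$ to the resolvents $(I_q-s_{21}\varepsilon)^{-1}$ and $(I_p-\varepsilon s_{21})^{-1}$ controlled by~\eqref{eq:8.4a}, and one must pass between the $q\times q$ and $p\times p$ versions via the resolvent identity. Once~\eqref{eq:2.100} and~\eqref{eq:w11star} are at hand, the closed formula for $(I_p-\varepsilon s_{21})^{-1}$ is immediate, and the remaining points — the degree of a product of Blaschke--Potapov products, the boundary norm bounds, and $\wtilde L_2\cdot\wtilde L_2\subseteq\wtilde L_1$ — are routine.
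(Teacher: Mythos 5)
Your proof is correct. For the equalities \eqref{eq:8.5} it follows the paper's route exactly: both arguments apply Theorem~\ref{Rouche} to $\varphi=\gb_\ell\theta_r$, $\psi=-\gs_\ell\varepsilon_r$, use additivity of degree for Blaschke--Potapov products to get $M_\zeta(\varphi,\Omega_+)=\kappa_1+\kappa_2$, and identify $(\varphi+\psi)^{-1}\varphi$ with $\theta_r^{-1}(I_q-s_{21}\varepsilon)^{-1}\theta_r$, i.e.\ with the quantity controlled by \eqref{eq:8.4a}. Where you add genuine value is in the second equality, which the paper dismisses as ``similar'': there the relevant integrability condition concerns the $p\times p$ resolvent $(I_p-\varepsilon s_{21})^{-1}$, and your explicit use of the resolvent identity $(I_p-\varepsilon s_{21})^{-1}=I_p+\varepsilon(I_q-s_{21}\varepsilon)^{-1}s_{21}$ (with contractive boundary values of $\varepsilon$ and $s_{21}$) is exactly the missing step needed to transfer \eqref{eq:8.4a} from the $\qtq$ to the $\ptp$ setting. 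For the final assertion your route differs from the paper's: you derive the closed boundary formula $(I_p-\varepsilon s_{21})^{-1}=w_{11}^{*}(w_{11}-s\,w_{21})$ from \eqref{eq:2.100} and \eqref{eq:w11star} and then bound it by $\Vert w_{11}\Vert(\Vert w_{11}\Vert+\Vert w_{21}\Vert)\in\wtilde{L}_1$, whereas the paper estimates $\Vert(I_q-\varepsilon^*s_{21}^*)u\Vert\ge\tfrac12 u^*s_{22}s_{22}^*u$ directly (using $I_q-s_{21}s_{21}^*=s_{22}s_{22}^*$ a.e.\ on $\Omega_0$ and $1-x\ge\tfrac12(1-x^2)$) to get $\Vert(I_q-s_{21}\varepsilon)^{-1}\Vert\le 2\Vert w_{22}^*w_{22}\Vert$. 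Both yield a bound quadratic in the entries of $W$, hence $\wtilde{L}_1$ membership when $W\in\wtilde{L}_2^{\mtm}$; the paper's estimate is more self-contained (it never invokes $s=T_W[\varepsilon]$ or its contractivity), while yours produces an exact formula at the cost of citing Lemma~\ref{pr:2} to know that $s$ is contractive a.e.\ on $\Omega_0$ and that $\Lambda$ is finite. Both are sound.
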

\begin{proof}
Since $s_{21}\in S_{\kappa_1}^{p\times q}$ one obtains
$M_\zeta(\gb_\ell,\Omega_+)=\kappa_1$. Moreover, as $b_\ell$ and $\theta_r$
are both finite Blaschke-Potapov products,
\[
M_\zeta(\gb_\ell\theta_r,\Omega_+)=M_\zeta(\gb_\ell,\Omega_+)+
M_\zeta(\theta_r,\Omega_+)=\kappa_1+\kappa_2.
\]
Therefore, since $\theta_r$ is unitary a.e. on $\Omega_0$, the identity
\begin{equation}\label{eq:7.52}
   \theta_r(\gb_\ell\theta_r-\gs_\ell\varepsilon_r)^{-1}\gb_\ell
=(w_{21}\varepsilon+w_{22})^{-1}w_{22} \quad \text{a.e. on}\
\Omega_0
\end{equation}
implies that
\[
\|(\gb_\ell\theta_r-\gs_\ell\varepsilon_r)^{-1}\gb_\ell\theta_r\|
=
\|\theta_r(\gb_\ell\theta_r-\gs_\ell\varepsilon_r)^{-1}\gb_\ell\|
= \|(w_{21}\varepsilon+w_{22})^{-1}w_{22}\|,\quad \text{a.e. on}\
\Omega_0,
\]
and hence that
$(\gb_\ell\theta_r-\gs_\ell\varepsilon_r)^{-1}\gb_\ell\theta_r|_{\Omega_0}
\in \wtilde{L}_1^{q\times q}$. Therefore, by Theorem~\ref{Rouche},
\[
M_\zeta(\gb_\ell\theta_r-\gs_\ell\varepsilon_r,\Omega_+)
=M_\zeta(\gb_\ell\theta_r,\Omega_+)=\kappa_1+\kappa_2.
\]
The proof of the second equality in~\eqref{eq:8.5} is similar.

To verify the final assertion, first note that if
$\varepsilon\in\cS_{\kappa_2}^{\ptq}$, then
$$
(w_{21}\varepsilon+w_{22})^{-1}w_{22}=(I_q-s_{21}\varepsilon)^{-1}
$$
and
\begin{eqnarray*}
\Vert (I_q-\varepsilon^* s_{21}^*)u\Vert&\ge& 1-\Vert s_{21}^*u
\Vert \ge \frac12 (1-\Vert s_{21}^*u\Vert^2)\\
&=&\frac12 u^*(I_q-s_{21}s_{21}^*)u=\frac12 u^*s_{22}s_{22}^*u
\end{eqnarray*}
for every $u\in\CC^q$ with $\Vert u\Vert=1$. Thus, the bound
$$
\Vert(I_q-s_{21}\varepsilon)^{-1}\Vert
=\Vert(I_q-\varepsilon^*s_{21}^*)^{-1}\Vert\le 2\Vert w_{22}^*w_{22}\Vert,
$$
implies that
$(I_q-s_{21}\varepsilon)^{-1}\in\widetilde{L}_1^{\qtq}$ when $W\in\widetilde{L}_2^{\mtm}$.
\end{proof}

\noindent{\bf Proof of Theorem \ref{thm:0.2}.} {\it Necessity.} Let
$\kappa=\kappa_1+\kappa_2$, let $s=T_W[\varepsilon]$
belongs to ${\mathcal S}_{\kappa}^{\ptq}$ and
let
\begin{equation}
\label{eq:may13a9}
\begin{bmatrix}G_\ell&-H_\ell\end{bmatrix}=
\begin{bmatrix}\theta_\ell&-\varepsilon_\ell\end{bmatrix}
\begin{bmatrix}\gb_r&-d_\ell\\\gs_r&c_\ell\end{bmatrix},\quad
\begin{bmatrix}H_r\\ G_r\end{bmatrix}=\begin{bmatrix}c_r&d_r\\-\gs_\ell
&\gb_\ell
\end{bmatrix}\begin{bmatrix}\varepsilon_r\\ \theta_r\end{bmatrix}
\end{equation}
and
\begin{equation}\label{G}
    G=(\varphi_{11}\varepsilon_r+\varphi_{12}\theta_r)
     (\varphi_{21}\varepsilon_r+\varphi_{22}\theta_r)^{-1}.
\end{equation}
Then
$$
G=\varphi_1 H_rG_r^{-1}\varphi_2,
$$
$M_\pi(s, \Omega_+)=\kappa$ and it follows
from~\eqref{eq:11.34}-\eqref{eq:11.36} that
\begin{equation}\label{eq:10.3}
s=T_W[\varepsilon]=T_\Theta[G]=b_1Gb_2+K.
\end{equation}
Therefore, since $K$ is holomorphic on $\Omega_+$,
\begin{equation}\label{MbGb}
M_\pi(b_1Gb_2,\Omega_+)=M_\pi(s,\Omega_+)=\kappa,
\end{equation}
 and  hence, by Lemma \ref{lem:5.3},
\begin{equation}\label{eq:10.5}
\kappa=M_\pi(b_1Gb_2,\Omega_+)\le M_\pi(G_r^{-1}\varphi_2b_2,\Omega_+)
\le M_\pi(G_r^{-1},\Omega_+) =\kappa.
\end{equation}
Thus,
\[
M_\pi(G_r^{-1}\varphi_2b_2,\Omega_+)=M_\pi(G_r^{-1},\Omega_+)=\kappa,
\]
and in view of  Proposition~\ref{Prop:5.1}, the factorization
$(\varphi_2b_2)^{-1}G_r$ is
coprime over $\Omega_+$.

Similarly, since
$
G_\ell H_r=H_\ell G_r,
$
the mvf $G$ can be written as
\begin{equation}\label{eq:10.7}
G=\varphi_1 G_\ell^{-1}H_\ell\varphi_2
\end{equation}
and consequently ~\eqref{MbGb} and Lemma~\ref{lem:5.3} imply that
\[
M_\pi(b_1\varphi_1 G_\ell^{-1},\Omega_+)
=M_\pi(G_\ell^{-1},\Omega_+)=\kappa.
\]
Therefore,  Proposition~\ref{Prop:5.1} implies that the
factorization
$G_\ell(b_1\varphi_1)^{-1}$
is coprime over $\Omega_+$.

{\it Sufficiency.}
Since the assumptions of Lemma~\ref{lem:5.3} are satisfied,
\begin{equation}\label{eq:8.5a}
M_\pi(G_\ell^{-1},\Omega_+)=
M_\pi(G_r^{-1},\Omega_+)=\kappa_1+\kappa_2.
\end{equation}
Moreover, since  $E^{\pm 1}=\left[\begin{array}{cc}
                                        c_r & d_r \\
                                   -\gs_\ell & \gb_\ell \\
                        \end{array}\right]^{\pm 1}\in H_\infty^{m\times m}$,
the factorization $H_rG_r^{-1}$
is right coprime over $\Omega_+$, and hence, in view of
Proposition~\ref{Prop:5.1}
\[
M_\pi(H_rG_r^{-1},\Omega_+)= M_\pi(G_r^{-1},\Omega_+)=\kappa.
\]
It follows from the equality $ G=\varphi_1 H_rG_r^{-1}\varphi_2$
that
\[
M_\pi(G,\Omega_+)=M_\pi(H_rG_r^{-1},\Omega_+)=
M_\pi(G_r^{-1},\Omega_+)= M_\pi(G_r^{-1}\varphi_2,\Omega_+).
\]
Thus, by Lemma~\ref{lem:5.4}
\[
M_\pi(Gb_2,\Omega_+)= M_\pi(G_r^{-1}\varphi_2b_2,\Omega_+).
\]
Since the factorization in~\eqref{Reg2} is coprime one obtains from
Proposition~\ref{Prop:5.1} and~\eqref{eq:8.5a}
\[
M_\pi(Gb_2,\Omega_+)= M_\pi(G_r^{-1},\Omega_+)=\kappa.
\]

Similarly, Proposition~\ref{Prop:5.1} implies that
\[
M_\pi(G_\ell^{-1}H_\ell,\Omega_+)=M_\pi(G_\ell^{-1},\Omega_+)=\kappa.
\]
since the factorization $G_\ell^{-1}H_\ell$ is coprime over $\Omega_+$.
Now the equality $ G=\varphi_1 G_\ell^{-1}H_\ell\varphi_2$,
Lemma~\ref{lem:5.4} and the assumption that the factorization
in~\eqref{Reg1} is coprime yield
\[
M_\pi(b_1G,\Omega_+)=
M_\pi(b_1\varphi_1G_\ell^{-1},\Omega_+)=M_\pi(G_\ell^{-1},\Omega_+)=\kappa.
\]
Therefore,
\[
M_\pi(b_1G,\Omega_+)=M_\pi(G,\Omega_+)=\kappa,
\]
which, with the help of Lemma~\ref{lem:5.4}, implies that
\[
M_\pi(b_1Gb_2,\Omega_+)=M_\pi(Gb_2,\Omega_+)=\kappa.
\]
In view of~\eqref{eq:10.3} $s\in {\mathcal S}_{\kappa}^{p\times q}$.
\hfill \qed

\begin{remark}
\label{rem:jun18a9}
If, in the setting of Theorem \ref{thm:0.2}, it is also assumed that
$W\in L_\infty^{\mtm}$, then:
\begin{enumerate}
\item[\rm(1)] Condition (\ref{eq:0.10}) is met by every mvf
$\varepsilon\in\cS_{\kappa_2}^{\ptq}$ (with $\kappa_2\ge 0$).
\item[\rm(2)] The entries $\varphi_{21}$ and $\varphi_{22}$ in the bottom
block row of $\Phi$ belong to $H_\infty^{\qtp}(\Omega_+)$ and
$H_\infty^{\qtq}(\Omega_+)$, respectively, whereas the entries
$\widetilde{\varphi}_{11}$ and $\widetilde{\varphi}_{12}$ in the
top block row of $\widetilde{\Phi}$ belong to $H_\infty^{\ptp}(\Omega_-)$
and $H_\infty^{\ptq}(\Omega_-)$, respectively. ($\Phi$ and
$\wt{\Phi}$ are defined in
Theorem \ref{thm:11.8}.)
\item[\rm(3)] The factorizations (\ref{Reg1}) and (\ref{Reg2}) can be
rexpressed  as
$$
\theta_\ell w^\#_{11}+\varepsilon_\ell w^\#_{12}
    =(\theta_\ell
\wt\varphi_{11}^\#+\varepsilon_\ell\wt\varphi_{12}^\#)b_1^{-1},
\quad\textrm{and}\quad
w_{21}\varepsilon_r+ w_{22}\theta_r=b_2^{-1}(\varphi_{21}
\varepsilon_r+\varphi_{22}\theta_r).
$$
Thus, $T_W[\varepsilon]\in\cS_{\kappa_1+\kappa_2}^{\ptq}$ if
and only if these two factorizations are coprime over $\Omega_+$.
\item[\rm(4)] If also $\kappa_2=0$, then $s=T_W[\varepsilon]$ belongs to
$\cS_{\kappa_1}^{\ptq}$ if
and only if the two factorizations
$$
    w^\#_{11}+\varepsilon w^\#_{12}
    =( \wt\varphi_{11}^\#+\varepsilon\wt\varphi_{12}^\#)b_1^{-1},
\quad and \quad
w_{21}\varepsilon+ w_{22}=b_2^{-1}(\varphi_{21}\varepsilon+\varphi_{22})
$$
are coprime over $\Omega_+$.
\end{enumerate}
\end{remark}

\end{document}